\documentclass[11pt]{amsart}
\usepackage{amssymb,amsmath,amsthm,mathtools}
\usepackage{a4wide}
\usepackage{graphicx}
\usepackage[utf8]{inputenc} 

\usepackage{verbatim}
\usepackage{microtype}
\usepackage{hyperref}
\usepackage{amsfonts} 
\usepackage{latexsym}
\usepackage[font=small,format=hang,labelfont={sf,bf}]{caption}
\usepackage{epsfig}
\usepackage{subfig}
\usepackage{url}
\usepackage{varioref}
\usepackage{bm}
\mathtoolsset{showonlyrefs} 
\usepackage{tikz}
\usepgflibrary{patterns} 
\usepgflibrary[patterns] 
\usetikzlibrary{patterns} 
\usetikzlibrary[patterns] 
\usepackage{amsfonts} 
\usepackage{geometry}
\geometry{a4paper,top=3cm,bottom=3cm,left=3cm,right=3cm}
\usepackage{color}
\usepackage{xcolor}
\usepackage{mathrsfs}

\usepackage{latexsym}
\usepackage{bbm}
\usepackage{dsfont}
\numberwithin{equation}{section}
\theoremstyle{plain}
\begingroup
\newtheorem{theorem}{Theorem}[section]
\newtheorem{lemma}[theorem]{Lemma}
\newtheorem{proposition}[theorem]{Proposition}

\newtheorem{definition}[theorem]{Definition}
\newtheorem{remark}[theorem]{Remark}

\endgroup



\definecolor{ddorange}{rgb}{1,0.5,0}
\definecolor{ddcyan}{rgb}{0,0.2,1.0}

\renewcommand{\div}{\mathrm{div}}
\newcommand{\pa}{\partial}
\newcommand{\eps}{\varepsilon}


\newcommand{\numberset}{\mathbb}
\newcommand{\R}{\numberset{R}}

\newcommand{\N}{\numberset{N}}
\newcommand{\Ha}{\mathcal{H}}
\newcommand{\medint}{-\kern -,40cm\int}
\newcommand{\medintinrigo}{-\kern -,34cm\int}
\newcommand{\res}
{\mathop{\hbox{\vrule height 7pt width .5pt depth 0pt
			\vrule height .5pt width 5pt depth 0pt}}\nolimits}

\newcommand{\weakly}{\rightharpoonup}
\newcommand{\weakstar}{\stackrel{*}{\weakly}}   


\title[The anisotropic surface diffusion with elasticity in the plane ]{On variational scheme modeling the anisotropic surface diffusion with elasticity in the plane}

\author[A. Kubin]
{A. kubin}
\address[Andrea Kubin]{
	Jyv\"askyl\"an Yliopisto, Matematiikan ja Tilastotieteen Laitos, Jyv\"askyl\"a, Finland
}
\email[A. Kubin]{andrea.a.kubin@jyu.fi}

\begin{document}

	\begin{abstract}
		In this paper, we prove the existence of classical solutions for the anisotropic surface diffusion with elasticity in the plane using a minimizing movements scheme, provided that the initial set is sufficiently regular. This scheme is inspired by the one introduced by Cahn-Taylor \cite{CaTa} to modeling the surface diffusion. Moreover, we prove that this scheme converges to the global solution of the equation.
		\noindent 
		\vskip5pt
		\noindent
		\textsc{Keywords}: Geometric evolutions; variational methods; minimizing movement scheme.  
		\vskip5pt
		\noindent
		\textsc{AMS subject classifications:}  
		53E10; 53E40;
		49Q20; 37E35; 74G65.  
	\end{abstract}
	
	\maketitle
	\tableofcontents
	\section{Introduction} 
	In this paper, we investigate the existence of solutions to the anisotropic surface diffusion equation with elasticity in the plane, employing the minimizing movements scheme. 
	
	We provide a brief overview of the physical and mathematical motivation for this equation. In recent years, there has been growing interest in the physics literature in energy functionals that involve a competition between surface interface energy and elastic energy. This interest is driven by the study of interface morphologies influenced by such energies. From a mathematical perspective, the problem is formulated as the analysis of local or global minimizers of a free energy functional, given by the sum of elastic energy and surface energy (typically modeled via isotropic or anisotropic perimeter terms). The static version of this problem has been extensively investigated in both the physical and numerical literature. In the mathematical literature, several works address this topic: \cite{BGZ2015, Bonacini2013,BonCham2002, FoFuLeMo2007, FM2012, GZ2014} present results on existence, regularity, and stability for variational models describing equilibrium configurations in two dimensions, while \cite{Bonacini2015, CS2007} provide results in three dimensions.
	As previously mentioned, our focus is on the dynamic and evolutionary counterpart of such energy models. Before introducing the differential equation we are studying, we recall the Einstein–Nernst equation, as our equation represents a special case of it. This equation describes the evolution of an interface driven by surface mass transport under the influence of a chemical potential $\mu$. In particular, the surface flux of atoms is proportional to the tangential gradient of the chemical potential, and the divergence of this flux corresponds to the rate at which material is either removed from or deposited onto the interface. Throughout the evolution, the volume is conserved, as bulk mass transport can be neglected due to its occurring on a much faster timescale (see \cite{Mullins1963}).
	Thus, the evolution law is
	\begin{equation}\label{EQEINNER}
		V_t= \Delta_\tau \mu_t \text{ on } \pa E_t
	\end{equation}
	where  $V_t$  is the normal velocity, $\Delta_\tau$  is the Laplace–Beltrami operator on $\pa E_t$, and $\mu_t$ is the chemical potential.  The chemical potential $\mu$ is defined as the first variation of the free-energy functional.
	The prototypical free-energy functional we consider is given by:
	\begin{equation}\label{Fintroeng}
		J(F)= \int_{\pa F} \varphi(\nu_F)\, d\mathcal{H}^1+ \frac{1}{2} \int_{\Omega \setminus F} Q(E(u_F))\, dx,
	\end{equation}
	where $\Omega \subset \R^2$ denotes the planar region in which the phenomena of interest occur (e.g., the region occupied by the elastic body), and $F \subset \Omega$ (e.g., represents the void that has formed within the elastic body). 
	As previously mentioned, the minimizers of the functional $F \rightarrow J(F)$ under the volume constraint $\vert F \vert=m$ can be used to describe the equilibrium shapes of voids in elastically stressed solids; see \cite{SMV2004}. We now clarify the various terms appearing in equation \eqref{Fintroeng}. The function $u_F$ represents the elastic equilibrium in $\Omega \setminus F$ subject to the boundary condition $u_{F}= w_0$ on $\pa \Omega$, i.e.,
	\begin{equation}\label{eqelasticoliberointro}
		u_{F} \in \arg \!\min \left\{ \int_{\Omega \setminus F } Q(E(u))\, dx \colon u \in H^1(\Omega \setminus F,\R^2) , \, u |_{\pa \Omega}=w_0    \right\}.
	\end{equation}
	The function $Q$ is the quadratic form defined by $Q(A):= \frac{1}{2} \mathbb{C}A: A$ for all $2 \times 2$-symmetric matrices $A$, where $\mathbb{C}$ is the elasticity tensor. The quantity $E(u_{F})$ denotes the symmetric part of the gradient $\nabla u_{F}$, given by $ E(u_{F})= \frac{\nabla u_{F}+ (\nabla u_{F})^t}{2}$. Finally $\varphi(\nu_F)$ is the anisotropic surface energy density evaluated at the outer unit normal $\nu_F$ to $F$. The anisotropy considered in this work is regular and strictly convex; i.e., $\varphi$ is one-homogeneous, $\varphi \in C^{\infty}(\R^2 \setminus \{0 \})$ and  \begin{equation}\label{unifellintro}
		\exists J>0 \, \colon D^2 \varphi (\nu) \xi \cdot \xi \geq J \vert \xi \vert^2 \quad \forall \nu \in \mathcal{S}^1,\,\xi \in \R^2 \text{ such that } \nu \bot \xi.
	\end{equation}
	The existence and regularity of minimizers of the energy functional $F \mapsto J(F)$ under a volume constraint on $F$ have been studied in various works; see \cite{CJP2013, FFLMi2011} for the two-dimensional case. A relaxation result valid in all dimensions, concerning a variant of the energy \eqref{Fintroeng}, is provided in \cite{BCS2007}.

	The equation studied in this work is derived from the Einstein–Nernst equation (see \eqref{EQEINNER}), under the assumption that $\mu_t$ corresponds to the first variation of the free energy \eqref{Fintroeng}. As a result, we obtain the following system:
	\begin{equation}\label{MAINEQsol}
		\left\{
		\begin{aligned}
			& V_t= \Delta_\tau \big(   \kappa^\varphi_{E_t}-Q (E(u_{E_t}))\big), \text{ on } \pa E_t\\
			& E_0 \text{ initial datum, } \\
			&u_{E_t} \in \arg \!\min \left\{ \int_{\Omega \setminus E_t } Q(E(u))\, dx \colon u \in H^1(\Omega \setminus E_t,\R^2) , \, u |_{\pa \Omega}=w_0    \right\},
		\end{aligned}
		\right.
	\end{equation}      
	where $\kappa_{E_t}^\varphi$ denotes the anisotropic curvature of $\pa E_t$.  The existence of classical solutions to equation \eqref{MAINEQsol} and the asymptotic stability of strictly stable stationary sets are studied in \cite{FJM2018}. In \cite{FJM2020}, the authors investigate the existence and asymptotic stability of solutions in three dimensions for the isotropic surface diffusion equation with elasticity.

	The equation \eqref{MAINEQsol} can be viewed as a nonlocal perturbation of the surface diffusion equation, where the nonlocality arises from the elasticity term. In dimension $n$, the surface diffusion equation takes the form
	\begin{equation}\label{eq:surf-diff}
		\left\{
		\begin{aligned}
			& V_t = \Delta_\tau H_{E_t} \text{ on } \pa E_t, \\
			& E_0 \text{ initial datum, }
		\end{aligned}
		\right.
	\end{equation}
	where $H_{E_t}(x)$ denotes the mean curvature of the hypersurface $\pa E_t$ at the point $x$. The short-time existence of classical solutions to \eqref{eq:surf-diff} was first established in the planar case in \cite{BDR, EG, GigaIto}, and later extended to all dimensions by Escher, Mayer, and Simonett \cite{EMS}, for initial sets with $C^{2,\alpha}$-regularity. Remarkably, the result in \cite{EMS} also applies to immersed surfaces, and the authors prove both global existence and exponential convergence for initial sets sufficiently close to a sphere. In the flat torus $\mathbb{T}^n$ , similar long-time existence and convergence results near stable critical sets have been obtained: for $n=3$ in \cite{AFJM}, and for $n\geq4$ in \cite{DGDKK} and \cite{DianaFuscoMantegazza}. The equation \eqref{eq:surf-diff} can be interpreted as the $H^{-1}$-gradient flow of the area functional; see \cite{TaCa}. This naturally leads to the question of whether a variational approach based on minimizing movements can be used to model the flow. In 1994, Cahn and Taylor \cite{CaTa} proposed such a scheme to describe surface diffusion. The proposed scheme is as follows: given any initial bounded set of finite perimeter $E_0 \subset \R^n$ and a small time step $h>0$, one defines $E_0^h= E_0$ and then constructs  $E_{hk}^h$ for $k = 1,2,\dots$ inductively as a minimizer of the functional
	\begin{equation} \label{eq:def-scheme-intro}
		P(F)+\frac{d_{H^{-1}}(F;E_{h(k-1)}^h)^2}{2h},
	\end{equation}
	where  
	\begin{equation} \label{eq:def-distance-intro}
		d_{H^{-1}}(F;E):=\sup_{\|\nabla_{\pa E} f\|_{L^2(\pa E)}\leq1}\int_{\R^n}   f( \pi_{\pa E}(x))(\chi_F(x)-\chi_E(x))\,dx.
	\end{equation}
	Above, $P(F)$ denotes the De-Giorgi perimeter of the set $F$, $\nabla_{\pa E}$ denotes the tangential gradient, $\chi_E$ the characteristic function of $E$ and $\pi_{\pa E}$ the projection on the boundary $\pa E$. Only recently, however, has it been rigorously shown in \cite{CFJKsd} that this scheme indeed models surface diffusion. In particular \cite{CFJKsd} proves that the scheme produces classical solutions and converges to the classical solution of \eqref{EQEINNER} throughout the full interval of existence in dimension $3$.
	
	Our work focuses on the implementation of the minimizing movements scheme employed in \cite{CFJKsd} for the case of equation \eqref{MAINEQsol}. In the literature, minimizing movement-type schemes have previously been used to model the $H^{-1}$ gradient flow of a variant of the energy \eqref{Fintroeng}, although these schemes differ from the one in \cite{CFJKsd}. Specifically, the variant energy considered includes a curvature regularization term added to the original energy \eqref{Fintroeng}. It is worth noting that such variants have been extensively studied in physical and mathematical literature; see, for example, \cite{AG1989,BHSV2007,DCGPG1992,GJ2002,H1951,RRV2006,SMV2004}.
	For instance, the authors of \cite{FFLM2012} study the $H^{-1}$  gradient flow of the functional
	\begin{equation}
		F \rightarrow  \int_{\pa F} \varphi(\nu_F)\, d\mathcal{H}^1 +\frac{1}{2} \int_{\Omega \setminus F} Q(E(u_F))\, dx + \frac{\varepsilon}{2}\int_{\pa F}\kappa_{F}^2 \, d \mathcal{H}^1,
	\end{equation}
	where $\varepsilon>0$ and $\kappa_{F}$ denotes the curvature. Their analysis focuses on periodic graph models describing the evolution of epitaxially strained elastic films in two dimensions.
	The corresponding flow is governed by the area-preserving evolution equation
	\begin{equation}
		\left\{
		\begin{aligned}
			&V_t= \Delta_\tau \big(   \kappa^\varphi_{E_t}-Q (E(u_{E_t}))-\varepsilon(\Delta_\tau \kappa_{E_t}+\frac{1}{2}\kappa^3_{E_t})\big) \text{ on  } \pa E_t,\\
			& E_0 \text{ initial datum,}
		\end{aligned}
		\right.
	\end{equation}
	They prove a local existence result even when $\varphi$ does not satisfy condition \eqref{unifellintro}. As previously mentioned, their approach is based on the minimizing movements scheme, which differs from that of \cite{CFJKsd}. It is well defined only when the sets have boundaries that can be represented as the graph of a function, unlike the method in \cite{CFJKsd}, which applies to general sets of finite perimeter. Moreover, their approach crucially depends on the curvature regularization term. In fact, all estimates derived in their work are $\varepsilon$-dependent and degenerate as $\varepsilon \rightarrow 0^+$, even when $\varphi$ satisfies \eqref{unifellintro}.
	A similar analysis was carried out in the three-dimensional setting in \cite{FFLM2015}.
	
	The main results of this work are the proof of the existence of a solution to equation \eqref{MAINEQsol}, and the prove of the consistency of the minimizing movements scheme. We briefly outline the strategy of the proof. The first step is to introduce a constrained elastic equilibrium by modifying the original problem \eqref{eqelasticoliberointro}. To this end, we fix two constants $K_{el}>0$ and $h>0$, where $h$ plays the role of a time discretization parameter, as in formula \eqref{eq:def-scheme-intro}.
	Given a set $F \subset \Omega$,  we consider the following constrained minimization problem: 
	\begin{equation}\label{minelvincintro}
		\min \left\{ \int_{\Omega \setminus F} Q(E(u))\, dx \colon \| u \|_{C^{3,\frac{1}{4}}(\Omega)} \leq K_{el} ,\, \| \nabla^4 u \|_{C^{0,\frac{1}{4}}(\Omega)} \leq \frac{K_{el}}{h^{\frac{1}{4}}},\, u |_{\pa \Omega}= \omega_0   \right\}.
	\end{equation}
	We denote by $ u_{F}^{K_{el},h}$ a minimizer of this problem. Accordingly, the constrained elastic energy is defined as
	\begin{equation}\label{enelastvincintro}
		\mathcal{E}(E(u_F^{K_{el},h})):= \int_{\Omega \setminus F} Q(E(u_F^{K_{el},h}))\, dx .
	\end{equation}
	As a second step, we implemented the minimizing movement algorithm as described in \cite{CFJKsd}.
	Let $E_0 \Subset \Omega$ be an open, connected set of class $C^5$ , which serves as the initial datum. We fix a small parameter $\beta >0$, set  $E_0^{h,\beta}= E_0$ and  define $E_{hk}^{h,\beta}$ as a minimizer of the following incremental minimization problem:
	\begin{equation}  \label{eq:def-scheme-intro-2}
		\inf \Big{\{} \int_{\pa F} \varphi(\nu_F)\, d\mathcal{H}^1+\mathcal{E}(E(u_F^{K_{el},h}))+\frac{d_{H^{-1}}(F;E_{h(k-1)}^{h,\beta})^2}{2h} :  F \Delta E_{h(k-1)}^{h,\beta} \subset \mathcal{I}_\beta(\pa  E_{h(k-1)}^{h,\beta}) \Big{\}} ,
	\end{equation}
	where $\mathcal{I}_\beta(\Gamma)$  denotes the tubular neighborhood of a set $\Gamma \subset \R^2$ (see \eqref{intornotubolare} for its definition). Due to the constraint condition $ F \Delta E_{hk}^{h,\beta} \subset \mathcal{I}_\beta(\pa  E_{h(k-1)}^{h,\beta})$, the existence of a minimizer for the above problem follows readily from the direct methods of the Calculus of Variations.
	Using quantitative geometric estimates we show that any minimizer $ E_{k}^{\beta,h}$ of  \eqref{eq:def-scheme-intro-2} satisfies 
	\begin{equation}  \label{eq:intro-3}
		E_{k}^{h,\beta} \Delta E_{h(k-1)}^{h,\beta} \subset \mathcal{I}_{\frac{\beta}{2}}(\pa  E_{h(k-1)}^{h,\beta}),
	\end{equation}
	when $h$ is sufficiently small, provided $\pa E_{h(k-1)}^{h,\beta}$ is sufficiently regular. This shows that the additional constraint in \eqref{eq:def-scheme-intro-2} it is not touched. We define  $ E^{h,\beta}_t= E^{h,\beta}_k$ for $ t \in [kh,(k+1)h)$ for all $ k \in \N$. The family $\{E_t^{h,\beta}\}_{t \geq 0}$ is called a constrained discrete flat flow with initial datum $E_0$ and time step $h$ (see definition \ref{12092023def1}).  
	
	Our main result is the short time regularity and the consistency of the minimizing movement
	scheme defined above.
	\begin{theorem}\label{teoremaintro}
		There exist constants $K_{el},T,\beta_0,\sigma_1$ with the following property: for every $ \beta< \beta_0$ there exists $h_0>0$ such that the family $\{E_t^{h,\beta}\}_{t \in [0,T]}$ satisfies 
		\begin{equation}
			\pa E^{h,\beta}_t= \{x+ f^{h,\beta}(t,x)\nu_{E_0}(x)\colon x \in \pa E_0    \}, \, \| f^{h,\beta} \|_{H^4(\pa E_0)} \leq C_0,\, \| f^{h,\beta} \|_{L^\infty(\pa E_0)} \leq \sigma_1,
		\end{equation}
		for all $t \in [0,T]$ and $ 0 < h \leq h_0$. 
		The function $f^{h,\beta}$ converge in 
		$L^\infty([0,T], H^4(\pa E_0))$ to a function $f^{\beta}$, such that the family $ (E^\beta(t))_{t \in [0,T]}$ have the properties
		\begin{equation}
			\pa E^\beta_t= \{  x + f^\beta(t,x) \nu_{E_0}(x) : x \in \pa E_0 \},
		\end{equation}
		and $ (E^\beta_t)_{t \in [0,T]}$ is a solution to \eqref{MAINEQsol} with initial datum $E_0$ on the interval $[0,T]$.
	\end{theorem}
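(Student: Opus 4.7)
I would follow the scheme of \cite{CFJKsd}, adapted to the elastic setting. The proof proceeds by induction on the discrete times $t_k=kh$: assuming $\pa E_{h(k-1)}^{h,\beta}$ is a normal graph $\{x+g_{k-1}(x)\nu_{E_0}(x):x\in\pa E_0\}$ with $\|g_{k-1}\|_{H^4(\pa E_0)}\le C_0$ and $\|g_{k-1}\|_{L^\infty}\le\sigma_1$, one shows the same bounds for the minimizer $E_{hk}^{h,\beta}$ with the \emph{same} constants $C_0$ and $\sigma_1$ on a uniform interval $[0,T]$. Transport everything to the reference manifold $\pa E_0$ and work throughout with the height functions $f^{h,\beta}(t,\cdot)$ defined by $\pa E^{h,\beta}_t=\{x+f^{h,\beta}(t,x)\nu_{E_0}(x):x\in\pa E_0\}$. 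A preliminary step, which the author announces as \eqref{eq:intro-3}, is to verify that the unilateral constraint $F\Delta E_{h(k-1)}^{h,\beta}\subset\mathcal{I}_\beta$ is inactive at the minimizer as soon as the previous set is sufficiently regular; this quantitative geometric estimate must be propagated through the induction so that one can freely use the Euler--Lagrange equation.

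\textbf{Discrete Euler--Lagrange and $H^4$ bounds.} Once the $\beta$-constraint is slack, the first variation of \eqref{eq:def-scheme-intro-2} gives, on $\pa E_{hk}^{h,\beta}$,
\begin{equation*}
\kappa^\varphi_{E_{hk}^{h,\beta}}-Q\bigl(E(u^{K_{el},h}_{E_{hk}^{h,\beta}})\bigr)=w_k+\lambda_k,
\end{equation*}
where $\lambda_k\in\R$ is the Lagrange multiplier for the implicit volume constraint (forced by $d_{H^{-1}}<\infty$) and $w_k$ is the $H^1$-potential with $\Delta_\tau w_k$ proportional to the discrete normal velocity $(f_k^{h,\beta}-f_{k-1}^{h,\beta})/h$. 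Writing $\kappa^\varphi$ in graph coordinates over $\pa E_0$ produces a quasilinear second-order operator in $f_k^{h,\beta}$ which is uniformly elliptic by \eqref{unifellintro}; one-dimensional Schauder/$H^4$ regularity then bootstraps from the right-hand side. The point of the constrained problem \eqref{minelvincintro} is that $Q(E(u_F^{K_{el},h}))$ is a priori bounded in $C^{2,1/4}$ independently of $F$, so on the scale of $H^2(\pa E_0)$ it contributes only a controlled perturbation, while the permitted $h^{-1/4}$-blowup on $\|\nabla^4 u\|$ is tuned so that after one extra integration by parts against $w_k$ the error is absorbed by the dissipation. Telescoping the discrete energy inequality yields $\sum_k d_{H^{-1}}(E_{hk}^{h,\beta};E_{h(k-1)}^{h,\beta})^2/(2h)\le C$, hence an $L^2([0,T];H^{-1}(\pa E_0))$ bound on the discrete time derivative of $f^{h,\beta}$; interpolating this bound with the running $H^4$ bound controls $\|f^{h,\beta}\|_{L^\infty}$ on a short interval and closes the induction.

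\textbf{Compactness and passage to the limit.} The uniform $L^\infty([0,T];H^4)$ estimate together with the discrete $L^2([0,T];H^{-1})$ time-derivative bound yields, via an Aubin--Lions argument for piecewise-constant interpolants, compactness of $\{f^{h,\beta}\}$ in $L^\infty([0,T];H^{4-\delta}(\pa E_0))$ for every $\delta>0$, together with weak-$*$ convergence in $L^\infty([0,T];H^4)$. Up to subsequences, one obtains the limit $f^{\beta}$ and the claimed convergence, since interpolation between weak-$*$ convergence in $H^4$ and strong convergence in $H^{4-\delta}$ upgrades to $L^\infty([0,T];H^4)$ convergence along the whole sequence by uniqueness of the limit equation. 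On the limit $E^\beta_t$, the $C^{3,\alpha}$-smallness of $f^\beta$ ensures that the free elastic equilibrium $u_{E^\beta_t}$ of \eqref{eqelasticoliberointro} lies strictly in the interior of the admissible set of \eqref{minelvincintro} (provided $K_{el}$ is chosen large enough once and for all, independently of $h$), so the constrained and free equilibria coincide. Passing to the limit in the discrete Euler--Lagrange equation, $w_k$ converges to the $H^{-1}$-potential of $V_t$ and $\lambda_k$ to the mean value over $\pa E^\beta_t$, producing in the weak sense the identity $V_t=\Delta_\tau\bigl(\kappa^\varphi_{E^\beta_t}-Q(E(u_{E^\beta_t}))\bigr)$ on $\pa E^\beta_t$; the $H^4$-regularity of $f^\beta$ then makes this a classical solution of \eqref{MAINEQsol}.

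\textbf{Main obstacle.} The genuinely new difficulty compared with \cite{CFJKsd} is the treatment of the nonlocal map $F\mapsto Q(E(u_F))|_{\pa F}$ uniformly in the discrete step: a direct approach would require Fr\'echet-differentiating $u_F$ in $F$ in strong norms, which meets delicate regularity issues for the elliptic system on the variable domain $\Omega\setminus F$ and risks losing a derivative at each step. The constrained scheme \eqref{minelvincintro} sidesteps this by imposing the regularity of $u$ a priori, but then one has to check carefully that neither the $C^{3,1/4}$ bound nor the $h^{-1/4}$-weighted $C^{0,1/4}$ bound on $\nabla^4 u$ is saturated by the true elastic equilibrium. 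Propagating this verification through the whole inductive scheme, and then through the limit $h\to 0$, while maintaining the a priori $H^4$ estimate on $f^{h,\beta}$ independently of $h$, is in my view the heart of the argument.
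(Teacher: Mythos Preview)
Your overall plan tracks the paper's, but there is a genuine gap in how you propose to propagate the bounds with \emph{fixed} constants through the induction. The telescoping energy inequality $\sum_k d_{H^{-1}}(E_k;E_{k-1})^2/(2h)\le C$ is too weak to close the $L^\infty$ bound: it gives only $\sum_k\|\psi_k\|_{H^{-1}}^2\le Ch$, and interpolating against the running $H^4$ bound yields $\|\psi_k\|_{L^\infty}\lesssim\|\psi_k\|_{H^{-1}}^{\theta}$ with $\theta<1$; a H\"older estimate over the $\sim T/h$ steps then produces a negative power of $h$ and the sum $\sum_k\|\psi_k\|_{L^\infty}$ diverges as $h\to 0$. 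The paper instead obtains the pointwise-in-$k$ bound $d_{H^{-1}}(E_k;E_{k-1})\le 2\Lambda' h$ (Lemma~\ref{lalalambdamin}), not from the trivial energy comparison but from the sharper almost-minimality inequality $\mathcal{G}(E)\le\mathcal{G}(F)+\Lambda' d_{H^{-1}}(F,E)$ of Lemma~\ref{lego123og}, which exploits the $H^3$-regularity of the previous set.

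Even granted a per-step velocity bound, your sketch does not explain why the $H^4$ constant does not drift. The one-step regularity (Theorem~\ref{MainThm375}) gives $\|\kappa^\varphi_{E_k}\|_{H^2}\le C_2(K)$ where $K$ bounds $\|\kappa^\varphi_{E_{k-1}}\|_{H^2}$; if $C_2(K)>K$ the induction does not close with the same $C_0$. The paper's remedy is the iteration argument of Section~\ref{iterazionesezione}: one substitutes the Euler--Lagrange equation for $E_k$ into that for $E_{k+1}$ via the curvature expansion~\eqref{L'ESPANSOIONE}, producing (Proposition~\ref{propdiiterazione})
\[
\int_{\pa E_k}\Bigl(\xi_{k+1}^2+\tfrac{h}{2}g(\nu_{E_k})|\Delta_{\pa E_k}\psi_{k+1}|^2\Bigr)\le(1+Mh)\int_{\pa E_{k-1}}\Bigl(\xi_k^2+\tfrac{h}{8}g(\nu_{E_{k-1}})|\Delta_{\pa E_{k-1}}\psi_k|^2\Bigr),
\]
which upon iteration gives $\|\psi_k\|_{L^2}^2+h\sum_{j\le k}\|\Delta\psi_j\|_{L^2}^2\le L_0^2h^2$ with $L_0$ depending only on the initial data. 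This feeds back into Proposition~\ref{Mthmproof2} to re-establish the curvature bound with the \emph{same} constant $K_0$ (Claim~1 in the proof of Theorem~\ref{thmausilmain}). Your ``Schauder bootstrap from the right-hand side'' is circular without this mechanism, since bounding the potential $w_k$ in $H^2(\pa E_{k-1})$ already requires $\|\psi_k\|_{L^2}/h\le C$, which is precisely what the iteration supplies. The treatment of the elastic term, which you flag as the main obstacle, is by contrast comparatively soft once the constraint~\eqref{minelvinc} is in place.
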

	We briefly outline the strategy used to prove the main theorem. The key ingredients for establishing the main result—whose proof is presented in the final section of the paper (see Section \ref{sezionefinale})—are the preliminary estimates (see Section \ref{stimaprelininari}) and an iteration argument (see Section \ref{iterazionesezione}).
	
	The main goal of the preliminary estimates is to show that the minimizers of each incremental problem \eqref{eq:def-scheme-intro-2} satisfy suitable regularity estimates. By regularity estimates, we mean that the minimizer $E^{h,\beta}_{hk}$ of problem \eqref{eq:def-scheme-intro-2} satisfies the following properties:
	\begin{itemize}
		\item  the boundary of $E^{h,\beta}_{hk}$ can be written as a normal graph over the previous step, i.e.,
		\begin{equation}\label{itemizeintro1}
			\pa E_{hk}^{h,\beta}= \{  x+ \psi_k(x)\nu_{E_{h(k-1)}^{h,\beta}}(x): x \in \pa E_{hk}^{h,\beta}\}, \text{ with } \psi_k \in C^1(\pa E_{h(k-1)}^{h,\beta}),  
		\end{equation}
		\item the boundary of $E^{h,\beta}_{hk}$ does not intersect the constraint, i.e., \begin{equation}\label{itemizeintro2}
			\pa E_{hk}^{h,\beta} \Subset \mathcal{I}_{\beta}(\pa E_{h(k-1)}^{h,\beta}),
		\end{equation}
		\item the function $\psi_k$ satisfies the bounds 
		\begin{equation}\label{itemizeintro3}
			\| \psi_k \|_{L^2(\pa E_{hk}^{h,\beta})} \leq C h,\,  \| \psi_k \|_{H^4(\pa E_{hk}^{h,\beta})} \leq C , \, 
			\| \kappa_{ E_{hk}^{h,\beta}} \|_{H^3(\pa  E_{hk}^{h,\beta} )} \leq C h^{-\frac{1}{4}}, 
		\end{equation}
	\end{itemize}
	where the constant $C$ depends only on the $H^2$-norm of the curvature of $E_{h(k-1)}^{h,\beta}$. We explain here how to obtain the estimates for the case $k=1$, since the subsequent steps will be proved by induction and iteration. The idea is to show that the minimizer $E_h^{h,\beta}$ is a $\Lambda$-minimizer of the $\varphi$-perimeter, for some constant $\Lambda$ independent  of $h$, but depending only on the $H^2$-norm of the curvature of $E_0$ (see Lemma \ref{lalalambdamin}). This allows us to apply a variant of the $\varepsilon$-regularity  theorem for 
	$\Lambda$-minimizer of the $\varphi$-perimeter, namely Lemma \ref{propdadim}, from which we deduce the existence of a function $\psi_1 : \pa E_0 \rightarrow \R$ such that
	\begin{equation}
		\pa  E_h^{h,\beta}= \{ x+ \psi_1(x)\nu_{E_0}(x)\colon x \in \pa E_0  \}, \text{ with } \psi \in C^1(\pa E_0).
	\end{equation}
	To carry out all of this, we need to show that the discrete velocity in $H^{-1}$ is bounded, namely
	\begin{equation}
		\frac{d_{H^{-1}}(E_{h}^{h,\beta}, E_0)}{h}\leq C,
	\end{equation}
	where the constant $C$ depends only on the $H^2$-norm of the curvature of $E_0$. This inequality follows from the minimality of $E_{h}^{h,\beta}$ and the regularity of $E_0$; see Lemmas \ref{lego123og} and \ref{lalalambdamin}. Furthermore, using Lemma \ref{lego123og}, we obtain that $ \pa E_{h}^{h,\beta} \Subset \mathcal{I}_{\beta}(\pa E_0) $, so the constraint $ \pa \mathcal{I}_{\beta}(\pa E_0)$ is never touched. Thanks to these results, we can compute the first variation of the energy
	\begin{equation}
		F \rightarrow \int_{\pa F} \varphi(\nu_F)\, d\mathcal{H}^1+\mathcal{E}(E(u_F^{K_{el},h}))+\frac{d_{H^{-1}}(F;E_0)^2}{2h}
	\end{equation}
	at the minimizer $E_h^{h,\beta}$. This leads to a differential equation for the unknown function $\psi_1$; see equation \eqref{LEEULEROCOMB}. Using the Euler–Lagrange equation, we also obtain another estimate for the discrete velocity, this time in $L^2$:
	\begin{equation}
		\frac{\| \psi_1 \|_{L^2(\pa E_0)}}{h} \leq C,
	\end{equation}
	where the constant $C$ depends only on the $H^2$-norm of the curvature of $E_0$. Moreover, we obtain a bound in the $H^4$-norm, namely $\| \psi_1 \|_{H^4(\pa E_0)} \leq C$, while the curvature satisfies $ \| \kappa_{E_h^{h,\beta}} \|_{H^3(\pa E_0)} \leq C h^{-\frac{1}{4}}$, where the constant $C$ depends only on the $H^2$-norm of the curvature of $E_0$. All of this is proved in Theorem \ref{MainThm375}.
	
	In Section \ref{iterazionesezione}, the goal is to establish a connection between the steps $k-1,k,k+1$ in such a way that the validity of formulas \eqref{itemizeintro1}, \eqref{itemizeintro2}, and \eqref{itemizeintro3} can be ensured for every admissible $k$. The main idea is to relate the Euler–Lagrange equation satisfied by the set $E_{h(k+1)}^{h,\beta}$ with the one satisfied by the set $E_{hk}^{h,\beta}$. To achieve this, we use the expansion of the $\varphi$-curvature given in formula \eqref{L'ESPANSOIONE}. Indeed, the Euler–Lagrange equation for $E_{h(k+1)}^{h,\beta}$ involves the $\varphi$-curvature of the set $E_{hk}^{h,\beta}$, which also appears in the Euler–Lagrange equation satisfied by $E_{hk}^{h,\beta}$ itself. Therefore, by substituting the latter equation into the former, we derive the desired iteration—see Lemma \ref{spaghettiska} and Proposition \ref{propdiiterazione}.
	
	In Section \ref{sezionefinale}, we provide the proof of Theorem \ref{teoremaintro}. This proof follows from Theorems \ref{thmausilmain} and \ref{locexisitsol}. In the first of these, we show that for any fixed $K_{el}>0$, there exists a time $T>0$ such that the family $\{E_t^{h,\beta}\}_{t \in [0,T]}$ satisfies 
	\begin{equation}
		\pa E^{h,\beta}_t= \{x+ f^{h,\beta}(t,x)\nu_{E_0}(x)\colon x \in \pa E_0    \}, \, \| f^{h,\beta} \|_{H^4(\pa E_0)} \leq C_0,\, \| f^{h,\beta} \|_{L^\infty(\pa E_0)} \leq \sigma_1,
	\end{equation}
	for all $t \in [0,T]$. The function
	$f^{h,\beta}$ converge as $h \rightarrow 0^+$ in 
	$L^\infty([0,T], H^4(\pa E_0))$ to a function $f^{\beta}$, with 
	$ f^\beta\in \mathrm{Lip}([0,T],L^2(\pa E_0))$, such that the family $ (E^\beta(t))_{t \in [0,T]}$ satisfies
	\begin{equation}
		\pa E^\beta_t= \{  x + f^\beta(t,x) \nu_{E_0}(x) : x \in \pa E_0 \},
	\end{equation}
	and
	\begin{equation}\label{HOlfinaleINTRO}
		\| f^\beta(t,\cdot) \|_{C^{3,\frac{1}{4}}(\pa E_0)} \leq C t^\frac{1}{21} 
	\end{equation}
	where $C=C(K_{el})$. Formula \eqref{HOlfinaleINTRO} will be sufficient to prove the existence of classical solutions. Indeed, by fixing a sufficiently large $K_{el}$, one can show that there exists a small time $T>0$ such that the constraint $K_{el}$  is not active for the minimizer of the constrained elasticity problem associated with $E^\beta_t$, for every $t \in [0,T]$. Therefore, thanks to the regularity of $\pa E_t^\beta$, this minimizer coincides with the one for the unconstrained elasticity problem \eqref{eqelasticoliberointro}. This will allow us to prove that the family
	$\{ E^\beta_t\}_{t \in [0,T]}$ satisfies the equation \eqref{MAINEQsol}.
	
	In the final section, namely Section \ref{CONVGLOBSOL}, we prove that the minimizing movements scheme converges to the solution of problem \eqref{MAINEQsol} throughout the entire interval of existence.
	
	The paper is organized as follows.
	In Section \ref{NOtazioniLAvoro}, we introduce the notation used throughout the paper, along with some useful formulas, the functional spaces involved, and interpolation inequalities.
	In Section \ref{sezionesettingpro}, we define the function $d_{H^{-1}}(F,E)$, discuss some of its properties including the computation of its first variation, introduce both the free and constrained elasticity problems, and finally present the minimizing movement scheme used in the analysis.
	In Section \ref{stimaprelininari}, we prove the $\Lambda$-minimality property for the minimizer of the incremental problem and establish a regularity estimate for the heightfunction.
	Section \ref{iterazionesezione} is devoted to the proof of the iteration argument.
	In Section \ref{sezionefinale}, we prove Theorem \ref{teoremaintro}.  Finally, in Section \ref{CONVGLOBSOL}, we prove the convergence to the global solution of equation \eqref{MAINEQsol}.

	\section{Notation of the paper and useful formulas}\label{NOtazioniLAvoro}
	In this paper, we work in the $2$-dimensional Euclidean space $\R^2$. We denote with $ \{e_1, e_2\}$ the canonical basis of $\R^2$, by $\vert \cdot \vert $ the Euclidean norm, and by $\cdot$ the inner product in $\R^2$. Let $r>0$ we set $B_r(x)= \{ y \in \R^2 : \vert x- y \vert < r \}$ when $x=0$, we simply write $B_r:=B_r(0)$. For every $A\subset \R^2$ we denote by ${\rm cl}( A)$ ($ {\rm int} (A)$) its topological closure (respectively its topological interior) with respect to the Euclidean topology. Given $A \subset \R^2$ and $x \in \R^2$, we denote by $ \mathrm{dist}(x,A)$ the distance between $x$ and $A$. The Lebesgue measure of a Borel set $A\subset \R^2$ is denoted by $\vert A \vert$. We denote by $\mathcal{H}^1$ the $1$-dimensional Hausdorff measure and by $\mathrm{dist}_{\mathcal{H}}$ the Hausdorff distance between sets. 
	In what follows we denote with $\varphi$ a regular strictly convex norm; i.e., $\varphi \in C^{\infty}(\R^2 \setminus \{0 \})$ and  \begin{equation}\label{unifell}
		\exists J>0 \, \colon D^2 \varphi (\nu) \xi \cdot \xi \geq J \vert \xi \vert^2 \quad \forall \nu \in \mathcal{S}^1,\,\xi \in \R^2 \text{ such that } \nu \bot \xi.
	\end{equation}
	We denote by $m_\varphi,\, M_\varphi$ the constants
	\begin{equation}\label{mMvarphi}
		m_\varphi:= \min_{\vert \nu \vert=1} \varphi(\nu),\quad M_{\varphi}:= \max_{\vert \nu \vert=1} \varphi(\nu).
	\end{equation} 
	The dual norm $\varphi^0$ is defined as $\varphi^0(\xi)= \sup_{\eta \in \R^2 \setminus \{0\}} \frac{\xi \cdot \eta}{\varphi(\eta)}$. Given $a,b \in \R^2$, we denote by $a \otimes b : \R^2 \rightarrow \R^2$ the linear map defined as $ a\otimes b (x):= (x \cdot b)a$. We denote by $\R^{2 \times 2}$ the space of the $2 \times 2$ matrices. Given $A \subset \R^2$ we denote by $A^c= \R^2 \setminus A$. Given $P,C \in \R^{2 \times 2}$, we set $P : C= \sum_{i,j=1}^2 p_{ij}c_{ij}$. The standard gradient in $\R^2$ is denoted by $\nabla$ and the Laplace operator in $\R^2$ is denoted by $\Delta_{\R^2}$. Throughout the paper, we write $C(*,\cdots,*)$ to indicate a generic positive constant that depends only on $*,\cdots,*$ and that may change from line to line.
	\subsection{Regular sets and useful formulas}
	Let $E \subset \R^2$ be a bounded open set of class $C^{2}$. The derivative of a function $f$ or of a vector field $X$ along $\pa E$ is denoted by $\pa_{\tau} f$ and $ \pa_{\tau} X$, respectively. In cases of ambiguity, we use $ \pa_{\pa E} f$ and $ \pa_{\pa E} X$. The Laplace–Beltrami operator on $\pa E$ is denoted by $\pa_{\tau}^2 $ (or $ \Delta_{\tau}$) and the tangential divergence on $\pa E$ is denoted by
	$\div_{\tau}$. If necessary for clarity, we also write these as $\pa_{\pa E}^2$ or ($ \Delta_{\pa E}$) and $\div_{\pa E}$.  We recall that the second fundamental form $B_E:\pa E\mapsto\R^{2\times2}$ and the curvature $\kappa_E:\pa E\to\R$ are given by
	$$
	\kappa_E=\div_\tau\nu_E \qquad B_E=\kappa_E\,\tau_E\otimes\tau_E,
	$$
	where $ \nu_E : \pa E \rightarrow \R^2$ is the outer normal vector field on $\pa E$ and $\tau_E:\pa E\to\R^2$ is the tangent vector field on $\pa E$, obtained by rotating $\nu_E$ by $\frac{\pi}{2}$ clockwise. We denote the tangential gradient on $\pa E$ by $\nabla_{\tau}$ (or $ \nabla_{\pa E} $), so that $\nabla_{\tau} f= \pa_{\tau} f \tau_E= \nabla f- (\nabla f \cdot \nu_E) \nu_E$ for a function $f$. Let $A \subset \R^2$ and given $\delta>0$, define the tubular neighborhood
	\begin{equation}\label{intornotubolare}
		\mathcal{I}_{\delta}(A):=\{x\in\R^2:\,\text{dist}(x, A)<\delta\}.  
	\end{equation}
	We define the signed distance function to $\pa E$ by
	\begin{equation}\label{distsd}
		d_E(x):=
		\left\{
		\begin{aligned}
			& \mathrm{dist}(x, \pa E) & \text{ for } x \in \R^2 \setminus E \, , \\
			& -\mathrm{dist}(x,\pa E) & \text{ for }  x \in E \, . 
		\end{aligned}
		\right.
	\end{equation}
	Let $E \subset \R^2$ be a open and bounded set of class $C^2$. We define
	\begin{equation}
		\sigma_E:= \frac{1}{2\| \kappa_E \|_{L^\infty(\pa E)}},
	\end{equation}
	It is known (see \cite[Chapter 14.6]{GilbargTrudinger1977}) that $d_E \in C^2(\mathcal{I}_{\sigma_E}(\pa E))$. The projection onto $\pa E$ is define for all $x \in \R^2$ where exists $ \nabla d_E (x)$, and is denoted by $ \pi_{\pa E}(x)$. For all $ x\in I_{\sigma_{E}}(\pa E)$ the projection satisfies
	$$ x=\pi_{\pa E}(x)+ d_E(x) \nabla d_{E}(x).$$  As shown in \cite[formula (2.31)]{JN}, for all $ x \in  \mathcal{I}_{\sigma_{E}}(\partial E)$, it holds that
	\begin{equation}\label{formunoquattro}
		\begin{split}
			\nabla \pi_{\pa E}&(x)\\
			&=I -\nu_{E} \circ \pi_{\partial E}(x) \otimes \nu_{ E} \circ \pi_{\pa E}(x)-d_E(x) (B_{E} \circ \pi_{\partial E}(x) )(I+ d_E(x) B_E \circ \pi_{\partial E}(x))^{-1}.
		\end{split}
	\end{equation} 
	\begin{definition}\label{ngrafico}
		Let $E \subset \R^2$ be an open bounded set of class $C^2$ and let $0<\sigma \leq  \sigma_E$. Let $F \subset \R^2$ be another open bounded set. We say that $\pa F$ is a normal graph over $\pa E$ if there exists a function $\psi: \pa E \rightarrow [-\sigma,\sigma]$, called the height function, such that 
		\begin{equation}
			\pa F= \{ x + \psi(x)\nu_{E}(x) \colon x \in \pa E\} \text{ and } E \Delta F \subset {\rm cl} \big( I_\sigma(\pa E) \big).
		\end{equation}
	\end{definition}
	Let $E, F$ as in the above definition with $\pa F = \{ x +\psi(x)\nu_E(x)\colon x \in \pa E\}$, and let $f  \in C^1(\pa E)$. Then, for all $y \in \pa F$,
	\begin{equation}\label{derivatafpi}
		\nabla_{\pa F}(f \circ \pi_{\pa E})(y)= \nabla_{\pa E} f (\pi_{\pa E}(y)) \nabla_{\pa F} \pi_{\pa E}(y), 
	\end{equation}
	where
	\begin{equation}\label{asdasd12??}
		\nabla_{\pa F} \pi_{\pa E}(y)= \nabla \pi_{\pa E}(y)-\nabla \pi_{\pa E}(y) \nu_F(y) \otimes \nu_F(y).
	\end{equation}
	If $\psi \in C^1(\pa E)$,  then the following formulas hold (see \cite[formulas (2.5), (2.6), (2.7)]{FJM2018}). For $x\in\pa E$:
	\begin{equation}\label{tangnuF}
		\tau_F(x+ \psi(x)\nu_F(x))= \frac{ (1+ \psi(x)\kappa_E(x)) \tau_E(x)+ \pa_{\tau} \psi (x) \nu_E(x)  }{\sqrt{ (1+\psi(x)\kappa_E(x))^2+ \vert \nabla_{\tau } \psi (x) \vert^2   }} 
	\end{equation}
	and
	\begin{equation}\label{nu_Fexp}
		\nu_F(x+ \psi(x) \nu_E(x))=\frac{-\nabla_{\tau }\psi(x)+(1+ \psi(x)\kappa_{E}(x))\nu_E(x)}{\sqrt{ (1+ \psi(x)\kappa_{E}(x))^2+ \vert \nabla_{\tau } \psi(x) \vert^2}}.
	\end{equation}
	If $\psi \in C^2(\pa E)$, then the curvature expands as (see \cite[formulas (2.7)]{FJM2018}, \cite[Lemma 2.5]{CFJKsd}):
	\begin{equation}\label{Cdacit}
		\kappa_F(x+ \psi(x)\nu_E(x))= - \Delta_{\tau} \psi(x)+ \kappa_E(x)+ R_0(x), \, \, x \in \pa E
	\end{equation}
	where the error term $R_0$ is given by
	\begin{equation}\label{Rdacit}
		R_0 = a_0(\psi, \pa_{\tau} \psi, \kappa_E)+ a_1(\psi \kappa_E, \pa_{\tau} \psi) \Delta_{\tau} \psi+ a_2(\psi \kappa_E,\pa_\tau \psi) \pa_{\tau}(\psi \kappa_E)
	\end{equation}
	with $a_0,a_1,a_2$ smooth functions satisfying $a_0(0,0,\cdot)= a_1(0,0)=a_2(0,0)=0$.
	\begin{lemma}\label{cappelloliberoL}
		Let $E \subset \R^2$ be a open and bounded set of class $C^2$ and let $F \subset \R^2$ be open and bounded set of class $C^1$ such that $\pa F $ is a normal graph over $\pa E$ given by $ \pa F= \{ x+ \psi(x)\nu_E(x)\colon x \in \pa E\}$. Let $g \in C^1(\pa F)$, and define $\hat{g} : \pa E \rightarrow \R$ by $ \hat{g}(x)= g(x+ \psi(x)\nu_E(x))$.
		Then, \begin{equation}\label{cappelloliberoF}
			\int_{\pa F} \vert \nabla_{\pa F} g \vert^2 \, d \mathcal{H}^1= \int_{\pa E} \frac{\vert \nabla_{\pa E} \hat{g} \vert^2}{\sqrt{ (1+ \psi \kappa_E)^2+ \vert \nabla_{\pa E} \psi \vert^2}}\, d \mathcal{H}^1.
		\end{equation}
	\end{lemma}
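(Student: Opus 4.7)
The plan is to view $\Phi : \pa E \to \pa F$, $\Phi(x) := x + \psi(x)\nu_E(x)$, as a diffeomorphism and reduce the identity to a one-dimensional change of variables, using crucially that we are in the plane (so $\pa E, \pa F$ are curves and the tangential gradient is just the arclength derivative multiplied by the unit tangent).

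First, I would compute the tangential derivative of $\Phi$. Using $\pa_\tau \nu_E = \kappa_E \tau_E$ (the same convention that produces the formula \eqref{tangnuF}), one obtains
\begin{equation}
\pa_\tau \Phi = (1+\psi\kappa_E)\tau_E + \pa_\tau \psi\,\nu_E,
\end{equation}
whence the Jacobian of $\Phi$ is
\begin{equation}
J(x) := |\pa_\tau \Phi(x)| = \sqrt{(1+\psi(x)\kappa_E(x))^2 + |\nabla_{\pa E}\psi(x)|^2}.
\end{equation}
This is the quantity appearing in the denominator on the right-hand side; note it is also exactly the normalizing factor in \eqref{tangnuF}, which is consistent because $\pa_\tau \Phi$ must be parallel to $\tau_F \circ \Phi$, and in fact $\pa_\tau \Phi = J\, (\tau_F \circ \Phi)$.

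Next, I would apply the area formula for Lipschitz maps between $1$-rectifiable sets:
\begin{equation}
\int_{\pa F} h\,d\mathcal{H}^1 = \int_{\pa E} (h\circ \Phi)\, J \, d\mathcal{H}^1
\end{equation}
for any Borel $h : \pa F \to [0,\infty)$. Taking $h = |\nabla_{\pa F} g|^2$, it remains to relate $|\nabla_{\pa F} g|\circ \Phi$ to $|\nabla_{\pa E}\hat g|$. Since we are in dimension $2$, $\nabla_{\pa F} g = (\pa_{\tau_F} g)\tau_F$ and hence $|\nabla_{\pa F} g|^2 = (\pa_{\tau_F} g)^2$. From $\hat g = g\circ \Phi$ and the chain rule,
\begin{equation}
\pa_\tau \hat g = \nabla g(\Phi)\cdot \pa_\tau \Phi = J\,\nabla g(\Phi)\cdot \tau_F(\Phi) = J\,(\pa_{\tau_F} g)\circ \Phi,
\end{equation}
so that $(\pa_{\tau_F} g)^2\circ \Phi = |\nabla_{\pa E}\hat g|^2/J^2$.

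Combining the two displays yields
\begin{equation}
\int_{\pa F} |\nabla_{\pa F} g|^2 \, d\mathcal{H}^1 = \int_{\pa E} \frac{|\nabla_{\pa E}\hat g|^2}{J^2}\, J\, d\mathcal{H}^1 = \int_{\pa E} \frac{|\nabla_{\pa E}\hat g|^2}{\sqrt{(1+\psi\kappa_E)^2 + |\nabla_{\pa E}\psi|^2}}\, d\mathcal{H}^1,
\end{equation}
which is \eqref{cappelloliberoF}. There is no genuine obstacle here: the only points requiring any care are the sign convention $\pa_\tau \nu_E = \kappa_E\tau_E$ (which is forced by consistency with \eqref{tangnuF}) and the observation that in the plane the tangential gradient is one-dimensional, which is what allows the Jacobian to appear only to the first power in the denominator rather than as a full inverse matrix.
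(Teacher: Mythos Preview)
Your argument is correct. The only cosmetic point is that in the chain-rule line you write $\nabla g(\Phi)\cdot \pa_\tau\Phi$ with $g$ only defined on $\pa F$; this is fine because $\pa_\tau\Phi$ is tangent to $\pa F$, so the expression depends only on the tangential derivative of $g$ and is well-defined (equivalently, use an arclength parametrization of $\pa E$ and the induced parametrization of $\pa F$).

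Your route is genuinely different from the paper's. The paper goes the other direction: it writes $g = \hat g\circ\pi_{\pa E}$ on $\pa F$ and computes $\nabla_{\pa F}(\hat g\circ\pi_{\pa E})$ via the explicit formula for $\nabla\pi_{\pa E}$ (a matrix computation in a frame adapted to $\pa E$), arriving at $|\tau_E\,\nabla_{\pa F}\pi_{\pa E}|^2 = |\nu_E\cdot\nu_F|^2/(1+d_E\kappa_E)^2$; it then evaluates $\nu_E\cdot\nu_F$ using \eqref{nu_Fexp} and only at the end performs the change of variables $\Phi$. Your approach bypasses the projection entirely by parametrizing $\pa F$ directly via $\Phi$ and reading off the scalar Jacobian, which is shorter and more elementary in this planar (one-dimensional boundary) setting. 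The paper's route, on the other hand, fits naturally with the projection-based formulas \eqref{formunoquattro}--\eqref{asdasd12??} already set up and used elsewhere in the article, and generalizes more transparently to higher dimensions where the tangential gradient is no longer scalar.
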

	\begin{proof}
		Let $x \in \pa E$ and let $y \in \pa F$ such that $ \pi_{\pa E}(y)=x$. Hence $d_E(y)= \psi(x)$. Without loss of generality, we may assume that $\tau_E(x)=(1,0)$ and $\nu_E(x)=(0,1)$, and we write $ \nu_F(y)= (\nu_1,\nu_2)$.
		Using formula
		\eqref{formunoquattro} we get
		\begin{equation}
			\begin{split}
				\nabla \pi_{\pa E}(y)=&  \begin{bmatrix}
					1 & 0 \\
					0 & 1 
				\end{bmatrix} - 
				\begin{bmatrix}
					0 & 0 \\
					0 & 1 
				\end{bmatrix} -
				d_E(y) \kappa_E(x) \begin{bmatrix}
					1 & 0 \\
					0 & 0 
				\end{bmatrix} \begin{bmatrix}
					1+ d_E(y) \kappa_E(x)& 0 \\
					0 & 0 
				\end{bmatrix}^{-1} \\
				=& \begin{bmatrix}
					1 & 0 \\
					0 & 0 
				\end{bmatrix} - \frac{\psi(x)\kappa_E(x)}{1+ \psi(x)\kappa_E(x)} \begin{bmatrix}
					1 & 0 \\
					0 & 0 
				\end{bmatrix} = \begin{bmatrix}
					\frac{1}{1+\psi(x)\kappa_E(x)} & 0 \\
					0 & 0 
				\end{bmatrix}. 
			\end{split}
		\end{equation}
		Therefore, by \eqref{asdasd12??}, we have
		\begin{equation}
			\nabla_{\pa F} \pi_{\pa E}(y)= \begin{bmatrix}
				\frac{1-\nu_1 \nu_1}{1+ \psi(x)\kappa_E(x)}& - \frac{\nu_1 \nu_2}{1+ \psi(x)\kappa_E(x)} \\
				0 & 0 
			\end{bmatrix}.
		\end{equation}
		Using the formula above, we deduce that   
		\begin{equation}
			\vert \tau_E(x) \nabla_{\pa F} \pi_{\pa E} (y) \vert^2= \frac{\vert \nu_E(x) \cdot \nu_F(y)\vert^2}{(1+ d_E(y) \kappa_E(x))^2} \quad \forall (x,y) \in \pa E \times \pa F: \, x= \pi_{\pa E}(y).
		\end{equation}
		Hence, from the previous expression, we obtain \begin{equation}\label{forsesiusa}
			\begin{split}
				\int_{\pa F} \vert \nabla_{\pa F} g \vert^2 d \mathcal{H}^1& = \int_{\pa F} \vert \nabla_{\pa F} (\hat{g} \circ \pi_{\pa E}) \vert^2 d \mathcal{H}^1\\
				&= \int_{\pa F} \vert \nabla_{\pa E } \hat{g}(\pi_{\pa E}(y)) \nabla_{\pa F} \pi_{\pa E}(y) \vert^2 d \mathcal{H}^1_y\\
				&= \int_{\pa F} \frac{ \vert\nabla_{\pa E } \hat{g}(\pi_{\pa E}(x)) \vert^2 \vert \nu_{E}(\pi_{\pa E}(x)) \cdot \nu_F(x)\vert^2}{\vert 1+ d_E(x) \kappa_E(\pi_{\pa E}(x))\vert^2} d\mathcal{H}^1_x.
			\end{split}
		\end{equation}
		Using formula \eqref{nu_Fexp}, we have that for all $y \in \pa F$
		\begin{equation}\label{normali}
			\nu_E(\pi_{\pa E}(y))\cdot\nu_F(y)=\frac{1+ d_E(y)\kappa_{E}(\pi_{\pa E}(y))}{\sqrt{ (1+ \psi(\pi_{\pa E}(y)))\kappa_{E}(\pi_{\pa E}(y)))^2+ \vert (\nabla_{\pa E} \psi)(\pi_{\pa E}(y)) \vert^2}}.
		\end{equation}
		Let us define $\Psi :\pa E \rightarrow \pa F$ as $\Psi(x):= x + \psi(x) \nu_{E}(x)$. Recalling that the tangential Jacobian of $\Psi$ is $$ J_{\tau} \Psi(x) = \sqrt{ (1+ \psi(x)\kappa_{E}(x))^2+ \vert (\nabla_{\pa E} \psi)(x) \vert^2}, $$ we deduce formula \eqref{cappelloliberoF}, from  \eqref{forsesiusa} and \eqref{normali}, indeed
		\begin{equation}\label{forse'giusta}
			\begin{split}
				\int_{\pa F} \vert \nabla_{\pa F} g \vert^2 d \mathcal{H}^1&= \int_{\pa F} \frac{ \vert\nabla_{\pa E } \hat{g}(\pi_{\pa E}(y)) \vert^2 \vert \nu_{E}(\pi_{\pa E}(y)) \cdot \nu_F(y)\vert^2}{\vert 1+ d_E(y) \kappa_E(\pi_{\pa E}(y))\vert^2} d\mathcal{H}^1_y  \\
				&= \int_{\Psi(\pa E) } \frac{\vert\nabla_{\pa E } \hat{g}(\pi_{\pa E}(y)) \vert^2}{(1+ \psi(\pi_{\pa E}(y)))\kappa_{E}(\pi_{\pa E}(y)))^2+ \vert (\nabla_{\pa E} \psi)(\pi_{\pa E}(y)) \vert^2}d \mathcal{H}^1_y\\
				& = \int_{\pa E} \frac{\vert\nabla_{\pa E } \hat{g}(x) \vert^2}{\sqrt{(1+ \psi(x))\kappa_{E}(x)^2+ \vert (\nabla_{\pa E} \psi)(x) \vert^2}} d\mathcal{H}^1_x.
			\end{split}
		\end{equation}
	\end{proof}
	\subsection{Spaces of functions}
	In what follows, we denote by $\Omega\subset \R^2$ an open and bounded set of class $C^5$. Let $E_0 \Subset \Omega$ be open and connected set of class $C^5$ such that $\vert E_0 \vert=1$.
	We denote by $\sigma_0$ a constant such that
	\begin{equation}\label{Laconstantesigmazerofin}
		\sigma_0 < \min\{\sigma_{E_0}, \mathrm{dist}_{\mathcal{H}}(\pa E_0, \pa \Omega)\}.  
	\end{equation}
	Given $1\leq k\leq 5$, $ \alpha \in [0,1]$, and $K>0$ we define
	\begin{align}\label{22082024pom2}
		\mathfrak{C}^{k,\alpha}_{K,\sigma_0}(E_0):=
		\big\{E\subset\R^2: \,\, E \Delta E_0 & \subset {\rm cl} \big( \mathcal{I}_{\sigma_0}(\pa E_0) \big),\,\pa E=\{y+\varphi_E(y)\nu_{E_0}(y)\!:\,y\in\pa E_0\},\cr
		& \|\varphi_E\|_{L^\infty(\pa E_0)}\leq \sigma_0,\,\|\varphi_E\|_{C^{k,\alpha}(\pa E_0)}\leq K\big\}.
	\end{align}
	For every $k \in \{1,\cdots,5\}$, we define the set $\mathfrak{H}^{k,\alpha}_{K,\sigma_0}(E_0)$ in the same way of $ \mathfrak{C}_{K,\sigma_0}^{k,\alpha}(E_0)$ by replacing $\|\varphi_E\|_{C^{k,\alpha}(\pa E_0)}$ with  $\|\varphi_E\|_{H^k(\pa E_0)}$. Let $\{ E_n \}_{n \in \N}$ and $ E$ be such that $ E_n \in \mathfrak{C}_{K,\sigma_0}^{k,\alpha}(E_0)$ (respectively $\mathfrak{H}^{k,\alpha}_K(E_0)$) for all $n \in \N$. We say that $E_n \rightarrow E $ in $\mathfrak{C}_{K,\sigma_0}^{k,\alpha}(E_0)$ (respectively in $\mathfrak{H}_{K,\sigma_0}^{k,\alpha}(E_0)$) if $\varphi_{E_n}$ is uniformly bounded by $\sigma_0$ in $L^{\infty}(\pa E_0)$ and it is a Cauchy sequence in $C^{k,\alpha}(\pa E_0)$ (respectively $H^{k}(\pa E_0)$).\\
	Let $ F \subset \Omega$ be an open set. Given $ k \in \N$, $ \alpha \in [0,1]$, and $M>0$ we define
	\begin{equation}
		\mathfrak{C}^{k,\alpha}_M(F,\R^2):= \left\{ f \in C^{k,\alpha}(F,\, \R^2)\colon \| f \|_{C^{k,\alpha}(F)} \leq M \right\}.
	\end{equation}

	\subsection{Sets of finite $\varphi$-perimeter and anisotropic curvature} Let $E \subset \R^2$ be a Borel set. We define the De Giorgi $\varphi$-perimeter of $E$ as 
	\begin{equation}
		P_\varphi(E)= \sup \left\{ \int_{E} \div X dx \colon\, X \in C^1_c(\R^2,\R^2),\, \sup_{x \in \R^2} \varphi^0(X) \leq 1   \right\}.
	\end{equation}
	When $\varphi(\cdot)= \vert \cdot \vert$ (the Euclidean norm), we write $P(E)$ instead of $P_{\vert \,\, \vert }(\cdot)$. We say that a Borel set $E \subset \R^2$ has finite perimeter if $P(E) < + \infty$. Given the assumptions we made on the function $\varphi$, it is easy to verify that $$P_{\varphi}(E)< + \infty \iff P(E)< +\infty. $$  
	For every set $E  \subset \R^2$ with finite perimeter, the set $\partial^* E \subset \R^2$ identifies the reduced boundary of $E$ and the Borel measurable map $\nu_{E}: \partial^* E \rightarrow \mathbb{R}^2$ the measure theoretic outer normal vector field (see, for instance, \cite[Definition 3.53]{AmbFuscPall} for the definitions of these objects). By De Giorgi's structure theorem (see, for instance, \cite[Theorem 3.59]{AmbFuscPall}, \cite[Theorem 15.19]{Maggibook}), for every set  $E \subset \R^2$ of finite perimeter, we have that $$P(E)=\mathcal{H}^{1}(\partial^*E).$$ Let $E \subset \R^2$ be a set of finite perimeter. A straightforward computation gives
	\begin{equation}
		P_\varphi(E):= \int_{\pa^* E} \varphi(\nu_E) \, d \mathcal{H}^1.
	\end{equation}
	Now we recall the well-known first variation formula for the anisotropic perimeter. Let $E \subset \R^2$ of class $C^2$. For any vector field $X \in C^1_c(\R^2,\R^2)$, let $(\Phi (t,\cdot))_{t \in (-\varepsilon,\varepsilon)}$ be the unique solution of the Cauchy problem
	\begin{equation}
		\begin{cases}
			\frac{\pa }{\pa t} \Phi (t,x)= X\circ \Phi(t,x) \quad \forall x \in \R^2\\
			\Phi(0,x)=x \quad \forall x \in \R^2. 
		\end{cases}
	\end{equation}
	Then we have 
	\begin{equation}\label{FeulPANI}
		\frac{d}{d t} \bigg|_{t=0} \int_{\pa \Phi(t,E)} \varphi (\nu_{\Phi(t,E)})\, d \mathcal{H}^1 = \int_{\pa E} \kappa_E^{\varphi} X \cdot \nu_E \, d \mathcal{H}^1
	\end{equation}
	where the anisotropic curvature $\kappa^{\varphi}_E$ of $ \pa E$ is given by $$\kappa_E^{\varphi}:= \div_{\pa E} (\nabla \varphi (\nu_E))$$ and can also be written as
	\begin{equation}\label{ancurvE}
		\begin{split}
			\kappa_E^{\varphi}&=\div_{\pa E} (\nabla \varphi (\nu_E))= \nabla (\nabla \varphi (\nu_E)) \tau_E \cdot \tau_E - \nabla (\nabla \varphi (\nu_E)) \nu_E \cdot \nu_E \\
			&=( \nabla^2 \varphi (\nu_E) \tau_E \cdot \tau_E )\kappa_E:= g(\nu_E)\kappa_E
		\end{split}
	\end{equation}
	where \begin{equation}\label{lafunzghaprop}
		g \in C^{\infty}(\R^2 \setminus \{0\}), \,  C_g=\min_{\vert \nu \vert =1} g(\nu)>0.
	\end{equation} 
	We recall an anisotropic version of the Gauss–Bonnet theorem for curves (see \cite{KimKwon2025} for a proof).
	\begin{lemma}\label{lGaussB}
		Let $\varphi \in C^\infty(\R^n \setminus \{0\})$ be a regular, strictly convex norm. There exists a constant $C_\varphi>0$, depending only on $\varphi$,  such that for all open, bounded sets $E \subset \R^2 $ of class $C^2$, the following holds:
		\begin{equation}
			\int_{\pa E} \kappa_E^\varphi(x) \varphi (\nu_E(x)) \, d \mathcal{H}^1_x= C_\varphi.
		\end{equation}
	\end{lemma}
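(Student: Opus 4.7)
The plan is to reduce the identity to a degree-theoretic computation on the Gauss map $\nu_E:\pa E\to S^1$. Using the pointwise formula $\kappa_E^{\varphi}=g(\nu_E)\,\kappa_E$ from \eqref{ancurvE}, the claim becomes
\begin{equation*}
\int_{\pa E} g(\nu_E)\,\varphi(\nu_E)\,\kappa_E\,d\mathcal{H}^1 \,=\, C_\varphi.
\end{equation*}
The key observation is that the left-hand side is (up to an orientation sign) the pull-back by $\nu_E$ of the smooth $1$-form $g\,\varphi\,d\theta$ on $S^1$, which reduces the computation to a topological invariant.

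Concretely, I would parametrize each connected component of $\pa E$ by arc length $\gamma:[0,L]\to\pa E$ with $\gamma'(s)=\tau_E(\gamma(s))$, and lift the outer normal continuously to an angle $\theta\in C^1([0,L])$ such that $\nu_E(\gamma(s))=(\cos\theta(s),\sin\theta(s))$. The paper's convention that $\tau_E$ is $\nu_E$ rotated clockwise by $\pi/2$ then gives $\tau_E(\gamma(s))=(\sin\theta(s),-\cos\theta(s))$; a direct differentiation of $\nu_E\circ\gamma$ paired with $\tau_E$ yields the signed Frenet relation $\kappa_E(\gamma(s))=-\theta'(s)$. Writing $G(\theta):=g(\cos\theta,\sin\theta)\,\varphi(\cos\theta,\sin\theta)$, the contribution of one component to the integral becomes
\begin{equation*}
-\int_0^L G(\theta(s))\,\theta'(s)\,ds \,=\, -\int_{\theta(0)}^{\theta(L)} G(\theta)\,d\theta
\end{equation*}
after a direct change of variables.

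The final step is to identify $\theta(L)-\theta(0)$, which is a topological invariant of $\pa E$. For a simple closed $C^2$ curve bounding a simply connected open subset of $\R^2$—the regime relevant for this paper, since all sets $E$ under consideration are $C^1$-small normal perturbations of the connected datum $E_0$—the outer normal winds exactly once per traversal, and with the above sign conventions one has $\theta(L)-\theta(0)=-2\pi$. Substituting yields
\begin{equation*}
\int_{\pa E}\kappa_E^{\varphi}\,\varphi(\nu_E)\,d\mathcal{H}^1 \,=\, \int_0^{2\pi} g(\cos\theta,\sin\theta)\,\varphi(\cos\theta,\sin\theta)\,d\theta\,=:\,C_\varphi,
\end{equation*}
which depends only on $\varphi$. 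Positivity of $C_\varphi$ is immediate from $g\geq C_g>0$ in \eqref{lafunzghaprop} together with $\varphi\geq m_\varphi>0$.

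The main technical issue I foresee is the sign bookkeeping in the two ingredients $\kappa_E=-\theta'$ and the degree of the Gauss map, both of which are convention-sensitive because of the clockwise rotation linking $\nu_E$ and $\tau_E$. A coordinate-free alternative that makes this transparent is to observe that $g(\nu)\varphi(\nu)\kappa_E\,d\mathcal{H}^1$ is, on each component of $\pa E$, the pull-back $\nu_E^{*}(g\varphi\,d\theta)$ up to a uniform orientation sign, and then to invoke the standard degree formula $\int_{\pa E}\nu_E^{*}\omega=\deg(\nu_E)\int_{S^1}\omega$. The whole problem then reduces to the well-known fact that the Gauss map of a simple closed planar curve has degree $\pm 1$, with the sign determined by whether $\pa E$ is the outer boundary of its bounded complementary component.
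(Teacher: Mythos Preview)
The paper does not supply its own proof of this lemma; it merely cites \cite{KimKwon2025}. Your argument is the standard one and is correct for the case you actually treat, namely when $\pa E$ is a single simple closed curve. The reduction via $\kappa_E^{\varphi}=g(\nu_E)\kappa_E$ and the lifted angle $\theta$ is clean; one small wording point is that the step
\[
-\int_0^L G(\theta(s))\,\theta'(s)\,ds \;=\; -\big(F(\theta(L))-F(\theta(0))\big),\qquad F'=G,
\]
is really the fundamental theorem of calculus (no monotonicity of $\theta$ is needed), so ``change of variables'' is a slight misnomer, but your displayed identity is correct when read this way. Your sign bookkeeping ($\kappa_E=-\theta'$ and $\theta(L)-\theta(0)=-2\pi$ under the paper's clockwise convention) checks out on the circle and hence, by the Hopf Umlaufsatz, on any simple closed $C^2$ curve.

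You are also right to flag the topological caveat, and in fact the lemma as stated is literally false for arbitrary open bounded $C^2$ sets: already in the isotropic case $\varphi=|\cdot|$ one has $\int_{\pa E}\kappa_E\,d\mathcal{H}^1=2\pi\chi(E)$, which vanishes for an annulus. The correct general identity your argument yields (summing over boundary components, inner components contributing with the opposite winding) is
\[
\int_{\pa E}\kappa_E^{\varphi}\,\varphi(\nu_E)\,d\mathcal{H}^1 \;=\; \chi(E)\int_0^{2\pi} g(\cos\theta,\sin\theta)\,\varphi(\cos\theta,\sin\theta)\,d\theta.
\]
This is harmless for the paper's purposes: every set to which the lemma is applied is a $C^1$-small normal graph over $\pa E_0$, hence diffeomorphic to $E_0$, so the right-hand side is a fixed constant. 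Note, though, that the paper only assumes $E_0$ connected, not simply connected, so your parenthetical ``simply connected'' should be relaxed to ``of fixed topological type''; your degree-theoretic alternative at the end already accommodates this.
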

	
	\subsection{Interpolation inequality} We recall the interpolation inequalities involving Sobolev norms on embedded surfaces. We use the result from \cite[Proposition 6.5]{Mantegazza2002} (see also \cite[Proposition 4.3]{DianaFuscoMantegazza}).
	\begin{proposition}\label{PROPINTER}
		Let $ E \in \mathfrak{C}^m_{K,\sigma_0}(E_0)$ for some $m \geq 2$. Then for integers $0\leq k < l \leq m$ and numbers $p \in [1,\infty)$,  $q,r \in [1,\infty]$   there is $\theta \in [k/l,1]$ such that for every function $f$ of class $C^l$ on $\pa E$ it holds
		\begin{equation}
			\|  \pa_{\tau}^k f \|_{L^p(\pa E)} \leq C \| f\|_{W^{l,q}(\pa E)}^\theta \| f\|_{L^{r}(\pa E )}^{1-\theta}
		\end{equation}
		for a constant $C=C(k,l,p,q,r,\theta,C_0)$,  provided that the following  condition is satisfied
		\begin{equation}
			\frac{1}{p} = k + \theta \left( \frac{1}{q} - l \right) + \frac{1}{r}(1 - \theta).
		\end{equation}
		Moreover, if $f: \pa E \rightarrow \R$ is a smooth function with $\int_{\pa E} f\,d\mathcal{H}^{1}=0$ the above inequality can be written as 
		\begin{equation}
			\|\pa_{\tau}^k f\|_{L^p(\pa E)} \leq C \|  \pa_{\tau}^l f\|_{L^{q}(\pa E)}^\theta \| f\|_{L^{r}(\pa E)}^{1-\theta}.
		\end{equation}
	\end{proposition}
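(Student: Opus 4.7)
The plan is to reduce the statement to the classical Gagliardo--Nirenberg interpolation inequality on the circle. Since $E \in \mathfrak{C}^m_{K,\sigma_0}(E_0)$, its boundary is the normal graph over $\pa E_0$ of a height function $\varphi_E$ with $\|\varphi_E\|_{C^m(\pa E_0)} \leq K$, while $E_0$ itself is of class $C^5$. Consequently $\pa E$ is a closed $C^m$-curve whose arc-length parametrization $\gamma : \mathbb{R}/L\mathbb{Z} \to \pa E$, where $L = \mathcal{H}^1(\pa E)$, and its inverse are uniformly $C^m$-bounded in terms of $K$, with $L$ bounded both above and below on the class. Under this parametrization the tangential derivative $\pa_\tau$ becomes the arc-length derivative, and the Sobolev norms $\|f\|_{W^{k,p}(\pa E)}$ are comparable, up to multiplicative constants depending only on the class, to the standard Sobolev norms of $f \circ \gamma$ on $\mathbb{R}/L\mathbb{Z}$.

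The first inequality is then the classical one-dimensional Gagliardo--Nirenberg statement for periodic functions: for integers $0 \leq k < l$ and exponents $p \in [1,\infty)$, $q,r \in [1,\infty]$ linked by the dimensional relation in the proposition, with $\theta \in [k/l,1]$, one has $\|u^{(k)}\|_{L^p} \leq C \|u\|_{W^{l,q}}^\theta \|u\|_{L^r}^{1-\theta}$. I would follow Nirenberg's classical argument: first prove a base case (for example $l=2$, $k=1$, $p=2$, $q=r=\infty$) by integration by parts, then obtain the general range via H\"older's inequality and iteration over $k$; on the circle, a fixed partition of unity subordinate to a cover by arcs, together with zero extension or reflection, reduces each local estimate to the case on $\mathbb{R}$. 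Pulling the estimate back via $\gamma$ yields the desired inequality on $\pa E$ with a constant depending only on $k,l,p,q,r,\theta$ and the class parameters.

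For the zero-mean refinement, the key additional tool is the Poincar\'e inequality $\|u\|_{L^q} \leq C \|u'\|_{L^q}$ for periodic functions of zero average on $\mathbb{R}/L\mathbb{Z}$, with constant controlled by $L$. Applied to $f$ itself this absorbs the lower-order contribution $\|f\|_{L^q}$ appearing inside $\|f\|_{W^{l,q}}$; an induction over the intermediate order $j$, combined with the first part of the statement applied to $\pa_\tau^j f$ minus its average, removes the remaining intermediate terms $\|\pa_\tau^j f\|_{L^q}$ and leaves the clean bound $\|\pa_\tau^l f\|_{L^q}^\theta \|f\|_{L^r}^{1-\theta}$.

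The main obstacle here is bookkeeping rather than analysis: each of the constants --- the equivalence of norms under $\gamma$, the partition of unity, the Poincar\'e constant, and the base interpolation constant --- must be verified to depend only on the class parameters $K$ and $\sigma_0$ and on the fixed set $E_0$. This uniformity follows from the $C^m$-bound on the height function, which in particular controls the curvature of $\pa E$ and its derivatives up to order $m-2$; once this is established, the classical proof on the circle goes through verbatim, and one may invoke \cite[Proposition 6.5]{Mantegazza2002} directly to close the argument.
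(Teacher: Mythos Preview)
The paper does not prove this proposition at all: it is stated as a known result, with the sentence ``We use the result from \cite[Proposition 6.5]{Mantegazza2002} (see also \cite[Proposition 4.3]{DianaFuscoMantegazza})'' serving in place of a proof. Your proposal is therefore consistent with the paper's treatment---you end by invoking the same reference---but you supply substantially more detail (the arc-length reduction to the circle, the Nirenberg iteration, the Poincar\'e step for the zero-mean refinement) than the paper itself, which simply cites the result.
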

	If $E \in \mathfrak{C}^{k,\alpha}_{K,\sigma_0}(E_0)$ for some $1 \leq k \leq 5$ and $ \alpha \in [0,1]$, then the classical interpolation inequality in H\"older norms holds, i.e.,
	for $ 0 < \beta < \alpha \leq 1 $ and $0 \leq l \leq m \leq k$ it holds
	\begin{equation}\label{interHOLDER}
		\| f \|_{C^{l,\beta}(\pa E)} \leq C \| f \|_{C^{m,\alpha}(\pa E)}^{\theta} \|   f \|_{C^0(\pa E)}^{1-\theta} \quad \theta= \frac{l+ \beta}{m+ \alpha},
	\end{equation}
	where $C$ depend on $K,l,m,\alpha,\beta$. This result follows from the Euclidean case; see, for example \cite[Example 1.9]{Lunardi2018}. The interpolation inequality in Proposition \ref{PROPINTER} implies the following useful estimate. The
	proof is standard, and we refer to \cite[Proposition 2.3]{JN}. Note that the argument is similar to that used in the Euclidean case; see \cite[Proposition 3.7]{Taylor}. We denote the sum of the components of an index vector $\alpha \in \N^{l}$ by
	$$  \vert \alpha \vert= \alpha_1+ \cdots + \alpha_l.$$
	\begin{lemma}\label{LemmLeibinz}
		Let $ E \in \mathfrak{C}^m_{K,\sigma_0}(E_0)$ for some $m \geq 2$ and let $f_1, \cdots, f_l $ be function of class $C^m$. Then for an index vector $ \alpha \in \N^{l}$ with norm $ \vert \alpha \vert \leq k \leq m$ it holds
		\begin{equation}
			\| \vert \pa_{\tau}^{\alpha_1} f_1 \vert \cdots \vert \pa_{\tau}^{\alpha_l} f_l \vert \|_{L^2(\pa E)} 
			\leq  C(K) \sum_{ \sigma \in S_l} \| f_{\sigma(1)}   \|_{L^\infty(\pa E)} \cdots \| f_{\sigma(l-1)}   \|_{L^\infty(\pa E)} \| f_{\sigma(l)}  \|_{H^k(\pa E)}
		\end{equation}
		where $S_l$ is the group of  permutation of $l$ object. In particular, 
		\begin{equation}
			\| \pa_{\tau}^k (f_1 f_2) \|_{L^2(\pa E)} \leq C(K) \big[ 
			\|f_1 \|_{L^\infty(\pa E)} \| f_2 \|_{H^k(\pa E)}+ \| f_2 \|_{L^\infty(\pa E)} \| f_1 \|_{H^k(\pa E)}\big].
		\end{equation}
	\end{lemma}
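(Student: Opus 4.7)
The plan is to prove the product estimate as a standard Gagliardo–Nirenberg style consequence of the single-factor interpolation inequality (Proposition \ref{PROPINTER}), combined with the generalized Hölder inequality and Young's inequality. The second (displayed) assertion is the special case $l=2$, $\alpha=(j,k-j)$ summed via the Leibniz rule, so it suffices to prove the first estimate.

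First I would reduce to the balanced case $|\alpha|=k$. If $|\alpha|<k$ then each factor $\|\pa_\tau^{\alpha_i} f_i\|_{L^\infty}$ can be bounded by $\|f_i\|_{H^k}^{\theta_i}\|f_i\|_{L^\infty}^{1-\theta_i}$ via Proposition \ref{PROPINTER} with $p=\infty$, and the argument below goes through with strict inequality $\sum 1/p_i<1/2$, which only makes the Hölder step easier (insert an extra $L^\infty$-type factor on the shortest index). So assume $|\alpha|=k$, and also assume $\alpha_i\ge 1$ (factors with $\alpha_i=0$ contribute a plain $\|f_i\|_{L^\infty}$ and are pulled out of the $L^2$-norm).

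Next, apply the generalized Hölder inequality on $\pa E$ with exponents $p_i:=2k/\alpha_i$, which satisfy $\sum_{i=1}^l 1/p_i=|\alpha|/(2k)=1/2$:
\begin{equation}
\bigl\| \,|\pa_\tau^{\alpha_1}f_1|\cdots|\pa_\tau^{\alpha_l}f_l|\,\bigr\|_{L^2(\pa E)}
\;\leq\; \prod_{i=1}^l \|\pa_\tau^{\alpha_i} f_i\|_{L^{p_i}(\pa E)}.
\end{equation}
Now invoke Proposition \ref{PROPINTER} with the quadruple $(k,l,p,q,r)=(\alpha_i,k,p_i,2,\infty)$. The scaling condition becomes
\begin{equation}
\frac{1}{p_i}=\alpha_i+\theta_i\!\left(\tfrac{1}{2}-k\right),
\end{equation}
which is solved by $\theta_i=\alpha_i/k\in[\alpha_i/k,1]$, giving
\begin{equation}
\|\pa_\tau^{\alpha_i}f_i\|_{L^{p_i}(\pa E)}\;\leq\; C(K)\,\|f_i\|_{H^k(\pa E)}^{\alpha_i/k}\,\|f_i\|_{L^\infty(\pa E)}^{1-\alpha_i/k}.
\end{equation}
Multiplying over $i$ and using $\sum_i \alpha_i/k=1$, we obtain
\begin{equation}
\bigl\| \,|\pa_\tau^{\alpha_1}f_1|\cdots|\pa_\tau^{\alpha_l}f_l|\,\bigr\|_{L^2(\pa E)}
\;\leq\; C(K)\prod_{i=1}^l \bigl(\|f_i\|_{H^k}\bigr)^{\alpha_i/k}\bigl(\|f_i\|_{L^\infty}\bigr)^{1-\alpha_i/k}.
\end{equation}

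Finally, I set $A_i:=\|f_i\|_{H^k(\pa E)}\,\prod_{j\neq i}\|f_j\|_{L^\infty(\pa E)}$ and rewrite the right-hand side as $\prod_i A_i^{\alpha_i/k}$; the weighted AM–GM (Young) inequality with exponents $\alpha_i/k$ summing to $1$ gives
\begin{equation}
\prod_{i=1}^l A_i^{\alpha_i/k}\;\leq\;\sum_{i=1}^l \frac{\alpha_i}{k}\,A_i\;\leq\;\sum_{\sigma\in S_l}\|f_{\sigma(1)}\|_{L^\infty}\cdots\|f_{\sigma(l-1)}\|_{L^\infty}\|f_{\sigma(l)}\|_{H^k},
\end{equation}
which is the claimed bound. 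The in-particular case follows by expanding $\pa_\tau^k(f_1f_2)=\sum_{j=0}^k\binom{k}{j}\pa_\tau^j f_1\,\pa_\tau^{k-j}f_2$, applying the estimate term-by-term, and collapsing the two permutations of $\{1,2\}$.

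The only genuinely delicate point is verifying the exponent condition in Proposition \ref{PROPINTER} with the choice $q=2$, $r=\infty$; once the arithmetic $\theta_i=\alpha_i/k$ and $1/p_i=\alpha_i/(2k)$ is checked, everything else is a sequence of standard inequalities. The hypothesis $E\in\mathfrak{C}^m_{K,\sigma_0}(E_0)$ enters only through the constant $C(K)$ in Proposition \ref{PROPINTER}, so no additional geometric work on $\pa E$ is needed.
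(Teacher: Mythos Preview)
Your proof is correct and is precisely the standard argument the paper has in mind: the paper gives no proof of its own, only citing \cite[Proposition~2.3]{JN} and \cite[Proposition~3.7]{Taylor}, which proceed exactly by H\"older with exponents $p_i=2k/\alpha_i$, single-factor Gagliardo--Nirenberg interpolation (Proposition~\ref{PROPINTER}) yielding $\theta_i=\alpha_i/k$, and weighted AM--GM to delinearize the product. One small cleanup: your reduction paragraph for $|\alpha|<k$ is a bit muddled (Proposition~\ref{PROPINTER} does not literally allow $p=\infty$); the cleanest reduction is simply to observe that the right-hand side is nondecreasing in $k$, so one may replace $k$ by $|\alpha|$ and run your main argument verbatim.
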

	
	\section{Setting of the problem}\label{sezionesettingpro}
	\subsection{Pseudo-pseudo-$H^{-1}$ metric} In this subsection, we recall the definition and some basic properties of the pseudo-pseudo-$H^{-1}$ distance introduced in \cite{CaTa} to model surface diffusion.
	\begin{definition}[Pseudo-pseudo-$H^{-1}$ metric]
		Let $ E \subset \R^2$ be a set of finite perimeter, and let $ F \subset \R^2$ be a measurable set. We define the function $d_{H^{-1}}(F,E)$ as
		\begin{equation}\label{d_{H^{-1}}}
			d_{H^{-1}}(F,E):= \sup_{ \| \nabla f \|_{L^2(\pa^* E)} \leq 1} \int_{\R^2} f \circ \pi_{\pa^* E}(x) (\chi_F(x)-\chi_E(x))\, dx.
		\end{equation}
	\end{definition}
	\begin{remark}
		Let $ E \subset \R^2$ be a set of finite perimeter $ \vert E \vert < + \infty$, and let $ F \subset \R^2$ be a measurable set.
		We observe that if $\vert E \vert \neq \vert F \vert$ then $d_{H^{-1}}(F,E)= + \infty$. Indeed, for every $a \in \R$ we define $f: \pa E \rightarrow \R$ by $f(x)=a$. Then, by \eqref{d_{H^{-1}}}, we have
		\begin{equation}
			d_{H^{-1}}(F,E) \geq \sup_{a \in \R} a(\vert F \vert - \vert E \vert )= + \infty.
		\end{equation}
	\end{remark}
	\begin{lemma}\label{remgEF}
		Let $E \subset \R^2$ be a open bounded set of class $C^2$. Fix $\sigma>0$ be such that $\sigma< \sigma_E$. Let $F\subset \R^2 $ such that $\vert F \vert=\vert E \vert$ and $ F \Delta E \subset {\rm cl}(\mathcal{I}_{\sigma}(\pa E))$. We define
		\begin{equation}\label{lafunzgEF}
			\xi_{F,E} : \pa E \rightarrow \R \quad \xi_{F,E}(x):= \int_{-\sigma}^{\sigma} (\chi_{F}(x+t \nu_E(x))- \chi_{E}(x+t\nu_E(x)))(1+t\kappa_E(x))\, dt.
		\end{equation}
		Then, 
		\begin{equation}
			\int_{\pa E} \xi_{F,E}\, d\mathcal{H}^1=0, \quad d_{H^{-1}}(F,E)= \| \xi_{F,E} \|_{H^{-1}(\pa E)}.
		\end{equation}
		Moreover, if $\pa F$ is a normal graph respect $ \pa E $, i.e., $ \pa F = \{ x+ \psi(x)\nu_E(x) \colon x \in \pa E\} $,  then we have \begin{equation}\label{lafunzgEFgraph}
			\xi_{F,E}= \psi + \kappa_E \frac{\psi^2}{2},\quad d_{H^{-1}}(F,E)= \|\psi + \kappa_E \frac{\psi^2}{2} \|_{H^{-1}(\pa E)}.
		\end{equation}
	\end{lemma}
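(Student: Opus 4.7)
The whole proof rests on a single change of variables in the tubular neighborhood. Since $\sigma<\sigma_E$, the map $\Phi:\pa E\times(-\sigma,\sigma)\to\mathcal{I}_\sigma(\pa E)$ defined by $\Phi(x,t)=x+t\nu_E(x)$ is a $C^1$-diffeomorphism with Jacobian $J\Phi(x,t)=1+t\kappa_E(x)>0$, and $\pi_{\pa E}\circ\Phi(x,t)=x$ for every $(x,t)$. Hence for any $g\in L^1(\mathcal{I}_\sigma(\pa E))$,
\begin{equation*}
\int_{\mathcal{I}_\sigma(\pa E)} g(y)\,dy=\int_{\pa E}\!\int_{-\sigma}^{\sigma} g(x+t\nu_E(x))\,(1+t\kappa_E(x))\,dt\,d\mathcal{H}^1(x).
\end{equation*}

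Applying this identity to $g=\chi_F-\chi_E$, which is supported in $\mathrm{cl}(\mathcal{I}_\sigma(\pa E))$ by hypothesis, gives $|F|-|E|=\int_{\pa E}\xi_{F,E}\,d\mathcal{H}^1$, and the assumption $|F|=|E|$ yields the zero-mean property. To obtain the duality identity, I would apply the same change of variables to $g(y)=f(\pi_{\pa E}(y))(\chi_F(y)-\chi_E(y))$ for a test function $f\in C^1(\pa E)$ with $\|\nabla_{\pa E}f\|_{L^2(\pa E)}\le 1$; since $\pi_{\pa E}\circ\Phi(x,t)=x$, the right-hand side collapses and produces
\begin{equation*}
\int_{\R^2} f(\pi_{\pa E}(y))(\chi_F(y)-\chi_E(y))\,dy=\int_{\pa E} f(x)\,\xi_{F,E}(x)\,d\mathcal{H}^1(x).
\end{equation*}
Taking the supremum over admissible $f$, the left-hand side is $d_{H^{-1}}(F,E)$; on the right, the zero-mean property just proved lets me replace $f$ by $f-\medint_{\pa E} f$ without altering the pairing, so the sup may be restricted to mean-zero $f$. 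On this subspace the Poincar\'e inequality on the closed curve $\pa E$ makes $\|\nabla_{\pa E} f\|_{L^2(\pa E)}$ equivalent to $\|f\|_{H^1(\pa E)}$, and the supremum is exactly $\|\xi_{F,E}\|_{H^{-1}(\pa E)}$ by the definition of the dual norm.

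For the graph case $\pa F=\{x+\psi(x)\nu_E(x):x\in\pa E\}$, the segment $t\mapsto x+t\nu_E(x)$ meets $\pa E$ only at $t=0$ and $\pa F$ only at $t=\psi(x)$. Since $\nu_E$ is the outer normal, one checks that $\chi_F(x+t\nu_E(x))-\chi_E(x+t\nu_E(x))$ equals $+1$ for $t\in(0,\psi(x))$ when $\psi(x)>0$, equals $-1$ for $t\in(\psi(x),0)$ when $\psi(x)<0$, and vanishes for $t$ outside the interval with endpoints $0$ and $\psi(x)$. In either sign, a direct integration gives
\begin{equation*}
\xi_{F,E}(x)=\int_0^{\psi(x)}(1+t\kappa_E(x))\,dt=\psi(x)+\kappa_E(x)\,\frac{\psi(x)^2}{2},
\end{equation*}
which combined with the previous paragraph yields the last assertion. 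The only genuinely delicate point is the passage from the sup-definition of $d_{H^{-1}}$ to the Sobolev $H^{-1}$-norm of $\xi_{F,E}$: this is where the zero-mean property is indispensable, since without it one could not absorb additive constants into $f$ and the equivalence of $\|\nabla_{\pa E} f\|_{L^2}$ with $\|f\|_{H^1}$ would fail.
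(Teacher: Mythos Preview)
Your proof is correct and follows essentially the same approach as the paper: both use the tubular-neighborhood change of variables (the paper phrases it via the coarea formula plus the area formula for $\Psi_t(x)=x+t\nu_E(x)$, you package it as a single diffeomorphism $\Phi$) to reduce the integral over $\R^2$ to the pairing $\int_{\pa E} f\,\xi_{F,E}$, then set $f=1$ for the zero-mean property and take the supremum for the $H^{-1}$ identity. Your discussion of why the supremum equals $\|\xi_{F,E}\|_{H^{-1}(\pa E)}$ (absorbing constants via the zero-mean property, then invoking Poincar\'e) is actually more explicit than the paper's, which simply asserts the identification.
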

	\begin{proof}
		Let $t \in [-\sigma,\sigma]$, we define
		$$    \Psi_t: \pa E \rightarrow \{x \colon d_E(x)=t \},\,\, \Psi_t(x):=x+t \nu_E(x).  $$
		We have that $J_\tau \Psi_t(x) = 1+ t \kappa_E(x)$. Let $f \in H^1(\pa E)$. Using the coarea formula and a change of variables, we get
		\begin{equation}\label{asdasd1}
			\begin{split}
				&\int_{\R^2} f \circ \pi_{\pa E}(\chi_F-\chi_E)\, dx= \int_{-\sigma}^\sigma \int_{\{x \colon d_E(x)=t \} } f \circ \pi_{\pa E}(x)(\chi_F(x)- \chi_E(x)) \,d\mathcal{H}^1_x \, dt\\
				&= \int_{-\sigma}^\sigma \int_{ \Psi_t(\pa E) } f \circ \pi_{\pa E}(x)(\chi_F(x)- \chi_E(x)) \,d\mathcal{H}^1_x \, dt\\
				&= \int_{-\sigma}^\sigma \int_{\pa E } f (y)(\chi_F(y+ t \nu_E(y))- \chi_E(y+ t \nu_E(y)) J_\tau \Psi_t(y) \,d\mathcal{H}^1_y \, dt .
			\end{split}
		\end{equation}
		By \eqref{lafunzgEF} and \eqref{asdasd1} we obtain 
		\begin{equation}\label{slablig}
			\int_{\R^2}f \circ \pi_{\pa E}(x) (\chi_F(x)-\chi_E(x))\, dx= \int_{\pa E} f(y) \xi_{F,E}(y) \, d \mathcal{H}^1_y.
		\end{equation}
		In particular, for $f=1$, we find
		\begin{equation}\label{mediazero0}
			0 = \vert F \vert- \vert E \vert= \int_{\pa E} \xi_{F,E} \, d \mathcal{H}^1.
		\end{equation}
		Hence, from \eqref{slablig} and the definition of $d_{H^{-1}}(E,F)$, we obtain
		\begin{equation}
			d_{H^{-1}}(E,F)= \| \xi_{F,E}\|_{H^{-1}(\pa E)}.
		\end{equation}
		In the case where $\pa F$ is a normal graph over $\pa E$, we compute
		$$ \xi_{F,E}= \int_{0}^{\psi} 1+t\kappa_E\, dt=\psi+ \kappa_E \frac{\psi^2}{2}.$$
	\end{proof}

	Therefore, under the assumptions of the above lemma, we have
	\begin{equation}\label{normH^-1funzv}
		d_{H^{-1}}^2(F,E)= \int_{\pa E} \vert \nabla_{\tau} v_{F,E} \vert^2 \, d \mathcal{H}^{1}
	\end{equation}
	where $v_{F,E} $ is the unique solution to the equation
	\begin{equation}\label{eqv}
		\left\{
		\begin{aligned}
			& \Delta_{\pa E} v_{F,E}= \xi_{F,E} & \text{ on } \pa E , \\
			& \int_{\pa E} v_{F,E} \, d \mathcal{H}^1=0.
		\end{aligned}
		\right.
	\end{equation} 
	The function $f$ that realize the supremum in formula \eqref{d_{H^{-1}}} is given by $$f=\frac{v_{F,E}}{d_{H^{-1}}(F,E)}.$$
	
	In the next proposition, we compute the first variation of the function $F \rightarrow d_{H^{-1}}(F,E)$.
	\begin{proposition}\label{propELDH-1}
		Let $ E \subset \R^2$ be a bounded open set of class $C^2$, and let $ \sigma< \sigma_E$. Let $F \Subset \R^2$ be a set of class $C^1$ such that $ F \Delta E \subset \mathcal{I}_{\sigma}(\pa E)$. Let $ X \in C_c^1(\R^2,\R^2)$ be such that $ \div X=0$, and let $\Psi: (-\varepsilon,\varepsilon) \times \R^2 \rightarrow \R^2$ be the solution of the Cauchy problem
		\begin{equation}
			\begin{cases}
				\frac{\pa }{\pa t} \Psi (t,x)= X\circ \Psi (t,x) \quad \forall x \in \R^2\\
				\Psi(0,x)=x \quad \forall x \in \R^2. 
			\end{cases}
		\end{equation}
		Finally, let $ f_0 \in H^1(\partial E)$ with $ \int_{\pa F} f_0 \,d \mathcal{H}^{1}=0$ be the function that realizes the supremum in the definition of $d_{H^{-1}}(F,E)$.
		Then 
		\begin{equation}
			\frac{d }{d t} d_{H^{-1}}(\Psi(t,F),E) |_{t=0}= \int_{\partial F} f_0(\pi_{\pa E}(x)) X(x) \cdot \nu_{F}(x) d \mathcal{H}^{1}_x.
		\end{equation}
	\end{proposition}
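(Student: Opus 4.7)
The strategy is an envelope (Danskin-type) argument: fix the maximizer, differentiate, and show the dependence on the maximizer is harmless. The hypothesis $\div X=0$ plays a triple role: it guarantees that $|\Psi(t,F)|=|F|$ (so the distance stays finite), that $\Psi(t,\cdot)$ is measure-preserving (allowing a clean change of variables), and that the divergence theorem produces only a boundary term.

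\smallskip
\textbf{Step 1 (geometric setup).} Since $X\in C^1_c$ and $\div X = 0$, for $|t|$ small one has $|\Psi(t,F)|=|F|=|E|$, and $\Psi(t,F)\Delta F\subset \mathcal{I}_{\eps(t)}(\pa F)$ with $\eps(t)\to 0$. Combining with $F\Delta E\subset \mathcal{I}_\sigma(\pa E)$ one gets $\Psi(t,F)\Delta E\subset \mathcal{I}_{\sigma'}(\pa E)$ for some $\sigma'<\sigma_E$, so Lemma \ref{remgEF} applies and $d_{H^{-1}}(\Psi(t,F),E)=\|\xi_{\Psi(t,F),E}\|_{H^{-1}(\pa E)}<\infty$. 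By \eqref{normH^-1funzv}--\eqref{eqv}, the supremum in \eqref{d_{H^{-1}}} is attained by a unique $f_t\in H^2(\pa E)$ with $\int_{\pa E}f_t\,d\mathcal{H}^1=0$, solving $\Delta_{\pa E}f_t=\xi_{\Psi(t,F),E}/d_{H^{-1}}(\Psi(t,F),E)$, with $f_0$ the maximizer at $t=0$.

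\smallskip
\textbf{Step 2 (envelope inequalities).} By optimality of $f_0$ for $F$ and $f_t$ for $\Psi(t,F)$, subtracting $d_{H^{-1}}(F,E)=\int f_0\circ\pi_{\pa E}(\chi_F-\chi_E)\,dx$ from $d_{H^{-1}}(\Psi(t,F),E)=\int f_t\circ\pi_{\pa E}(\chi_{\Psi(t,F)}-\chi_E)\,dx$ and using that each $f_s$ is suboptimal for the other distance gives
\begin{equation}
\int f_0\circ\pi_{\pa E}(\chi_{\Psi(t,F)}-\chi_F)\,dx \;\leq\; d_{H^{-1}}(\Psi(t,F),E)-d_{H^{-1}}(F,E)\;\leq\;\int f_t\circ\pi_{\pa E}(\chi_{\Psi(t,F)}-\chi_F)\,dx.
\end{equation}
Note both bounding integrands are supported in $\Psi(t,F)\Delta F\subset \mathcal{I}_{\sigma_E}(\pa E)$, where $\pi_{\pa E}$ is smooth.

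\smallskip
\textbf{Step 3 (differentiation with a frozen maximizer).} For each $s$, extend $f_s\circ\pi_{\pa E}$ (defined on $\mathcal{I}_{\sigma_E}(\pa E)$) to a function $G_s\in C^1(\R^2)$; the extension outside the tubular neighborhood is irrelevant as the integrand vanishes there. Using $\det\nabla\Psi(t,\cdot)\equiv 1$ (because $\div X=0$), the change of variables $x=\Psi(t,y)$ gives
\begin{equation}
\int G_s(\chi_{\Psi(t,F)}-\chi_F)\,dx=\int_F\big[G_s(\Psi(t,y))-G_s(y)\big]\,dy.
\end{equation}
Differentiating at $t=0$ and applying $\pa_t\Psi(0,\cdot)=X$ together with the divergence theorem and $\div X=0$:
\begin{equation}
\frac{d}{dt}\bigg|_{t=0}\int_F\big[G_s(\Psi(t,y))-G_s(y)\big]dy=\int_F\nabla G_s\cdot X\,dy=\int_F\div(G_sX)\,dy=\int_{\pa F}f_s\circ\pi_{\pa E}\;X\cdot\nu_F\,d\mathcal{H}^1.
\end{equation}
The left (lower) envelope at $s=0$ therefore yields the correct right-hand side; to close the sandwich, the upper bound at $s=t$ must produce the same limit, which reduces to showing $f_t\to f_0$ continuously enough to pass to the limit in the previous display (uniformly in small $t$).

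\smallskip
\textbf{Step 4 (continuity of the maximizer).} This is the main technical point. From Lemma \ref{remgEF}, the data $\xi_{\Psi(t,F),E}$ of the Poisson problem \eqref{eqv} can be written explicitly as an integral over fibers of $\pi_{\pa E}$ of $\chi_{\Psi(t,F)}-\chi_E$; using $\Psi(t,\cdot)=\mathrm{Id}+tX+o(t)$ in $C^1$ one gets $\xi_{\Psi(t,F),E}\to\xi_{F,E}$ in $L^\infty(\pa E)$. Elliptic regularity on the one-dimensional manifold $\pa E$ then yields $v_t\to v_0$ in $H^2(\pa E)\hookrightarrow C^1(\pa E)$, and hence $f_t=v_t/\|\nabla v_t\|_{L^2}\to f_0$ in $C^1(\pa E)$. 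This $C^1$-convergence, together with the Lipschitz estimate on $\Psi(t,y)-y$ coming from $X$, is enough to make the difference quotient of the upper bound converge to the same boundary integral obtained from the lower bound. Equating both sides of the sandwich gives the claimed formula.

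\smallskip
\textbf{Expected main obstacle.} The non-trivial part is Step 4, specifically controlling $f_t-f_0$ uniformly as $t\to 0$. One must carefully track the joint dependence of $f_t$ on both the right-hand side $\xi_{\Psi(t,F),E}$ and the normalization $d_{H^{-1}}(\Psi(t,F),E)$, and exploit the $1$-dimensionality of $\pa E$ so that the $L^\infty$ continuity of $\xi$ upgrades via \eqref{eqv} to the $C^1$ continuity of $f$ required to pass to the limit in the boundary integral.
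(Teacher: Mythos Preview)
Your proof is correct and takes a genuinely different route from the paper's. The paper does not use an envelope (Danskin) sandwich: instead it differentiates the \emph{square} $d_{H^{-1}}^2(F_t,E)=\|\nabla_\tau v_t\|_{L^2(\pa E)}^2$ and exploits the bilinear identity
\[
\int_{\pa E}\big(|\nabla_\tau v_t|^2-|\nabla_\tau v_0|^2\big)=\int_{\pa E}(\xi_t-\xi_0)(v_t+v_0)=\int_{\R^2}(v_t+v_0)\circ\pi_{\pa E}\,(\chi_{F_t}-\chi_F)\,dx,
\]
after which it passes to the limit via the first-variation-of-volume fact $\tfrac{1}{t}(\chi_{F_t}-\chi_F)\mathcal{L}^2\weakstar X\!\cdot\!\nu_F\,\mathcal{H}^1\res\pa F$ (Maggi, Prop.~17.8), combined with the uniform convergence $v_t\circ\pi_{\pa E}\to v_0\circ\pi_{\pa E}$. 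Your approach replaces the weak-$*$ limit by the measure-preserving change of variables $x=\Psi(t,y)$ and the divergence theorem, which is more elementary and makes the role of $\div X=0$ fully explicit; the paper's approach is slicker once one sees the squaring trick, since it bypasses the two-sided envelope estimate entirely. Both proofs hinge on the same continuity input $v_t\to v_0$ (equivalently $f_t\to f_0$).

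One small correction to your Step~4: the convergence $\xi_{\Psi(t,F),E}\to\xi_{F,E}$ need not hold in $L^\infty(\pa E)$ in general (a fiber $\{y+s\nu_E(y)\}$ could graze $\pa F$ tangentially). What is readily available---and what the paper proves---is $L^p(\pa E)$ convergence for every finite $p$, which on the one-dimensional manifold $\pa E$ still yields $v_t\to v_0$ in $W^{2,p}\hookrightarrow C^1$ and hence $f_t\to f_0$ in $C^1(\pa E)$; this is exactly what your Step~3 upper-bound limit needs, so the argument closes.
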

	\begin{proof} We fix $\varepsilon>0$ such that $ \Psi(t,F) \triangle F  \Subset  \mathcal{I}_\sigma(\partial E)$ for all $ t \in (-\varepsilon,\varepsilon)$. 
		Set $F_t := \Psi(t,F)$ for $t \in (-\varepsilon,\varepsilon)$. 
		Define $\xi_t : \partial E \rightarrow \R$ for $t \in (-\varepsilon,\varepsilon)$, by
		\begin{equation}
			\xi_t(y):= \int_{-\sigma}^\sigma(\chi_{F_t}(y+s\nu_E(y))-\chi_E(y+s\nu_E(y))J_\tau\Phi_s(y)\,ds,\qquad y\in\pa E,
		\end{equation}
		where $\Phi_s(x)= x+ s \nu_E(x) $. For every $ t \in (-\varepsilon,\varepsilon)$ we have
		\begin{equation}\label{1407asdasd}
			d_{H^{-1}}^2(F_t,E)= \int_{\pa E} \vert \nabla_\tau v_t \vert^2 d \mathcal{H}^{1},
		\end{equation}
		where
		\begin{equation}\label{14072024matt1}
			\begin{cases}
				-\Delta_{\tau} v_t=\xi_t & \text{on $\pa E$}, \cr
				\displaystyle\int_{\pa E}v_t\,d\Ha^{1}=0.
			\end{cases}
		\end{equation}
		We note that $\xi_t \in L^{\infty}(\partial E)$ with $ \| \xi_t \|_{\infty} \leq C(\| \kappa_E \|_{\infty})$.\\
		\textit{Claim:} $\xi_t\to \xi_0$ in $L^p(\partial E)$ for all $p\geq1$. 
		
		Indeed, since $\pa F_t\subset \mathcal{I}_{Ct}(\pa F)$ for some $C>0$ depending only on $\| X\|_\infty$, we have 
		\begin{equation}
			\begin{split}
				\| \xi_t  -\xi_0 & \|_{L^p(\partial E)}^p = \int_{\partial E} \big|\int_{-\sigma}^{\sigma}  (\chi_{F_t}(y+s\nu_E(y))-\chi_E(y+s\nu_E(y))J\Phi_s(y)\,ds \big|^p d \mathcal{H}^1_x \\
				&= \int_{B(z,Ct) \cap \pa E} \big|\int_{-Ct}^{Ct}  (\chi_{F_t}(y+s\nu_E(y))-\chi_E(y+s\nu_E(y))J\Phi_s(y)\,ds \big|^p d \mathcal{H}^1_x \leq C t^{p}.
			\end{split}
		\end{equation}
		In particular this implies that $v_t\to v_0$ in $W^{2,p}(\pa E)$ for all $p\geq1$, hence also uniformly on $\pa E$.  Therefore we have 
		\begin{equation}\label{18072024pom1}
			v_t \circ \pi_{\pa E} \rightarrow v_0 \circ \pi_{\pa E} \quad \text{ as }t\rightarrow 0  \text{ in }C^0({\rm cl}\big(\mathcal{I}_\sigma(\pa E)\big)) .
		\end{equation}
		Recall now that, see \cite[Proposition 17.8]{Maggibook}, that for all $\varphi\in C_c(\R^2)$
		$$
		\lim_{t\to0}\frac{1}{t}\bigg(\int_{F_t}\varphi\,dx-\int_{F}\varphi\,dx\bigg)=\int_{\pa F}\varphi X\cdot\nu_F\,d\Ha^{1}, 
		$$	
		that is 
		\begin{equation}\label{18072024pom2}
			\frac{1}{t}(\chi_{F_t}-\chi_F)\mathcal L^2\weakstar X\cdot\nu_F\Ha^{1}\res\pa F \qquad \text{in the sense of measures.}	
		\end{equation}
		Now, using the divergence theorem, formula \eqref{18072024pom1} and coarea formula, we have that
		\begin{equation}\label{18072024pom3}
			\begin{split}
				&	\int_{\partial E} (\vert \nabla_\tau v_t \vert^2 - \vert \nabla_\tau v_0 \vert^2 )\, d \mathcal{H}^{1}= \int_{\partial E} (\nabla_\tau  v_t - \nabla_\tau  v_0 ) \cdot (\nabla_\tau  v_t+ \nabla_\tau  v_0) \,d \mathcal{H}^{1}\\
				&= \int_{\pa E} (-\Delta_\tau  v_t+\Delta_\tau  v_0)(v_t+v_0) \, d \mathcal{H}^{1}= \int_{\pa E} (\xi_t-\xi_0)(v_t+v_0) \,d \mathcal{H}^{1}\\
				& =\int_{\partial E}\int_{-\sigma}^{\sigma}(\chi_{F_t}(y+s\nu_E(y))-\chi_F(y+s\nu_E(y)) J \Phi_s(y)(v_t(y)+v_0(y)) \,ds \,d\mathcal{H}^{1}_y\\
				&= \int_{\R^2} (v_t (\pi_{\pa E}(x))+v_0(\pi_{\pa E}(x)))(\chi_{F_t}(x)-\chi_F(x))\,dx.
			\end{split}
		\end{equation}
		Therefore, by \eqref{1407asdasd}, \eqref{18072024pom1}, \eqref{18072024pom2}, \eqref{18072024pom3}, we obtain
		\begin{equation}
			\begin{split}
				&\lim_{t \rightarrow 0} \frac{d_{H^{-1}}(F_t,E)^2-d_{H^{-1}}(F,E)^2}{t}= \lim_{t \rightarrow 0} \frac{\int_{\pa E} \vert \nabla_\tau v_t \vert^2\, d \mathcal{H}^{1}-\int_{\pa E} \vert \nabla_\tau v_0 \vert^2 \,d \mathcal{H}^{1}}{t}\\
				& =\lim_{t \rightarrow 0} \frac{\int_{F_t} (v_t(\pi_{\pa E}(x))+v_0(\pi_{\pa E}(x)))\,dx-\int_{F} (v_t(\pi_{\pa E}(x))+v_0(\pi_{\pa E}(x)))\,dx}{t}\\
				&=\int_{\pa F}2 v_0(\pi_{\pa E}(y)) X(y)\cdot\nu_F(y)\,d\Ha^{1}_y.
			\end{split}
		\end{equation}
		Hence by above formula and recalling $ f_0 = v_0 / \| \nabla_\tau v_0 \|_{L^2(\pa E)}$, we obtain  the desired result.
	\end{proof}
	\subsection{Elastic energy}  Let $F \Subset \Omega$, and let $u \colon \Omega \setminus F \rightarrow \R^2$ be an  elastic displacement. We define $E(u)$, the symmetric part of $\nabla u$, as $$E(u):=\frac{\nabla u+ (\nabla u)^{T}}{2}.$$ 
	Throughout this work,
	$\mathbb{C}$ denotes a fourth-order elasticity tensor acting on symmetric $2 \times 2$ matrices $A$, satisfying the coercivity condition $$\mathbb{C}A \colon A >0 \text{ for all } A \neq 0.$$ We define the elastic energy density as $$Q(A):= \frac{1}{2} \mathbb{C}A \colon A.$$  
	\subsubsection{Constrained elastic energy}
	Let $K_{el}>0$ and $ h>0$ be fixed. Given a boundary displacement $w_0 \in C^{3,\frac{1}{4}}(\pa \Omega)$, we define the minimization problem
	\begin{equation}\label{minelvinc}
		u_F^{K_{el},h} \in \mathrm{argmin} \left\{ \int_{\Omega \setminus F} Q(E(u))\, dx \colon u \in \mathfrak{C}_{K_{el}}^{3,\frac{1}{4}}(\Omega ,\R^2) ,\, \| \nabla^4 u \|_{C^{0,\frac{1}{4}}(\Omega)} \leq \frac{K_{el}}{h^{\frac{1}{4}}},\, u |_{\pa \Omega}= \omega_0   \right\}.
	\end{equation}
	We then define the constrained elastic energy as
	\begin{equation}\label{enelvinc}
		\mathcal{E}(E(u_F^{K_{el},h})):= \int_{\Omega \setminus F} Q(E(u_F^{K_{el},h}))\, dx.
	\end{equation}
	\begin{remark}
		The existence of an minimizier for the problem \eqref{minelvinc} follows from the Arzela-Ascoli Theorem. Hence, the energy functional in  \eqref{enelvinc} is well-defined.
	\end{remark}
	In what follow, we omit the explicit dependence of $u_{F}^{K_{el},h}$ on $h$, and we write $u_{F}^{K_{el}}$ for brevity.
	\subsubsection{Elastic energy} Now, fix a boundary displacement $w_0 \in C^{3,\frac{1}{4}}(\pa \Omega)$ we define the (unconstrained) elastic problem as:
	\begin{equation}\label{minelast}
		u_F \in \arg \!\min \left\{ \int_{\Omega \setminus F} Q(E(u))\, dx \colon u \in H^1(\Omega \setminus F, \R^2)    \right\}
	\end{equation}
	and we define the corresponding energy as 
	\begin{equation}\label{enelast}
		\mathcal{E}(E(u_F)):= \int_{\Omega \setminus F} Q(E(u_F))\, dx.
	\end{equation}
	More precisely, $u_F$ is the unique solution in $H^1(\Omega \setminus F, \R^2)$ to the following elliptic system:
	\begin{equation}\label{eqelliel}
		\left\{
		\begin{aligned}
			& \div \mathbb{C}E(u_F)=0 & \text{ in } \Omega \setminus F  , \\
			& \mathbb{C}E(u_F)[\nu_F]=0 & \text{ on }  \pa F , \\
			& u_F= w_0 & \text{ on } \pa \Omega.
		\end{aligned}
		\right.
	\end{equation}  
	We recall that if $ w_0 \in C^{3,\frac{1}{4}}(\pa \Omega)$ and $F $ is of class $C^{3,\frac{1}{4}}$, then the solution $u_F \in C^{3,\frac{1}{4}}(\Omega \setminus F)$ by standard elliptic regularity theory (see \cite{AgmonDouglisNiremberg1964}, \cite[Proposition 8.9]{FM2012}). Moreover, the following estimate holds: $$ \| u_F \|_{C^{3,\frac{1}{4}}(\Omega \setminus F)} \leq C (\| w_0 \|_{C^{3,\frac{1}{4}}(\pa \Omega)}+ \|u_F \|_{C^{3,\frac{1}{4}}(\pa F)}) $$
	where $C$ is an universal constant. Thank to this observation, we have that for $K_{el}$ sufficiently large, the minimization problems \eqref{minelvinc} and \eqref{minelast} are equivalent, so that $u_F=u_F^{K_{el},0}$.
	
	In the next proposition, we compute the first variation of the function $$F \rightarrow \mathcal{E}(E(u_F^{K_{el}})).$$
	\begin{proposition}\label{PropelELASt}
		Let $F \Subset \Omega$ be a set of class $C^1$, let $X \in C^1_c(\Omega,\R^2)$, and let $( \Phi(t,\cdot))_{t \in (-\varepsilon,\varepsilon)}$ be the unique solution of the Cauchy problem:
		\begin{equation}
			\left\{
			\begin{aligned}
				& \frac{\pa }{\pa t} \Phi(t,x)= X \circ \Phi(t,x) & \forall x \in \R^2 ,\\
				& \Phi(0,x)=x & \forall x \in \R^2.
			\end{aligned}
			\right.
		\end{equation}
		We define $F_t= \Phi(t,F)$. Then the following identity holds:
		\begin{equation}\label{FVELAST}
			\frac{d}{d t} \bigg|_{t=0} \mathcal{E}(u_{F_t}^{K_{el}})= -\int_{\pa F} Q(E(u_F^{K_{el}})) X \cdot \nu_F \, d \mathcal{H}^1.
		\end{equation}
	\end{proposition}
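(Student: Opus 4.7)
The plan is to obtain the derivative via a sandwich argument exploiting the minimality of the constrained elastic displacements, combined with the shape-derivative formula for moving domains already invoked in the proof of Proposition~\ref{propELDH-1}. Set $u_0:=u_F^{K_{el}}$ and $u_t:=u_{F_t}^{K_{el}}$. The crucial observation is that the admissible class in \eqref{minelvinc} depends only on $\Omega$, not on $F$: hence $u_0$ is a competitor for the $F_t$-problem and $u_t$ is a competitor for the $F$-problem, yielding the minimality inequalities $\int_{\Omega\setminus F_t}Q(E(u_t))\,dx\leq \int_{\Omega\setminus F_t}Q(E(u_0))\,dx$ and $\int_{\Omega\setminus F}Q(E(u_0))\,dx\leq \int_{\Omega\setminus F}Q(E(u_t))\,dx$. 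Subtracting $\mathcal{E}(u_F^{K_{el}})$ and $\mathcal{E}(u_{F_t}^{K_{el}})$ from the appropriate sides and using $\int_{\Omega\setminus G}f\,dx - \int_{\Omega\setminus H}f\,dx = \int_{\Omega}f(\chi_H - \chi_G)\,dx$ yields the sandwich (for $t>0$, with inequalities reversed for $t<0$)
\begin{equation*}
\int_{\Omega}Q(E(u_t))(\chi_F-\chi_{F_t})\,dx \ \leq\ \mathcal{E}(u_{F_t}^{K_{el}})-\mathcal{E}(u_F^{K_{el}}) \ \leq\ \int_{\Omega}Q(E(u_0))(\chi_F-\chi_{F_t})\,dx.
\end{equation*}

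For the upper bound, the function $Q(E(u_0))$ is continuous on $\Omega$ since $u_0 \in C^{3,\frac{1}{4}}(\Omega,\R^2)$, so the shape-derivative formula \cite[Proposition 17.8]{Maggibook} applied exactly as in the proof of Proposition~\ref{propELDH-1} gives
\begin{equation*}
\lim_{t\to 0}\frac{1}{t}\int_{\Omega}Q(E(u_0))(\chi_F-\chi_{F_t})\,dx = -\int_{\pa F}Q(E(u_0))\,X\cdot\nu_F\,d\mathcal{H}^1.
\end{equation*}

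For the lower bound, the obstacle is that the integrand itself depends on $t$. Here I would exploit the uniform a priori bounds $\|u_t\|_{C^{3,\frac{1}{4}}(\Omega)}\leq K_{el}$ and $\|\nabla^4 u_t\|_{C^{0,\frac{1}{4}}(\Omega)}\leq K_{el}/h^{\frac{1}{4}}$ built into the definition \eqref{minelvinc}. Along any sequence $t_n\to 0$, Arzela-Ascoli extracts a subsequence along which $u_{t_n}\to u^*$ in $C^{3}({\rm cl}(\Omega))$, with $u^*$ still admissible. Passing to the limit in the minimality inequality and using $|F_{t_n}\Delta F|\to 0$, one obtains $\int_{\Omega\setminus F}Q(E(u^*))\,dx \leq \mathcal{E}(u_F^{K_{el}})$, so $u^*$ is itself a constrained minimizer on $F$. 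Strict convexity of $Q$ on symmetric matrices together with Korn's inequality on $\Omega\setminus F$ and the Dirichlet datum $u=w_0$ on $\pa\Omega$ then force $E(u^*)=E(u_0)$ on $\Omega\setminus F$; in particular $Q(E(u_{t_n}))\to Q(E(u_0))$ uniformly on a neighborhood of $\pa F$. Writing $Q(E(u_{t_n}))=Q(E(u_0))+[Q(E(u_{t_n}))-Q(E(u_0))]$, the first summand contributes the same limit as the upper bound, while the second is bounded by $\|Q(E(u_{t_n}))-Q(E(u_0))\|_{L^\infty(\Omega)}\,|F_{t_n}\Delta F|/t_n$ which tends to $0$ since $|F_{t_n}\Delta F|=O(t_n)$. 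As the limit is independent of the subsequence, $(\mathcal{E}(u_{F_t}^{K_{el}})-\mathcal{E}(u_F^{K_{el}}))/t$ converges as $t\to 0$ to the right-hand side of \eqref{FVELAST}.

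The main obstacle is precisely the identification $E(u^*)=E(u_0)$ on $\Omega\setminus F$: the constrained minimizer is not uniquely determined on $F$ itself (the energy is insensitive to values there), so uniqueness can only be extracted on the exterior domain via minimality plus the Korn/Dirichlet argument above. Once this identification is in place, the remainder is the standard Hadamard/Reynolds transport calculation already exploited in Proposition~\ref{propELDH-1}.
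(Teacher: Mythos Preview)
Your sandwich argument via cross-testing the minimizers is exactly the mechanism underlying the paper's proof; the paper compresses it into the single line ``combining \eqref{31032025form3} with the measure convergence $\tfrac{1}{t}(\chi_{F_t}-\chi_F)\mathcal L^2\weakstar X\cdot\nu_F\,\mathcal H^1\res\pa F$'', but the content is the same: Arzel\`a--Ascoli compactness of the constrained minimizers plus the Reynolds-type transport identity.

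There is, however, a small but genuine gap in your treatment of the lower bound. From $E(u^*)=E(u_0)$ on $\Omega\setminus F$ you \emph{cannot} deduce that $Q(E(u_{t_n}))\to Q(E(u_0))$ uniformly on a fixed two-sided neighborhood of $\pa F$, nor that $\|Q(E(u_{t_n}))-Q(E(u_0))\|_{L^\infty(\Omega)}\to 0$: the constrained problem \eqref{minelvinc} imposes no condition on the values inside $F$, so $u^*$ and $u_0$ may well differ there, and your error bound $\|Q(E(u_{t_n}))-Q(E(u_0))\|_{L^\infty(\Omega)}\cdot|F_{t_n}\Delta F|/t_n$ need not vanish. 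The fix is immediate once you localize correctly: the error integral is supported on $F\Delta F_{t_n}\subset\mathcal I_{C|t_n|}(\pa F)$, so bound it by
\[
\sup_{\mathcal I_{C|t_n|}(\pa F)}\bigl|Q(E(u_{t_n}))-Q(E(u_0))\bigr|\cdot\frac{|F\Delta F_{t_n}|}{|t_n|}
\ \le\ \Bigl(\|Q(E(u_{t_n}))-Q(E(u^*))\|_{L^\infty(\Omega)}+\sup_{\mathcal I_{C|t_n|}(\pa F)}\bigl|Q(E(u^*))-Q(E(u_0))\bigr|\Bigr)\cdot O(1).
\]
The first summand tends to zero by the $C^3$ convergence $u_{t_n}\to u^*$; the second tends to zero because $Q(E(u^*))-Q(E(u_0))$ is a fixed continuous function on $\Omega$ that vanishes on $\pa F$ (by continuity from the exterior, where it is identically zero), and the neighborhood shrinks to $\pa F$. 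Equivalently, run the decomposition with $u^*$ in place of $u_0$ and observe at the end that $Q(E(u^*))=Q(E(u_0))$ on $\pa F$, so the two boundary integrals coincide. With this repair your argument is complete and coincides with the paper's. (Korn's inequality is not actually needed: strict convexity of $Q$ alone already forces $E(u^*)=E(u_0)$ on $\Omega\setminus F$.)
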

	\begin{proof}
		Without loss of generality, we can assume that $ F_t \Subset \Omega$ for all $ t \in (-\varepsilon,\varepsilon)$. Note that the symmetric difference $ F_t \Delta F   $ is contained in a tubular neighborhood $\mathcal{I}_{t C} (\pa F)$, where $C=C(\| X \|_{\infty}) $. As a result, \begin{equation}\label{31032025form1}
			\vert F_t \Delta F \vert \rightarrow 0 \text{ as } t \rightarrow 0.
		\end{equation} 
		For every $t \in (-\varepsilon,\varepsilon)$, let $u_{F_t}^{K_{el}} \in \mathfrak{C}_{K_{el}}^{3,\frac{1}{4}}(\Omega,\R^2)$ be a minimizer of the problem \eqref{minelvinc} for $F=F_t$. Then using the Arezela-Ascoli Theorem, up to a subsequece, we have that \begin{equation}\label{31032025form2}
			u_{F_t}^{K_{el}} \rightarrow u \text{ in } \mathfrak{C}_{K_{el}}^{3,\frac{1}{4}}(\Omega,\R^2).
		\end{equation} 
		\textit{Claim} $u= u_F^{K_{el}}$.\\
		Combining \eqref{31032025form1} and \eqref{31032025form2}, we deduce:
		\begin{equation}
			\int_{\Omega \setminus F} Q(E(u))  \, d x = \lim_{t \rightarrow 0} \int_{\Omega \setminus F_t} Q(E(u_{F_t}^{K_{el}})) \, d x\leq \lim_{t \rightarrow 0} \int_{\Omega \setminus F_t} Q(E(v)) \, d x = \int_{\Omega \setminus F} Q(E(v)) \, dx
		\end{equation}
		for every admissible test function $ v \in \mathfrak{C}^{3,\frac{1}{4}}_{K_{el}}(\Omega,\R^2)$. Therefore, 
		$u$ must be a minimizer $u_{F}^{K_{el}}$, and hence: \begin{equation}\label{31032025form3}
			u_{F_t}^{K_{el}} \rightarrow u_{F}^{K_{el}} \text{ in } \mathfrak{C}_{K_{el}}^{3,\frac{1}{4}}(\Omega,\R^2).
		\end{equation}
		Finally, recall \cite[Proposition 17.8]{Maggibook} that:
		\begin{equation}
			\frac{1}{t}(\chi_{F_t}-\chi_F)\mathcal L^2\weakstar X\cdot\nu_F\Ha^{1}\res\pa F,	
		\end{equation}
		in the sense of measures. Combining this with formula \eqref{31032025form3}, we get the desired derivative formula, i.e., \eqref{FVELAST}.
	\end{proof}
	\subsection{Minimizing movement scheme and flat solution}
	Fix $h>0$ be a fixed time step discretization. Let $K_{el}>0$ be fixed. Let $E \Subset \Omega  $ be a bounded open set of class $C^2$. For every set $F \subset \R^2$ sufficiently close to $E$, we define the functional
	\begin{equation}\label{energia}
		\mathcal{F}_h(F,E):= \mathcal{G}(F)+ \frac{1}{2h}d_{H^{-1}}^2(F,E)
	\end{equation}
	where 
	\begin{equation}\label{defG}
		\mathcal{G}(F)=P_{\varphi}(F)+\mathcal{E}(E(u_F^{K_{el}})).
	\end{equation} 
	\begin{definition}[Constrained discrete flat flow] \label{12092023def1}
		Let 
		\begin{equation}
			\text{ $\beta< \min \{ \sigma_{E_0}, \mathrm{dist}(\pa \Omega, \pa E_0) \}$ and $K_{el}$ be fixed.}   
		\end{equation}
		Let $h >0$ be the time step discretization. Define the family of sets $\{ E_{hk}^{h,\beta}\}_{k \in \mathbb{N}}$ iteratively by setting $E_0^{h,\beta}:=E_0$ and, 
		$$   E_{ hk}^{h,\beta} \in \arg \min \left\{\mathcal{F}_{h}(F,\, E_{h(k-1)}^{h,\beta}) \colon \, F \Delta E_{h(k-1)}^h \subset {\rm cl}( I_{\beta}(\pa E_{h(k-1)}^{h,\beta}))\right\} \quad k\geq 1,$$
		where the functional $\mathcal{F}_h$ is defined in \eqref{energia}.
		We define 
		\begin{equation}\label{agligderigligb}
			E_{t}^{h,\beta}:= E_{hk }^{h,\beta} \quad \text{for any } t \in [k h,\, (k+1)h).
		\end{equation}
		The family $\{E_t^{h,\beta}\}_{t \geq 0}$ is called a constrained discrete flat flow with initial datum $E_0$ and time step $h$. 
	\end{definition}
	We define a flat flow solution $\{E^\beta_t\}_{t \geq 0}$ of the anisotropic surface diffusion with elasticity as any cluster point when we let  $h \to 0^+$ of $ \{ E^{h,\beta}_t \}_{t \geq 0}$. 
	\section{Preliminary estimates}\label{stimaprelininari}
	The aim of this section is to establish a regularity estimate for the set $F$ that minimizes the incremental problem
	\begin{equation}\label{probdimin1}
		\min \left\{ \mathcal{F}_h(A,E) \colon \, A \Delta E \subset {\rm cl}(\mathcal{I}_{ \eta} (\pa E)) \right\}
	\end{equation}
	where $E \in \mathfrak{H}^4_{K,\sigma_0}(E_0)$ and $\eta(K,K_{el})>0$. We recall that $E_0 \Subset \Omega$ be open and connected set of class $C^5$ such that $\vert E_0 \vert=1$. The main result of this section is the following:
	\begin{theorem}\label{MainThm375}
		Let $E$ be a set of class $C^5$ such that $E \in \mathfrak{H}^4_{K,\sigma_0}(E_0)$ and $\| \pa_{\tau}^3 \kappa_E^{\varphi} \|_{L^2(\pa E)} \leq \frac{K}{h^{\frac{1}{4}}}$. 
		Then there exist constants $\eta_0 = \eta_0(K,K_{el})$, $C_1= C_1(K,K_{el})$, and $ C_2= C_2(K,K_{el})$, such that, for every $\eta < \eta_0$, there exits $h_0$ with the following property: if $0 < h  \leq h_0$ and $F$ is a minimizer of \eqref{probdimin1},  then $\pa F \Subset \mathcal{I}_{\eta}(\pa E)$ and coincides with a graph of a smooth function $\psi \colon \pa E \rightarrow \R$ satisfying
		\begin{equation}\label{formMain1375}
			\| \psi \|_{L^2(\pa E)} \leq C_1 h, \quad \| \psi \|_{H^4(\pa E)} \leq C_1
		\end{equation}
		and
		\begin{equation}\label{FormMain2375}
			\|  \kappa_F^{\varphi} \|_{H^2(\pa F)} \leq C_2, \qquad  \| \pa_{\pa F}^3 \kappa_F^\varphi \|_{L^2(\pa F)} \leq \frac{C_2}{h^{\frac{1}{4}}}.
		\end{equation}
		Moreover, there exist constants $\hat \sigma=\hat{\sigma}(K,K_{el})$, and $ K_1=K_1(K,K_{el})$ such that $ F \in \mathfrak{H}^4_{K_1, \hat \sigma}(E_0)$.
	\end{theorem}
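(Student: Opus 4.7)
The plan is to follow the roadmap sketched in the introduction: derive a $\Lambda$-minimality estimate for $F$, invoke an $\varepsilon$-regularity theorem to obtain a $C^1$ graph representation, show that the exterior constraint is inactive so that an Euler--Lagrange equation is available, and finally bootstrap regularity from the EL equation using the curvature expansion \eqref{Cdacit} together with the interpolation tools of Section \ref{NOtazioniLAvoro}.

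\textbf{Step 1: $H^{-1}$-velocity bound and $\Lambda$-minimality.} Testing minimality against the competitor $A=E$ gives
\begin{equation}
P_\varphi(F)+\mathcal{E}(E(u_F^{K_{el}}))+\frac{d_{H^{-1}}^2(F,E)}{2h}\leq P_\varphi(E)+\mathcal{E}(E(u_E^{K_{el}})),
\end{equation}
from which one reads off a uniform bound $d_{H^{-1}}^2(F,E)/h\leq C(K,K_{el})$ and $P_\varphi(F)\leq C(K,K_{el})$. For any competitor $A$ with $A\Delta F\Subset\mathcal{I}_\eta(\pa E)$, minimality of $F$ combined with $|\mathcal{E}(E(u_A^{K_{el}}))-\mathcal{E}(E(u_F^{K_{el}}))|\leq C(K_{el})|A\Delta F|$ (from the uniform $C^{3,1/4}$-control in the constrained elastic problem) and an estimate of $|d_{H^{-1}}^2(A,E)-d_{H^{-1}}^2(F,E)|/(2h)$ via Lemma \ref{remgEF} and the velocity bound yields
\begin{equation}
P_\varphi(F)\leq P_\varphi(A)+\Lambda|A\Delta F|,\qquad \Lambda=\Lambda(K,K_{el}).
\end{equation}

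\textbf{Step 2: graph representation.} Apply the $\varepsilon$-regularity lemma for $\Lambda$-minimizers of $P_\varphi$ (Lemma \ref{propdadim}) together with the Hausdorff-closeness of $\pa F$ to $\pa E$, which follows from the density estimates for $\Lambda$-minimizers and the assumption $F\Delta E\subset\mathrm{cl}(\mathcal{I}_\eta(\pa E))$. For $\eta_0=\eta_0(K,K_{el})$ sufficiently small this produces $\psi\in C^{1,1/2}(\pa E)$ with
\begin{equation}
\pa F=\{x+\psi(x)\nu_E(x):x\in\pa E\},\qquad \|\psi\|_{C^{1,1/2}(\pa E)}\leq \tfrac{1}{2}\eta,
\end{equation}
so in particular $\pa F\Subset\mathcal{I}_\eta(\pa E)$ and the obstacle is not active.

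\textbf{Step 3: Euler--Lagrange equation.} Inactivity of the constraint allows inner variations by divergence-free vector fields. Combining \eqref{FeulPANI}, Proposition \ref{PropelELASt}, and Proposition \ref{propELDH-1} gives, for some Lagrange multiplier $\lambda$ from the volume constraint,
\begin{equation}
\kappa_F^\varphi(y)-Q(E(u_F^{K_{el}}))(y)+\frac{v_{F,E}(\pi_{\pa E}(y))}{h}=\lambda\qquad \text{for all }y\in\pa F,
\end{equation}
where $v_{F,E}$ is defined by \eqref{eqv}. Pulling back through $\Psi(x)=x+\psi(x)\nu_E(x)$ and applying the curvature expansion \eqref{Cdacit} converts this into a fourth-order semilinear elliptic equation for $\psi$ on $\pa E$ with leading term $g(\nu_E)\Delta_\tau\psi$ (plus lower-order nonlinearities in $\psi,\pa_\tau\psi$ and the data $\kappa_E,Q(E(u_F^{K_{el}})),v_{F,E}/h$).

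\textbf{Step 4: regularity bootstrap and the $L^2$-velocity bound.} The identity \eqref{lafunzgEFgraph} combined with the $H^{-1}$-velocity bound from Step 1 gives $\|\psi+\kappa_E\psi^2/2\|_{H^{-1}(\pa E)}\leq Ch^{1/2}$, and solving the Neumann/mean-zero problem with the already-known $C^{1,1/2}$ control on $\psi$ yields $\|\psi\|_{L^2(\pa E)}\leq C_1 h$. To upgrade $\psi$ from $C^{1,1/2}$ to $H^4$, I iterate elliptic estimates on the EL equation: the elastic term lies in $C^{0,1/4}(\pa F)$ uniformly in $h$ thanks to the bound $\|u_F^{K_{el}}\|_{C^{3,1/4}}\leq K_{el}$, and $v_{F,E}/h$ is controlled in $W^{2,p}$ by the velocity estimate. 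Nonlinear products are handled with Lemma \ref{LemmLeibinz} and Proposition \ref{PROPINTER}, producing the bound $\|\psi\|_{H^4(\pa E)}\leq C_1(K,K_{el})$. Reverting, one reads off $\|\kappa_F^\varphi\|_{H^2(\pa F)}\leq C_2$ and locates $F$ inside some $\mathfrak{H}^4_{K_1,\hat\sigma}(E_0)$.

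\textbf{Step 5: third-order curvature bound with $h^{-1/4}$ blow-up.} Differentiate the EL equation three times in $\pa F$ and use $\pa_{\pa F}^3(v_{F,E}/h)$ being controlled by $\|\xi_{F,E}\|_{H^2}/h$, which is in turn controlled via \eqref{lafunzgEFgraph} and the $H^4$-bound on $\psi$. The only term exhibiting genuine $h^{-1/4}$ growth is $\pa_{\pa F}^3 Q(E(u_F^{K_{el}}))$, whose $L^2$ norm is bounded by $\|\nabla^4 u_F^{K_{el}}\|_{C^{0,1/4}}\leq K_{el}/h^{1/4}$ thanks to the constraint in \eqref{minelvinc}. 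Combining everything (with the hypothesis $\|\pa_\tau^3\kappa_E^\varphi\|_{L^2}\leq K/h^{1/4}$ entering through the term $\kappa_E^\varphi$ transplanted onto $\pa F$) gives \eqref{FormMain2375}.

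\textbf{Main obstacle.} The principal difficulty is the $\Lambda$-minimality step with $\Lambda$ independent of $h$: one must estimate the variation of $d_{H^{-1}}^2/h$ under modifications of the candidate set using only the a priori velocity bound, without losing a factor $1/h$. A close second is the bootstrap: every estimate must track the precise $h$-dependence so that the elastic constraint contributes only through the expected $h^{-1/4}$ factor in the third-derivative bound, while $\|\psi\|_{H^4}$ and $\|\kappa_F^\varphi\|_{H^2}$ remain $h$-independent. This is where the delicate interplay between the constrained elastic energy \eqref{enelvinc} and the Sobolev interpolation inequalities of Proposition \ref{PROPINTER} and Lemma \ref{LemmLeibinz} is essential.
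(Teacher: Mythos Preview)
Your overall roadmap is right, but Step~1 contains a genuine gap that propagates. Testing $F$ against $E$ gives only $d_{H^{-1}}^2(F,E)\leq Ch$, i.e.\ $d_{H^{-1}}(F,E)\leq C\sqrt{h}$. With this bound, the quantity $(d_{H^{-1}}(A,E)+d_{H^{-1}}(F,E))/(2h)$ that appears when you expand the difference of squared distances is of order $h^{-1/2}$, so your $\Lambda$ depends on $h$ and the $\varepsilon$-regularity argument degenerates. The paper resolves this via Lemma~\ref{lego123og}, which is the missing idea: one shows that $E$ itself is an almost-minimizer of $\mathcal G$ in the sense $\mathcal G(E)\leq \mathcal G(F)+\Lambda' d_{H^{-1}}(F,E)$, using the strict convexity of $\varphi$ (the lower bound \eqref{remform}) to extract a gain $\tfrac{J_\varphi}{4}\|\nabla_\tau\psi\|_{L^2}^2$ from $P_\varphi(F)-P_\varphi(E)$ and then absorbing the elastic and lower-order remainders via interpolation between $H^{-1}$ and $H^1$. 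Combining this with minimality of $F$ upgrades the naive bound to the \emph{linear} estimate $d_{H^{-1}}(F,E)\leq 2\Lambda' h$ (Lemma~\ref{lalalambdamin}), which is what makes the $\Lambda$-minimality constant $h$-independent and simultaneously yields $\|\nabla_\tau\psi\|_{L^2}\leq C\sqrt h$.

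Step~4 also has the logical order inverted. You cannot pass from $\|\xi_{F,E}\|_{H^{-1}}\leq C\sqrt{h}$ (or even $\leq Ch$) directly to $\|\psi\|_{L^2}\leq C_1h$: the paper first obtains only $\|\psi\|_{L^2}\leq Ch^{3/4}$ by interpolating $L^2$ between $H^{-1}$ and $H^1$ (Proposition~\ref{primapropreg}), then runs the $H^4$-energy estimate by multiplying the Euler--Lagrange equation \eqref{LEEULEROCOMB} by $\pa_\tau^4\psi$ and integrating (Proposition~\ref{Mthmproof1}), and only \emph{after} securing $\|\psi\|_{H^4}\leq C_1$ does it test the equation against $\xi_{F,E}$ to read off $\|\psi\|_{L^2}\leq C_1h$. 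Finally, in Step~5 the $h^{-1/4}$ loss is not isolated in the elastic term: the potential $v_{F,E}/h$ contributes as well, since $\|v_{F,E}/h\|_{H^3(\pa E)}$ is controlled by $\|\xi_{F,E}\|_{H^1}/h\leq Ch^{-1/4}$ (not $\|\xi_{F,E}\|_{H^2}/h$ as you wrote, which would give the worse rate $h^{-1/2}$).
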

	\subsection{$\Lambda$-minimality estimate}
	In this subsection, we prove that any minimizer $F$ of \eqref{probdimin1} is a $\Lambda$-minimizer of the $\varphi$-perimeter, with $\Lambda$ independent from $h$.
	
	We begin by recalling the definition of a $\Lambda$-minimizer of the $\varphi$-perimeter.
	\begin{definition}
		Let $E \subset \R^2$ be a set of finite perimeter. We say that $E$ is a $\Lambda$-minimizer of the $\varphi$-perimeter if there exists $\Lambda>0$ such that 
		\begin{equation}
			P_{\varphi}(E) \leq P_{\varphi}(G)+ \Lambda \vert G \Delta E \vert
		\end{equation}
		for every $G \subset \R^2$.
	\end{definition}
	It is known that if $ E\subset \R^2$ is a $\Lambda$-minimizer of the $\varphi$-perimeter, then $ \pa E$ is of class $C^{1,\eta}$ for all $ \eta \in [0 , \frac{1}{2})$ see  \cite{AlmgrenSimonSchoen1977}, \cite{Bombieri1982} and \cite{DePMa2015}. In the case where $\varphi$ is the Euclidean norm, see also \cite[Theorem~1.9]{Tamanimi1984}. 
	
	We will use the following lemma.  The proof is similar to those in \cite[Lemma~2.8]{CFJKsd} and \cite{KLP2025}, but we include it here for the reader’s convenience. 
	\begin{lemma}\label{propdadim}
		Assume that $E \in \mathfrak{H}^{3}_{K,\sigma_0}(E_0)$ and let $F$ be an $\Lambda$-minimizer of the $\varphi$-perimeter. Then for every $\gamma \leq \frac{1}{4} $, there exists  $\delta_0 =\delta_0(K,\Lambda,\gamma)$ such that 
		if 
		$$
		F \Delta E \subset \mathrm{cl}(\mathcal{I}_{\delta_0}(\pa E)),
		$$
		then there exists a function $\psi \in C^{1,\gamma}(\pa E)$ such that 
		\begin{equation}
			\pa F = \{ x + \psi(x)\nu_E(x) : x \in \pa E\}. 
		\end{equation}
		Moreover, for every $\eps>0$ there exists $\delta_0 = \delta_0(\eps)$ such that  $\|\psi\|_{C^{1,\gamma'}(\pa E)} \leq \eps$ for $\gamma' < \gamma$.
	\end{lemma}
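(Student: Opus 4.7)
I would argue by contradiction, combining the compactness of the class $\mathfrak{H}^{3}_{K,\sigma_0}(E_0)$ with the classical $\varepsilon$-regularity theory for $\Lambda$-minimizers of the $\varphi$-perimeter recalled right before the statement (Almgren, Bombieri, and in the anisotropic setting De Philippis--Maggi). Assume the first assertion fails: then there exist sequences $\delta_n\to 0^+$, sets $E_n\in\mathfrak{H}^{3}_{K,\sigma_0}(E_0)$, and $\Lambda$-minimizers $F_n$ with $F_n\Delta E_n\subset{\rm cl}(\mathcal{I}_{\delta_n}(\pa E_n))$, such that no function in $C^{1,\gamma}(\pa E_n)$ realizes $\pa F_n$ as a normal graph over $\pa E_n$. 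The compact Sobolev embedding of $H^{3}(\pa E_0)$ into $C^{2,\alpha}(\pa E_0)$ for every $\alpha<1/2$ yields, along a subsequence, $E_n\to E_\infty$ in $C^{2,\alpha}$ with $E_\infty\in\mathfrak{H}^{3}_{K,\sigma_0}(E_0)$. Since $|F_n\Delta E_n|\le C\delta_n\to 0$, we get $F_n\to E_\infty$ in $L^1$; testing the $\Lambda$-minimality against $E_n$ and using lower semicontinuity of $P_\varphi$ gives $P_\varphi(F_n)\to P_\varphi(E_\infty)$.

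With perimeter convergence in hand, I would apply the $\varepsilon$-regularity theorem at every point of $\pa E_\infty$: for each $x_0\in\pa E_\infty$, the spherical excess of $F_n$ on a small fixed ball around $x_0$ tends to zero (using convergence of measures and smoothness of the limit), so for $n$ large $\pa F_n\cap B_r(x_0)$ is the graph of a $C^{1,\gamma}$ function over the tangent line to $\pa E_\infty$ at $x_0$, with a $C^{1,\gamma}$ bound uniform in $n$. A finite cover of $\pa E_\infty$ then produces a uniform local $C^{1,\gamma}$ bound on $\pa F_n$ and the uniform convergence $\nu_{F_n}\to \nu_{E_\infty}\circ\pi_{\pa E_\infty}$, hence also $\nu_{F_n}\to\nu_{E_n}\circ\pi_{\pa E_n}$ uniformly. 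The decisive step, which I expect to be the main obstacle, is upgrading these local graphs to a single global normal graph over $\pa E_n$. Since $E_n\in\mathfrak{H}^3_{K,\sigma_0}(E_0)$, the projection $\pi_{\pa E_n}$ is a well-defined $C^{1,\alpha}$ map on $\mathcal{I}_{\sigma_0}(\pa E_n)$, and for $n$ large $\pa F_n\subset\mathcal{I}_{\delta_n}(\pa E_n)\Subset\mathcal{I}_{\sigma_0}(\pa E_n)$. The uniform closeness of $\nu_{F_n}$ to $\nu_{E_n}\circ\pi_{\pa E_n}$, together with $|\nabla d_{E_n}|=1$, shows through the implicit function theorem that each normal segment $\{x+s\nu_{E_n}(x):|s|<\sigma_0\}$ meets $\pa F_n$ transversally in exactly one point, giving a continuous map $\pa E_n\to\pa F_n$; surjectivity follows from a standard degree argument exploiting that $F_n$ and $E_n$ agree outside $\mathcal{I}_{\delta_n}(\pa E_n)$. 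This yields the desired $\psi_n\in C^{1,\gamma}(\pa E_n)$ with a uniform bound, contradicting the assumption.

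For the smallness assertion, I would use interpolation in H\"older spaces: for $\gamma'<\gamma$ one has
\begin{equation}
\|\psi\|_{C^{1,\gamma'}(\pa E)} \leq C\,\|\psi\|_{C^{1,\gamma}(\pa E)}^{\theta}\,\|\psi\|_{C^{0}(\pa E)}^{1-\theta}, \qquad \theta=\frac{1+\gamma'}{1+\gamma}<1,
\end{equation}
which is a direct consequence of \eqref{interHOLDER}. Combining the uniform $C^{1,\gamma}$ bound just established with $\|\psi\|_{C^0(\pa E)}\leq \delta_0$, the right-hand side can be made smaller than any prescribed $\varepsilon$ by choosing $\delta_0$ sufficiently small, which gives the second assertion.
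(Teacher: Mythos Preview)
Your proposal is correct and follows essentially the same route as the paper: compactness of $\mathfrak{H}^{3}_{K,\sigma_0}(E_0)$ combined with a contradiction argument, the $\varepsilon$-regularity theorem for $\Lambda$-minimizers to obtain local $C^{1,\gamma}$ graphs, a local-to-global step to produce the heightfunction, and then the H\"older interpolation \eqref{interHOLDER} together with $\|\psi\|_{L^\infty}\le\delta_0$ for the smallness claim. The only organizational difference is that the paper runs the contradiction on an intermediate normal-closeness statement ($|\nu_E(x)-\nu_F(y)|\le\varepsilon$ for $y$ near $x$) and then bounds the excess directly from it, whereas you run the contradiction on the graph conclusion itself and reach small excess via perimeter convergence; both feed the same $\varepsilon$-regularity machinery, and the paper disposes of the local-to-global step by citation where you sketch the implicit-function/degree argument.
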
 
	\begin{proof}
		By assumption, $E \Delta F \subset \mathrm{cl} (\mathcal{I}_{\delta_0}(\pa E))$ we have that for every $x \in \pa E$,
		$$ {\rm cl}\big(B_{\delta_0}(x)\big) \cap \pa F \neq \emptyset.$$  
		Let $\varepsilon>0$ 
		be fixed, and let $C(K)\geq K>0$ be a constant that we will choose  later. \\
		\textit{Claim:} For all $\delta_0 \in (0,\frac{\varepsilon}{100 C(K)})$  if $E$ and $F $ satisfy the assumption, then
		\begin{equation}\label{burro}
			\vert \nu_E(x)-\nu_F(y) \vert \leq \varepsilon \quad \text{for all } y \in \pa F \cap \mathrm{cl}(B_{\delta_0}(x)). 
		\end{equation}
		
		We argue by contradiction. Suppose the claim fails. Then there exist $\varepsilon>0$, sequences $\{E_n\}_{n \in \N},\, \{F_n\}_{n \in \N}  $ such that 
		\begin{enumerate}
			\item $ E_n \in \mathfrak{H}_{K,\sigma_0}^{3}(E_0)$ for all $n \in \N$,
			\item $F_n$ is a $\Lambda$-minimizer of the $ \varphi$-perimeter for all $n \in \N$,
			\item $ E_n \Delta F_n \subset \mathrm{cl}(\mathcal{I}_{\delta_0}(\pa E))  $ for all $n \in \N$,
			\item exist $x_n \in \pa E_n,\, y_n \in B_{\frac{1}{n}}(x_n) \cap \pa F_n$ such that
			\begin{equation}\label{contradict}
				\vert \nu_{E_n}(x_n)- \nu_{F_n}(y_n) \vert \geq \varepsilon \text{ for all } n \in \N.
			\end{equation}
		\end{enumerate}
		Without loss of generality and up to extracting a subsequence, we have $x_n,\,y_n \rightarrow x$ as $ n \rightarrow + \infty$,
		$$ E_n \rightarrow  E    \text{ in } \mathfrak{H}^{3}_{K,\sigma_0}(E_0), \,\, F_n \rightarrow F \text{ in Hausdorff distance,}  $$
		where $F$ is a $\Lambda$-minimizer of the $ \varphi$-perimeter. Therefore, we have $$ \nu_{E_n}(x_n) \rightarrow \nu_E(x)\text{ as }n \rightarrow + \infty .$$ 
		Now using the $ \Lambda$-minimality of $F_n$, we obtain $\nu_{F_n}(y_n) \rightarrow \nu_E(x)$; see \cite{Bombieri1982}. This contradicts \eqref{contradict}. 
		
		The conclusion of the lemma follows from \eqref{burro} and using a standard regularity argument. Indeed, let  $x_0 \in \pa E$. We may assume, without loss of generality, that $x_0 = 0$ and $\nu_E(0) = e_2$. Since $E \in \mathfrak{H}^{3}_{K,\sigma_0}(E_0)$, there exists $r_0= r_0(K) \leq  \frac{1}{C(K)}$ such that $ E \cap B_{r_0/2}$ coincides with the subgraph of a function $f: (-\frac{r_0}{2}, \frac{r_0}{2}) \to \R$, with $$\|f\|_{C^{1,\frac{1}{4}}((-\frac{r_0}{2}, \frac{r_0}{2}))} \leq \frac{10}{r_0},$$ provided $r_0 \leq 1$.  It then follows that  $|\nu_E(x) - e_2 | < \eps$   for all $x \in \pa E \cap B_{r_\eps}$, where $r_\eps = \frac{r_0}{20}\eps$. Observe that $\delta_0 < \frac{r_0}{80}\eps$ implies  $\delta_0 < \frac{r_\eps}{4}$. Then, by  \eqref{burro}, we obtain $|\nu_F(y) - e_2 | < 2\eps$ for all $y \in \pa F \cap B_{\frac{3r_\eps}{4}}$. Choose any point  $y_0 \in \pa F \cap B_{\delta_0}$ and using the previous inequality and the perimeter density estimates for $\Lambda$-minimizers of $\varphi$-perimeter, we conclude that the excess satisfies 
		\begin{equation}
			{\bf{e}}\big(F,y_0,\frac{r_\eps}{2}\big) =\min_{\omega \in \mathcal{S}^1}  \frac{1}{r_\eps}  \int_{\pa F \cap B_{\frac{r_\eps}{2}}(y_0)} |\nu_F(y) - \omega |^2 \, d \mathcal{H}^{1}_y\leq C\eps^2,
		\end{equation}
		provided $r_\eps<r_1=r_1(\Lambda,K)$, for some constant $C=C(\Lambda,K)$. Then, by the $\eps$-regularity theorem (see \cite{Bombieri1982}), and since $B_{r_\eps/4} \subset B_{r_\eps/2}(y_0) $  there exists a function $\varphi :(-r_\eps/4,r_\eps/4) \to \R$ such that 
		\[
		F \cap B_{r_\eps/4} = \{ (y_1,y_2) \in \R^2 : y_2 < \varphi(y_1) \} \cap  B_{r_\eps/4} 
		\]
		with   $\|\varphi\|_{C^{1,\gamma}((-r_\eps/4,r_\eps/4) )}\leq C$ and $\gamma \leq   \frac14 $. The existence of the heightfunction $\psi \in C^{1,\gamma}(\pa E)$ follows from the assumption that $ E \in \mathfrak{H}_{K,\sigma_0}^3(E_0)$, see \cite[Section 1.2]{Antonia}. 
		
		Finally, the smallness of the norm $\|\psi\|_{C^{1,\gamma'}(\pa E)}$ for $\gamma' <  \gamma$ ,when $\delta_0$ is small, follows from interpolation inequality \eqref{interHOLDER}, using that $\|\psi\|_{L^\infty(\pa E)} \leq \delta_0$. 
	\end{proof}
	
	We proceed to prove a technical lemma that will be instrumental at various stages of the article.
	
	\begin{lemma}\label{lemmazzo1}
		Let $E \in \mathfrak{H}^3_{K,\sigma_0}(E_0)$ be such that $\vert E \vert=1$. Then there exist constants $\sigma,
		C$ depending only on $K$ such that the following holds: if $F \subset \R^2$ with $ \pa F = \{ x+ \psi(x)\nu_E(x)\colon x \in \pa E\}$ for some function $ \| \psi \|_{C^1(\pa E)} \leq \sigma$ with $ \vert F \vert=1$, then \begin{equation}\label{tesilemmazzzo}
			\frac{1}{C} \| \nabla_{\pa E} \psi \|_{L^2(\pa E)} \leq  \| \nabla_{\pa E} \xi_{F,E} \|_{L^2(\pa E)} \leq C \| \nabla_{\pa E} \psi \|_{L^2(\pa E)}
		\end{equation}
		where $\xi_{F,E}$ is defined in Lemma \ref{remgEF}.
	\end{lemma}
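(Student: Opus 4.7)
The plan is to exploit the explicit formula $\xi_{F,E}=\psi+\kappa_E\psi^2/2$ provided by \eqref{lafunzgEFgraph}, differentiate it tangentially to obtain
\begin{equation}
\nabla_{\pa E}\xi_{F,E}=(1+\kappa_E\psi)\,\nabla_{\pa E}\psi+\frac{\psi^2}{2}\,\nabla_{\pa E}\kappa_E,
\end{equation}
and then absorb the quadratic remainder by smallness of $\sigma$. As background bounds I use $\|\kappa_E\|_{L^\infty(\pa E)}\leq C(K)$ and $\|\nabla_{\pa E}\kappa_E\|_{L^2(\pa E)}\leq C(K)$, both of which follow from $E\in\mathfrak{H}^3_{K,\sigma_0}(E_0)$ together with the one-dimensional embedding $H^1\hookrightarrow L^\infty$, as well as $\mathcal{H}^1(\pa E)\geq c(K)>0$, which is ensured by $|E|=1$.

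The crucial step, which closes what would otherwise be a circular estimate, is a Poincar\'e-type inequality for $\psi$ itself. Since $\psi$ is not a priori mean-zero, I would use that $\int_{\pa E}\xi_{F,E}\,d\mathcal{H}^1=0$ (from $|F|=|E|$) to write
\begin{equation}
\bar\psi:=\frac{1}{\mathcal{H}^1(\pa E)}\int_{\pa E}\psi\,d\mathcal{H}^1=-\frac{1}{\mathcal{H}^1(\pa E)}\int_{\pa E}\kappa_E\frac{\psi^2}{2}\,d\mathcal{H}^1,
\end{equation}
from which $|\bar\psi|\leq C(K)\|\psi\|_{L^2(\pa E)}^2\leq C(K)\sigma\,\|\psi\|_{L^2(\pa E)}$ because $\|\psi\|_\infty\leq\sigma$. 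Combining this with the standard Poincar\'e inequality applied to $\psi-\bar\psi$, one obtains, provided $\sigma$ is small depending only on $K$, the bound $\|\psi\|_{L^2(\pa E)}\leq C(K)\|\nabla_{\pa E}\psi\|_{L^2(\pa E)}$. The one-dimensional Sobolev embedding then upgrades this to $\|\psi\|_{L^\infty(\pa E)}\leq C(K)\|\nabla_{\pa E}\psi\|_{L^2(\pa E)}$.

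With this self-improvement in hand, both directions of \eqref{tesilemmazzzo} follow from the identity above. The upper bound is immediate, estimating $\|\psi^2\nabla_{\pa E}\kappa_E\|_{L^2}\leq\|\psi\|_\infty^2\|\nabla_{\pa E}\kappa_E\|_{L^2}\leq C(K)\sigma\,\|\nabla_{\pa E}\psi\|_{L^2}$, which is harmless since the dominant term contributes $(1+C(K)\sigma)\|\nabla_{\pa E}\psi\|_{L^2}$. For the lower bound I would choose $\sigma$ so small that $|1+\kappa_E\psi|\geq 1/2$ pointwise, solve the identity for $\nabla_{\pa E}\psi$, take $L^2$ norms, and absorb the analogous quadratic contribution on the right-hand side via the same smallness argument. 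The main obstacle in the argument is precisely this self-referential handling of the $\|\psi\|_\infty$ factor multiplying $\nabla_{\pa E}\kappa_E$; the zero-mean property of $\xi_{F,E}$ coming from the volume constraint $|F|=|E|$ is exactly what is needed to break the circle.
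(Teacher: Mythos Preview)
Your argument is correct and close in spirit to the paper's, but the two proofs organize the ``break the circle'' step differently. The paper never establishes a Poincar\'e inequality for $\psi$ itself; instead it uses the pointwise bound $\psi^2\leq 2\,\xi_{F,E}^2$ (valid once $\sigma$ is small) to convert every occurrence of $\psi^2$ into $\xi_{F,E}^2$, and then applies Poincar\'e directly to $\xi_{F,E}$, which has mean zero. Concretely, for the upper bound the paper writes $\|\nabla_{\pa E}\xi_{F,E}\|_{L^2}\leq (1+\sigma K)\|\nabla_{\pa E}\psi\|_{L^2}+C\sigma\|\xi_{F,E}\|_{L^2}$ and absorbs the last term via $\|\xi_{F,E}\|_{L^2}\leq C\|\nabla_{\pa E}\xi_{F,E}\|_{L^2}$; for the lower bound it again replaces $\psi^2$ by $\xi_{F,E}^2$ and uses the same Poincar\'e on $\xi_{F,E}$. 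Your route---controlling $\bar\psi$ from $\int\xi_{F,E}=0$, deducing $\|\psi\|_{L^2}\leq C\|\nabla_{\pa E}\psi\|_{L^2}$, and then upgrading to $L^\infty$ by Sobolev---is a clean alternative that has the mild advantage of yielding the standalone estimate $\|\psi\|_{L^2}\leq C\|\nabla_{\pa E}\psi\|_{L^2}$ as a by-product. The paper's route avoids the Sobolev embedding step entirely by working only with the pointwise comparison $\psi^2\lesssim\xi_{F,E}^2$. Both are short and rely on the same input, namely the zero mean of $\xi_{F,E}$ coming from $|F|=|E|$.
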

	\begin{proof}
		By Lemma \ref{remgEF}, we have  
		$ \xi_{F,E}= \psi + \frac{\psi^2}{2}\kappa_E$. 
		For $\sigma$ sufficiently small, we obtain  
		\begin{equation}\label{perchèiosiloso}
			\frac{\psi^2}{2} \leq \big(\psi+ \frac{\psi^2}{2}\kappa_E\big)^2= \xi_{E,F}^2 .
		\end{equation}   
		Computing the tangential gradient of $\xi_{F,E}$, we find
		\begin{equation}\label{050502025zoo}
			\nabla_{\pa E} \xi_{F,E}= \nabla_{\pa E} \psi+ \psi \kappa_E \nabla_{\pa E} \psi+ \frac{\psi^2}{2} \nabla_{\pa E}\kappa_E.
		\end{equation}
		Since $\vert E \vert = \vert F \vert$, we have $ \int_{\pa E} \xi_{F,E} \, d \mathcal{H}^1=0$ (see formula \eqref{mediazero0}). 
		Therefore, using \eqref{perchèiosiloso}, \eqref{050502025zoo}, and the H\"{o}lder inequality, we obtain \begin{equation}\label{asdhhhhh}
			\begin{split}
				\| \nabla_{\pa E} \xi_{F,E} \|_{L^2(\pa E)} &\leq (1+ \sigma K) \| \nabla_{\pa E} \psi \|_{L^2(\pa E)}+ \frac{\sigma}{\sqrt{2}} \| \xi_{F,E} \|_{L^2(\pa E)}\\
				& \leq  (1+ \sigma K) \| \nabla_{\pa E} \psi \|_{L^2(\pa E)}+ \frac{\sigma C_1}{\sqrt{2}} \| \nabla_{\pa E} \xi_{F,E} \|_{L^2(\pa E)}
			\end{split}
		\end{equation}
		where in the last inequality we used the Poincarè inequality: $$\| \xi_{F,E} \|_{L^2(\pa E)} \leq C_1 \| \nabla_{\pa E} \xi_{F,E} \|_{L^2(\pa E)}.$$ 
		Taking $\sigma$ small enough in \eqref{asdhhhhh}, we obtain  \begin{equation}\label{05052025primacaffè1}
			\| \nabla_{\pa E} \xi_{F,E} \|_{L^2(\pa E)} \leq C \| \nabla_{\pa E} \psi \|_{L^2(\pa E)} . 
		\end{equation}
		From \eqref{050502025zoo} and using the Sobolev embedding together with the H\"{o}lder's inequality, we get
		\begin{equation}\label{050502025dcaffè}
			\begin{split}
				\| \nabla_{\pa E} \psi \|_{L^2(\pa E)} &\leq \| \nabla_{\pa E} \xi_{F,E} \|_{L^2(\pa E)} + \| \nabla_{\pa E} \psi \|_{L^2(\pa E)} K \| \psi \|_{\infty}+ \| \nabla_{\pa E} \kappa_E \|_{L^2(\pa E)}  \| \psi^2 \|_{L^2(\pa E)}\\
				& \leq \| \nabla_{\pa E} \xi_{F,E} \|_{L^2(\pa E)} + \sigma K\| \nabla_{\pa E} \psi \|_{L^2(\pa E)}+ 2 \sigma K\| \xi_{F,E}\|_{L^2(\pa E)}\\
				& \leq \| \nabla_{\pa E} \xi_{F,E} \|_{L^2(\pa E)} + \sigma K\| \nabla_{\pa E} \psi \|_{L^2(\pa E)}+ 2 \sigma KC_1\| \nabla_{\pa E} \xi_{F,E}\|_{L^2(\pa E)}
			\end{split}
		\end{equation}
		where the second inequality uses \eqref{perchèiosiloso}, and the third uses the Poincaré inequality. Taking $\sigma$ small enough in \eqref{050502025dcaffè}, we get
		\begin{equation}\label{050525dcaffètesi}
			\frac{1}{C} \| \nabla_{\pa E} \psi \|_{L^2(\pa E)} \leq  \| \nabla_{\pa E} \xi_{F,E} \|_{L^2(\pa E)}.   
		\end{equation}
		Combining  \eqref{05052025primacaffè1} and \eqref{050525dcaffètesi} yields the desired formula, i.e., \eqref{tesilemmazzzo}.
	\end{proof}

	\begin{remark}
		Recalling that $ \varphi$ is a regular strictly convex norm, the following inequality holds:
		\begin{equation}\label{remform}
			\exists J_{\varphi}>0 \, \colon D^2 \varphi (\nu) \xi \cdot \xi \geq J_{\varphi} \vert \xi \vert^2 \quad \forall \nu \in \mathrm{cl}(\mathcal{I}_\frac{1}{4}(\mathcal{S}^1)),\,\xi \in \R^2 \text{ such that } \nu \bot \xi.
		\end{equation}  
	\end{remark}
	
	\begin{lemma}\label{lego123og}
		Let $E\in\mathfrak{H}^{3}_{K,\sigma_0}(E_0)$ with $ \vert E \vert=1$. Then there exist constants $\Lambda,\,\Lambda',\, \sigma$, depending on $K,K_{el},\Omega$, such that the following holds: if $F \subset \R^2$ is such that $ \pa F= \{ x +\psi(x) \nu_{E}(x) : x \in \pa E\},$ with $ \psi \in C^{1}(\pa F)$ and $ \| \psi \|_{C^{1}(\pa F)} \leq  \sigma,$ then
		\begin{equation}\label{lemmatesi1}
			\frac{J_\varphi}{4} \|\nabla_{\tau} \psi \|_{L^2(\pa F)}^2+\mathcal{G}(E) \leq \mathcal{G}(F)+ \Lambda d_{H^{-1}}(F,E)
		\end{equation}
		where $J_{\varphi}$ is defined in \eqref{remform}. 
		Furthermore, if $ F \subset \R^2 $ is a set of finite perimeter such that $ F\Delta E \subset \mathcal{I}_\sigma(\pa E),$ then
		\begin{equation}\label{lemmatesi2}
			\mathcal{G}(E) \leq \mathcal{G}(F)+ \Lambda' d_{H^{-1}}(F,E).
		\end{equation}  
	\end{lemma}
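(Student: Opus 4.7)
The plan is to prove the graph inequality \eqref{lemmatesi1} by Taylor-expanding the $\varphi$-perimeter to second order in the height function $\psi$, matching the resulting first-order and elastic correction terms against $d_{H^{-1}}(F,E)$ through a duality argument, and then absorbing the quadratic residuals into the coercive term. The weaker bound \eqref{lemmatesi2} will follow from a parallel divergence-based lower bound that bypasses the graph structure.

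For \eqref{lemmatesi1}, using \eqref{nu_Fexp} and one-homogeneity of $\varphi$,
\begin{equation*}
P_\varphi(F)=\int_{\pa E}\varphi\big((1+\psi\kappa_E)\nu_E-\nabla_\tau\psi\big)\,d\mathcal{H}^1.
\end{equation*}
A second-order expansion of the integrand around $\nu_E$ (followed by a tangential integration by parts that uses $\kappa_E^\varphi=\div_\tau(\nabla\varphi(\nu_E))$ from \eqref{ancurvE}, $\pa_\tau\nu_E=\kappa_E\tau_E$ and $\pa_\tau\tau_E=-\kappa_E\nu_E$) yields
\begin{equation*}
P_\varphi(F)\geq P_\varphi(E)+\int_{\pa E}\kappa_E^\varphi\,\psi\,d\mathcal{H}^1+\tfrac12\int_{\pa E} g(\nu_E)\,|\nabla_\tau\psi|^2\,d\mathcal{H}^1-C(K)\sigma\big(\|\nabla_\tau\psi\|_{L^2(\pa E)}^2+\|\psi\|_{L^2(\pa E)}^2\big),
\end{equation*}
and the uniform ellipticity \eqref{remform} gives $g(\nu_E)\geq J_\varphi$, so the quadratic term is at least $\tfrac{J_\varphi}{2}\|\nabla_\tau\psi\|_{L^2(\pa E)}^2$. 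For the elastic part, taking $u_E^{K_{el}}$ as a competitor for $u_F^{K_{el}}$ in \eqref{minelvinc} (admissible since the constraints are set-independent) and symmetrically, combined with Proposition \ref{PropelELASt} applied along a linear-in-$\psi$ normal deformation joining $E$ to $F$, yields
\begin{equation*}
\mathcal{E}(E(u_F^{K_{el}}))-\mathcal{E}(E(u_E^{K_{el}}))\geq -\int_{\pa E}Q(E(u_E^{K_{el}}))\,\psi\,d\mathcal{H}^1-C(K_{el})\|\psi\|_{L^2(\pa E)}^2.
\end{equation*}
Substituting $\psi=\xi_{F,E}-\kappa_E\psi^2/2$ from \eqref{lafunzgEFgraph}, the combined linear contribution reduces to $\int_{\pa E}g\,\xi_{F,E}\,d\mathcal{H}^1$ with $g:=\kappa_E^\varphi-Q(E(u_E^{K_{el}}))$, and the defining PDE $\Delta_\tau v_{F,E}=\xi_{F,E}$ from \eqref{eqv} together with \eqref{normH^-1funzv} gives
\begin{equation*}
\int_{\pa E}g\,\xi_{F,E}\,d\mathcal{H}^1=-\int_{\pa E}\nabla_\tau g\cdot\nabla_\tau v_{F,E}\,d\mathcal{H}^1\leq\|\nabla_\tau g\|_{L^2(\pa E)}\,d_{H^{-1}}(F,E),
\end{equation*}
with $\|\nabla_\tau g\|_{L^2(\pa E)}\leq C(K,K_{el},\Omega)$ since $E\in\mathfrak{H}^3_{K,\sigma_0}(E_0)$ and $\|u_E^{K_{el}}\|_{C^{3,\frac14}(\Omega)}\leq K_{el}$.

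To close \eqref{lemmatesi1} I combine Lemma \ref{lemmazzo1} with the Cauchy--Schwarz inequality in the $(-\Delta_\tau)^{1/2}$-pairing, $\|\xi_{F,E}\|_{L^2(\pa E)}^2\leq d_{H^{-1}}(F,E)\,\|\nabla_\tau\xi_{F,E}\|_{L^2(\pa E)}$, to obtain $\|\psi\|_{L^2(\pa E)}^2\leq C\,d_{H^{-1}}(F,E)\,\|\nabla_\tau\psi\|_{L^2(\pa E)}\leq\tfrac{J_\varphi}{16}\|\nabla_\tau\psi\|_{L^2(\pa E)}^2+C\,d_{H^{-1}}(F,E)^2$ by Young; the squared term is controlled by $M\cdot d_{H^{-1}}(F,E)$ from the a priori bound $d_{H^{-1}}(F,E)\leq M(K,\sigma_0)$ coming from $F\Delta E\subset\mathcal I_\sigma(\pa E)$. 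Choosing $\sigma$ small enough that every $O(\sigma)$ or elastic correction coefficient lies below $\tfrac{J_\varphi}{4}$ of the coercive term, and converting $\|\nabla_\tau\psi\|_{L^2(\pa E)}$ to $\|\nabla_\tau\psi\|_{L^2(\pa F)}$ via Lemma \ref{cappelloliberoL} (a $1+O(\sigma)$ factor), proves \eqref{lemmatesi1}. For \eqref{lemmatesi2} I use the same philosophy without a coercive term: the lower bound $P_\varphi(F)-P_\varphi(E)\geq\int_{\R^2}\div X_0\,(\chi_F-\chi_E)\,dx$ follows by testing with $X_0=\nabla\varphi(\nu_E\circ\pi_{\pa E})$ suitably extended outside $\mathcal I_{\sigma_E}(\pa E)$ so that $\sup_{\R^2}\varphi^0(X_0)\leq 1$; expanding $\div X_0=\kappa_E^\varphi\circ\pi_{\pa E}+O(d_E)$ and applying \eqref{slablig} reduces the leading part to $\int_{\pa E}\kappa_E^\varphi\,\xi_{F,E}\,d\mathcal{H}^1$, bounded by $C(K)\,d_{H^{-1}}(F,E)$ as above, while the $O(d_E)$ remainder together with the crude elastic bound $|\mathcal{E}(E(u_F^{K_{el}}))-\mathcal{E}(E(u_E^{K_{el}}))|\leq C(K_{el})|F\Delta E|$ is absorbed for $\sigma$ small via $|F\Delta E|\leq C\,d_{H^{-1}}(F,E)+C\sigma|F\Delta E|$.

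The main obstacle is the final absorption: making the quadratic errors in $\|\psi\|_{L^2(\pa E)}$ and in $|F\Delta E|$ fit on the right-hand side simultaneously with the linear $d_{H^{-1}}$ term and below the coercive $\tfrac{J_\varphi}{4}\|\nabla_\tau\psi\|_{L^2}^2$ hinges on the duality $\|\xi_{F,E}\|_{L^2}^2\leq d_{H^{-1}}(F,E)\,\|\nabla_\tau\xi_{F,E}\|_{L^2}$ combined with the norm equivalence of Lemma \ref{lemmazzo1}, and forces $\sigma=\sigma(K,K_{el},J_\varphi)$ to be tuned delicately; the same smallness threshold then governs the absorption in \eqref{lemmatesi2}.
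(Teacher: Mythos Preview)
Your treatment of \eqref{lemmatesi1} is close to the paper's and essentially sound: the paper uses strict convexity of $\varphi$ (inequality \eqref{remform}) directly rather than a full Taylor expansion, and for the elastic part it uses the minimality of $u_E^{K_{el}}$ (not of $u_F^{K_{el}}$) together with the projection trick $Q(E(u_F^{K_{el}}))=Q(E(u_F^{K_{el}}))\circ\pi_{\pa E}+O(d_E)$, but your duality argument through $v_{F,E}$ and the $\|\xi_{F,E}\|_{L^2}^2\leq d_{H^{-1}}(F,E)\|\nabla_\tau\xi_{F,E}\|_{L^2}$ interpolation match the paper's mechanism. (Your phrasing ``$u_E^{K_{el}}$ as a competitor for $u_F^{K_{el}}$'' is the wrong direction for the lower bound you need; it is the symmetric inequality that matters here.)

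Your argument for \eqref{lemmatesi2}, however, has a genuine gap. The absorption step
\[
|F\Delta E|\leq C\,d_{H^{-1}}(F,E)+C\sigma\,|F\Delta E|
\]
is false for a general set of finite perimeter with $F\Delta E\subset\mathcal I_\sigma(\pa E)$. The distance $d_{H^{-1}}(F,E)=\|\xi_{F,E}\|_{H^{-1}(\pa E)}$ sees only the \emph{net} signed contribution $\xi_{F,E}(x)=\int_{-\sigma}^{\sigma}(\chi_F-\chi_E)(1+t\kappa_E)\,dt$ along each normal ray, whereas $|F\Delta E|$ records the gross contribution. For an $F$ obtained from $E$ by adding and removing alternating thin annular layers one has $\xi_{F,E}\equiv 0$, hence $d_{H^{-1}}(F,E)=0$, while $|F\Delta E|>0$. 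So neither the $O(d_E)$ remainder from your calibration nor the crude elastic bound $C(K_{el})|F\Delta E|$ can be absorbed this way.

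The paper avoids this obstruction by an indirect variational argument: it sets $\mathcal J(F)=\mathcal G(F)+(\Lambda+1)\,d_{H^{-1}}(F,E)$ and shows that $E$ minimizes $\mathcal J$ over $\{F:\,F\Delta E\subset\mathrm{cl}(\mathcal I_\sigma(\pa E))\}$. Any minimizer is first shown (via the estimates \eqref{26032025form1}--\eqref{26032025form2} and a calibration to remove the constraints) to be a $\widehat\Lambda$-minimizer of the $\varphi$-perimeter with $\widehat\Lambda=\widehat\Lambda(K,K_{el})$; Lemma~\ref{propdadim} then forces it to be a $C^1$ normal graph over $\pa E$, at which point the already-proved graph inequality \eqref{lemmatesi1} applies and yields $\nabla_\tau\psi\equiv 0$, hence $F=E$. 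The point is that \eqref{lemmatesi2} is \emph{reduced} to \eqref{lemmatesi1} through $\varepsilon$-regularity, rather than proved by an independent calibration-plus-absorption scheme.
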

	\begin{proof}
		We fix $\sigma_1 \leq  \min \{ \sigma_E, \mathrm{dist}(\pa \Omega, \pa E)\}$ and divide the proof into two steps. \\
		\textit{Step 1:} Proof of  \eqref{lemmatesi1}. 
		
		By Lemma \ref{remgEF} we have $ \xi_{F,E}= \psi + \kappa_E \frac{\psi^2}{2}$. If $\sigma_1$ is sufficiently small, then \begin{equation}\label{fomr07112024}
			\frac{\psi^2}{2}\leq \big(\psi+ \frac{\psi^2}{2}\kappa_E\big)^2= \xi_{F,E}^2 \leq 2 \psi^2,\qquad 2\vert \psi \kappa_E \vert + \psi^2 \kappa_E^2 \leq \frac{1}{16}.
		\end{equation} 
		\textit{Claim 1:} There exists a constant $C(K,K_{el})$ such that for all $\eta>0$,
		\begin{equation}\label{18032025claim1}
			\mathcal{E}(E(u_E^{K_{el}})) \leq  \mathcal{E}(E(u_F^{K_{el}}))+ \frac{C(K,K_{el})}{\eta}\| \xi_{F,E} \|_{H^{-1}(\pa E)}+\eta \| \nabla_{\pa E} \psi \|_{L^2(\pa E)}^2  .
		\end{equation}
		
		Using the definition of $F \rightarrow \mathcal{E}(E(u_F^{k_{el}}))$, the minimality of $ u_E^{K_{el}}$, and the very definition of $d_{H^{-1}}(F,E)$, we have that
		\begin{equation}\label{18032025form1}
			\begin{split}
				\mathcal{E}(E(u_F^{K_{el}})) &= \int_{\Omega \setminus F} Q ( E (u_F^{K_{el}})) \, dx = \int_{\R^2}Q ( E (u_F^{K_{el}})) (\chi_{\Omega}- \chi_F) \, dx \\
				&= \int_{\Omega \setminus E} Q ( E (u_F^{K_{el}})) \, dx + \int_{\R^2} Q ( E (u_F^{K_{el}})) (\chi_E- \chi_F)\,dx \\
				& \geq \mathcal{E}(E(u_E^{K_{el}})) -\int_{\R^2} Q ( E (u_F^{K_{el}}))\circ \pi_{\pa E} (\chi_F- \chi_E)\,dx\\
				& \qquad  + \int_{\R^2}(Q ( E (u_F^{K_{el}}))-Q ( E (u_F^{K_{el}}))\circ \pi_{\pa E} )(\chi_E- \chi_F)\,dx\\
				& \geq \mathcal{E}(E(u_E^{K_{el}}))- C(K_{el}) d_{H^{-1}}(F,E)
				\\
				& \qquad \qquad + \int_{\R^2}(Q ( E (u_F^{K_{el}}))-Q ( E (u_F^{K_{el}}))\circ \pi_{\pa E} )(\chi_E- \chi_F)\,dx.
			\end{split}
		\end{equation}
		We observe that
		\begin{equation}
			\vert Q ( E (u_F^{K_{el}}))(x)-Q ( E (u_F^{K_{el}}))\circ \pi_{\pa E}(x)\vert  \leq C(K_{el}) \vert x - \pi_{\pa E}(x)\vert . 
		\end{equation}
		By this formula and
		using the coarea formula and \eqref{fomr07112024}, we obtain \begin{equation}\label{18032025form2}
			\begin{split}
				& \int_{\R^2}(Q ( E (u_F^{K_{el}}))-Q ( E (u_F^{K_{el}}))\circ \pi_{\pa E} )(\chi_E- \chi_F)\,dx \\
				&\leq  C(K_{el})\int_{\R^2} \vert x- \pi_{\pa E}(x)\vert \chi_{E \Delta F}(x)\, dx \\
				& = C(K,K_{el}) \int_{\pa E} \vert \psi(x)\vert  \int_{0}^{\vert \psi (x)\vert } (1+ t \kappa_E(x))\, dt \, d \mathcal{H}^1_x \\
				& \leq C(K,K_{el}) \| \xi_{F,E}\|^2_{L^2(\pa E)} \leq C(K,K_{el}) \| \xi_{F,E} \|_{H^{-1}(\pa E)} \| \nabla_{\pa E} \xi_{F,E} \|_{L^2(\pa E)} \\
				& \leq \frac{C(K,K_{el})}{\eta} \| \xi_{F,E}\|_{H^{-1}(\pa E)}^2+ \eta \| \nabla_{\pa E} \xi_{F,E}\|_{L^2(\pa E)}^2\\
				& \leq \frac{C(K,K_{el})}{\eta} \| \xi_{F,E}\|_{H^{-1}(\pa E)}^2+ \eta C(K) \| \nabla_{\pa E} \psi \|_{L^2(\pa E)}^2
			\end{split}
		\end{equation}
		where in the third inequality we have used the interpolation inequality; i.e.,
		$$ \| f \|_{L^2(\pa E)} \leq \| f \|^{\frac{1}{2}}_{H^{-1}(\pa E)} \| \nabla f \|_{L^2(\pa E)}^{\frac{1}{2}} \quad \text{for all } f \text{ such that } \int_{\pa E} f \, d\mathcal{H}^1=0, $$ and in the fourth inequality we have used the Young inequality and in the last inequality we have used Lemma \ref{lemmazzo1}.
		Combining \eqref{18032025form1} and \eqref{18032025form2}, we obtain \eqref{18032025claim1}. \\
		\textit{Claim 2:} There exists a constant $C(K)$ such that
		\begin{equation}\label{25032025form1}
			\frac{3J_{\varphi}}{8}\| \nabla_{\pa E} \psi \|_{L^2(\pa E)}^2+ P_{\varphi}(E) \leq P_\varphi(F)+ C(K) d_{H^{-1}}(F,E).
		\end{equation}
		
		Define $ \Psi : \pa E \rightarrow \pa F$ defined as $ \Psi(x): =x+ \psi(x)\nu_E(x)$. The tangential Jacobian is $$J_\tau \Psi = \sqrt{(1+ \psi \kappa_E)^2 +\vert \nabla_{\pa E} \psi\vert^2}.$$ Using the area formula and the expansion of $\nu_F$, see \eqref{nu_Fexp}, we get: \begin{equation}\label{per01102024}
			\begin{split}
				P_{\varphi}(F)=\int_{\pa F} \varphi(\nu_F)d \mathcal{H}^1&= \int_{\Psi(\pa E)} \varphi (\nu_F \circ \Psi \circ \pi_{\pa E} |_{\pa F}) d\mathcal{H}^1\\
				&=\int_{\pa E}  \varphi(\nu_F(\Psi))\sqrt{(1+ \psi \kappa_E)^2+ \vert \nabla_{\pa E} \psi \vert^2} d \mathcal{H}^1\\
				& = \int_{\pa E} \varphi(-\nabla_{\pa E}\psi(x)+(1+ \psi(x)\kappa_{E}(x))\nu_E(x))d \mathcal{H}^1_x.
			\end{split}
		\end{equation} 
		We observe that 
		\begin{equation}
			\begin{split}
				\nabla_{\pa E} ( \nabla_{\pa E} \varphi (\nu_E))= \nabla^2_{\pa E} \varphi (\nu_E) \nabla_{\pa E}( \nu_E )= \kappa_E  \nabla^2_{\pa E} \varphi (\nu_E) \tau_E \otimes \tau_E,
			\end{split}
		\end{equation}
		then \begin{equation}\label{26022025f1}
			\begin{split}
				\div_{\pa E} ( \nabla_{\pa E} \varphi (\nu_E))= \mathrm{Tr} \nabla_{\pa E} (\nabla_{  \pa E} \varphi (\nu_E) )=\kappa_E \nabla_{\pa E}^2 \varphi(\nu_E )\tau_E\cdot \tau_E.
			\end{split}
		\end{equation}
		By the convexity of $ \varphi$ and using \eqref{remform} (up to take $ \sigma_1$ small enough) we have
		\begin{equation}\label{oggi27feb}
			\begin{split}
				&\varphi((1+\psi\kappa_E)\nu_E-\nabla_{\pa E} \psi ) \\
				&\geq \varphi(\nu_E+\psi\kappa_E\nu_E)- \nabla \varphi( \nu_E) \cdot \nabla_{\pa E} \psi  + \frac{J_{\varphi}}{2} \vert  \nabla_{\pa E} \psi \vert^2\\
				& \geq \varphi(\nu_E)+ \psi\kappa_E \nabla \varphi(\nu_E)\cdot \nu_E - \nabla \varphi( \nu_E) \cdot \nabla_{\pa E} \psi  + \frac{J_{\varphi}}{2} \vert  \nabla_{\pa E} \psi \vert^2\\
				& = \varphi(\nu_E)+ \psi \kappa_E \varphi(\nu_E)- \nabla \varphi( \nu_E) \cdot \nabla_{\pa E} \psi+ \frac{J_{\varphi}}{2} \vert  \nabla_{\pa E} \psi \vert^2
			\end{split}
		\end{equation}
		where in the last equality we have used $ \nabla \varphi (x) \cdot x = \varphi (x)$.  Let $\varepsilon>0$ that we choice later, using the divergence theorem and formula \eqref{26022025f1}, we get
		\begin{equation}\label{19032025form1}
			\begin{split}
				\int_{\pa E} &\nabla \varphi ( \nu_E) \cdot \nabla_{\pa E} \psi  = \int_{\pa E} \nabla_{\pa E} \varphi (\nu_E) \cdot \nabla_{\pa E} \psi  =\int_{\pa E} \div_{\pa E}(\nabla_{\pa E} \varphi (\nu_E)) \psi  \\
				&\leq  \int_{\pa E} \div_{\pa E}(\nabla_{\pa E} \varphi (\nu_E)) (\psi + \kappa_E \frac{\psi^2}{2})+ C(K) \int_{\pa E} \psi^2  \\
				& \leq C(K)\| \psi + \kappa_E \frac{\psi^2}{2}\|_{H^{-1}(\pa E)} + C(K) \int_{\pa E} \xi_{F,E}^2 \\
				& \leq C(K) \| \xi_{F,E}\|_{H^{-1}(\pa E)}+ C(K) \| \xi_{F,E}\|_{H^{-1}(\pa E)} \| \nabla_{\pa E} \xi_{F,E} \|_{L^2(\pa E)}\\
				& \leq \frac{C(K)}{\varepsilon} \| \xi_{F,E}\|_{H^{-1}(\pa E)} + \varepsilon \| \nabla_{\pa E} \xi_{F,E} \|_{L^2(\pa E)}^2 \\
				& \leq \frac{C(K)}{\varepsilon} \| \xi_{F,E}\|_{H^{-1}(\pa E)} + \varepsilon C(K) \| \nabla_{\pa E} \psi \|_{L^2(\pa E)}^2
			\end{split}
		\end{equation}
		where in the third inequality we have used the interpolation of $L^2$ between $H^{-1}$ and $H^{1}$, in the fourth inequality we have used Young's inequality and in the last inequality we have used Lemma \ref{lemmazzo1}. Integrating \eqref{oggi27feb} over $\pa E$ and using \eqref{per01102024}, \eqref{19032025form1}, we get
		\begin{equation}
			P_{\varphi}(F) \geq P_{\varphi}(E)- \frac{C(K)}{\varepsilon } \| \xi_{F,E} \|^{2}_{H^{-1}(\pa E)}- C(K) \varepsilon \| \nabla_{\pa E} \psi \|^2_{L^2(\pa E)}+\frac{J_{\varphi}}{2} \| \nabla_{\pa E} \psi\|_{L^2(\pa E)}^2.
		\end{equation}
		Choosing $ \varepsilon= \frac{J_{\varphi}}{8C(K)}$ we obtain \eqref{25032025form1}. 
		
		We choose $\eta= \frac{J_{\varphi}}{8}$. Combining \eqref{18032025claim1}, \eqref{25032025form1} and recalling that $$ d_{H^{-1}}(F,E)= \| \xi_{F,E} \|_{H^{-1}(\pa E)},$$ we obtain \eqref{lemmatesi1}.\\
		\textit{Step 2:} Proof of \eqref{lemmatesi2}.
		
		Let $\widehat{\Lambda}= \widehat{\Lambda}(K,K_{el})$ (to be defined later, see formula \eqref{lacost28042025}). By Lemma \ref{propdadim} applied with $\Lambda=\widehat{\Lambda}$, we obtain $\delta_0= \delta_0 (K,K_{el})$.
		Set 
		\begin{equation}
			\mathcal{J}(F):= \mathcal{G}(F)+ (\Lambda+1)d_{H^{-1}}(F,E).
		\end{equation}
		Fix $\sigma \leq \min \{ \sigma_1, \delta_0\}$. The thesis of the step 2 is equivalent the claim.\\
		\textit{Claim 3:} The set $E$ is the minimizer of the problem
		\begin{equation}\label{lego1}
			\min \left\{ \mathcal{J}(F) \colon F \Delta E \subset {\rm cl} (\mathcal{I}_{\sigma}(\pa E))   \right\}.
		\end{equation}
		
		Existence of a minimizer follows by the direct method of the calculus of variations. Let $F$ be such a minimizer. \\
		\textit{Subclaim:} The minimizer of \eqref{lego1} is an $\widehat{\Lambda}$-minimizer of the $\varphi$-perimeter.
		
		Let $S(K)>0$ denote the Sobolev embedding constant of $H^1(\pa E)$ into $L^{\infty}(\pa E)$, which depends only on $K$.
		Let $ G \subset \R^2$ with $ G \Delta E \subset {\rm cl} (\mathcal{I}_{\sigma}(\pa E))$ and $\vert G \vert =1$. Then, by minimality of $F$, we get:
		\begin{equation}\label{26032025form1}
			\begin{split}
				\mathcal{G}(F)-& \mathcal{G}(G) \leq (\Lambda+1) (d_{H^{-1}}(G,E)-d_{H^{-1}}(F,E))\\
				& \leq  (\Lambda+1) \int_{\R^2} f_G \circ \pi_{\pa E}(x) (\chi_G(x)- \chi_F(x))\,dx 
				\leq (\Lambda+1) \| f_G \circ \pi_{\pa E} \|_{\infty} \vert G \Delta F \vert \\
				& \leq S(K) (\Lambda+1) \| \nabla_{\pa E} f_G \|_{L^2(\pa E)} \vert G \Delta F \vert \leq S(K) (\Lambda+1) \vert G \Delta F \vert
			\end{split}
		\end{equation}
		where $f_G $ is the function that realize the supremum in the definition of $d_{H^{-1}}(G,E)$. We consider now the elastic term: 
		\begin{equation}\label{26032025form2}
			\begin{split}
				\mathcal{E}(E(u_F^{K_{el}}))-\mathcal{E}(E(u_G^{K_{el}}))&=  \int_{\Omega \setminus F} Q(E(u_F^{K_{el}})) dx - \int_{\Omega \setminus G} Q(E(u_G^{K_{el}})) dx\\
				& \leq \int_{\Omega \setminus F} Q(E(u_G^{K_{el}})) dx - \int_{\Omega \setminus G} Q(E(u_G^{K_{el}})) dx \\
				& \leq  K_{el} \vert G \Delta F \vert  ,
			\end{split}
		\end{equation}
		here, the first inequality follows from the minimality of  $u_F^{K_{el}}$. Combining formulas \eqref{26032025form1}, \eqref{26032025form2}, and recalling the definition of $ \mathcal{G}$, we obtain 
		\begin{equation}\label{lminconvfix}
			P_{\varphi}(F)- P_{\varphi}(G) \leq (S(K)(\Lambda+1)+ K_{el}) \vert F \Delta G \vert \text{ for all $ G \Delta E \subset {\rm cl} (\mathcal{I}_{\sigma}(\pa E))$ and $\vert G \vert =1$}.
		\end{equation}
		We now conclude the proof of the subclaim using a standard calibration argument, which we proceed to explain. Let us denote $C_1(K,K_{el}):=(S(K)(\Lambda+1)+ K_{el})$, and fix a set $ G \subset \R^2$.\\
		\textit{Case 1:} $\vert G \Delta F \vert \geq 1$. 
		
		By the minimality of $F$ in \eqref{lego1}, we have
		\begin{equation}
			P_{\varphi}(F) \leq \mathcal{J}(F) \leq \mathcal{J}(E)= \mathcal{G}(E) \leq C_2(K,K_{el}).
		\end{equation}
		Therefore,
		\begin{equation}\label{21052025lego375}
			P_{\varphi}(F) \leq P_{\varphi}(G)+ P_{\varphi}(F) \leq P_{\varphi}(G)+ C_2(K,K_{el}) \leq P_{\varphi}(G)+ C_2(K,K_{el}) \vert G \Delta F\vert.
		\end{equation}
		\textit{Case 2:} $ \vert G \Delta F \vert < 1$.\\
		Define $E_s:= \{  x \in \R^2 \colon d_{E}(x)< s \}$ for $s \in [-\sigma,\sigma]$, and set
		\begin{equation}
			\widehat{G}:= (G \cap E_\sigma) \cup E_{-\sigma}.
		\end{equation}
		Then $\widehat{G} \Delta E \subset {\rm cl} \big( \mathcal{I}_\sigma(\pa E) \big)$ and $$ \widehat{G} \Delta G =\big( G \setminus E_\sigma \big) \cup \big( E_{-\sigma} \setminus G  \big).$$ 
		Since $ F \Delta E \subset {\rm cl} \big( \mathcal{I}_\sigma(\pa E) \big)$, we have $ F \subset {\rm cl} (E_\sigma)$ and ${\rm int}(E_{-\sigma}) \subset F$, yielding
		\begin{equation}\label{2105difsim}
			\vert G \Delta \widehat{G} \vert \leq \vert G \Delta F \vert.
		\end{equation}
		\textit{Subsubclaim:} 
		\begin{equation}\label{slabligngg1212}
			P_{\varphi}(\widehat{G}) \leq P_{\varphi}(G)+ C_3(K) \vert G \Delta \widehat{G} \vert.
		\end{equation}
		We analyze the case $G \cap E_{-\sigma} = E_{-\sigma}$; the other cases are similar. Define the vector field
		$$ Y : \R^2 \rightarrow \R^2, \quad Y:= -\nabla \varphi (\nu_{E_{\sigma}} \circ \pi_{\pa E_{\sigma} } ) \xi$$
		where $\xi \in C^{\infty}_c(\mathcal{I}_{\sigma}(\pa E_{\sigma}))$ and $\xi(x)=1$ for all $x \in \mathcal{I}_{\frac{\sigma}{2}}(\pa E_{\sigma}) $. By the divergence Theorem, we obtain \begin{equation}\label{21052025intpart}
			\begin{split}
				\int_{G \setminus \widehat{G}} \div Y\, dx &= \int_{G \setminus E_\sigma} \div Y \, dx \\
				&= \int_{\pa E_\sigma \cap G} \nu_{E_\sigma}\cdot \nabla \varphi (\nu_{E_{\sigma}}  )- \int_{\pa^* G \cap E_\sigma^c} \nu_G \cdot \nabla \varphi (\nu_{E_{\sigma}} \circ \pi_{\pa E_{\sigma} } ) \xi.
			\end{split}
		\end{equation}
		By the convexity of $\varphi$ and the triangle inequality, we obtain
		\begin{equation}\label{convtringoggi27}
			\nabla \varphi(\nu_{E_{\sigma}} \circ \pi_{\pa E_{\sigma} } )\cdot \nu_G \leq \varphi(\nu_G+ \nu_{E_{\sigma}} \circ \pi_{\pa E_{\sigma} })- \varphi(\nu_{E_{\sigma}} \circ \pi_{\pa E_{\sigma} }) \leq \varphi(\nu_G).
		\end{equation}
		Furthermore, the one homogeneity of $\varphi$ gives
		\begin{equation}\label{homoggi27}
			\nabla \varphi (\nu_{E_{\sigma}} \circ \pi_{\pa E_{\sigma} }) \cdot \nu_{E_{\sigma}} \circ \pi_{\pa E_{\sigma} } = \varphi (\nu_{E_{\sigma}} \circ \pi_{\pa E_{\sigma} }).
		\end{equation}
		Combining  \eqref{21052025intpart}, \eqref{convtringoggi27}, and \eqref{homoggi27}, we obtain 
		\begin{equation}
			P_\varphi(\widehat{
				G}) \leq P_\varphi(G)+ \int_{G \setminus \widehat{G}} \div Y \, d x,
		\end{equation}
		which implies
		\eqref{slabligngg1212}.
		
		We now consider two cases: $ \vert \widehat{G} \vert \geq \vert F \vert $ or $ \vert \widehat{G} \vert \leq \vert F \vert$. We analyze the former; the latter is analogous. For all $s \in [-\sigma,\sigma]$, we have 
		\begin{equation}
			\vert E_{-\sigma} \vert \leq \vert E_{-\sigma} \cap \widehat{G} \vert \leq \vert E_s \cap \widehat{G} \vert \leq \vert E_\sigma \cap \widehat{G} \vert = \vert \widehat{ G } \vert.
		\end{equation}
		By continuity of $ s \rightarrow \vert E_s \cap \widehat{G} \vert$, there exists $ \hat{s} \in [-\sigma,\sigma]$ such that $ \vert E_{\hat{s}} \cap \widehat{G}\vert=\vert F \vert $. 
		Denoting $ G_{\hat{s}}:= E_{\hat{s}} \cap \widehat{G}  \subset \widehat{G}$ and using \eqref{2105difsim}, we get
		\begin{equation}\label{21052025919}
			\begin{split}
				\vert \widehat{G} \Delta G_{\hat{s}} \vert& = \vert \widehat{G} \vert - \vert G_{\hat{s}} \vert = \vert \widehat{G} \vert - \vert F \vert \leq \vert \widehat{G} \Delta F \vert \\
				& \leq \vert G \Delta F \vert+ \vert G \Delta \widehat{G} \vert \leq 2 \vert G \Delta F \vert.
			\end{split}
		\end{equation}
		Applying a calibration argument similar to the one in \eqref{slabligngg1212}, we obtain
		\begin{equation}
			P_\varphi(\widehat{G}_s) \leq P_\varphi(\widehat{G})+ C_4(K) \vert \widehat{G}_s \Delta \widehat{G} \vert.
		\end{equation}
		Combining this with \eqref{2105difsim}, \eqref{slabligngg1212}, and \eqref{21052025919} we conclude
		\begin{equation}\label{210525lambda}
			P_\varphi(\widehat{G}_s) \leq P_\varphi(G)+ C_5(K)\vert G \Delta F\vert.
		\end{equation}
		From \eqref{21052025lego375}, \eqref{lminconvfix}, and \eqref{210525lambda}, we obtain
		\begin{equation}\label{vivaverdi}
			P_\varphi(F) \leq P_\varphi(G)+ \widehat{\Lambda}(K,K_{el})\vert G \Delta F \vert,
		\end{equation}
		where
		\begin{equation}\label{lacost28042025}
			\widehat{\Lambda}(K,K_{el}):=  C_1(K,K_{el})+C_2(K,K_{el}) +C_5(K).
		\end{equation}
		Thus, from \eqref{vivaverdi} we deduce that $ F$ is an $\widehat{\Lambda}$-minimizer of the $\varphi$-perimeter. Therefore subclaim has been proven. 
		
		Applying Lemma \ref{propdadim}, we obtain that $ \pa F $ coincide with a normal graph of a $C^1$ function over $\pa E$, i.e., $$ \pa F = \{ x + \psi(x)\nu_E(x) \colon x \in \pa E \}.$$ Applying Step 1, we get
		\begin{equation}\label{primacro}
			\begin{split}
				\frac{J_\varphi}{4} \|\nabla_{\tau} \psi \|_{L^2(\pa F)}^2+\mathcal{G}(E) &\leq \mathcal{G}(F)+ \Lambda d_{H^{-1}}(F,E) \\
				& \leq \mathcal{J}(F) \leq \mathcal{J}(E)= \mathcal{G}(E)
			\end{split}
		\end{equation}
		where in the last inequality we have used the minimality of $F$. Hence, from \eqref{primacro}, we conclude that $\psi$  must be constant. Since $\psi $ is constant and  $ \vert F \vert= \vert E \vert$, it follows that $\psi=0$, and therefore $F=E$.
	\end{proof}
	
	In the next lemma, we show that every minimizer $F$ of the problem \eqref{probdimin1} is an $\Lambda$-minimizer of the $ \varphi$-perimeter, where $\Lambda$ depends only on $K, K_{el}$. Moreover, we establish that the discrete velocity in $H^{-1}$ is bounded 
	$$\frac{d_{H^{-1}}(E,F)}{h}\leq C.$$
	\begin{lemma}\label{lalalambdamin}
		Let $E \in \mathfrak{H}_{K,\sigma_0}^3(E_0)$ with $ \vert E \vert=1$ and let $ \sigma$ be the constant from \ref{lego123og}. Then, for every minimizer $F$ of the problem \eqref{probdimin1} with  $\eta < \sigma$, the following properties hold:
		\begin{itemize}
			\item[1)]  Let $\Lambda'$ be the constant from Lemma \ref{lego123og}. Then \begin{equation}\label{2803e'arri}
				d_{H^{-1}}(F,E) \leq 2 \Lambda' h.
			\end{equation} 
			\item[2)] There exists a constant $\lambda= \lambda(K,K_{el})$ such that
			\begin{equation}\label{ildalma}
				P_{\varphi}(F) \leq P_{\varphi}(G)+ \lambda\vert G \Delta F \vert \quad \text{ for all } G \subset \R^2.
			\end{equation}
		\end{itemize}
	\end{lemma}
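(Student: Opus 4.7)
The proof naturally splits into the two items; I would prove them in order, using the first to set up the second.

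For property (1), the idea is to test the minimality of $F$ against the set $E$ itself, which is trivially admissible since $E\Delta E = \emptyset \subset \mathrm{cl}(\mathcal{I}_\eta(\pa E))$. This gives
\[
\mathcal{G}(F)+\frac{d_{H^{-1}}^2(F,E)}{2h}\ \leq\ \mathcal{F}_h(E,E)=\mathcal{G}(E).
\]
Since $\eta<\sigma$, the hypothesis $F\Delta E\subset\mathcal{I}_\sigma(\pa E)$ of \eqref{lemmatesi2} in Lemma \ref{lego123og} is met, so $\mathcal{G}(E)\leq \mathcal{G}(F)+\Lambda' d_{H^{-1}}(F,E)$. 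Combining these two inequalities cancels $\mathcal{G}(F)$ and leaves $d_{H^{-1}}^2(F,E)/(2h)\leq \Lambda' d_{H^{-1}}(F,E)$, from which \eqref{2803e'arri} follows immediately after division.

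For property (2), the plan is to reduce an arbitrary $G\subset\R^2$ to an admissible competitor in \eqref{probdimin1} via the same calibration argument used in the subclaim of Lemma \ref{lego123og}, step 2, then apply minimality. I split on the size of $|G\Delta F|$. \emph{Case A} ($|G\Delta F|\geq 1$): by property (1) and the minimality comparison with $E$, $P_\varphi(F)\leq \mathcal{G}(F)\leq \mathcal{G}(E)\leq C(K,K_{el})$, so $P_\varphi(F)\leq P_\varphi(G)+C(K,K_{el})|G\Delta F|$ trivially. \emph{Case B} ($|G\Delta F|<1$): I produce an admissible $\tilde G$ with $|\tilde G|=|F|=1$ by setting $\widehat G:=(G\cap E_\eta)\cup E_{-\eta}$ (where $E_s:=\{d_E<s\}$), adjusting volume via intersection with a level set $E_{\hat s}$, and applying the $\varphi$-perimeter calibration constructed using the vector field $Y=-\nabla\varphi(\nu_{E_\sigma}\circ\pi_{\pa E_\sigma})\xi$ exactly as in \eqref{slablig}--\eqref{210525lambda}. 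This yields $P_\varphi(\tilde G)\leq P_\varphi(G)+C(K)|G\Delta F|$ and $|\tilde G\Delta F|\leq C|G\Delta F|$.

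With $\tilde G$ admissible, the minimality of $F$ in \eqref{probdimin1} gives
\[
P_\varphi(F)\ \leq\ P_\varphi(\tilde G)+K_{el}|\tilde G\Delta F|+\frac{d_{H^{-1}}^2(\tilde G,E)-d_{H^{-1}}^2(F,E)}{2h},
\]
where the elastic-energy bound comes from \eqref{26032025form2}. The final term is the crux of the argument and the main obstacle. I factor it as $(d_{H^{-1}}(\tilde G,E)-d_{H^{-1}}(F,E))(d_{H^{-1}}(\tilde G,E)+d_{H^{-1}}(F,E))/(2h)$. A triangle-like estimate derived from testing both sup-problems against the other's optimizer and exploiting the Sobolev embedding $H^1(\pa E)\hookrightarrow L^\infty(\pa E)$ (with constant $S(K)$) gives $|d_{H^{-1}}(\tilde G,E)-d_{H^{-1}}(F,E)|\leq S(K)|\tilde G\Delta F|$, while property (1) controls $d_{H^{-1}}(F,E)\leq 2\Lambda' h$. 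These combine to bound the final term by $2S(K)\Lambda'|\tilde G\Delta F|+S(K)^2|\tilde G\Delta F|^2/(2h)$, the linear part already being of the right form.

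The main obstacle I expect is the residual quadratic term $|\tilde G\Delta F|^2/(2h)$: unlike the linear-in-$d_{H^{-1}}$ setting of Lemma \ref{lego123og} (where the argument closes directly), this term appears because our functional is parabolic in scaling. To handle it one uses the tubular-neighborhood constraint to bound $|\tilde G\Delta F|\leq C(K)\eta$ together with the freedom in the choice of $h_0$; crucially, after subtracting the already-good bounds given by (1), one absorbs the remainder into the linear term using that both $F$ and $\tilde G$ are sufficiently close to $E$ (a consequence of admissibility). Threading this estimate through Cases A and B yields a single constant $\lambda=\lambda(K,K_{el})$ delivering \eqref{ildalma}.
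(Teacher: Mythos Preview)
Your proof of part (1) is correct and matches the paper exactly.

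For part (2), your reduction to admissible competitors via calibration is fine, as is your handling of the elastic term and the large--$|G\Delta F|$ case. However, the treatment of the quadratic distance term has a genuine gap. You correctly arrive at a residual term of the form $S(K)^2|\tilde G\Delta F|^2/(2h)$, but your proposed absorption does not work: the bound $|\tilde G\Delta F|\leq C(K)\eta$ only converts this into $C(K)\eta|\tilde G\Delta F|/(2h)$, and since $\eta$ is a fixed constant (depending only on $K,K_{el}$) while $h$ is arbitrary, the ratio $\eta/h$ is unbounded. No choice of $h_0$ helps, because the term blows up as $h\to 0^+$, not as $h\to\infty$. The statement of the lemma requires $\lambda$ to depend only on $K,K_{el}$, so this route cannot close.

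The missing idea, which the paper supplies, is a dichotomy on the size of $d_{H^{-1}}(G,E)$ relative to $h$ \emph{before} factoring. If $d_{H^{-1}}(G,E)\leq 4\Lambda' h$, then by part (1) the sum $d_{H^{-1}}(G,E)+d_{H^{-1}}(F,E)\leq 6\Lambda' h$, so after factoring the difference of squares the $1/(2h)$ is neutralized and one obtains $\mathcal{G}(F)-\mathcal{G}(G)\leq 3\Lambda'\big(d_{H^{-1}}(G,E)-d_{H^{-1}}(F,E)\big)$, which is linear. If instead $d_{H^{-1}}(G,E)>4\Lambda' h$, one does \emph{not} use minimality of $F$ against $G$ at all: one uses $\mathcal{G}(F)\leq\mathcal{G}(E)$ (from part (1)) together with the linear estimate \eqref{lemmatesi2} of Lemma \ref{lego123og} applied to $G$, namely $\mathcal{G}(E)\leq\mathcal{G}(G)+\Lambda' d_{H^{-1}}(G,E)$; since in this regime $d_{H^{-1}}(G,E)>2d_{H^{-1}}(F,E)$, one has $d_{H^{-1}}(G,E)\leq 2\big(d_{H^{-1}}(G,E)-d_{H^{-1}}(F,E)\big)$, again giving the linear bound. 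In both cases one obtains $\mathcal{G}(F)\leq\mathcal{G}(G)+3\Lambda'\big(d_{H^{-1}}(G,E)-d_{H^{-1}}(F,E)\big)$, and from here your Sobolev-embedding estimate $|d_{H^{-1}}(G,E)-d_{H^{-1}}(F,E)|\leq S(K)|G\Delta F|$ and the elastic bound \eqref{26032025form2} finish the job for admissible $G$; the calibration step then extends to arbitrary $G$.
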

	\begin{proof}
		We divide the proof into two steps.\\
		\textit{Step 1} In this step, we prove \eqref{2803e'arri}.
		
		Using formula \eqref{lemmatesi2} and the minimality of $F$, we obtain
		\begin{equation}
			\mathcal{G}(F)+ \frac{1}{2h} d_{H^{-1}}^2(F,E) \leq \mathcal{G}(E) \leq \mathcal{G}(F) + \Lambda'd_{H^{-1}}(F,E).
		\end{equation}
		Hence, inequality \eqref{2803e'arri} follows. \\
		\textit{Step 2} In this step, we prove \eqref{ildalma}.\\
		\textit{Claim:} For every set $G \subset \R^2 $ such that $ G \Delta E \subset {\rm cl} (\mathcal{I}_{\eta}(\pa E))$ and $\vert G \vert =1$, the following inequality holds:
		\begin{equation}\label{lamdaminvinc}
			\mathcal{G}(F) \leq \mathcal{G}(G)+ 3\Lambda' (d_{H^{-1}}(G,E)- d_{H^{-1}}(F,E)).
		\end{equation}
		
		\textit{Case 1)} $ d_{H^{-1}}(G,E) > 4 \Lambda' h$.\\
		By \eqref{2803e'arri}, we have
		\begin{equation}
			2 d_{H^{-1}}(F,E) \leq 4 \Lambda' h < d_{H^{-1}}(G,E) 
		\end{equation}
		and hence,
		\begin{equation}\label{nadorla}
			d_{H^{-1}}(F,E) \leq d_{H^{-1}}(G,E)- d_{H^{-1}}(F,E).
		\end{equation}
		Using the minimality of $F$ along with formulas \eqref{lemmatesi2} and \eqref{nadorla}, we deduce
		\begin{equation}\label{travaglio1}
			\begin{split}
				\mathcal{G}(F) &\leq \mathcal{G}(F)+ \frac{1}{2 h} d_{H^{-1}}^2(F,E) \leq \mathcal{G}(E)\\
				& \leq \mathcal{G}(G)+ \Lambda' d_{H^{-1}}(F,E) \leq \mathcal{G}(G)+ \Lambda' (d_{H^{-1}}(G,E)- d_{H^{-1}}(F,E)).
			\end{split}
		\end{equation}
		
		\textit{Case 2)} $ d_{H^{-1}}(G,E) \leq  4 \Lambda' h$.\\
		Using the minimality of $F$ and inequality \eqref{2803e'arri}, we obtain
		\begin{equation}\label{trvaglio2}
			\begin{split}
				\mathcal{G}(F)- \mathcal{G}(G) & \leq \frac{1}{2 h} (d_{H^{-1}}(G,E)+ d_{H^{-1}}(F,E) )(d_{H^{-1}}(G,E)- d_{H^{-1}}(F,E) ) \\
				&\leq 3 \Lambda' (d_{H^{-1}}(G,E)- d_{H^{-1}}(F,E) ). 
			\end{split}
		\end{equation}
		The conclusion of the claim follows from inequalities \eqref{travaglio1} and \eqref{trvaglio2}.\\
		\textit{Claim:} There exits $\lambda_1= \lambda_1(K,K_{el})$ such that for every set $G \subset \R^2 $ with $ G \Delta E \subset {\rm cl} (\mathcal{I}_\sigma(\pa E))$ and $\vert G \vert =1$, the following holds:
		\begin{equation}\label{22052025caffè1}
			P_{\varphi}(F) \leq P_{\varphi}(G)+ \lambda_1 \vert F \Delta G \vert.
		\end{equation}
		
		Using the definition of $\mathcal{G}$ (see formula \eqref{defG}), and the previous claim, we have
		\begin{equation}
			P_{\varphi}(F)- P_{\varphi}(G) \leq \mathcal{E}(E(u_G^{K_{el}}))- \mathcal{E}(E(u_F^{K_{el}})) + 3 \Lambda' (d_{H^{-1}}(G,E)- d_{H^{-1}}(F,E) )
		\end{equation}
		The claim then follows by applying the same reasoning used in formulas \eqref{26032025form1} (to estimate the difference between $d_{H^{-1}}(G,E)$ and $d_{H^{-1}}(F,E)$) and \eqref{26032025form2} (to estimate the difference between $ \mathcal{E}(E(u_G^{K_{el}}))$ and $\mathcal{E}(E(u_F^{K_{el}}))$).\\
		\textit{Claim:} There exits $\lambda= \lambda(K,K_{el})$ such that for every $G \subset \R^2$, the inequality \eqref{ildalma}.
		
		This claim follows by \eqref{22052025caffè1} and adapting the same argument used in the subclaim of Step 2 in Lemma \ref{lego123og}.
	\end{proof}
	
	\subsection{Estimate for the heightfunction} In this subsection, we prove that $\pa F$ coincides with the graph of a smooth function $\psi : \pa E \rightarrow \R$, where $F$ is a  minimizer of \eqref{probdimin1}. Moreover, we establish regularity estimates for $\psi$.

	We begin with a remark that provides an analogue of formula \eqref{Cdacit} for the anisotropic curvature defined in \eqref{ancurvE}.
	\begin{remark}
		Let $E \in \mathfrak{C}^{3}_{K,\sigma_0}(E_0)$ and let $A \subset \R^2$ be a set of class $C^2$ such that $ \pa A $ is a normal graph over $\pa E$, i.e. $$ \pa A = \{ x + \psi(x)\nu_E(x)\colon x \in \pa E\}.$$ Let $g \in C^\infty(\R^2 \setminus \{0 \})$ be the function defined in \eqref{ancurvE}. Then, using formula \eqref{nu_Fexp} and the Taylor expansion of $g$, we obtain for all $x \in \pa E$ \begin{equation}\label{gepsansione}
			g(\nu_A (x+ \psi(x)\nu_E(x)))= g(\nu_E(x))+ R_1(\psi(x)\kappa_E(x), \pa_{\tau} \psi(x), \nu_E(x)), 
		\end{equation}
		where $R_1 \in C^\infty$.
		Using formulas \eqref{Cdacit}, \eqref{Rdacit}, \eqref{ancurvE} and \eqref{gepsansione}, we obtain
		\begin{equation}\label{L'ESPANSOIONE}
			\kappa_A^{\varphi}(x+ \psi(x)\nu_E(x))= -g(\nu_E(x)) \pa^2_{\tau} \psi(x) + \kappa_E^{\varphi}(x)+ R(x) \,\, x \in \pa E,
		\end{equation}
		where \begin{equation}\label{ILRESTO}
			R=r_0(\psi, \pa_{\tau} \psi,\kappa_E, \nu_E)+ r_1(\psi \kappa_E, \pa_\tau \psi, \nu_E) \pa_\tau^2 \psi+ r_2(\psi \kappa_E, \pa_\tau \psi, \nu_E) \pa_\tau(\psi \kappa_E)
		\end{equation}
		and $r_0 , r_1,r_2 $ are smooth functions satisfying $$ r_0(0,0,\cdot,\cdot)= r_1(0,0,\cdot)= r_2(0,0,\cdot)=0.$$
	\end{remark}
	
	We now state and prove three propositions that will enable us to prove the Theorem \ref{MainThm375}.
	\begin{proposition}\label{primapropreg}
		Let $E \in \mathfrak{H}^4_{K,\sigma_0}(E_0)$.Then there exist constants $\eta_0= \eta_0(K,K_{el})$, $h_0=h_0(K,K_{el}) $ and $C=C(K,K_{el})$  such that, if $0 < h \leq h_0$ then any minimizer of the problem \eqref{probdimin1} (with $\eta= \eta_0$) $ F\subset \R^2$ has the property that $ \pa F $ coincides with the graph of a smooth function $\psi : \pa E \rightarrow \R$ satisfying
		\begin{equation}\label{tesipropPreg}
			\| \psi \|_{L^2(\pa E)} \leq C h^{\frac{3}{4}},\quad\| \nabla_{\pa E} \psi \|_{L^2(\pa E)} \leq  C \sqrt{h} , \quad \| \kappa_{F}^{\varphi}\|_{H^1(\pa F)} \leq C.
		\end{equation}
	\end{proposition}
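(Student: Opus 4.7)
I would derive the three estimates sequentially, beginning with the graph representation.

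\textbf{Graph representation.} By Lemma \ref{lalalambdamin}, any minimizer $F$ of \eqref{probdimin1} is a $\lambda$-minimizer of the $\varphi$-perimeter with $\lambda = \lambda(K,K_{el})$, and $d_{H^{-1}}(F,E)\leq 2\Lambda' h$, where $\Lambda'$ is the constant from Lemma \ref{lego123og}. I would choose $\eta_0 = \eta_0(K,K_{el})$ smaller than both the $\sigma$ of Lemmas \ref{lego123og} and \ref{lemmazzo1} and the $\delta_0(K,\lambda,\gamma)$ from Lemma \ref{propdadim} (with $\gamma$ slightly less than $1/4$), and small enough that $\|\psi\|_{C^{1,\gamma'}(\pa E)}$ is as small as needed. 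Since $E \in \mathfrak{H}^4_{K,\sigma_0}(E_0) \subset \mathfrak{H}^3_{K,\sigma_0}(E_0)$, Lemma \ref{propdadim} then yields a function $\psi \in C^{1,\gamma'}(\pa E)$ such that $\pa F = \{x + \psi(x)\nu_E(x) : x \in \pa E\}$.

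\textbf{The $L^2$ and $\dot{H}^1$ bounds on $\psi$.} Combining the minimality $\mathcal{G}(F)+\frac{1}{2h}d_{H^{-1}}^2(F,E) \leq \mathcal{G}(E)$ with inequality \eqref{lemmatesi1} of Lemma \ref{lego123og} gives
\begin{equation*}
\frac{J_\varphi}{4}\|\nabla_\tau \psi\|_{L^2(\pa F)}^2 + \frac{1}{2h}d_{H^{-1}}^2(F,E) \leq \Lambda\, d_{H^{-1}}(F,E).
\end{equation*}
Using $d_{H^{-1}}(F,E) \leq 2\Lambda' h$ yields directly $\|\nabla_\tau \psi\|_{L^2(\pa F)} \leq C\sqrt{h}$. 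For the $L^2$ bound I would exploit the interpolation $\|\xi_{F,E}\|_{L^2} \leq \|\xi_{F,E}\|_{H^{-1}}^{1/2}\|\nabla_\tau \xi_{F,E}\|_{L^2}^{1/2}$, available since $\int_{\pa E}\xi_{F,E}\,d\mathcal{H}^1=0$ by Lemma \ref{remgEF}. Since $\|\xi_{F,E}\|_{H^{-1}(\pa E)} = d_{H^{-1}}(F,E) \leq Ch$ and, via Lemma \ref{lemmazzo1}, $\|\nabla_\tau \xi_{F,E}\|_{L^2} \leq C\|\nabla_\tau \psi\|_{L^2} \leq C\sqrt{h}$, we obtain $\|\xi_{F,E}\|_{L^2(\pa E)} \leq Ch^{3/4}$. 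The pointwise bound $\psi^2 \leq 2\xi_{F,E}^2$ from \eqref{fomr07112024} then gives $\|\psi\|_{L^2(\pa E)} \leq Ch^{3/4}$.

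\textbf{The $H^1$ bound on $\kappa_F^\varphi$.} This step needs the Euler--Lagrange equation. Computing the first variation of $\mathcal{F}_h(\cdot,E)$ along divergence-free vector fields via \eqref{FeulPANI}, Proposition \ref{propELDH-1}, and Proposition \ref{PropelELASt} yields (at first distributionally on $\pa F$)
\begin{equation*}
\kappa_F^\varphi \,=\, Q(E(u_F^{K_{el}})) \,-\, \tfrac{1}{h}\,v_{F,E}\circ \pi_{\pa E} \,+\, c_F \qquad \text{on } \pa F,
\end{equation*}
where $c_F$ is the Lagrange multiplier from the volume constraint built into $d_{H^{-1}}$, fixed by the anisotropic Gauss--Bonnet identity of Lemma \ref{lGaussB}. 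The elastic term is bounded in $H^1(\pa F)$ by $C(K_{el})$ since $u_F^{K_{el}} \in \mathfrak{C}^{3,1/4}_{K_{el}}(\Omega,\R^2)$; the transport term is bounded since $-\Delta_\tau v_{F,E} = \xi_{F,E}$ gives $\|\nabla_\tau v_{F,E}\|_{L^2(\pa E)} = d_{H^{-1}}(F,E) \leq Ch$, so Poincaré plus the change-of-variable identity of Lemma \ref{cappelloliberoL} yield $\|v_{F,E}\circ\pi_{\pa E}\|_{H^1(\pa F)} \leq Ch$, and dividing by $h$ gives a constant bound; and $|c_F|$ is controlled by the already-estimated terms via Gauss--Bonnet. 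Collecting the estimates proves $\|\kappa_F^\varphi\|_{H^1(\pa F)} \leq C(K,K_{el})$.

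\textbf{The main obstacle.} The delicate point is the rigorous derivation and use of the Euler--Lagrange equation, since Lemma \ref{propdadim} only provides $\psi \in C^{1,\gamma'}$, which is not a priori sufficient to apply the classical first variation of $P_\varphi$ or to interpret $\kappa_F^\varphi$ pointwise. The standard remedy is a bootstrap: the EL equation holds weakly on $\pa F$, and using the expansion $\kappa_F^\varphi(x+\psi(x)\nu_E(x)) = -g(\nu_E)\pa_\tau^2\psi + \kappa_E^\varphi + R$ from \eqref{L'ESPANSOIONE} with the uniform lower bound $g(\nu_E) \geq C_g > 0$ from \eqref{lafunzghaprop}, one rewrites it as a quasilinear elliptic equation of second order for $\psi$ on $\pa E$. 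The smoothness of $g$, the $C^{3,1/4}$-bound on $u_F^{K_{el}}$, the graph change of variables, and the velocity bound on $v_{F,E}/h$ then allow one to iteratively upgrade $\psi$ through $H^2$ to $H^3$, yielding simultaneously the claimed $H^1(\pa F)$ bound on $\kappa_F^\varphi$.
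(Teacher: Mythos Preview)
Your approach is essentially the same as the paper's, and the argument is correct. There is one step you skip that the paper makes explicit: before invoking the Euler--Lagrange equation you must check that the constraint $\pa F \subset {\rm cl}(\mathcal{I}_{\eta_0}(\pa E))$ is \emph{not active}, i.e.\ $\pa F \Subset \mathcal{I}_{\eta_0}(\pa E)$. The paper does this by first deriving $\|\psi\|_{H^1(\pa E)} \leq C\sqrt{h}$ (your second paragraph already gives this), then using Sobolev embedding to get $\|\psi\|_{L^\infty(\pa E)} \leq C\sqrt{h}$, which for $h \leq h_0$ small forces $\pa F$ away from the boundary of the tube; only then is the first variation computed. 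Your estimates contain this information, but the logical order matters and should be stated. Apart from this, the only difference is cosmetic: the paper writes the Euler--Lagrange equation with $f = v_{F,E}/d_{H^{-1}}(F,E)$ and the coefficient $d_{H^{-1}}(F,E)/h$, whereas you work directly with $v_{F,E}/h$; and the paper postpones the sharp $\|\psi\|_{L^2} \leq Ch^{3/4}$ interpolation to the very end, while you do it before the Euler--Lagrange step. Your remarks on the regularity bootstrap are more explicit than the paper's, which simply asserts the equation and appeals to elliptic regularity in a remark after the proposition.
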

	\begin{proof}
		Let $\sigma$ be the constant from Lemma \ref{lego123og}, and let $\delta_0$  be the constant obtained in Lemma \ref{probdimin1}  for $\Lambda= \lambda(K,K_{el})$, where $\lambda(K,K_{el})$ is the constant defined in Lemma \ref{lalalambdamin} (see formula \eqref{ildalma}). We set $\eta_0:= \min \{ \sigma, \delta_0\}$. Let $F$ be a minimizer of \eqref{probdimin1} for $\eta= \eta_0$. By Lemma \ref{lalalambdamin}, we have that $F$ is a $\lambda(K,K_{el})$-minimizer of the $\varphi$-perimeter. Applying Lemma \ref{propdadim}, we obtain $$\pa F = \{ x+ \psi (x)\nu_E(x)\colon x \in \pa E\},$$ where $\psi \in C^1(\pa E)$ and $\| \psi \|_{C^1(\pa E)} \leq C(K,K_{el})$. Using formula \eqref{lemmatesi1}, the minimality of $F$, and formula \eqref{2803e'arri}, we get
		\begin{equation}\label{01042025formz1}
			\| \nabla_{\pa E} \psi \|_{L^2(\pa E)} \leq C(K) \sqrt{h}.
		\end{equation} 
		Using formulas \eqref{tesilemmazzzo}, \eqref{perchèiosiloso}, and the Poincaré inequality, we have
		\begin{equation}\label{ludovicoariosto}
			\| \psi \|_{L^2(\pa E)} \leq C(K)\| \xi_{F,E} \|_{L^2(\pa E)} \leq C(K) \| \nabla_{\pa E} \xi_{F,E} \|_{L^2(\pa E)} \leq C(K) \| \nabla_{\pa E} \psi \|_{L^2(\pa E)} ,
		\end{equation}
		where $\xi_{F,E}= \psi+ \frac{\psi \kappa_E^2}{2}$.
		Therefore by the Sobolev embedding, \eqref{01042025formz1} and \eqref{ludovicoariosto}, we obtain 
		\begin{equation}
			\| \psi \|_{L^\infty(\pa E)} \leq C(K)  \| \psi \|_{H^1(\pa E)}  \leq C(K) \sqrt{h}.
		\end{equation}
		Hence, for $h_0$ small enough, we have $ \pa F \Subset \mathcal{I}_{\eta}(\pa E)$. We are now in a position to compute the Euler–Lagrange equation for the functional $ F \rightarrow \mathcal{F}_h(F,E)$. Applying formula \eqref{FeulPANI}, Proposition \ref{propELDH-1}, and Proposition \ref{PropelELASt} we obtain
		\begin{equation}\label{01042025euler1}
			\kappa_F^{\varphi}(y)- Q(E(u_{F}^{K_{el}}))(y)+ \frac{d_{H^{-1}}(F,E)}{h}f (\pi_{\pa E}(y))=L \quad \text{ for all }y \in \pa F
		\end{equation}
		where $f \in H^1(\pa E)$  is the function that attains the supremum in \eqref{d_{H^{-1}}}, and $L$ is the Lagrange multiplier. Integrating equation \eqref{01042025euler1} over $\pa F$, we get
		\begin{equation}\label{gelato21052025}
			\begin{split}
				L  &\leq \frac{1}{P(F)} \bigg[ \int_{\pa F} \kappa_F^{\varphi}\, d \mathcal{H}^1+ C(K,K_{el})+\frac{d_{H^{-1}}(F,E)}{h} \int_{\pa F } f \circ \pi_{\pa E} \, d \mathcal{H}^1    \bigg]\\
				& = \frac{1}{P(F)} \bigg[ \int_{\pa F} \kappa_F^{\varphi}\, d \mathcal{H}^1+ C(K,K_{el}) \\
				& \qquad\qquad \qquad+\frac{d_{H^{-1}}(F,E)}{h} \int_{\pa E} f(x) \sqrt{ (1+ \psi(x)\kappa_E(x))^2+ \vert\nabla_{\pa E} \psi(x) \vert^2}  \, d \mathcal{H}^1_x \bigg] \\
				& \leq \frac{1}{\sqrt{4 \pi}} \bigg[ C_{\varphi}+2 \Lambda' C(K)    \bigg]= C(K,K_{el})
			\end{split} 
		\end{equation}
		where we have used: a change of variable $x \in \pa E \rightarrow x+ \psi(x)\nu_E(x) \in \pa F$, whose tangential Jacobian is $x \rightarrow \sqrt{ (1+ \psi(x)\kappa_E(x))^2+ \vert\nabla_{\pa E} \psi(x) \vert^2} $, the isoperimeteric inequality $ 1= \vert F \vert \leq \frac{1}{4 \pi} P(F)^{2}$, Lemma \ref{lGaussB}, formula \eqref{2803e'arri}, and the bound $\| f \|_{L^2(\pa E)} \leq C(K) $. By the Euler–Lagrange equation and \eqref{gelato21052025}, we can now estimate the $L^2$ norm of $\kappa_F^\varphi$, and we get \begin{equation}\label{01042025formz2}
			\| \kappa_F^{\varphi} \|_{L^2(\pa F)} \leq C(K,K_{el}).
		\end{equation}  Differentiating the equation \eqref{01042025euler1}, we obtain 
		\begin{equation}
			\nabla_{\pa F} \kappa_F^{\varphi}(y)- \nabla_{\pa F} Q(E(u_F^{K_{el}}))(y)+ \frac{d_{H^{-1}}(F,E)}{h} \nabla_{\pa F} f (\pi_{\pa E}(y))=0  \quad \text{ for all }y \in \pa F.
		\end{equation}
		Using this equation, formula \eqref{cappelloliberoF}, and $ \| \nabla_{\pa E} f \|_{L^2(\pa E)} \leq 1$, we get \begin{equation}\label{01042025formz3}
			\| \nabla_{\pa F} \kappa_{F}^{\varphi} \|^2_{L^2(\pa F)} \leq C(K,K_{el})+ \int_{\pa E} \frac{\vert \nabla_{\pa E} f \vert^2}{\sqrt{(1+ \psi \kappa_E)+\vert\nabla_{\pa E} \psi  \vert^2}}\, d \mathcal{H}^1 \leq C(K,K_{el}).
		\end{equation}
		From formulas \eqref{lemmazzo1} and \eqref{01042025formz1}, we deduce $$ \| \nabla_{\pa E} \xi_{F,E} \|_{L^2(\pa E)} \leq C \sqrt{h}.$$
		Now using formula \eqref{fomr07112024} and the interpolation of $L^2(\pa E)$ between $H^{1}(\pa E)$ and $H^{-1}(\pa E)$, we obtain
		\begin{equation}\label{dacitare}
			\frac{1}{\sqrt{2}}\| \psi \|_{L^2(\pa E)} \leq  \| \xi_{F,E} \|_{L^2(\pa E)} \leq \| \nabla_{\pa E} \xi_{F,E} \|_{L^2(\pa E)}^{\frac{1}{2}} \| \xi_{F,E} \|_{H^{-1}(\pa E)}^{\frac{1}{2}} \leq C(K,K_{el})h^{\frac{3}{4}}
		\end{equation}
		where we have used  formula \eqref{2803e'arri}, i.e., $d_{H^{-1}}(F,E)= \| \xi_{F,E} \|_{H^{-1}(\pa E)} \leq 2 \Lambda' h$.
		Finally, the estimate \eqref{tesipropPreg} follows from \eqref{01042025formz1}, \eqref{01042025formz2}, \eqref{01042025formz3}, and \eqref{dacitare}.
	\end{proof}
	
	An important consequence of Proposition \ref{primapropreg} is that the boundary of any minimizer of problem \eqref{probdimin1} does not intersect the boundary of the constraint $\pa \mathcal{I}_{\eta_0}(\pa E)$ for $h \leq h_0$. This allows us to write the Euler-Lagrange equation for $F$, and by applying  \eqref{eqv} and \eqref{01042025euler1}, we obtain:
	\begin{equation}\label{LEEULERO}
		\left\{
		\begin{aligned}
			& \kappa_F^{\varphi}- Q(E(u_F^{K_{el}}))+ \frac{d_{H^{-1}}(F,E)}{h} f \circ \pi_{\pa E}=L & \text{ on } \pa F  , \\
			& -\Delta_{\pa E} f=\frac{ \xi_{F,E} }{d_{H^{-1}}(F,E)} & \text{ on }  \pa E 
		\end{aligned}
		\right.
	\end{equation}
	where $\xi_{F,E}$ is defined in \eqref{lafunzgEF}(see also formula \eqref{lafunzgEFgraph}) and $L$ is the Lagrange multiplier. We remark that if $E$ is $C^{5,\gamma}$-regular for some $\gamma \in (0,1)$, then
	by the elliptic regularity theory implies that $F$ is also $C^{5}$-regular. Using formula \eqref{L'ESPANSOIONE}, we can combine the two equations above into the following single equation:
	\begin{multline}\label{LEEULEROCOMB}
		\frac{1}{h}\big( \psi(x)+ \kappa_E(x) \frac{\psi(x)^2}{2} \big)\\= \pa_{\tau}^2 (-g(\nu_E(x))\pa_{\tau}^2 \psi(x)+ \kappa_E^\varphi(x))- \pa_{\tau}^2 \big(Q(E(u_F^{K_{el}}))(x+ \psi(x)\nu_E(x))\big)+ \pa_{\tau}^2 R(x) 
	\end{multline}
	for all $ x \in \pa E$, where $R$ is define in \eqref{ILRESTO}.
	
	Let us recall a lemma that will be useful in the upcoming proofs; see \cite[Lemma 2.3]{CFJKsd}, \cite[Lemma 2.5 \& Proposition 2.6]{JN}.
	\begin{lemma}\label{michele}
		Let $A \subset \R^2$ be a set of class $C^5$ and such that $A \in \mathfrak{C}_M^2(E_0)$. For all $f \in C^4(\pa A)$ it holds
		\begin{equation}
			\begin{split}
				&\| f \|_{H^1(\pa A)} \leq C \big(    \| \pa_\tau f \|_{L^2(\pa A)}+ \| f \|_{L^\infty(\pa A)} (1+ \|   \kappa_E \|_{L^2(\pa A)})\big),\\
				&\| f \|_{H^2(\pa A)} \leq C \big(    \| \pa^2_\tau f \|_{L^2(\pa A)}+ \| f \|_{L^\infty(\pa A)} (1+ \|  \pa_\tau \kappa_A \|_{L^2(\pa A)})\big),\\
				& \| f \|_{H^3(\pa A)} \leq C \big(    \| \pa^3_\tau f \|_{L^2(\pa A)}+ \| f \|_{L^\infty(\pa A)} (1+ \|  \pa_\tau^2 \kappa_A \|_{L^2(\pa A)})\big),\\
				& \| f \|_{H^4(\pa A)} \leq C \big(    \| \pa^4_\tau f \|_{L^2(\pa A)}+ \| f \|_{L^\infty(\pa A)} (1+ \| \pa^3_\tau \kappa_A \|_{L^2(\pa A)}) \big),
			\end{split}
		\end{equation}
		where $C$ is a universal constant.
	\end{lemma}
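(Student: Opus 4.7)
My plan is to establish the four inequalities via iterated integration by parts on the compact curve $\pa A$, combined with the interpolation inequality from Proposition \ref{PROPINTER} and the Leibniz-type estimate from Lemma \ref{LemmLeibinz}.

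The $H^1$ inequality is essentially immediate. Since $A \in \mathfrak{C}^2_M(E_0)$, the length $|\pa A|$ is bounded by a constant depending only on $M$, so $\|f\|_{L^2(\pa A)} \leq |\pa A|^{1/2} \|f\|_{L^\infty(\pa A)}$; adding $\|\pa_\tau f\|_{L^2(\pa A)}$ yields the desired bound, and the curvature factor on the right hand side is included only for uniformity with the subsequent inequalities.

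For $H^k$ with $k \in \{2,3,4\}$ the strategy is to bound every intermediate tangential derivative $\|\pa_\tau^j f\|_{L^2(\pa A)}$, $1 \leq j \leq k-1$, in terms of $\|\pa_\tau^k f\|_{L^2}$ and $\|f\|_{L^\infty}$, and then absorb the top-order contribution. The starting identity is the integration by parts on the closed curve $\pa A$,
\begin{equation*}
\int_{\pa A} (\pa_\tau^j f)^2 \, d\mathcal{H}^1 = -\int_{\pa A} \pa_\tau^{j+1} f \cdot \pa_\tau^{j-1} f \, d\mathcal{H}^1,
\end{equation*}
which by Cauchy--Schwarz gives $\|\pa_\tau^j f\|_{L^2}^2 \leq \|\pa_\tau^{j+1} f\|_{L^2}\|\pa_\tau^{j-1} f\|_{L^2}$. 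Iterating this log-convex estimate and combining with $\|f\|_{L^2} \leq |\pa A|^{1/2}\|f\|_{L^\infty}$ produces a Gagliardo--Nirenberg type inequality $\|\pa_\tau^j f\|_{L^2} \leq C\|\pa_\tau^k f\|_{L^2}^{j/k}\|f\|_{L^\infty}^{1-j/k}$, and Young's inequality then absorbs every intermediate term into $\varepsilon \|\pa_\tau^k f\|_{L^2} + C_\varepsilon\|f\|_{L^\infty}$.

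The curvature-dependent factor $(1+\|\pa_\tau^{k-1}\kappa_A\|_{L^2})$ enters because the constants in Proposition \ref{PROPINTER} depend on the higher-order control of the parametrization of $\pa A$, which, via the Frenet frame, is governed precisely by the $H^{k-1}$-norm of $\kappa_A$. Equivalently, when carrying out the iterated integration by parts in a coordinate-free way, the commutators between $\pa_\tau^j$ and the operations needed to reduce to $\pa_\tau^k f$ generate terms of the form $\pa_\tau^\ell \kappa_A \cdot \pa_\tau^{k-1-\ell} f$ with $\ell \leq k-1$, whose $L^2$-norm is controlled by the Leibniz estimate of Lemma \ref{LemmLeibinz} as the product of $\|\pa_\tau^{k-1}\kappa_A\|_{L^2}$, $\|f\|_{L^\infty}$ and lower-order curvature quantities already encoded in $M$. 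The main obstacle is precisely this bookkeeping of curvature commutators across the iteration; once organized, each case reduces to a standard Young absorption of the top-order term.
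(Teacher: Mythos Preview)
The paper does not supply its own proof of this lemma; it is quoted from \cite[Lemma 2.3]{CFJKsd} and \cite[Lemma 2.5, Proposition 2.6]{JN}, which are formulated for hypersurfaces in general dimension. Your integration-by-parts/log-convexity approach is correct and does establish the inequalities in the planar case; in fact it gives the stronger bound $\|f\|_{H^k(\pa A)} \leq C\bigl(\|\pa_\tau^k f\|_{L^2(\pa A)} + \|f\|_{L^\infty(\pa A)}\bigr)$ with no curvature factor, since on a closed curve $\pa_\tau = d/ds$ and the identity $\int_{\pa A}(\pa_\tau^j f)^2 = -\int_{\pa A}\pa_\tau^{j+1}f\,\pa_\tau^{j-1}f$ holds exactly, without any correction terms. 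The only dependence of the constant is on the length of $\pa A$, which is controlled by the assumption $A\in\mathfrak{C}^2_M(E_0)$.

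This is precisely where your last paragraph goes wrong. Neither of your two explanations for the factor $(1+\|\pa_\tau^{k-1}\kappa_A\|_{L^2})$ holds up: your argument never actually invokes Proposition~\ref{PROPINTER}, and the ``commutators between $\pa_\tau^j$ and the operations needed to reduce to $\pa_\tau^k f$'' that you describe simply do not exist in dimension one---covariant differentiation of a scalar on a curve is ordinary differentiation in arc length and produces no curvature terms. The curvature factor in the stated lemma is a vestige of the higher-dimensional formulation in the cited references, where the second fundamental form genuinely enters via commutation of covariant derivatives; in the present planar setting it is redundant. So the core of your proof is fine, but you should drop the commutator discussion and simply observe that the stated inequality follows a fortiori from the curvature-free version your argument actually yields.
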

	
	In the next proposition, we will prove a sharp estimate for the $L^2$-norm of the heightfunction in \eqref{tesipropPreg}, namely: $$\| \psi \|_{L^2(\pa E)} \precsim h.$$
	
	\begin{proposition}\label{Mthmproof1}
		Let $E$ be a set of class $C^5$ such that $E \in \mathfrak{H}^4_{K,\sigma_0}(E_0)$ and $\| \pa_{\tau}^3 \kappa_E^{\varphi} \|_{L^2(\pa E)} \leq \frac{K}{h^{\frac{1}{4}}}$.  Let $F \subset \R^2$ be a minimizer of \eqref{probdimin1} for $\eta=\eta_0$, where $\eta_0$ is given in Proposition \ref{primapropreg}. Then, for the heightfunction in \eqref{tesipropPreg}, we have
		\begin{equation}\label{Lego375}
			\| \psi \|_{L^2(\pa E)} \leq C_1 h, \quad \| \psi \|_{H^4(\pa E)} \leq C_1
		\end{equation}
		for all $h \leq h_0$ where $h_0$ is the constant from Proposition \ref{primapropreg}. The constant $C_1$ depends on $K$ and $K_{el}$.
	\end{proposition}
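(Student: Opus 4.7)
The strategy is to use the combined Euler--Lagrange equation \eqref{LEEULEROCOMB}, viewed as a fourth-order elliptic equation in $\psi$ with a ``$1/h$''-coefficient on the zero-order part, first to upgrade the bounds of Proposition \ref{primapropreg} to $\|\psi\|_{H^4(\pa E)}\leq C_1$, and then to sharpen the $L^2$-bound from $h^{3/4}$ to $h$. From Proposition \ref{primapropreg} I start with $\|\psi\|_{L^2(\pa E)}\leq Ch^{3/4}$, $\|\nabla_\tau\psi\|_{L^2(\pa E)}\leq C\sqrt h$, and hence $\|\psi\|_{L^\infty(\pa E)}\leq C\sqrt h$ by Sobolev embedding on the closed curve $\pa E$; moreover $\|\pa_\tau^2\psi\|_{L^2(\pa E)}\leq C$ is obtained by reading $\pa_\tau^2\psi$ off the expansion $-g(\nu_E)\pa_\tau^2\psi=\kappa_F^\varphi-\kappa_E^\varphi-R$ from \eqref{L'ESPANSOIONE}, together with the $H^1$-bound on $\kappa_F^\varphi$.

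For the $H^4$-estimate, I test \eqref{LEEULEROCOMB} against $\pa_\tau^2(g(\nu_E)\pa_\tau^2\psi)$. Integrating by parts twice in the first term, and using $\pa_\tau^2\xi_{F,E}=\pa_\tau^2\psi+\pa_\tau^2(\kappa_E\psi^2/2)$, Young's inequality on the right-hand side yields
\begin{equation}
\frac{1}{h}\int_{\pa E}g(\nu_E)(\pa_\tau^2\psi)^2\,d\mathcal H^1+\tfrac{1}{2}\bigl\|\pa_\tau^2\bigl(g(\nu_E)\pa_\tau^2\psi\bigr)\bigr\|_{L^2(\pa E)}^2\leq C(K,K_{el})+\mathrm{error},
\end{equation}
where three kinds of error terms must be absorbed: (i) the quadratic corrections coming from $\pa_\tau^2(\kappa_E\psi^2/2)$, handled via $\|\psi\|_{L^\infty}\leq C\sqrt h$; (ii) the contribution of $\pa_\tau^2 R$, whose top-order term has the form $r_1\,\pa_\tau^4\psi$ with $r_1=r_1(\psi\kappa_E,\pa_\tau\psi,\nu_E)$ vanishing at $\psi=0$, so that $\|r_1\|_{L^\infty}\to 0$ as $h\to 0^+$; and (iii) the elasticity term $\pa_\tau^2\bigl(Q(E(u_F^{K_{el}}))\circ\Psi\bigr)$ and the curvature term $\pa_\tau^2\kappa_E^\varphi$, whose $L^2$-norms are bounded by $C(K_{el})$ and $C(K)$ respectively since $\|u_F^{K_{el}}\|_{C^{3,1/4}}\leq K_{el}$ and $E\in\mathfrak H^4_{K,\sigma_0}(E_0)$. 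Expanding $\pa_\tau^2(g(\nu_E)\pa_\tau^2\psi)=g(\nu_E)\pa_\tau^4\psi+2\pa_\tau g\,\pa_\tau^3\psi+\pa_\tau^2 g\,\pa_\tau^2\psi$, using $g(\nu_E)\geq C_g>0$ together with the interpolation $\|\pa_\tau^3\psi\|_{L^2}^2\leq\|\pa_\tau^4\psi\|_{L^2}\|\pa_\tau^2\psi\|_{L^2}$ from Proposition \ref{PROPINTER}, and absorbing, I obtain $\|\pa_\tau^4\psi\|_{L^2(\pa E)}\leq C_1$; Lemma \ref{michele} then upgrades this to $\|\psi\|_{H^4(\pa E)}\leq C_1$.

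With $\|\psi\|_{H^4(\pa E)}\leq C_1$ available, the quantities $\|\pa_\tau^2 R\|_{L^2}$ and $\|\pa_\tau^2(g(\nu_E)\pa_\tau^2\psi)\|_{L^2}$ are bounded independently of $h$ via Lemma \ref{LemmLeibinz} and Proposition \ref{PROPINTER}. Rewriting \eqref{LEEULEROCOMB} as $\xi_{F,E}=h\,\pa_\tau^2\bigl(\kappa_E^\varphi-g(\nu_E)\pa_\tau^2\psi+R-Q(E(u_F^{K_{el}}))\circ\Psi\bigr)$ and taking $L^2$-norms gives $\|\xi_{F,E}\|_{L^2(\pa E)}\leq C_1 h$; since $\xi_{F,E}=\psi(1+\kappa_E\psi/2)$ with $\|\psi\|_{L^\infty}$ small, this yields the desired $\|\psi\|_{L^2(\pa E)}\leq C_1 h$. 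The main obstacle lies in the absorption of the quasilinear term $r_1\,\pa_\tau^4\psi$ from $\pa_\tau^2 R$: it has the same differential order as the principal part $\pa_\tau^2(g(\nu_E)\pa_\tau^2\psi)$, and its absorption is possible only thanks to the smallness of $\|r_1\|_{L^\infty}$ forced by $\|\psi\|_{L^\infty}\to 0$ as $h\to 0^+$, which is precisely what Proposition \ref{primapropreg} provides. The lower-order cross terms produced by the non-constant coefficient $g(\nu_E)$ and by the quadratic piece of $\xi_{F,E}$ are treated analogously via the interpolation inequalities of Section \ref{NOtazioniLAvoro}.
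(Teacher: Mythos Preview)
Your proposal is correct and follows essentially the same route as the paper: an energy estimate on the Euler--Lagrange equation \eqref{LEEULEROCOMB} to get $\|\psi\|_{H^4}\leq C_1$, followed by reading off $\|\xi_{F,E}\|_{L^2}\leq C_1 h$ directly from the equation once the right-hand side is known to be bounded in $L^2$. The only visible difference is cosmetic: the paper tests \eqref{LEEULEROCOMB} against $\pa_\tau^4\psi$ (obtaining $\frac{1}{h}\|\pa_\tau^2\psi\|_{L^2}^2+\int g(\nu_E)|\pa_\tau^4\psi|^2$ on the left), whereas you test against $\pa_\tau^2(g(\nu_E)\pa_\tau^2\psi)$ (obtaining $\frac{1}{h}\int g(\nu_E)|\pa_\tau^2\psi|^2+\|\pa_\tau^2(g(\nu_E)\pa_\tau^2\psi)\|_{L^2}^2$). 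Your choice makes the principal fourth-order term appear as a perfect square, at the price of having to unpack it afterwards via $\pa_\tau^2(g\pa_\tau^2\psi)=g\pa_\tau^4\psi+\text{lower order}$; the paper's choice gives $\|\pa_\tau^4\psi\|_{L^2}$ directly but produces cross-terms $\int\pa_\tau^j g(\nu_E)\,\pa_\tau^{4-j}\psi\,\pa_\tau^4\psi$ that are handled the same way you handle yours. The treatment of the three error groups---the quadratic correction in $\xi_{F,E}$, the elasticity term via \eqref{ciserviràsicuro}, and the quasilinear remainder $R$ whose top-order piece $r_1\pa_\tau^4\psi$ is absorbed using $\|r_1\|_{L^\infty}\to 0$---matches the paper's estimates \eqref{napoli2}--\eqref{napoli7} exactly.
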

	
	\begin{proof}
		By the assumption on $E$, we deduce 
		\begin{equation}\label{guccini}
			\nu_E \in H^3(\pa E) \text{ and } \| \nu_E \|_{H^3(\pa E)} \leq C(K).
		\end{equation}
		We multiply the Euler equation \eqref{LEEULEROCOMB} by $\pa_\tau^4 \psi$ and integrate over
		$\pa E$, obtaining
		\begin{equation}\label{napoli1}
			\begin{split}
				\frac{1}{h} \int_{\pa E} \vert \pa_{\tau}^2 \psi \vert ^2  +& \int_{\pa E} g(\nu_E) \vert \pa_{\tau}^4 \psi \vert^2= \frac{1}{h}\int_{\pa E}\kappa_E \frac{\psi^2}{2} \pa_{\tau}^4 \psi - \int_{\pa E} \pa_{\tau}^2 g(\nu_E) \pa_{\tau}^2 \psi \pa_{\tau}^4 \psi\\
				& \qquad\qquad  -\int_{\pa E}\pa_{\tau}g(\nu_E) \pa_{\tau}^3 \psi \pa_{\tau}^4 \psi + \int_{\pa E} \pa_\tau^2 \kappa_E^{\varphi} \pa_{\tau}^4 \psi \\
				& \qquad\qquad  - \int_{\pa E}\pa_\tau^2 (Q(E(u_F^{K_{el}})))(\cdot+ \psi (\cdot)\nu_{E}(\cdot)) \pa_\tau^4 \psi +\int_{\pa E} \pa_{\tau}^2 R \pa_\tau^4 \psi.
			\end{split}
		\end{equation}
		We now proceed to estimate the right-hand side of the above equation. Let us fix $\varepsilon>0$ to be chosen later.\\
		\textit{Estimate of $\frac{1}{h}\int_{\pa E}\kappa_E \frac{\psi^2}{2} \pa_{\tau}^4 \psi  $.}
		
		Using the Cauchy–Schwarz and Young inequalities, together with formula \eqref{tesipropPreg} and the Sobolev embedding, we obtain
		\begin{equation}\label{napoli2}
			\begin{split}
				\frac{1}{h} \int_{\pa E} \kappa_E \psi^2 \pa_{\tau}^4 \psi &\leq \frac{C(K)}{h}\| \psi \|_{L^\infty(\pa E)}\| \psi \|_{L^2(\pa E)} \| \pa_{\tau}^4 \psi \|_{L^2(\pa E)}  \\
				&\leq \frac{C(K,\varepsilon)}{h^2} \| \psi \|^2_{L^\infty(\pa E)} \| \psi \|^2_{L^2(\pa E)}+ \varepsilon \| \pa_{\tau}^4 \psi\|_{L^2(\pa E)}^2\\
				&\leq \frac{C(K,\varepsilon)}{h^2} h^{2 (\frac{1}{2}+\frac{3}{4})}+ \varepsilon \| \pa_{\tau}^4 \psi\|_{L^2(\pa E)}^2\\
				& \leq C(K,\varepsilon)+ \varepsilon \| \pa_{\tau}^4 \psi \|^2_{L^2(\pa E)}.
			\end{split}
		\end{equation}
		\textit{Estimate of $\int_{\pa E} \pa_{\tau}^2 g(\nu_E) \pa_{\tau}^2 \psi \pa_{\tau}^4 \psi  $.}
		
		Using the Cauchy–Schwarz and Young inequalities, we obtain
		\begin{equation}\label{napoli3}
			\begin{split}
				\int_{\pa E} \pa_\tau^2 g(\nu_E) \pa_{\tau}^2 \psi \pa_{\tau}^4 \psi &\leq \| \pa_\tau^2 g(\nu_E) \|_{L^\infty(\pa E)} \| \pa_\tau^2 \psi \|_{L^2(\pa E)} \| \pa_{\tau}^4 \psi \|_{L^2(\pa E)} \\
				& \leq C(K,\varepsilon) \| g(\nu_E) \|_{H^3(\pa E)}^2 \| \pa_\tau^2 \psi \|_{L^2(\pa E)}^2+ \varepsilon \| \pa^4_\tau \psi \|^2_{L^2(\pa E)}\\
				& \leq C(K,\varepsilon) \| \pa_\tau^2 \psi \|_{L^2(\pa E)}^2+ \varepsilon \| \pa^4_\tau \psi \|^2_{L^2(\pa E)}
			\end{split}
		\end{equation}
		where in the last inequality we have used the smoothness of $ g $ is smooth and formula \eqref{guccini}.\\
		\textit{Estimate of $\int_{\pa E} \pa_{\tau} g(\nu_E) \pa_\tau^3 \psi \pa_\tau^4 \psi$. }
		
		Recalling the interpolation inequality (see Proposition \ref{PROPINTER}), 
		$$ \| \pa_\tau^3 \psi \|_{L^2(\pa E)} \leq C \| \psi \|_{H^4(\pa E)}^{\frac{3}{4}} \|  \psi \|_{L^2(\pa E)}^{\frac{1}{4}} $$
		and using the Cauchy–Schwarz and Young inequalities, we obtain
		\begin{equation}\label{napoli4}
			\begin{split}
				\int_{\pa E} &\pa_\tau g(\nu_E) \pa_\tau^3 \psi \pa_\tau^4 \psi \leq C(K) \| \pa_\tau^3 \psi \|_{L^2(\pa E)} \|  \pa_\tau^4 \psi \|_{L^2(\pa E)} \\
				& \leq C(K,\varepsilon) \| \pa _\tau^3 \psi \|_{L^2(\pa E)}^2 + \frac{\varepsilon}{2} \| \pa_\tau^4 \psi \|_{L^2(\pa E)}^2 \\
				& \leq C(K,\varepsilon) \| \psi \|_{L^2(\pa E)}^2 + \varepsilon \|  \psi \|_{H^4(\pa E)}^2\\
				& \leq C(K,\varepsilon) \| \psi \|^2_{L^2(\pa E)}+ \varepsilon C\big( \| \pa_\tau^4 \psi \|_{L^2(\pa E)}+ \| \psi \|_{L^\infty(\pa E)}(1+\| \pa^3_\tau \kappa_E \|_{L^2(\pa E)}) \big)^2\\
				& \leq C(K,\varepsilon) \| \psi \|^2_{L^2(\pa E)} + \varepsilon C \| \pa^4_\tau \psi \|^2_{L^2(\pa E)}+ C(K),
			\end{split}
		\end{equation}
		where in the fourth inequality we have used Lemma \ref{michele} and \eqref{tesipropPreg} that gives $$ \| \psi \|_{L^\infty(\pa A)} (1+\|\pa^3_\tau \kappa_E \|_{L^2(\pa E)})^2 \leq C(K) h^{\frac{1}{2}} \frac{1}{h^{\frac{1}{2}}}\leq C(K). $$\\
		\textit{Estimate of $\int_{\pa E} \pa_\tau^2 \kappa_E^{\varphi} \pa_{\tau}^4 \psi$.}
		
		Using the Cauchy–Schwarz and Young inequalities and the bound $ \| \pa_\tau^2 \kappa_E^{\varphi} \|_{L^2(\pa E)} \leq C(K)$, we obtain
		\begin{equation}\label{napoli5}
			\int_{\pa E} \pa_\tau^2 \kappa_E^{\varphi} \pa_{\tau}^4 \psi \leq \| \pa^2_\tau \kappa_E^\varphi \|_{L^2(\pa E)} \| \pa^4_\tau \psi \|_{L^2(\pa E)} \leq C(K,\varepsilon)+ \varepsilon \| \pa^4_\tau \psi \|^2_{L^2(\pa E)}.
		\end{equation}
		\textit{Estimate of $ \int_{\pa E}\pa_\tau^2 (Q(E(u_F^{K_{el}})))(\cdot+ \psi (\cdot)\nu_{E}(\cdot)) \pa_\tau^4 \psi $.}\\
		\textit{Claim:} It is holds \begin{equation}\label{ciserviràsicuro}
			\| \pa_\tau^2 (Q(E(u_F^{K_{el}})))(\cdot+ \psi (\cdot)\nu_{E}(\cdot)) \|_{L^2(\pa E)} \leq C(K,K_{el})\big(\|  \psi \|_{L^2(\pa E )}+ \| \pa_{\tau}^2 \psi \|_{L^2(\pa E )}\big).
		\end{equation}
		
		Set $\hat F(x):=Q(E(u_F^{K_{el}}))(x) $ for all $x \in \Omega$.
		Then, for all $ x \in \pa E$, we have
		\begin{multline}
			\pa_\tau \big(\hat F(x+ \psi(x)\nu_E(x))\big) 
			= \nabla \hat F(x + \psi(x)\nu_E(x) )\cdot  (\pa_{\tau} \psi (x) \nu_E(x)+ (1+ \psi(x)\kappa_E(x))\tau_E(x)    ).
		\end{multline}
		Moreover,
		\begin{equation}
			\pa_\tau \nabla \hat F(x+\psi(x)\nu_E(x))= \nabla^2 \hat F(x+ \psi(x)\nu_E(x)) [ (1+ \psi(x) \kappa_E(x))\tau_E(x)+ \pa_\tau \psi(x)\nu_E(x)  ]
		\end{equation}
		and 
		\begin{equation}
			\pa_{\tau }[   (1+ \psi\kappa_E)\tau_E+ \pa_{\tau} \psi \nu_E ] 
			= \tau_E[2 \kappa_E\pa_\tau \psi+ \psi \pa_\tau \kappa_E]+ \nu_E [\kappa_E + \psi \kappa_E^2+ \pa_\tau^2 \psi]. 
		\end{equation}
		Therefore, by the Leibniz rule, we obtain
		\begin{equation}\label{30042025form1}
			\begin{split}
				\pa_\tau^2 &\big(\hat F(x+ \psi(x)\nu_E(x))\big) \\
				&= \nabla^2 \hat F(x+ \psi(x)\nu_E(x))G (x, \psi(x)\kappa_E(x), \pa_\tau \psi(x)  ) \cdot G (x,\psi(x)\kappa_E(x), \pa_\tau \psi(x)  ) \\
				& \quad+ \nabla \hat F(x+ \psi(x)\nu_E(x)) \cdot \bar{G}(x,\pa_\tau \psi (x) \kappa_E(x), \psi (x) \pa_\tau \kappa_E(x), \psi(x) \kappa_E^2(x), \kappa_E^2(x), \pa^2_\tau \psi(x)   ),
			\end{split}
		\end{equation}
		where $G \in C^\infty(\R^3) $ and $ \bar G \in C^\infty(\R^6,\R^2)$ satisfy $G(\cdot,0,0)=0$ and $ \bar G(\cdot, 0,0,0,0,0)=0$. Using formula \eqref{30042025form1} and recalling the very definition of $\hat F$ and that $u_F^{K_{el}} \in \mathfrak{C}^{3,\frac{1}{4}}_{K_{el}}(\Omega,\R^2) $, see \eqref{minelast} and using the Soblev embedding we can estimate the $L^2(\pa E)$-norm of $ x \rightarrow \pa_\tau^2 \big( Q(E(u_F^{K_{el}}))(x+ \psi(x)\nu_E(x))  \big) $ and we obtain the claim.
		
		Using the claim, along with the Cauchy–Schwarz and Young inequalities and Lemma \ref{michele}, we finally obtain
		\begin{equation}\label{napoli6}
			\int_{\pa E}\pa_\tau^2 (Q(E(u_F^{K_{el}})))(\cdot+ \psi (\cdot)\nu_{E}(\cdot)) \pa_\tau^4 \psi  \leq C(K,K_{el},\varepsilon) \| \pa_\tau^2 \psi \|_{L^2(\pa E )}^2+ \varepsilon \| \pa^4_\tau \psi \|_{L^2(\pa E)}^2.
		\end{equation}
		\textit{Estimate of $\int_{\pa E} \pa_\tau^2 R \pa^4_\tau \psi$.}
		
		Using the Cauchy–Schwarz and Young inequalities, we obtain
		\begin{equation}
			\int_{\pa E} \pa_\tau^2 R \pa^4_\tau \psi \leq \| \pa^2_\tau R \|_{L^2(\pa E)} \|\pa_\tau^4 \psi \|_{L^2(\pa E)}\leq C(\varepsilon) \|  \pa_\tau^2 R \|_{L^2(\pa E)}^2+ \varepsilon \| \pa^4_\tau \psi \|_{L^2(\pa E)}^2.
		\end{equation}
		Hence we need to estimate $ \|  \pa_\tau^2 R \|_{L^2(\pa E)}$.
		To estimate this term, we recall the form of $R$, see \eqref{ILRESTO}. Applying the Leibniz rule, we obtain
		\begin{equation}\label{04042025f1}
			\begin{split}
				\| \pa_\tau^2 R \|_{L^2(\pa E)} \leq &C \sum_{j+k=2} \|  \pa_\tau^j r_1(\psi\kappa_E, \pa_\tau \psi, \nu_E)  \pa_\tau^{2+k} \psi  \|_{L^2(\pa E)}\\
				&\qquad +C \sum_{j+k=2} \| \vert \pa_\tau^j r_2 (\psi \kappa_E, \pa_\tau \psi, \nu_E) \vert \vert \pa_\tau^{1+k}(\psi \kappa_E) \vert \|_{L^2(\pa E)} \\
				&\qquad + \| r_0(\psi, \pa_\tau \psi, \kappa_E, \nu_E) \|_{H^2(\pa E)}.
			\end{split}
		\end{equation}
		Let $j,k \in \N$ be such that $j+k = 2$, we apply Lemma \ref{LemmLeibinz} with $f_1=r_1(\psi \kappa_E,\pa_\tau \psi,\nu_E) $ and $f_2=\pa_\tau \psi $ to estimate
		\begin{equation}\label{04042025f2}
			\begin{split}
				\| \pa_{\tau}^j (r_1(\psi \kappa_e,\pa_{\tau} \psi, \nu_E)) \pa_{\tau}^{2+k} \psi \|_{L^2(\pa E)} \leq& C \| r_1(\psi \kappa_E, \pa_\tau \psi, \nu_E )  \|_{L^\infty(\pa E)} \| \psi \|_{H^4(\pa E)}\\
				& \quad + C \| \psi \|_{C^1(\pa E)} \| r_1(\psi \kappa_E, \pa_{\tau} \psi, \nu_E) \|_{H^3(\pa E)}.
			\end{split}
		\end{equation}
		Similarly, with $f_1= r_2(\psi\kappa_E,\pa_\tau \psi ,\nu_E) $ and $f_2= \psi \kappa_E$
		\begin{equation}\label{04042025f3}
			\begin{split}
				\| \pa_{\tau}^j ( r_2(\psi \kappa_E, \pa_{\tau} \psi, \nu_E)  ) \pa_{\tau}^{1+k}(\psi \kappa_E) \|_{L^2(\pa E)} \leq & C \| r_2(\psi \kappa_E, \pa_{\tau} \psi, \nu_E) \|_{L^\infty(\pa E)} \| \psi \kappa_E \|_{H^3(\pa E)}  \\
				& \quad +C \| \psi \kappa_E \|_{L^\infty(\pa E)} \| r_2 (\psi \kappa_E, \pa_{\tau} \psi, \nu_E) \|_{H^3(\pa E)}.
			\end{split}
		\end{equation}
		Since $r_i$ is smooth and satisfies $r_i(0,0,\cdot)=0$, we have that
		\begin{equation}
			\| r_i (\psi \kappa_E, \pa_\tau \psi, \nu_E)\|_{L^\infty(\pa E)} \leq C \| \psi \|_{C^1(\pa E)}.
		\end{equation}
		Furthermore, by the smoothness of $r_i$ and the chain rule, we obtain the following pointwise estimate
		\begin{equation}
			\begin{split}
				& \vert \pa_\tau r_i(\psi \kappa_E, \pa_\tau \psi ,\nu_E) \vert \leq C \big( 1+ \vert \pa_{\tau}^2 \psi \vert+ \vert \pa_\tau (\psi \kappa_E)\vert   \big),  \\
				& \vert \pa_\tau^2 r_i(\psi \kappa_E, \pa_\tau \psi ,\nu_E) \vert \leq C \sum_{\alpha \in \N^6, \vert \alpha \vert\leq 2} \prod_{k=1}^{2}(1+ \vert \pa_\tau^{\alpha_k} (\psi \kappa_E) \vert )(1+ \vert \pa_\tau^{1+\alpha_{2+k}} \psi \vert ),  \\
				&   \vert \pa_\tau^3 r_i(\psi \kappa_E, \pa_\tau \psi ,\nu_E) \vert \leq C \sum_{\alpha \in \N^6, \vert \alpha \vert\leq 3} \prod_{k=1}^{3}(1+ \vert \pa_\tau^{\alpha_k} (\psi \kappa_E) \vert )(1+ \vert \pa_\tau^{1+\alpha_{3+k}} \psi \vert ). 
			\end{split}
		\end{equation}
		Therefore, using Lemma \ref{LemmLeibinz} with $f_1=f_2=f_3= \psi \kappa_E$ and $f_4=f_5=f_6= \pa_\tau \psi$, we get
		\begin{equation}\label{04042025f4}
			\begin{split}
				\| r_i(\psi \kappa_E &,\pa_\tau \psi, \nu_E) \|_{H^3(\pa E)} \\ &\leq C(1+ \| \psi \kappa_E \|_{L^\infty(\pa E)}) (1+ \|\psi  \|_{H^4(\pa E)})+ (1+ \| \psi \|_{C^1(\pa E)})(1+ \| \psi \kappa_E \|_{H^3(\pa E)})
				\\
				&\leq C (1+ \| \psi \|_{H^4(\pa E)}+ \| \psi \kappa_E \|_{H^3(\pa E)}) \\
				& \leq C(1+ \| \psi \|_{H^4(\pa E)}+ \| \psi \|_{L^\infty(\pa E)} \|\kappa_E \|_{H^3(\pa E)})\\
				& \leq C(1+ \| \psi \|_{H^{4}(\pa E)}),
			\end{split}
		\end{equation}
		where we used the assumption $\| \pa_{\tau}^3 \kappa_E^{\varphi} \|_{L^2(\pa E)} \leq \frac{K}{h^{\frac{1}{4}}}$ and formula \eqref{tesipropPreg}.
		To estimate the third term, we again apply the chain rule, the regularity of $r_0$, Lemma \ref{LemmLeibinz} and Proposition \ref{PROPINTER}:
		\begin{equation}\label{04042025f5}
			\begin{split}
				\| r_0 (\psi, \pa_\tau \psi, \kappa_E, \nu_E) \|_{H^2(\pa E)} &\leq C(1+ \| \psi \|_{H^3(\pa E)}+ \| \kappa_E \|_{H^2(\pa E)}) \leq C (1+ \| \psi \|_{H^3(\pa E)}) \\
				& \leq \varepsilon \| \psi \|_{H^4(\pa E)}+ C(K,\varepsilon) \| \psi \|_{H^2(\pa E)}+ C(K).
			\end{split}
		\end{equation}
		Combining estimates  \eqref{04042025f1}, \eqref{04042025f2}, \eqref{04042025f3}, \eqref{04042025f4}, \eqref{04042025f5} and using Lemma \ref{michele} we obtain
		\begin{equation}\label{napoli7}
			\| \pa_\tau^2 R \|_{L^2(\pa E)} \leq C(K,\varepsilon)\| \psi \|_{H^2(\pa E)}^2+ \varepsilon \| \pa_\tau^4 \psi \|_{L^2(\pa E)}^2.
		\end{equation}
		
		Using \eqref{napoli1},\eqref{napoli2}, \eqref{napoli3}, \eqref{napoli4},\eqref{napoli5},\eqref{napoli6}, \eqref{napoli7} and recalling \eqref{lafunzghaprop} and for $\varepsilon, h$ sufficiently small we deduce
		\begin{equation}
			\frac{1}{h} \| \pa^2_\tau \psi \|^2_{L^2(\pa E)}+ \frac{C_g}{2} \| \pa^4_\tau \psi \|^2_{L^2(\pa E)} \leq C(K,K_{el}).
		\end{equation}
		Therefore, we obtain the bound
		\begin{equation}\label{lego6080}
			\| \psi \|_{H^4(\pa E)} \leq C(K,K_{el}).
		\end{equation} 
		This implies that the right-hand side of equation \eqref{LEEULEROCOMB}  is bounded in $L^2(\pa E)$, i.e., \begin{equation}\label{laFPDELULTF1}
			\| \pa_{\tau}^2 (-g(\nu_E)\pa_{\tau}^2 \psi+ \kappa_E^\varphi)- \pa_{\tau}^2 (Q(E(u_F^{K_{el}}))(\cdot+ \psi(\cdot)\nu_E(\cdot)))+ \pa_{\tau}^2 R \|_{L^2(\pa E)} \leq C(K,K_{el}) . 
		\end{equation}
		
		To prove $ \| \psi \|_{L^2(\pa E)} \leq C(K,K_{el}) h$,  we multiply the Euler–Lagrange equation \eqref{LEEULEROCOMB} by $ \psi + \frac{\psi^2}{2}\kappa_E$, integrate over $\pa E$, and apply the Cauchy–Schwarz inequality along with \eqref{laFPDELULTF1}:
		\begin{equation}
			\begin{split}
				\frac{1}{h}& \| \psi + \frac{\psi^2}{2}\kappa_E \|_{L^2(\pa E)}^2 \\
				&\leq \| (\pa_{\tau}^2 (-g(\nu_E)\pa_{\tau}^2 \psi+ \kappa_E^\varphi)- \pa_{\tau}^2 (Q(E(u_F^{K_{el}}))(\cdot+ \psi(\cdot)\nu_E(\cdot)))+ \pa_{\tau}^2 R) (\psi + \frac{\psi^2}{2}\kappa_E) \|_{L^2(\pa E)}\\
				& \leq C(K,K_{el}) \| \psi + \frac{\psi^2}{2}\kappa_E \|_{L^2(\pa E)}.
			\end{split}
		\end{equation}
		Therefore we obtain $ \| \psi + \frac{\psi^2}{2}\kappa_E \|_{L^2(\pa E)} \leq C(K,K_{el})h$. Recalling that $\frac{\psi^2}{2} \leq \big(\psi+ \frac{\psi^2}{2}\kappa_E\big)^2 $ (see formula \eqref{fomr07112024}), we obtain \begin{equation}\label{lego60801}
			\| \psi \|_{L^2(\pa E)} \leq C(K,K_{el})h.
		\end{equation}
		Combining \eqref{lego6080} and \eqref{lego60801}, we conclude the proof of \eqref{Lego375}.
	\end{proof}
	We need the following technical lemma; see \cite[Lemma 5.3]{JN} for the proof. We state the lemma in  $\R^2$, as this is the setting relevant to our context.
	\begin{lemma}
		Let $E\subset \R^2$ of class $C^5$ be such that $E \in \mathfrak{H}^4_{K,\sigma_0}(E_0)$.  Then, for all $u \in C^3(\pa E)$, the following estimates hold: \begin{equation}\label{maperchènonti}
			\begin{split}
				&\vert \nabla (u \circ \pi_{\pa E})(x) \vert \leq C(1+\vert \kappa_E \circ \pi_{\pa E}(x)\vert ) \vert \pa_{\pa E} u \circ \pi_{\pa E}(x)\vert,\\
				&\vert \nabla^2 (u \circ \pi_{\pa E}) (x)\vert \leq C \sum_{i=0,1} (1+\vert \pa_{\pa E}^i \kappa_E \circ \pi_{\pa E}(x)\vert ) \vert \pa_{\pa E}^{2-i} u \circ \pi_{\pa E}(x)\vert,\\
				& \vert \nabla^3 (u \circ \pi_{\pa E})(x)\vert \leq C \sum_{i=0,1,2} (1+\vert \pa_{\pa E}^i \kappa_E \circ \pi_{\pa E}(x)\vert ) \vert \pa_{\pa E}^{3-i} u \circ \pi_{\pa E}(x)\vert,
			\end{split}
		\end{equation}
		for all $ x \in \mathcal{I}_{\sigma_E}(\pa E)$.  
	\end{lemma}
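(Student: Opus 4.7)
The approach is iterative application of the chain rule, starting from the explicit formula \eqref{formunoquattro} for $\nabla \pi_{\pa E}$ and using the uniform bound $|d_E(x)\, \kappa_E(\pi_{\pa E}(x))| \leq 1/2$, which holds for $x \in \mathcal{I}_{\sigma_E}(\pa E)$ by the very choice of $\sigma_E$. Inserting $B_E = \kappa_E\, \tau_E \otimes \tau_E$ into \eqref{formunoquattro} and diagonalizing in the orthonormal basis $\{\tau_E, \nu_E\}$ at $\pi_{\pa E}(x)$, a direct computation gives
\begin{equation}
\nabla \pi_{\pa E}(x) = \frac{1}{1 + d_E(x)\, \kappa_E(\pi_{\pa E}(x))}\, (\tau_E \otimes \tau_E)(\pi_{\pa E}(x)),
\end{equation}
whose scalar prefactor lies in $[2/3,\, 2]$. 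Applying the chain rule to $u \circ \pi_{\pa E}$ immediately yields the first inequality with $C \leq 2$; the factor $(1+|\kappa_E|)$ in the statement is just an upper bound absorbed for uniformity with the higher-order cases.

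For the second and third inequalities I would induct on $k$. Differentiating any composition $f \circ \pi_{\pa E}$ produces the tangential expression $(\nabla \pi_{\pa E})^T (\nabla_{\pa E} f \circ \pi_{\pa E})$, so each application of $\nabla$ either raises by one the tangential derivative order of the $u$-factor, or produces one further derivative of the curvature via differentiation of the factors $\tau_E, \nu_E, (1+d_E \kappa_E \circ \pi_{\pa E})^{-1}$. Iterating and applying the Leibniz rule, one shows that $\nabla^k(u \circ \pi_{\pa E})(x)$ is a finite sum of products of the form
\begin{equation}
\Phi \cdot \prod_{l=1}^{m} \bigl(\pa_{\pa E}^{i_l} \kappa_E \circ \pi_{\pa E}\bigr) \cdot \bigl(\pa_{\pa E}^{j} u \circ \pi_{\pa E}\bigr),
\end{equation}
with $i_1 + \cdots + i_m + j = k$, $j \geq 1$ and $i_l \leq k-1$, where $\Phi$ is a smooth tensor field built from $\tau_E, \nu_E, d_E$ and the inverse scalar $(1+d_E \kappa_E \circ \pi_{\pa E})^{-1}$, and hence uniformly bounded thanks to $|d_E \kappa_E| \leq 1/2$. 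Grouping by the maximal curvature-derivative index $i := \max_l i_l \in \{0, \dots, k-1\}$ and absorbing the lower-order $\kappa_E$ factors into $\Phi$, using $\|\kappa_E\|_{L^\infty(\pa E)} \leq C(K)$ (available since $E \in \mathfrak{H}^4_{K,\sigma_0}(E_0)$), recovers precisely the form stated in \eqref{maperchènonti}.

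The main obstacle is purely combinatorial bookkeeping: one must verify inductively that no term with $j = 0$ (no derivative of $u$) ever appears in the expansion, and that differentiation always splits cleanly so that the total weight of curvature derivatives plus the order of $\pa_{\pa E}^j u$ remains equal to $k$. Both facts follow by induction, since differentiating a factor independent of $u$ cannot introduce one, and each $\nabla$ contributes exactly one unit of total tangential derivative order via the structural identity for $\nabla \pi_{\pa E}$ displayed above. Because all coefficient tensors built from $\tau_E, \nu_E, d_E$ and the inverse scalar remain uniformly bounded on $\mathcal{I}_{\sigma_E}(\pa E)$, no analytic estimate beyond this algebraic expansion is required.
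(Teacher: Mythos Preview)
Your approach is correct and is the standard one: iterate the chain rule starting from the explicit formula for $\nabla\pi_{\pa E}$, track the combinatorics, and absorb lower-order curvature factors using the $L^\infty$ bounds on $\kappa_E$ and $\pa_{\pa E}\kappa_E$ coming from $E\in\mathfrak{H}^4_{K,\sigma_0}(E_0)$. One small inaccuracy: the total weight identity you state, $i_1+\cdots+i_m+j=k$, is not strictly an equality --- differentiating the bounded geometric factors $\tau_E\circ\pi_{\pa E}$, $\nu_E\circ\pi_{\pa E}$, or $d_E$ produces new undifferentiated $\kappa_E$ factors (index $0$), so the sum can drop strictly below $k$. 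This does not affect your argument, since such terms are still dominated by the stated bound after absorbing $\|\kappa_E\|_{L^\infty}\leq C(K)$ and using $(1+|\pa^{k-j}_{\pa E}\kappa_E|)\geq 1$; but the induction hypothesis should read $\sum i_l+j\leq k$.

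The paper does not give its own proof of this lemma: it simply cites \cite[Lemma~5.3]{JN}. Your direct chain-rule argument is exactly the kind of computation that underlies that reference, so there is no substantive difference in method.
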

	In the next proposition, we prove that if $F \subset \R^2$ is a minimizer of \eqref{probdimin1}, then the following estimates hold:
	$$\| \kappa_F^{\varphi} \|_{H^2(\pa F)} \leq  C_2, \quad \| \kappa_F^{\varphi} \|_{H^3(\pa F)}  \leq \frac{C_2}{h^{\frac{1}{4}}} ,  $$
	where $C_2:= C_2(K,K_{el})$.
	
	\begin{proposition}\label{Mthmproof2}
		Let $E\subset \R^2$ of class $C^5$ be such that $E \in \mathfrak{H}^4_{K,\sigma_0}(E_0)$ and $ \| \pa^3_{\pa E} \kappa_E^\varphi \|_{L^2(\pa E)} \leq \frac{ K}{h^{\frac{1}{4}}}$. Let $F \subset \R^2$ be a minimizer of \eqref{probdimin1} for $\eta=\eta_0$, where $\eta_0$ is given by the Proposition \ref{primapropreg}.  Let $\psi $ be the heightfunction in \eqref{tesipropPreg} satisfying  \eqref{Lego375}, that is,
		$$ \| \psi \|_{L^2(\pa E)} \leq C_1 h, \qquad \| \pa_{\pa E}^4 \psi  \|_{L^2(\pa E)} \leq C_1.$$
		Then there exists a constant $C_2$, depending only on $K, K_{el}$, such that
		\begin{equation}\label{finsuperlong}
			\| \pa_{\pa F}^2 \kappa_F^{\varphi} \|_{L^2(\pa F)} \leq C_2, \qquad  \| \pa_{\pa F}^3 \kappa_F^\varphi \|_{L^2(\pa F)} \leq \frac{C_2}{h^{\frac{1}{4}}}.
		\end{equation}
	\end{proposition}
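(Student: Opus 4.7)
The plan is to differentiate the Euler--Lagrange equation \eqref{LEEULERO} tangentially on $\pa F$ and to control each resulting term separately. Writing $f=v_{F,E}/d_{H^{-1}}(F,E)$, \eqref{LEEULERO} becomes
\begin{equation*}
\kappa_F^\varphi - Q(E(u_F^{K_{el}})) + \tfrac{1}{h}\,(v_{F,E}\circ\pi_{\pa E}) = L \qquad \text{on } \pa F,
\end{equation*}
so that for $j\in\{2,3\}$, since $L$ is constant,
\begin{equation*}
\pa_{\pa F}^j \kappa_F^\varphi \;=\; \pa_{\pa F}^j\bigl(Q(E(u_F^{K_{el}}))\bigr) \;-\; \tfrac{1}{h}\,\pa_{\pa F}^j\bigl(v_{F,E}\circ\pi_{\pa E}\bigr).
\end{equation*}
Bounding these two terms separately will yield \eqref{finsuperlong}.

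For the elastic term I would exploit the constraints built into \eqref{minelvinc}: $\|u_F^{K_{el}}\|_{C^{3,\frac14}(\Omega)}\leq K_{el}$ and $\|\nabla^4 u_F^{K_{el}}\|_{C^{0,\frac14}(\Omega)}\leq K_{el}/h^{\frac14}$. Since $Q(E(u))=\tfrac{1}{2}\mathbb{C}E(u)\!:\!E(u)$ is quadratic in $\nabla u$, the Leibniz rule gives $\|\nabla^2 Q(E(u_F^{K_{el}}))\|_{L^\infty(\Omega)}\leq C(K_{el})$ and $\|\nabla^3 Q(E(u_F^{K_{el}}))\|_{L^\infty(\Omega)}\leq C(K_{el})/h^{\frac14}$. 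Projecting these ambient bounds onto tangential derivatives on $\pa F$ (whose length is uniformly bounded thanks to Proposition \ref{primapropreg}) yields $\|\pa_{\pa F}^2 Q(E(u_F^{K_{el}}))\|_{L^2(\pa F)}\leq C$ and $\|\pa_{\pa F}^3 Q(E(u_F^{K_{el}}))\|_{L^2(\pa F)}\leq C/h^{\frac14}$.

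For the $v_{F,E}$ term I would sharpen the bounds on $\xi_{F,E}=\psi+\tfrac{\psi^2}{2}\kappa_E$ using \eqref{Lego375}. The estimate $\|\xi_{F,E}\|_{L^2(\pa E)}\leq Ch$ is already known, and interpolation (Proposition \ref{PROPINTER}) gives $\|\pa_{\pa E}^2\psi\|_{L^2}\leq C\|\psi\|_{H^4}^{1/2}\|\psi\|_{L^2}^{1/2}\leq Ch^{1/2}$, hence $\|\xi_{F,E}\|_{H^2(\pa E)}\leq Ch^{1/2}$. A further integration-by-parts interpolation $\|\nabla_{\pa E}\xi_{F,E}\|_{L^2}^2\leq \|\xi_{F,E}\|_{L^2}\|\Delta_{\pa E}\xi_{F,E}\|_{L^2}$ then produces $\|\xi_{F,E}\|_{H^1(\pa E)}\leq Ch^{3/4}$. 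Standard elliptic regularity applied to $-\Delta_{\pa E}v_{F,E}=\xi_{F,E}$ gives
\begin{equation*}
\|v_{F,E}\|_{H^2(\pa E)}\leq Ch, \qquad \|v_{F,E}\|_{H^3(\pa E)}\leq Ch^{3/4}.
\end{equation*}

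Finally I would invoke the pointwise chain-rule inequalities for $\nabla^j(v_{F,E}\circ\pi_{\pa E})$ provided by the technical lemma preceding the statement, combined with the fact that $\pi_{\pa E}\colon\pa F\to\pa E$ is a diffeomorphism with bounded Jacobian, to estimate $\|\pa_{\pa F}^j(v_{F,E}\circ\pi_{\pa E})\|_{L^2(\pa F)}$ by sums of $\|(1+|\pa_{\pa E}^i\kappa_E|)\,|\pa_{\pa E}^{j-i}v_{F,E}|\|_{L^2(\pa E)}$. For $j=2$ this immediately gives $\|\pa_{\pa F}^2(v_{F,E}\circ\pi_{\pa E})\|_{L^2(\pa F)}\leq C\|v_{F,E}\|_{H^2(\pa E)}\leq Ch$, so dividing by $h$ produces the first estimate in \eqref{finsuperlong}. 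For $j=3$ the delicate contribution is $\|\pa_{\pa E}^2\kappa_E\|_{L^2}\cdot\|\pa_{\pa E}v_{F,E}\|_{L^\infty}$, which is controlled by $\|\kappa_E\|_{H^2}\leq C(K)$ (encoded in $E\in\mathfrak{H}^4_{K,\sigma_0}(E_0)$) together with the 1D Sobolev bound $\|\pa_{\pa E}v_{F,E}\|_{L^\infty}\leq C\|v_{F,E}\|_{H^2}\leq Ch$; the remaining terms are subordinate. This gives $\|\pa_{\pa F}^3(v_{F,E}\circ\pi_{\pa E})\|_{L^2(\pa F)}\leq Ch^{3/4}$, and division by $h$ yields the $C/h^{1/4}$ bound. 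The main obstacle is precisely this last balancing: securing the sharp $h^{3/4}$ decay of $\|v_{F,E}\|_{H^3}$ via the interpolation step and then handling the mixed chain-rule terms, in which $\pa_{\pa E}^2\kappa_E$ is available only in $L^2(\pa E)$ (since the hypothesis on $\pa_{\pa E}^3\kappa_E^\varphi$ permits $h^{-1/4}$ blow-up), while still landing inside the target $C/h^{1/4}$ estimate.
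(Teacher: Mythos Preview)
Your proposal is correct and follows essentially the same strategy as the paper: differentiate the Euler--Lagrange equation on $\pa F$, bound the elastic term via the built-in $C^{3,\frac14}$ and $\|\nabla^4 u\|_{C^{0,\frac14}}\leq K_{el}h^{-1/4}$ constraints, sharpen the $\xi_{F,E}$ bounds to $\|\xi_{F,E}\|_{L^2}\leq Ch$ and $\|\xi_{F,E}\|_{H^1}\leq Ch^{3/4}$ (yielding $\|v_{F,E}\|_{H^2}\leq Ch$, $\|v_{F,E}\|_{H^3}\leq Ch^{3/4}$), and then invoke the chain-rule lemma for $\nabla^j(u\circ\pi_{\pa E})$. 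The paper organizes this by setting $\tilde f:=\tfrac{d_{H^{-1}}(F,E)}{h}f=v_{F,E}/h$ so that the target bounds become $\|\tilde f\|_{H^2}\leq C$ and $\|\tilde f\|_{H^3}\leq Ch^{-1/4}$, but this is cosmetic. One point you glossed over: converting ambient $\nabla^j$ bounds into tangential $\pa_{\pa F}^j$ bounds on $\pa F$ produces extra terms involving $\kappa_F$ and $\pa_{\pa F}\kappa_F$ (see the identities $\pa_{\pa F}^2 g=\Delta_{\R^2}g-\nabla^2 g\,\nu_F\!\cdot\!\nu_F-\kappa_F\nabla g\!\cdot\!\nu_F$ and its derivative). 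These are harmless because $\|\psi\|_{H^4(\pa E)}\leq C_1$ already furnishes $\|\pa_{\pa F}\kappa_F\|_{L^\infty(\pa F)}\leq C$, but it is worth making explicit.
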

	\begin{proof} 
		In what follows, we denote by $C$  a generic constant that depends on $K,K_{el},C_1$.
		We derive the estimates from the Euler–Lagrange equations \eqref{LEEULERO}. We define $$ \tilde{f}:= \frac{d_{H^{-1}}(F,E)}{h}f $$
		where $f$  is the function that realizes the supremum in \eqref{d_{H^{-1}}}.
		The Euler-Lagrange equation then becomes 
		\begin{equation}\label{18042024form1}
			\left\{
			\begin{aligned}
				& \kappa_F^{\varphi}- Q(E(u_F^{K_{el}}))+ \tilde{f} \circ \pi_{\pa E}=L & \text{ on } \pa F  , \\
				& -\Delta_{\pa E} \tilde{f}=\frac{ \xi_{F,E} }{h} & \text{ on }  \pa E, 
			\end{aligned}
			\right.
		\end{equation}
		where $\xi_{F,E}$ is defined in \eqref{lafunzgEFgraph} and $L$ is the Lagrange multiplier. From \eqref{Lego375} and interpolation of $H^1(\pa E)$ between $L^2(\pa E)$ and $H^4(\pa E)$, see Proposition \ref{PROPINTER}, we obtain
		\begin{equation}
			\| \psi \|_{H^1}(\pa E) \leq Ch^{\frac{3}{4}}.
		\end{equation}
		Using this estimate together with \eqref{tesilemmazzzo} and \eqref{Lego375}, we deduce
		\begin{equation}\label{18042025form0}
			\| \xi_{F,E}\|_{L^2(\pa E)} \leq C h \qquad \| \xi_{F,E} \|_{H^1(\pa E)} \leq C h^{\frac{3}{4}}.
		\end{equation}
		Therefore, by \eqref{18042025form0} and the second equation in \eqref{18042024form1}, we conclude
		\begin{equation}\label{forma18042025}
			\begin{split}
				&\| \tilde{f} \|_{H^2(\pa E)} \leq C \frac{ \| \xi_{F,E} \|_{L^2(\pa E)}}{h} \leq C,  \\
				&\| \tilde{f} \|_{H^3(\pa E)} 
				\leq C (1+ \| \kappa_E \|_{H^2(\pa E)}+ \| \pa_{\pa E} \tilde{f}\|_{H^1(\pa E)}) \leq C(1+ \frac{\| \xi_{F,E} \|_{H^1(\pa E)}}{h})
				\leq  C h^{-\frac{1}{4}}.
			\end{split}
		\end{equation}
		We now need to estimate the derivatives of $\tilde{f} \circ \pi_{\pa E}$ on $\pa F$. 
		From formula \eqref{maperchènonti}, we find that for all $x \in \mathcal{I}_{\eta_0}(\pa E)$,
		\begin{equation}\label{levariederiv}
			\begin{split}
				&\vert \nabla (\tilde{f} \circ \pi_{\pa E})(x) \vert \leq C(1+\vert \kappa_E \circ \pi_{\pa E}(x)\vert ) \vert \pa_{\pa E} \tilde{f} \circ \pi_{\pa E}(x)\vert,\\
				&\vert \nabla^2 (\tilde{f} \circ \pi_{\pa E}) (x)\vert \leq C \sum_{i=0,1} (1+\vert \pa_{\pa E}^i \kappa_E \circ \pi_{\pa E}(x)\vert ) \vert \pa_{\pa E}^{2-i} \tilde{f} \circ \pi_{\pa E}(x)\vert,\\
				& \vert \nabla^3 (\tilde{f} \circ \pi_{\pa E}) (x)\vert \leq C \sum_{i=0,1,2} (1+\vert \pa_{\pa E}^i \kappa_E \circ \pi_{\pa E}(x)\vert ) \vert \pa_{\pa E}^{3-i} \tilde{f} \circ \pi_{\pa E}(x)\vert.
			\end{split}
		\end{equation}
		To obtain \eqref{finsuperlong}, we need to estimate $ \| \pa_{\pa F}^2 \kappa_F^{\varphi} \|_{L^2(\pa F)}$ and $\| \pa_{\pa F}^3 \kappa_F^{\varphi} \|_{L^2(\pa F)}$.\\
		\textit{Estimate of $ \| \pa_{\pa F}^2 \kappa_F^{\varphi} \|_{L^2(\pa F)}$.} 
		
		Recalling the first equation of \eqref{18042024form1}, we need to estimate 
		$$ \|  \pa_{\pa F}^2 (\tilde{f}\circ \pi_{\pa E}) \|_{L^2(\pa F)} \text{ and } \| \pa_{\pa F}^2Q(E(u_F^{K_{el}}))\|_{L^2(\pa F)}.$$
		We begin with the estimate of $ \|  \pa_{\pa F}^2 (\tilde{f}\circ \pi_{\pa E}) \|_{L^2(\pa F)}$.
		Using formula \eqref{levariederiv}, we obtain
		\begin{equation}
			\vert \nabla^2 (\tilde{f} \circ \pi_{\pa E})(x)\vert \leq C \vert \pa_{\pa E}^2 \tilde{f} \circ \pi_{\pa E}(x) \vert + C(1+ \vert \pa_{\pa E} \kappa_E \circ \pi_{\pa E}(x) \vert ) \vert \pa_{\pa E} \tilde{ f} \circ \pi_{\pa E}(x) \vert
		\end{equation}
		for all $ x \in \pa F$. Therefore, by Sobolev embedding and \eqref{forma18042025}, we get 
		\begin{equation}\label{stimadev2}
			\| \nabla^2 (\tilde{f} \circ \pi_{\pa E}) \|_{L^2(\pa F)} \leq C \big( \| \tilde{f} \|_{H^2(\pa E)}+ (1+ \| \pa_{\pa E} \kappa_E \|_{\infty} \|\pa_{\pa E} \tilde{f} \|_{L^2(\pa E)} )   \big) \leq C.
		\end{equation}
		Next, we recall that the Laplacian of $ \tilde{f} \circ \pi_{\pa E}$ on $\pa F$ can be written as
		\begin{equation}\label{Formulavera?}
			\pa^2_{\pa F} (\tilde{f} \circ \pi_{\pa E})= \Delta_{\R^2} (\tilde{f} \circ \pi_{\pa E})- \nabla^2 (\tilde{f} \circ \pi_{\pa E}) \nu_F \cdot \nu_F - \kappa_F \nabla (\tilde{f} \circ \pi_{\pa E}) \cdot \nu_F.
		\end{equation}
		Thus, using this formula together with \eqref{stimadev2}, we obtain
		\begin{equation}\label{01052025form1}
			\| \pa_{\pa F}^2 (\tilde{f}\circ \pi_{\pa E}) \|_{L^2(\pa F)} \leq C.
		\end{equation}
		It remains to estimate
		$$\| \pa_{\pa F}^2 Q(E(u_F^{K_{el}}))\|_{L^2(\pa F)}  .$$  A direct computation yields
		\begin{equation}\label{grimbatul}
			\begin{split}
				\pa_{\pa F}^2 Q(E(u_F^{K_{el}}))&= \pa_{\pa F} [  \pa_{\pa F} Q(E(u_F^{K_{el}}))]= \pa_{\pa F}[ \nabla Q(E(u_F^{K_{el}})) \cdot \tau_F ]\\
				&= \nabla^2 Q(E(u_F^{K_{el}})) \tau_F \cdot \tau_F + \kappa_F \nabla Q(E(u_F^{K_{el}})) \cdot \nu_F 
			\end{split}
		\end{equation}
		where we have used $ \pa_{\pa F} \tau_F= \kappa_F \nu_F$.
		Applying this formula, we deduce
		\begin{equation}\label{01052025form2}
			\| \pa_{\pa F}^2Q(E(u_F^{K_{el}}))\|_{L^2(\pa F)} \leq C.
		\end{equation}
		Combining estimates \eqref{01052025form1} and \eqref{01052025form2}, we conclude that
		\begin{equation}\label{trattoria1}
			\| \pa_{\pa F}^2 \kappa_F^{\varphi} \|_{L^2(\pa F)} \leq C.
		\end{equation}
		\textit{Estimate of $\| \pa_{\pa F}^3 \kappa_F^{\varphi} \|_{L^2(\pa F)}$.}
		
		Recalling the first equation in \eqref{18042024form1}, we need to estimate 
		$$  \|  \pa_{\pa F}^3 (\tilde{f}\circ \pi_{\pa E}) \|_{L^2(\pa F)} \text{ and } \| \pa_{\pa F}^3 Q(E(u_F^{K_{el}}))\|_{L^2(\pa F)} .  $$
		We begin by estimating $ \|  \pa_{\pa F}^3 (\tilde{f}\circ \pi_{\pa E}) \|_{L^2(\pa F)}$.
		
		Using formula \eqref{levariederiv}, we obtain
		\begin{equation}\label{entropia1}
			\begin{split}
				\vert \nabla^3 ( \tilde{f} \circ \pi_{\pa E}) (x) \vert \leq C \vert \pa^3_{\pa E}& \tilde{f} \circ \pi_{\pa E}(x) \vert + C(1+ \vert \pa_{\pa E} \kappa_E \circ \pi_{\pa E}(x)\vert )\vert \pa^2_{\pa E} \tilde{ f} \circ \pi_{\pa E}(x)\vert\\
				&C \big(  1+ \vert \pa_{\pa E}^2 \kappa_E \circ \pi_{\pa E}(x) \vert + \vert \pa_{\pa E} \kappa_E \circ \pi_{\pa E} (x)\vert^2   \big) \vert \pa_{\pa E} \tilde{f} \circ \pi_{\pa E} (x)\vert 
			\end{split}
		\end{equation}
		for all $x \in \pa F$. 
		Using formula \eqref{Formulavera?}, we compute $ \pa_{\pa F}^3 (\tilde{f} \circ \pi_{\pa E})$
		\begin{equation}
			\begin{split}
				\pa_{\pa F}^3 (\tilde{f} \circ \pi_{\pa E})&= \nabla \Delta_{\pa F} (\tilde{f} \circ \pi_{\pa E}) \cdot \tau_F\\
				& = \nabla \big[ \Delta_{\R^2} (\tilde{f} \circ \pi_{\pa E})- \nabla^2(\tilde{f} \circ \pi_{\pa E}) \nu_F \cdot \nu_F- \kappa_F \nabla(\tilde{f} \circ \pi_{\pa E}) \cdot \nu_F   \big] \cdot \tau_F\\
				&= T(\nabla^3 (\tilde{f} \circ \pi_{\pa E}), \nabla^2 (\tilde{f} \circ \pi_{\pa E}),\pa_{\pa F} \kappa_F \nabla (\tilde{f} \circ \pi_{\pa E})   ),
			\end{split}
		\end{equation}
		where $T \in C^\infty$ such that $T(0,0,0)=0$. Hence, using the regularity of $T$, we deduce the pointwise estimate: for all $ x \in \pa F$, \begin{equation}\label{entropia2}
			\vert \pa_{\pa F}^3 (\tilde{f} \circ \pi_{\pa E}) \vert (x) \leq C \big( \vert \nabla^3 (  \tilde{f} \circ \pi_{\pa E}) (x) \vert + \vert \nabla^2 ( \tilde{f} \circ \pi_{\pa E}) (x) \vert + \vert \nabla ( \tilde{f} \circ \pi_{\pa E}) (x) \vert  \big) .
		\end{equation}
		Therefore, combining \eqref{entropia2}, \eqref{entropia1}, \eqref{stimadev2}, and \eqref{forma18042025}, we obtain
		\begin{equation}\label{teggare0}
			\| \pa_{\pa F}^3 (\tilde{f} \circ \pi_{\pa E}) \|_{L^2(\pa F)} \leq C\big(   \| \tilde{f} \|_{H^3(\pa E)}+ \| \kappa_E \|_{H^2(\pa E)} \big) \leq C h^{-\frac{1}{4}} .
		\end{equation}
		It remains to estimate
		$\| \pa_{\pa F}^3 Q(E(u_F^{K_{el}}))\|_{L^2(\pa F)}  $. 
		
		Differentiating formula \eqref{grimbatul}, we get
		\begin{equation}\label{primapalestra0105}
			\begin{split}
				\pa_{\pa F}^3 Q(E(u_F^{K_{el}}))&= \pa_{\pa F} \big[ \nabla^2 Q(E(u_F^{K_{el}})) \tau_F \cdot \tau_F    \big]+ \pa_{\pa F} [ \kappa_F \nabla Q(E(u_F^{K_{el}})) \cdot \nu_F ]\\
				&= 2 \kappa_F \nabla^2 Q(E(u_F^{K_{el}})) \tau_F \cdot \nu_F + M( \nabla^3 Q(E(u_F^{K_{el}})),\tau_F) \tau_F \cdot \tau_F\\
				& \qquad - \kappa_F^2 \tau_F \cdot \nabla Q(E(u_F^{K_{el}}))+ \pa_{\pa F} \kappa_F \nabla Q(E(u_F^{K_{el}})) \cdot \nu_F\\
				& \qquad + \kappa_F \nabla^2 Q(E(u_F^{K_{el}})) \tau_F \cdot \nu_F
			\end{split}
		\end{equation}
		where $ M( \nabla^3 Q(E(u_F^{K_{el}})),\tau_F) $ is a matrix $2 \times 2$ matrix whose coefficients depend on $\nabla^3 Q(E(u_F^{K_{el}}))$ and $ \tau_F $, 
		and satisfy $$ \vert M( \nabla^3 Q(E(u_F^{K_{el}})),\tau_F) \vert \leq C \vert  \nabla^3 Q(E(u_F^{K_{el}})) \vert.$$
		Therefore, using \eqref{forma18042025}, \eqref{primapalestra0105},  and recalling that $$ \| \nabla^3 Q(E(u_F^{K_{el}}))\|_{L^\infty(\Omega)} \leq \frac{K_{el}}{h^{\frac{1}{4}}},$$
		(see formula \eqref{minelvinc}), we obtain
		\begin{equation}\label{teggare1}
			\| \pa_{\pa F}^3 Q(E(u_F^{K_{el}})) \|_{L^2(\pa F)}  \leq C \big(  \| \nabla^3 Q(E(u_F^{K_{el}})) \|_{L^2(\pa F)}+ C \| \pa_{\pa F} \kappa_F \|_{L^2(\pa F)}  \big) \leq  \frac{C}{h^{\frac{1}{4}}}.
		\end{equation}
		Therefore, combining \eqref{teggare0} and  \eqref{teggare1}, we conclude that
		\begin{equation}\label{trattoria2}
			\| \pa_{\pa F}^3 \kappa_F^{\varphi} \|_{L^2(\pa F)} \leq \frac{C}{h^{\frac{1}{4}}}.
		\end{equation}
		
		Combining \eqref{trattoria1} and \eqref{trattoria2}, we finally obtain\eqref{finsuperlong}.
	\end{proof}
	We are now in position to prove Theorem \ref{MainThm375}.
	\begin{proof}[Proof of Theorem \ref{MainThm375}]
		The existence of constants $\eta_0$ and $h_0$ is guaranteed by Proposition \ref{primapropreg}. Using this proposition, it is also established that $ \pa F \Subset \mathcal{I}_{\eta_0}(\pa E)$ and that 
		$$ \pa F= \{ x + \psi(x)\nu_E(x): \, x \in \pa E\}.$$  
		Proposition \ref{Mthmproof1} establishes the existence of a constant $C_1$ and the validity of formula \eqref{formMain1375}. Similarly, Proposition \ref{Mthmproof2} proves the existence of a constant $C_2$ and formula \eqref{FormMain2375}.\\
		\textit{Claim} There exists $\hat{\sigma}$  such that $ F \in \mathfrak{H}^4_{K_1,  \hat \sigma}(E_0)$, for some $K_1=K_1(K,K_{el})$.
		
		By Lemma \ref{lalalambdamin}, the set $F$ is a $\lambda$-minimizer of the $\varphi$-perimeter, with $\lambda=\lambda(K,K_{el})$. Applying Lemma \ref{propdadim} with $E=E_0$ and $\Lambda= \lambda$ we obtain the existence of the constant $\delta_0=\delta_0(\lambda)$. Now we take $\sigma_0, \eta_0$ such that $\sigma_0+\eta_0 \leq \frac{\delta_0}{2}$. Then we have:
		\begin{equation}\label{belgodi1}
			\pa F \Subset \mathcal{I}_{\eta_0}(\pa E),\, \pa E \Subset \mathcal{I}_{\sigma_0}(\pa E_0) \text{ and } \sigma_0+\eta_0 \leq \frac{\delta_0}{2} \implies \pa F \Subset \mathcal{I}_{\delta_0}(\pa E_0).
		\end{equation}
		Applying Lemma \ref{propdadim} once again, we obtain the existence of a function $u: \pa E_0 \rightarrow \R$ such that 
		$$ \pa F= \{ x+ u(x)\nu_{E_0}(x): x \in \pa E_0\},$$ 
		with $ u \in C^{1,\gamma}(\pa E_0)$. Therefore, using \eqref{FormMain2375} we conclude that
		\begin{equation}\label{bengodi2}
			u \in H^4(\pa E_{0}),\quad \| u \|_{H^4(\pa E_0)} \leq K_1 \quad \text{for some }K_1= K_1(K,K_{el}). 
		\end{equation}
		Combining \eqref{belgodi1} and \eqref{bengodi2}, we obtain 
		$ F \in \mathfrak{H}^4_{K_1,\hat \sigma}$ as claimed.
	\end{proof}
	
	\section{Iteration}\label{iterazionesezione}
	In this section, we prove a crucial iteration formula.  To this end, we fix a set $E \subset\R^2$ of class $C^5$ such that $E \in \mathfrak{H}^4_{K,\sigma_0}(E_0)$ and 
	$\| \pa_{\pa E}^3 \kappa_E^{\varphi} \|_{L^2(\pa E)} \leq \frac{K}{h^{\frac{1}{4}}}$. We recall that $E_0 \Subset \Omega$ be open and connected set of class $C^5$.
	We consider two sets $F,G\subset \R^2$ constructed as follows. By the Theorem \ref{MainThm375}, there exist constants $ h_0,\eta_0,C_1,C_2,K_1, \hat{\sigma}$, depending only on $K$ and $K_{el}$, such that if  $0 < h \leq h_0$ and
	\begin{equation}\label{ierminF}
		F \in \mathrm{argmin}\{  \mathcal{F}_h(A,E)\colon A \Delta E \subset \mathrm{cl} (\mathcal{I}_{\eta_0}(\pa E))\} 
	\end{equation}
	then $F$ is of class $C^5$ and $F \in \mathfrak{H}^4_{K_1,\hat{ \sigma}}(E_0)$. Again, by Proposition \ref{PROPINTER} and Theorem \ref{MainThm375}, we have $ \pa F \Subset \mathcal{I}_{\eta_0}(\pa E)$, and $\pa F= \{   x+ \psi_{F,E}(x) \nu_E(x): x \in \pa E\}$, with the following estimates for $\psi_{F,E}$: 
	\begin{equation}\label{iterstimF}
		\begin{split}
			&\| \psi_{F,E} \|_{L^2(\pa E)} \leq C_1 h, \,\, \| \psi_{F,E} \|_{H^4(\pa E)} \leq C_1, \,\, \|  \kappa_F^{\varphi} \|_{H^2(\pa F)} \leq C_2, \,\,  \| \pa_{\pa F}^3 \kappa_F^\varphi \|_{L^2(\pa F)} \leq \frac{C_2}{h^{\frac{1}{4}}}\\
			&\| \pa_{\pa E} \psi_{F,E} \|_{L^2(\pa E)} \leq C_1 h^{\frac{3}{4}},\, \, \| \pa_{\pa E}^2 \psi_{F,E} \|_{L^2(\pa E)} \leq C_1 h^{\frac{1}{2}}, \,\, \| \pa_{\pa E}^3 \psi_{F,E} \|_{L^2(\pa E)} \leq C_1 h^{\frac{1}{4}},
		\end{split}
	\end{equation}
	where the second line follows from the first and an application of Proposition \ref{PROPINTER}.
	Applying Theorem \ref{MainThm375} again this time with $F,K_1,\hat{\sigma}$ in place of $E,K,\sigma_0$, we get new constants $\eta_1,h_1,C_3,C_4, K_2, \tilde{\sigma}$ 
	, depending only on $ K_1$ and $K_{el}$ (and hence ultimately on $K$ and $K_{el}$). If $\eta \leq  \eta_1$ and $ 0 < h \leq h_2:= \min \{ h_0,h_1\}$ the set $G$ is given by
	\begin{equation}\label{iterminG}
		G \in \mathrm{argmin}\{  \mathcal{F}_h(A,F)\colon A \Delta E \subset \mathrm{cl} (\mathcal{I}_{\eta}(\pa F))\} 
	\end{equation}
	and $G$ is of class $C^5$, $G \in \mathfrak{H}^4_{K_2,\tilde{ \sigma}}(E_0)$. Again, by Proposition \ref{PROPINTER} and Theorem \ref{MainThm375}, we have $ \pa G \Subset \mathcal{I}_{\eta_1}(\pa F)$, and $\pa G= \{   x+ \psi_{G,F}(x) \nu_F(x): x \in \pa F\}$, with the following estimates for $ \psi_{G,F}$:
	\begin{equation}\label{iterstimG}
		\begin{split}
			&\| \psi_{G,F} \|_{L^2(\pa F)} \leq C_3 h, \,\, \| \psi_{G,F} \|_{H^4(\pa F)} \leq C_3, \,\, \|  \kappa_G^{\varphi} \|_{H^2(\pa G)} \leq C_4, \,\,  \| \pa_{\pa G}^3 \kappa_G^\varphi \|_{L^2(\pa G)} \leq \frac{C_4}{h^{\frac{1}{4}}},\\
			& \| \pa_{\pa F} \psi_{G,F} \|_{L^2(\pa F)} \leq C_3 h^{\frac{3}{4}},\, \, \| \pa_{\pa F}^2 \psi_{G,F} \|_{L^2(\pa F)} \leq C_3 h^{\frac{1}{2}}, \, \, \| \pa_{\pa F}^3 \psi_{G,F} \|_{L^2(\pa F)} \leq C_3 h^{\frac{1}{4}},
		\end{split}
	\end{equation}
	where the second line again follows from the first using Proposition \ref{PROPINTER}.
	Throughout this section, we will use the notation just introduced. We now state a lemma that will be essential for proving the main result of this section.
	\begin{lemma}\label{spaghettiska}
		Let $\eta < \eta_1$, where $\eta_1$ is as defined above. Let $E,F$ and $G$ as above, and we set
		$\xi_{G,F}= \psi_{G,F}+ \kappa_G \frac{\psi_{G,F}^2}{2} $. There exists a constant $h_3>0$, depending only on $K$ and $K_{el}$, such that the following inequality holds:
		\begin{equation}\label{07052025tes1lemmaiter}
			\int_{\pa F} (1-Ch)\xi^2_{G,F} + \frac{3h}{4} g(\nu_F) \vert \pa^2_{\pa F} \psi_{G,F} \vert^2 \, d \mathcal{H}^1   \leq h\int_{\pa F} \kappa_F^{\varphi} 
			\pa_{\pa F}^2 \xi_{G,F}  \, d \mathcal{H}^1.
		\end{equation}
		for $0 < h \leq h_3$, where $C= C(K,K_{el})$.
	\end{lemma}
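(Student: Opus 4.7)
\emph{Plan of proof.}
The strategy is to test the coupled Euler--Lagrange system for the minimizer $G$ (with base $F$) against $\xi_{G,F}$, transform the result into a quantitative energy identity on $\pa F$, and then absorb all arising nonlinear and elastic error terms into the admissible buffer $Ch\int\xi_{G,F}^2+\frac{h}{4}\int g(\nu_F)|\pa_{\pa F}^2\psi_{G,F}|^2$ using the sharp regularity bounds recorded in \eqref{iterstimG}. By Proposition~\ref{primapropreg} applied with base $F$, the constraint in \eqref{iterminG} is inactive for $h$ small, so \eqref{LEEULERO} provides $\kappa_G^{\varphi}-Q(E(u_G^{K_{el}}))+\tilde f\circ\pi_{\pa F}=L$ on $\pa G$ together with $-\Delta_{\pa F}\tilde f=\xi_{G,F}/h$ on $\pa F$. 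Pulling the first equation back to $\pa F$ via $\Phi(x):=x+\psi_{G,F}(x)\nu_F(x)$ and invoking the $\varphi$-curvature expansion \eqref{L'ESPANSOIONE}--\eqref{ILRESTO} (with $F,G,\psi_{G,F}$ in place of $E,A,\psi$) yields
\[
\tilde f(x)=L+g(\nu_F(x))\,\pa_{\pa F}^2\psi_{G,F}(x)-\kappa_F^{\varphi}(x)-R(x)+Q(E(u_G^{K_{el}}))\circ\Phi(x), \qquad x\in\pa F.
\]

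Next I multiply $-\Delta_{\pa F}\tilde f=\xi_{G,F}/h$ by $\xi_{G,F}$ and integrate by parts twice on the closed curve $\pa F$, obtaining $\int_{\pa F}\xi_{G,F}^2\,d\mathcal{H}^1=-h\int_{\pa F}\tilde f\,\pa_{\pa F}^2\xi_{G,F}\,d\mathcal{H}^1$. Substituting the expression for $\tilde f$ and using that the Lagrange multiplier $L$ drops out because $\int_{\pa F}\pa_{\pa F}^2\xi_{G,F}\,d\mathcal{H}^1=0$, and then splitting $\pa_{\pa F}^2\xi_{G,F}=\pa_{\pa F}^2\psi_{G,F}+\mathcal{E}_1$ with $\mathcal{E}_1:=\pa_{\pa F}^2(\kappa_F\psi_{G,F}^2/2)$ to extract the coercive term, I arrive at the identity
\[
\int_{\pa F}\xi_{G,F}^2\,d\mathcal{H}^1+h\int_{\pa F}g(\nu_F)|\pa_{\pa F}^2\psi_{G,F}|^2\,d\mathcal{H}^1-h\int_{\pa F}\kappa_F^{\varphi}\,\pa_{\pa F}^2\xi_{G,F}\,d\mathcal{H}^1=\mathcal{R}_h,
\]
where $\mathcal{R}_h:=-h\!\int\! g(\nu_F)\,\pa_{\pa F}^2\psi_{G,F}\cdot\mathcal{E}_1\,d\mathcal{H}^1+h\!\int\! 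R\,\pa_{\pa F}^2\xi_{G,F}\,d\mathcal{H}^1-h\!\int\! Q(E(u_G^{K_{el}}))\!\circ\!\Phi\cdot\pa_{\pa F}^2\xi_{G,F}\,d\mathcal{H}^1$.

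The conclusion reduces to the estimate $|\mathcal{R}_h|\leq Ch\int\xi_{G,F}^2+\tfrac{h}{4}\int g(\nu_F)|\pa_{\pa F}^2\psi_{G,F}|^2$ for $h\leq h_3$. For the $\mathcal{E}_1$-term, Lemma~\ref{LemmLeibinz} combined with \eqref{iterstimG} gives $\|\mathcal{E}_1\|_{L^2(\pa F)}\leq C\|\psi_{G,F}\|_{L^\infty(\pa F)}\|\psi_{G,F}\|_{H^2(\pa F)}$; since $\|\psi_{G,F}\|_{L^\infty(\pa F)}$ is a positive power of $h$ by Sobolev embedding, Young's inequality together with the lower bound $\|\psi_{G,F}\|_{L^2}^2\leq 2\|\xi_{G,F}\|_{L^2}^2$ coming from \eqref{fomr07112024} absorbs this contribution into $\tfrac{h}{8}\int g(\nu_F)|\pa_{\pa F}^2\psi_{G,F}|^2+Ch\int\xi_{G,F}^2$. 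For the remainder $R$, the vanishing $r_i(0,0,\cdot)=0$ in~\eqref{ILRESTO} and Lemma~\ref{LemmLeibinz} produce the same kind of smallness for $\|R\|_{L^2(\pa F)}$ (possibly after one integration by parts to transfer a derivative onto $\xi_{G,F}$), and the same absorption scheme applies. The main obstacle is the elastic term: a direct bound $|Q|\leq K_{el}$ only gives $h|\int Q\!\circ\!\Phi\,\pa_{\pa F}^2\xi_{G,F}|\lesssim h^{3/2}$, which is larger than the admissible $O(h^2)$ buffer. The resolution is to integrate by parts twice on the closed curve to rewrite the term as $h\int\pa_{\pa F}^2(Q(E(u_G^{K_{el}}))\!\circ\!\Phi)\,\xi_{G,F}\,d\mathcal{H}^1$ and then invoke the analogue of~\eqref{ciserviràsicuro},
\[
\|\pa_{\pa F}^2(Q(E(u_G^{K_{el}}))\!\circ\!\Phi)\|_{L^2(\pa F)}\leq C(K,K_{el})\big(\|\psi_{G,F}\|_{L^2(\pa F)}+\|\pa_{\pa F}^2\psi_{G,F}\|_{L^2(\pa F)}\big),
\]
valid because $u_G^{K_{el}}\in\mathfrak{C}^{3,\frac{1}{4}}_{K_{el}}(\Omega,\R^2)$; combined with the sharp estimate $\|\xi_{G,F}\|_{L^2}\leq C_3 h$ from \eqref{iterstimG} and Young's inequality, this yields the clean absorption into $\tfrac{h}{8}\int g(\nu_F)|\pa_{\pa F}^2\psi_{G,F}|^2+Ch\int\xi_{G,F}^2$ and completes the proof.
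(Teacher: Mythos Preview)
Your proposal is correct and follows essentially the same route as the paper. Both arguments test the Euler--Lagrange equation for $G$ against $\xi_{G,F}$, split $\pa_{\pa F}^2\xi_{G,F}=\pa_{\pa F}^2\psi_{G,F}+\pa_{\pa F}^2(\kappa_F\psi_{G,F}^2/2)$ to isolate the coercive term $h\int g(\nu_F)|\pa_{\pa F}^2\psi_{G,F}|^2$, and then absorb the same three error contributions (the mixed $\mathcal{E}_1$-term, the remainder $R$, and the elastic term) via \eqref{iterstimG} and~\eqref{ciserviràsicuro}. The only organizational difference is that you begin from the coupled system \eqref{LEEULERO} and substitute the expression for $\tilde f$ into the second equation, whereas the paper starts directly from the combined form \eqref{LEEULEROCOMB}; the resulting identity and the subsequent estimates coincide.
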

	\begin{proof} In what follows, we denote by $C$ a generic constant depending on $K$ and $K_{el}$. We recall that, as stated in formula \eqref{fomr07112024} 
		\begin{equation}\label{050525pranzo1}
			\frac{1}{\sqrt{2}}\psi_{G,F}^2 \leq \xi_{G,F}^2 \leq \sqrt{2} \psi_{G,F}^2. 
		\end{equation}
		From the discussion at the beginning of the section, the Euler–Lagrange equation \eqref{LEEULEROCOMB} for the set $G$ can be written as
		\begin{equation}
			\begin{split}
				\frac{1}{h}\xi_{G,F}(x)=& \pa_{\pa F}^2 (-g(\nu_F(x))\pa_{\pa F}^2 \psi_{G,F}(x)+ \kappa_F^\varphi(x))\\ 
				&\qquad - \pa_{\pa F}^2 \big(Q(E(u_F^{K_{el}}))(x+ \psi_{G,F}(x)\nu_F(x))\big)+ \pa_{\pa F}^2 R(x) \text{ for $x \in \pa F$ }
			\end{split}
		\end{equation}
		where $R$ is define in \eqref{ILRESTO}. Multiplying the equation above by $\xi_{G,F}$ and integrating by parts yields
		\begin{equation}
			\begin{split}
				\int_{\pa F}  \frac{\xi_{G,F}^2}{h} \, d \mathcal{H}^1&= \int_{\pa F} - g(\nu_F) \pa_{\pa F}^2 \psi_{G,F} \pa_{\pa F}^2 \xi_{G,F} \, d \mathcal{H}^1+ \int_{\pa F} \kappa_F^{\varphi} 
				\pa_{\pa F}^2 \xi_{G,F}  \, d \mathcal{H}^1\\
				&\,\,- \int_{\pa F} \xi_{G,F} \pa_{\pa F}^2 \big(Q(E(u_F^{K_{el}}))(\cdot+ \psi_{G,F}(\cdot)\nu_F(\cdot))\big) \, d \mathcal{H}^1+ \int_{\pa F} R \pa_{\pa F}^2 \xi_{G,F} \, d \mathcal{H}^1.
			\end{split}
		\end{equation}
		By the very definition of $\xi_{G,F}$, we obtain
		\begin{equation}\label{finalmento070525}
			\begin{split}
				&\int_{\pa F} \frac{\xi^2_{G,F}}{h} \, d \mathcal{H}^1+ \int_{\pa F} g(\nu_F) \vert \pa_{\pa F}^2 \psi_{G,F} \vert^2 \, d \mathcal{H}^1 \\
				&=  \int_{\pa F} -g(\nu_F) \pa_{\pa F}^2 \psi_{G,F} \pa_{\pa F}^2 \bigg( \frac{\kappa_F\psi_{G,F}^2}{2} \bigg)\, d \mathcal{H}^1 + \int_{\pa F} \kappa_F^{\varphi} 
				\pa_{\pa F}^2 \xi_{G,F}  \, d \mathcal{H}^1\\
				& \qquad - \int_{\pa F} \xi_{G,F} \pa_{\pa F}^2 \big(Q(E(u_F^{K_{el}}))(\cdot+ \psi_{G,F}(\cdot)\nu_F(\cdot))\big) \, d \mathcal{H}^1+ \int_{\pa F} R \pa_{\pa F}^2 \xi_{G,F} \, d \mathcal{H}^1.
			\end{split}
		\end{equation}
		We now need to estimate the following integrals:
		\begin{align}
			&\int_{\pa F} g(\nu_F) \pa_{\pa F}^2 \psi_{G,F} \pa_{\pa F}^2 \bigg( \frac{\kappa_F\psi_{G,F}^2}{2} \bigg)\, d \mathcal{H}^1; \label{05052025form1} \\
			&\int_{\pa F} \xi_{G,F} \pa_{\pa F}^2 \big(Q(E(u_F^{K_{el}}))(\cdot+ \psi_{G,F}(\cdot)\nu_F(\cdot))\big) \, d \mathcal{H}^1; \label{05052025form2} \\
			&\int_{\pa F} R \pa_{\pa F}^2 \xi_{G,F} \, d \mathcal{H}^1. \label{05052025form3} 
		\end{align}
		Let $\varepsilon>0$ be fixed, to be chosen later.\\
		\textit{Estimate of \eqref{05052025form1}. }
		
		A straightforward computation yields
		\begin{equation}\label{0505primacrossfit1}
			\begin{split}
				\pa_{\pa F}^2 \bigg( \kappa_F \frac{\psi_{G,F}^2}{2}   \bigg)=& \pa_{\pa F}^2 \kappa_F \frac{\psi_{G,F}^2}{2}+ 2\pa_{\pa F} \kappa_F \psi_{G,F} \pa_{\pa F} \psi_{G,F}  \\
				& \quad\qquad\qquad \,+ \kappa_F (\pa_F \psi_{G,F})^2+ \kappa_F \psi_{G,F} \pa_{\pa F}^2 \psi_{G,F}.
			\end{split}
		\end{equation}
		Using formulas \eqref{iterstimG} and \eqref{050525pranzo1}, a together with the Sobolev embedding and the H\"{o}lder inequality, we obtain
		\begin{equation}\label{0505primacrossfit2}
			\begin{split}
				&\| \pa_{\pa F}^2 \kappa_F \frac{\psi_{G,F}^2}{2}+2 \pa_{\pa F} \kappa_F \psi_{G,F} \pa_{\pa F} \psi_{G,F}  \|_{L^2(\pa F)} \\
				&\quad \qquad \leq \big(\frac{\| \psi_{G,F} \|_{L^\infty(\pa F)}}{2} \| \kappa_{F} \|_{H^2(\pa F)} +2 \| \pa_{\pa F} \kappa_F \|_{L^2(\pa F)}\| \pa_{\pa F} \psi_{G,F} \|_{L^2(\pa F)} \big)  \| \psi_{G,F} \|_{L^2(\pa F)} \\
				&\quad \qquad \leq C\big( \frac{C_2 h^{\frac{1}{2}}}{2}+ 2C_2 h^{\frac{3}{4}} \big) \| \xi_{G,F} \|_{L^2(\pa E)}= C h^{\frac{1}{2}} \| \xi_{G,F} \|_{L^2(\pa E)}.
			\end{split}
		\end{equation}
		Using again formula \eqref{iterstimG}, the Sobolev embedding, and the H\"{o}lder inequality, we get
		\begin{equation}\label{0505primacrossfit3}
			\begin{split}
				\int_{\pa F} g(\nu_F) \pa_{\pa F}^2 \psi_{G,F} \kappa_F (\pa_F \psi_{G,F})^2 \, d \mathcal{H}^1 &\leq C \| \pa_{\pa F} \psi_{G,F} \|_{L^\infty(\pa F)}^2 \| \pa_{\pa F}^2 \psi_{G,F} \|_{L^2(\pa F)} \\
				&\leq C h^{\frac{1}{2}} \| \pa_{\pa F}^2 \psi_{G,F} \|_{L^2(\pa F)}^2. 
			\end{split}
		\end{equation}
		Still using \eqref{iterstimG}, the Sobolev embedding, and the H\"{o}lder inequality, we deduce
		\begin{equation}\label{0505primacrossfit4}
			\begin{split}
				\int_{\pa F} g(\nu_F) \big( \pa_{\pa F}^2 \psi_{G,F} \big)^2 \kappa_F \psi_{G,F} \, d \mathcal{H}^1 & \leq C \| \psi_{G,F} \|_{L^\infty(\pa F)} \| \pa_{\pa F}^2 \psi_{G,F} \|_{L^2(\pa F)}^2 \\
				&\leq C h^{\frac{3}{4}} \| \pa_{\pa F}^2 \psi_{G,F} \|_{L^2(\pa F)}^2 .
			\end{split}
		\end{equation}
		Combining \eqref{05052025form1}, \eqref{0505primacrossfit1}, \eqref{0505primacrossfit2}, \eqref{0505primacrossfit3}, \eqref{0505primacrossfit4}, and using the H\"{o}lder inequality, we obtain that
		\begin{equation}\label{finestima0705251}
			\int_{\pa F} g(\nu_F) \pa_{\pa F}^2 \psi_{G,F} \pa_{\pa F}^2 \bigg( \frac{\kappa_F\psi_{G,F}^2}{2} \bigg)\, d \mathcal{H}^1 \leq \| \xi_{G,F} \|_{L^2(\pa F)}^2 + h^{\frac{1}{2}}C \| \pa_{\pa F}^2 \psi_{G,F}\|^2_{L^2(\pa F)}.
		\end{equation}
		\textit{Estimate of \eqref{05052025form2}.}
		
		Recalling formula \eqref{ciserviràsicuro} and applying  the Cauchy–Schwarz and Young inequalities, we get
		\begin{multline}\label{finestima0705252}
			\int_{\pa F} \xi_{G,F} \pa_{\pa F}^2 \big(Q(E(u_F^{K_{el}}))(\cdot+ \psi_{G,F}(\cdot)\nu_F(\cdot))\big) \, d \mathcal{H}^1 \\
			\leq C(K,K_{el},\varepsilon) \| \xi_{G,F} \|_{L^2(\pa F)}^2 + \varepsilon    \| \pa_{\pa F}^2 \psi_{G,F}\|^2_{L^2(\pa F)}.
		\end{multline}
		\textit{Estimate of \eqref{05052025form3}.}\\
		\textit{Claim: }  \begin{equation}\label{06052025formpvia}
			\| \pa_{\pa F} \psi_{G,F} \|_{L^2(\pa F)} \leq  \varepsilon C \| \pa_{\pa F}^2 \psi_{G,F} \|_{L^2(\pa F)}+ C(\varepsilon) \| \psi_{G,F} \|_{L^2(\pa F)}.
		\end{equation}
		
		From formulas \eqref{tesilemmazzzo}, \eqref{iterstimG}, and \eqref{0505primacrossfit1}, and using the Sobolev embedding and Proposition \ref{PROPINTER},
		we deduce
		\begin{equation}\label{stac'èpriovani}
			\begin{split}
				\|& \pa_{\pa F} \psi_{G,F} \|_{L^2(\pa F)} \leq C \| \pa_{\pa F} \xi_{G,F} \|_{L^2(\pa F)} \leq   C \| \pa_{\pa F}^2 \xi_{G,F} \|_{L^2(\pa F)}^{\frac{1}{2}} \|  \xi_{G,F}\|^{\frac{1}{2}}_{L^2(\pa F)}\\
				& = \varepsilon \| \pa_{\pa F}^2 \psi_{G,F} + \pa_{\pa F}^2 \big( \kappa_F \frac{\psi_{G,F}^2}{2}   \big)  \|_{L^2(\pa F)}+ C(\varepsilon) \| \xi_{G,F}  \|_{L^2(\pa F)}\\
				& \leq \varepsilon \| \pa_{\pa F}^2 \psi_{G,F}   \|_{L^2(\pa F)} + C \| \pa_{\pa F}^2 \kappa_F \|_{L^2(\pa F)} \| \psi_{G,F}\|_{\infty} \| \psi_{G,F} \|_{L^2(\pa F)}\\
				& \qquad+C \| \pa_{\pa F} \psi_{G,F} \|_{L^2(\pa F)}^2 + C(\varepsilon)\| \xi_{G,F}  \|_{L^2(\pa F)} \\
				& \leq \varepsilon \| \pa_{\pa F}^2\psi_{G,F} \|_{L^2(\pa F)}+ C h^{\frac{3}{4}} \| \pa_{\pa F} \psi_{G,F} \|_{L^2(\pa F)}+ C(\varepsilon)\| \xi_{G,F}  \|_{L^2(\pa F)}.
			\end{split}
		\end{equation}
		Thus, for sufficiently small $h$, estimate \eqref{06052025formpvia} follows.\\
		\textit{Claim:} $$ \| R \|_{L^2(\pa F)} \leq \varepsilon C\| \pa_{\pa F}^2 \psi_{G,F}\|_{L^2(\pa F)}+ C(\varepsilon) \| \psi_{G,F} \|_{L^2(\pa F)}.$$
		
		To this end, we first require a pointwise estimate for $R$ on $\pa F$. By the very definition of $R$, and using formula \eqref{ILRESTO} along with the smallness of $ \| \psi_{G,F} \|_{C^1(\pa F)} \leq C h^{\frac{1}{2}}$ (this estimate follows from formula \eqref{iterstimG} and the Sobolev embedding), we obtain the pointwise estimate 
		\begin{equation}\label{bart1}
			\vert R \vert \leq C \big(   \vert \psi_{G,F} \vert + \vert \pa_{\pa F} \psi_{G,F} \vert \big) \big( 1+ \vert \pa_{\pa F}^2 \psi_{G,F} \vert + \vert \pa_{\pa F} (\psi_{G,F} \kappa_F ) \vert \big)\quad \text{on }\pa F.
		\end{equation}
		From formula \eqref{iterstimG} and the Sobolev embedding, we derive the following estimates:
		\begin{equation}\label{bart2}
			\begin{split}
				&\| \pa_{\pa F}(\psi_{G,F} \kappa_F) \|_{L^\infty(\pa F)} \leq \| \psi_{G,F} \|_\infty \| \pa_{\pa F} \kappa_F \|_\infty+ \| \pa_{\pa F} \psi_{G,F} \|_\infty \| \kappa_F \|_\infty \leq C h^{\frac{1}{2}},\\
				& \| \pa_{\pa F}^2 \psi_{G,F} \|_{L^\infty(\pa F)} \leq C h^{\frac{1}{4}}.
			\end{split}
		\end{equation}
		Combining  \eqref{bart1}, \eqref{bart2}, and using \eqref{fomr07112024},  \eqref{06052025formpvia}, we obtain
		\begin{equation}\label{Rinfinutostim}
			\begin{split}
				\| R \|_{L^2(\pa F)} &\leq C \big( \| \pa_{\pa F} \psi_{G,F} \|_{L^2(\pa F)} + \|  \psi_{G,F}\|_{L^2(\pa F)} \big) \\
				& \leq \varepsilon C \| \pa_{\pa F}^2 \psi_{G,F}\|_{L^2(\pa F)}+ C(\varepsilon) \| \xi_{G,F} \|_{L^2(\pa F)}.
			\end{split}
		\end{equation}
		We are now in a position to estimate \eqref{05052025form3}.
		Using formula \eqref{Rinfinutostim}, the definition of $\xi_{G,F}$, the Cauchy–Schwarz inequality, and the Young's inequality, we obtain 
		\begin{equation}\label{blackfalcon1}
			\begin{split}
				\int_{\pa F} &R \pa_{\pa F}^2 \xi_{G,F}\, d \mathcal{H}^1 = \int_{\pa F} R \pa_{\pa F}^2 \psi_{G,F} \, d \mathcal{H}^1 + \int_{\pa F} R \pa_{\pa F}^2 \bigg( \kappa_F \frac{\psi_{G,F}^2}{2}   \bigg) \, d \mathcal{H}^1\\
				& \leq \| R \|_{L^2(\pa F)} \| \pa_{\pa F}^2 \psi_{G,F} \|_{L^2(\pa F)}+ \int_{\pa F} R \pa_{\pa F}^2 \bigg( \kappa_F \frac{\psi_{G,F}^2}{2}   \bigg) \, d \mathcal{H}^1                        \\
				& \leq \varepsilon C \| \pa_{\pa F}^2 \psi_{G,F} \|_{L^2(\pa F)}^2+ C(\varepsilon) \| \psi_{G,F} \|_{L^2(\pa F)}^2+\int_{\pa F} R \pa_{\pa F}^2 \bigg( \kappa_F \frac{\xi_{G,F}^2}{2}   \bigg) \, d \mathcal{H}^1.
			\end{split}
		\end{equation}
		It remains to estimate the term $\int_{\pa F} R \pa_{\pa F}^2 \bigg( \kappa_F \frac{\psi_{G,F}^2}{2}   \bigg) \, d \mathcal{H}^1$.
		Using \eqref{iterstimG}, \eqref{0505primacrossfit1}, \eqref{0505primacrossfit2}, \eqref{06052025formpvia}, the Sobolev embedding, we get
		\begin{equation}\label{nnnnchedobali}
			\begin{split}
				\| \pa_{\pa F}^2 \bigg( \kappa_F \frac{\psi_{G,F}^2}{2}   \bigg) \|_{L^2(\pa F)} & \leq C\big(  h^{\frac{1}{2}} (\| \xi_{G,F}\|_{L^2(\pa F)}+ \| \pa_{\pa F} \psi_{G,F}\|_{L^2(\pa F)} ) + h \| \pa_{\pa F}^2 \psi_{G,F} \|_{L^2(\pa F)}\big)\\
				& \leq C h^{\frac{1}{2}}   \big( \| \xi_{G,F} \|_{L^2(\pa F)}+ \| \pa_{\pa F}^2 \psi_{G,F} \|_{L^2(\pa F)}   \big).
			\end{split}
		\end{equation}
		Therefore, using the Cauchy–Schwarz inequality, estimates \eqref{Rinfinutostim}, \eqref{nnnnchedobali}, and Young’s inequality, we obtain
		\begin{equation}\label{finestima0705253}
			\begin{split}
				\int_{\pa F} R \pa_{\pa F}^2 \big( \kappa_F \frac{\psi_{G,F}^2}{2} \big) \, d \mathcal{H}^1 &\leq C\| R \|_{L^2(\pa F)}  h^{\frac{1}{2}}   \big( \| \xi_{G,F} \|_{L^2(\pa F)}+ \| \pa_{\pa F}^2 \psi_{G,F} \|_{L^2(\pa F)}   \big) \\
				&\leq C(\varepsilon) \| \xi_{G,F} \|_{L^2(\pa F)}^2 + \varepsilon \| \pa_{\pa F}^2 \psi_{G,F} \|^2_{L^2(\pa F)}.
			\end{split}
		\end{equation}
		Finally, inserting the estimates \eqref{finestima0705251} for \eqref{05052025form1}, \eqref{finestima0705252} for \eqref{05052025form2}, and \eqref{finestima0705253} for \eqref{05052025form3}, into formula \eqref{finalmento070525}, we obtain
		\begin{equation}
			\bigg(\frac{1}{h}- C(\varepsilon) \bigg) \int_{\pa F} \xi_{G,F}^2 \, d \mathcal{H}^1 + \int_{\pa F} (g(\nu_F)-\varepsilon) \vert \pa_{\pa F}^2 \psi_{G,F} \vert \, d \mathcal{H}^1 \leq \int_{\pa F} \kappa_F^{\varphi} 
			\pa_{\pa F}^2 \xi_{G,F}  \, d \mathcal{H}^1.
		\end{equation}
		Recalling that $g \geq m_\varphi >0$ (see formula \eqref{mMvarphi}), the inequality above implies formula \eqref{07052025tes1lemmaiter} for sufficiently small $h$ and $\varepsilon$. 
	\end{proof}
	
	In the proof of the next proposition, we will use the following well-known inequality, whose proof follows from a classical homogenization argument and the Sobolev embedding of $H^1$ in $L^\infty$. Let $A \in \mathfrak{C}^1_M(E_0)$, for some $M>0$. If $f$ is a smooth function on $\pa A$, then there exists a constant $C(M)$ such that for every $\varepsilon \in (0,1)$,
	\begin{equation}\label{eleminter}
		\| f \|_{L^\infty(\pa A)}^2 \leq C(M) \big( \frac{1}{\varepsilon} \| f \|_{L^2(\pa A)}^2+  \varepsilon \| \pa_{A} f \|_{L^2(\pa A)}^2  \big).
	\end{equation}
	We are now in a position to prove the main result of this section.
	\begin{proposition}[Iteration] \label{propdiiterazione}
		Let $E,F,G$ be as in Lemma \ref{spaghettiska} and, we set
		$$\xi_{F,E}= \psi_{F,E}+ \kappa_E \frac{\psi_{F,E}^2}{2}, \, \,\xi_{G,F}= \psi_{G,F}+ \kappa_G \frac{\psi_{G,F}^2}{2}. $$
		There exist $M,h_4$, depending only on $K$ and $K_{el}$, such that 
		\begin{equation}\label{TDLDITERAZIONE}
			\int_{\pa F} \big( \xi_{G,F}^2+ \frac{h}{2}g(\nu_F) \vert \Delta_{\pa F} \psi_{G,F} \vert^2 \big)\, d \mathcal{H}^1 \leq (1+M h) \int_{\pa E} \big(  \xi_{F,E}^2 + \frac{h}{8}g(\nu_F) \vert \Delta_{\pa E} \psi_{F,E} \vert^2 \big) \, d \mathcal{H}^1  
		\end{equation}
		for $0 < h \leq h_4$.
	\end{proposition}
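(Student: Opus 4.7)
The plan is to start from Lemma~\ref{spaghettiska} and bound its right-hand side
\[
h\int_{\pa F}\kappa_F^{\varphi}\,\pa^2_{\pa F}\xi_{G,F}\,d\mathcal{H}^1
\]
from above by a quantity of the form $(1+M_1h)\int_{\pa E}\xi_{F,E}^2+\frac{h}{8}\int_{\pa E}g(\nu_E)|\pa^2_{\pa E}\psi_{F,E}|^2 +\alpha\int_{\pa F}\xi_{G,F}^2+\beta h\int_{\pa F}g(\nu_F)|\pa^2_{\pa F}\psi_{G,F}|^2$, where $\alpha$ may be taken arbitrarily small and $\beta<1/4$; combined with \eqref{07052025tes1lemmaiter}, this yields \eqref{TDLDITERAZIONE} after choosing $M$ large enough in terms of $M_1$ and the constants of Lemma~\ref{spaghettiska}. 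The main tool is the expansion \eqref{L'ESPANSOIONE}: writing $\Psi(x):=x+\psi_{F,E}(x)\nu_E(x)$ for $x\in\pa E$, we have $\kappa_F^\varphi(\Psi(x))=-g(\nu_E(x))\pa^2_{\pa E}\psi_{F,E}(x)+\kappa_E^\varphi(x)+R_{F,E}(x)$. Changing variables and setting $\Xi(x):=(\pa^2_{\pa F}\xi_{G,F})(\Psi(x))J_\tau\Psi(x)$, one splits
\[
h\int_{\pa F}\kappa_F^\varphi\pa^2_{\pa F}\xi_{G,F}\,d\mathcal{H}^1 = -h\int_{\pa E}g(\nu_E)\pa^2_{\pa E}\psi_{F,E}\,\Xi\,d\mathcal{H}^1 + h\int_{\pa E}\kappa_E^\varphi\Xi\,d\mathcal{H}^1 + h\int_{\pa E}R_{F,E}\Xi\,d\mathcal{H}^1,
\]
to be estimated term by term.

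For the first term I would use Young's inequality with weight $1/8$, producing the required $\frac{h}{8}\int g(\nu_E)|\pa^2_{\pa E}\psi_{F,E}|^2$ and leaving the residual $2h\int g(\nu_E)|\Xi|^2$. Pulling the latter back to $\pa F$ via Lemma~\ref{cappelloliberoL}-type formulas, using the decomposition $\xi_{G,F}=\psi_{G,F}+\frac12\kappa_F\psi_{G,F}^2$ and the smallness estimates \eqref{iterstimG}, reduces it (up to $O(h^{3/2})$ terms in $\|\xi_{G,F}\|_{L^2}^2$) to a multiple of $h\int_{\pa F}g(\nu_F)|\pa^2_{\pa F}\psi_{G,F}|^2$, the passage from $g(\nu_E)$ to $g(\nu_F)$ costing only $O(h^{1/2})$ thanks to \eqref{iterstimF}; this is absorbable into the $\frac{3h}{4}\int_{\pa F}g(\nu_F)|\pa^2_{\pa F}\psi_{G,F}|^2$ on the left of \eqref{07052025tes1lemmaiter} for $h$ small. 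The third term is genuinely lower order: since $r_0,r_1,r_2$ in \eqref{ILRESTO} vanish at $0$, the Leibniz rule of Lemma~\ref{LemmLeibinz} combined with \eqref{iterstimF} yields $\|R_{F,E}\|_{L^2(\pa E)}=O(h^{3/4})$, so the contribution $h\int R_{F,E}\Xi$ is absorbed by the same mechanism.

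The crucial contribution is the middle one, $h\int_{\pa E}\kappa_E^\varphi\Xi\,d\mathcal{H}^1$, from which the term $(1+M_1h)\int\xi_{F,E}^2$ must emerge. Here I would invoke the Euler--Lagrange equation \eqref{LEEULEROCOMB} for $F$ in its $\pa E$-form
\[
\pa^2_{\pa E}\bigl(-g(\nu_E)\pa^2_{\pa E}\psi_{F,E}+\kappa_E^\varphi\bigr)=\frac{\xi_{F,E}}{h}+\pa^2_{\pa E}\bigl(Q(E(u_F^{K_{el}}))\circ\Psi\bigr)-\pa^2_{\pa E}R_{F,E},
\]
and note that, since $\int_{\pa E}\Xi\,d\mathcal{H}^1=\int_{\pa F}\pa^2_{\pa F}\xi_{G,F}\,d\mathcal{H}^1=0$ by the divergence theorem on the closed curve $\pa F$, the elliptic problem $-\Delta_{\pa E}\phi=\Xi$ admits a unique zero-mean solution $\phi\in H^2(\pa E)$. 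Two integrations by parts on $\pa E$ then transform $h\int\kappa_E^\varphi\Xi$ into $-\int\xi_{F,E}\phi$ plus correction terms coming from $Q\circ\Psi$ and $R_{F,E}$ multiplied by $\phi$. An application of Cauchy--Schwarz and Young to $-\int\xi_{F,E}\phi$, combined with $\|\phi\|_{L^\infty(\pa E)}\leq C\|\xi_{F,E}\|_{H^{-1}(\pa E)}\leq 2C\Lambda' h$ from Lemma~\ref{lalalambdamin} and the interpolation inequality~\eqref{eleminter}, yields the desired $(1+M_1h)\int\xi_{F,E}^2$ bound together with small remainders absorbable into the left-hand side of \eqref{07052025tes1lemmaiter}.

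The main obstacle is precisely this last step: obtaining the sharp factor $(1+M_1h)$ on $\int\xi_{F,E}^2$ rather than a weaker factor such as $1+O(h^{1/2})$ requires keeping all geometric error terms — the Jacobian $J_\tau\Psi-1$, the discrepancy $g(\nu_F)-g(\nu_E)$, and the difference between the Laplace--Beltrami operators on $\pa F$ and $\pa E$ — at the correct order $O(h)$. This relies simultaneously on the sharp $L^2$-bound $\|\psi_{F,E}\|_{L^2(\pa E)}\leq C_1h$ from \eqref{iterstimF}, the $H^4$-control on $\psi_{F,E}$ (so that $\|\pa_{\pa E}\psi_{F,E}\|_{L^\infty}=O(h^{1/2})$), and the parabolic scaling $\|\xi_{F,E}\|_{H^{-1}(\pa E)}\leq 2\Lambda' h$ from Lemma~\ref{lalalambdamin}; together these ensure that every correction term produced by the change of variables $\Psi$ is at worst of order $h$ relative to $\int\xi_{F,E}^2+\frac{h}{8}\int g(\nu_E)|\pa^2_{\pa E}\psi_{F,E}|^2$.
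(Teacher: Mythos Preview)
Your decomposition via the curvature expansion \eqref{L'ESPANSOIONE} produces a constant that does not close. Young's inequality with weight $1/8$ on the first term $-h\int_{\pa E}g(\nu_E)\,\pa^2_{\pa E}\psi_{F,E}\,\Xi$ leaves the residual $2h\int_{\pa E}g(\nu_E)|\Xi|^2$, which after pulling back to $\pa F$ is essentially $2h\int_{\pa F}g(\nu_F)|\pa^2_{\pa F}\psi_{G,F}|^2$; since $2>\tfrac34$ and both terms scale linearly in $h$, this cannot be absorbed into the left-hand side of \eqref{07052025tes1lemmaiter} no matter how small $h$ is. In fact the first term should not be estimated separately: if you carry your own plan for the middle term through, the Euler--Lagrange equation yields, besides $\xi_{F,E}/h$ and the $Q$ and $R_{F,E}$ corrections you list, also the contribution $\pa^2_{\pa E}\bigl(g(\nu_E)\pa^2_{\pa E}\psi_{F,E}\bigr)$; two integrations by parts against $\phi$ (using $\pa^2_{\pa E}\phi=-\Xi$) convert it back into $+h\int_{\pa E}g(\nu_E)\pa^2_{\pa E}\psi_{F,E}\,\Xi$, which \emph{cancels} your first term exactly. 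You also misidentify the bound on $\phi$: it solves $-\Delta_{\pa E}\phi=\Xi$ with $\Xi$ built from $\xi_{G,F}$, so $\phi\approx-\hat\xi_{G,F}$ up to geometric corrections, and $\|\phi\|_{L^\infty}$ has nothing to do with $\|\xi_{F,E}\|_{H^{-1}}$.

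The paper avoids all of this by not expanding $\kappa_F^\varphi$. It integrates by parts once on $\pa F$, changes variables to $\pa E$ via Lemma~\ref{cappelloliberoL} (the correction $1-1/J_{F,E}$ is controlled as in \eqref{08052025form2}), integrates by parts once more on $\pa E$, and then applies the Euler--Lagrange identity in the compact form
\[
\pa^2_{\pa E}\hat\kappa_F^\varphi=\tfrac{1}{h}\xi_{F,E}+\pa^2_{\pa E}\bigl(Q(E(u_F^{K_{el}}))\circ\Psi\bigr),
\]
which is exactly what remains of \eqref{LEEULEROCOMB} once $-g(\nu_E)\pa^2_{\pa E}\psi_{F,E}+\kappa_E^\varphi+R_{F,E}$ is recombined into $\hat\kappa_F^\varphi$. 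This lands directly on the cross term $\int_{\pa E}\xi_{F,E}\hat\xi_{G,F}\le\tfrac12\|\xi_{F,E}\|^2_{L^2(\pa E)}+\tfrac12\|\hat\xi_{G,F}\|^2_{L^2(\pa E)}$; the second half equals $\tfrac12\int_{\pa F}\xi_{G,F}^2$ up to the Jacobian remainder $\tfrac12\int_{\pa E}\hat\xi_{G,F}^2\,\kappa_E\psi_{F,E}$, which is handled using the potential $v_{F,E}$ of \eqref{eqv} together with $\|\nabla_{\pa E}v_{F,E}\|_{L^2}=d_{H^{-1}}(F,E)\le 2\Lambda' h$. No $h|\pa^2_{\pa F}\psi_{G,F}|^2$ term with a bad coefficient ever appears, because both derivatives land on $\hat\kappa_F^\varphi$, where the equation converts them into $\xi_{F,E}/h$.
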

	\begin{proof}
		In what follows, we denote by $C$  a generic constant depending on $K$ and $K_{el}$. To prove the thesis, we need to estimate the term on the right-hand side of inequality \eqref{07052025tes1lemmaiter}, namely
		\begin{equation}\label{daintegrareperparti}
			h\int_{\pa F} \kappa_F^\varphi \pa_{\pa F}^2 \xi_{G,F}\, d \mathcal{H}^1.
		\end{equation}
		To this end, we consider the diffeomorphism
		\begin{equation}
			\Psi_{F,E}: \pa E \rightarrow \pa F \quad \Psi_{F,E}(x)=x+ \psi_{F,E}(x)\nu_E(x)
		\end{equation}
		and we define
		$$\hat{\kappa}_F^\varphi(x):= \kappa_F^\varphi ( \Psi_{F,E}(x)),\quad \hat{\xi}_{G,F}(x):= \xi_{G,F}( \Psi_{F,E}(x)), \quad \forall x \in \pa E.  $$
		We fix $\varepsilon>0$, to be chosen later. We set 
		$$J_{F,E}:= \sqrt{(1+\kappa_E \psi_{F,E})^2+ \vert \pa_{\pa E} \psi_{F,E} \vert^2}.$$ By integrating by parts in \eqref{daintegrareperparti}, and using formula \eqref{cappelloliberoF}, the Young inequality and the Taylor expansion of the function $t \rightarrow \frac{1}{\sqrt{1+t}}$, we obtain
		\begin{equation}\label{08052025form1}
			\begin{split}
				&h\int_{\pa F} \kappa_F^\varphi \pa_{\pa F}^2 \xi_{G,F}\, d \mathcal{H}^1=- \int_{\pa F} \nabla_{\pa F} \kappa_F^\varphi \cdot \nabla_{\pa F} \xi_{G,F}\, d \mathcal{H}^1= - h\int_{\pa E} \frac{\nabla_{\pa E} \hat{\kappa}_F^\varphi \cdot \nabla_{\pa E} \hat{\xi}_{G,F}}{J_{F,E}}\, d \mathcal{H}^1\\
				& = -h\int_{\pa E} \nabla_{\pa E} \hat{\kappa}_F^\varphi \cdot \nabla_{\pa E} \hat{\xi}_{G,F}\, d \mathcal{H}^1+h \int_{\pa E }\nabla_{\pa E} \hat{\kappa}_F^\varphi \cdot \nabla_{\pa E} \hat{\xi}_{G,F} \bigg( 1- \frac{1}{J_{F,E}}   \bigg) \, d \mathcal{H}^1\\
				& \leq -h\int_{\pa E} \pa_{\pa E} \hat{\kappa}_F^\varphi  \pa_{\pa E} \hat{\xi}_{G,F}\, d \mathcal{H}^1 + \varepsilon h C\int_{\pa F} \vert \pa_{\pa F} \xi_{G,F} \vert^2 \, d \mathcal{H}^1\\
				&  \,\,\,\qquad \qquad + \frac{C}{\varepsilon} h \int_{\pa E} \vert \pa_{\pa E} \hat{\kappa}_{F}^\varphi \vert^2 \big( \psi_{F,E}^2+ \psi_{F,E}^4 + \vert \pa_{\pa E} \psi_{F,E} \vert^4   \big) \, d \mathcal{H}^1 .
			\end{split}
		\end{equation}
		Using \eqref{eleminter}, \eqref{iterstimF}, and the Sobolev embedding, and assuming $h$ is sufficiently small with respect to $\varepsilon$, we estimate the last integral:
		\begin{equation}\label{08052025form2}
			\begin{split}
				\frac{Ch}{\eps}\int_{\pa E }|\pa_{\pa E} \hat{\kappa}_F^\varphi|^2 &(\psi_{F,E}^2+\psi_{F,E}^4+|\pa_{\pa E} \psi_{F,E}|^4) \, d \mathcal{H}^1
				\\
				&\leq \frac{Ch}{\eps}\big(\|\psi_{F,E}\|_{L^\infty(\pa E)}^2+\|\psi_{F,E}\|_{L^\infty(\pa E)}^4+\|\pa_{\pa E}\psi_{F,E}\|_{L^\infty(\pa E)}^4\big)\cr
				& \leq \frac{Ch}{\eps}\Big[\frac{1}{\eps^2}\|\psi_{F,E}\|_{L^2(\pa E)}^2+\eps^2\|\pa_{\pa E}\psi_{F,E}\|_{L^2(\pa E)}^2\\
				& \qquad \qquad \quad+h^{\frac{3}{4}}\Big(\frac{1}{\eps^2}\|\pa_{\pa E}\psi_{F,E}\|_{L^2(\pa E)}^2+\eps^2\|\pa_{\pa E}^2\psi_{F,E}\|_{L^2(\pa E)}^2\Big)\Big]\cr
				& \leq C(\eps) h\|\psi_{F,E}\|_{L^2(\pa E)}^2+Ch\eps(\|\pa_{\pa E}\psi_{F,E}\|_{L^2(\pa E)}^2+\|\pa_{\pa E}^2\psi_{F,E}\|_{L^2(\pa E)}^2)\\
				& \leq C(\varepsilon)h \|\xi_{F,E}\|_{L^2(\pa E)}^2 + Ch \varepsilon \|\pa_{\pa E}^2\psi_{F,E}\|_{L^2(\pa E)}^2,
			\end{split}
		\end{equation}
		where in the last inequality we have used \eqref{050525pranzo1} and \eqref{06052025formpvia}.
		Using the same reasoning as in \eqref{stac'èpriovani}, and recalling \eqref{050525pranzo1}, we obtain
		\begin{equation}\label{08052025form3}
			\| \pa_{\pa F } \xi_{G,F} \|_{L^2(\pa F)}^2 \leq C \big( \| \xi_{G,F} \|_{L^2(\pa F)}^2+ \| \pa_{\pa F}^2 \psi_{G,F} \|_{L^2(\pa F)}^2 \big).
		\end{equation}
		Plugging \eqref{08052025form2} and \eqref{08052025form3} into formula \eqref{08052025form1} and performing integration by parts yields
		\begin{equation}\label{08052025form4}
			\begin{split}
				h\int_{\pa F} \kappa_F^\varphi \pa_{\pa F}^2 \xi_{G,F}\, d \mathcal{H}^1 \leq& h\int_{\pa E} \pa_{\pa E}^2 \hat{\kappa}_F^\varphi   \hat{\xi}_{G,F}\, d \mathcal{H}^1 + \varepsilon h  C \big( \| \xi_{G,F} \|_{L^2(\pa F)}^2+ \| \pa_{\pa F}^2 \psi_{G,F} \|_{L^2(\pa F)}^2 \big) \\
				& \qquad + C(\varepsilon)h \|\xi_{F,E}\|_{L^2(\pa E)}^2 + Ch \varepsilon \|\pa_{\pa E}^2\psi_{F,E}\|_{L^2(\pa E)}^2.
			\end{split}
		\end{equation}
		Recalling that $F$ satisfies the Euler–Lagrange equation \eqref{LEEULEROCOMB}, we have
		\begin{equation}\label{080525euler}
			\pa_{\pa E}^2 \hat{\kappa}_F^\varphi(x)= \frac{1}{h} \xi_{F,E}(x)+ \pa_E^2 \big( Q(E(u_F^{K_{el}}))(x+ \psi_{F,E}(x)\nu_E(x))  \big), \quad x \in \pa E.
		\end{equation}
		We need to estimate $$h\int_{\pa E} \pa_{\pa E}^2 \hat{\kappa}_F^\varphi   \hat{\xi}_{G,F}\, d \mathcal{H}^1.$$ 
		
		Using \eqref{080525euler}, the Cauchy–Schwarz and Young inequalities, and \eqref{ciserviràsicuro}, we obtain
		\begin{equation}\label{05062025sera1}
			\begin{split}
				h \int_{\pa E} & \pa_{\pa E}^2 \hat{\kappa}_F^\varphi   \hat{\xi}_{G,F}\, d \mathcal{H}^1= \int_{\pa E} \xi_{F,E} \hat{\xi}_{G,F}\, d \mathcal{H}^1\\
				& \qquad \qquad\qquad\qquad+ h\int_{\pa E}\pa_E^2 \big( Q(E(u_F^{K_{el}}))(x+ \psi_{F,E}(x)\nu_E(x))  \big) \hat{\xi}_{G,F}(x)\, d \mathcal{H}^1_x\\
				& \leq \frac{1}{2} \int_{\pa E} \xi_{F,E}^2 \, d \mathcal{H}^1+ \frac{1}{2} \int_{\pa E} \hat{\xi}_{G,F}^2 \, d \mathcal{H}^1  \\
				& \qquad+ h \bigg( \int_{\pa E}(\pa_E^2 \big( Q(E(u_F^{K_{el}}))(x+ \psi_{F,E}(x)\nu_E(x))  \big))^2 \, d \mathcal{H}^1_x  \bigg)^{\frac{1}{2}} \bigg( \int_{\pa E} \hat{\xi}_{G,F}^2 \, d \mathcal{H}^1 \bigg)^{\frac{1}{2}}\\
				& \leq \frac{1}{2} \int_{\pa E} \xi_{F,E}^2 \, d \mathcal{H}^1+ \frac{1}{2} \int_{\pa E} \frac{\hat{\xi}_{G,F}^2}{J_{F,E}} \, d \mathcal{H}^1+ \frac{1}{2} \int_{\pa E} \hat{\xi}_{G,F}^2 \bigg(   1- \frac{1}{J_{F,E}}\bigg) \, d \mathcal{H}^1\\
				& \qquad+ Ch \varepsilon \|\pa_{\pa E}^2\psi_{F,E}\|_{L^2(\pa E)}^2+ h C(\varepsilon) \int_{\pa E} \frac{\hat{\xi}_{G,F}^2}{J_{F,E}} \, d \mathcal{H}^1 \\
				& \leq \frac{1}{2} \int_{\pa E} \xi_{F,E}^2 \, d \mathcal{H}^1+ \frac{1}{2} \int_{\pa F} \xi^2_{G,F}\, d \mathcal{H}^1+ \frac{1}{2} \int_{\pa E} \hat{\xi}_{G,F}^2 \kappa_E \psi_{F,E}\, d \mathcal{H}^1\\
				& \qquad + Ch \varepsilon \|\pa_{\pa E}^2\psi_{F,E}\|_{L^2(\pa E)}^2+ h C(\varepsilon) \int_{\pa F} \xi_{G,F}^2 \, d \mathcal{H}^1 +Ch \| \psi_{F,E}\|^2_{L^2(\pa E)}\\
				& \leq \frac{1}{2} \int_{\pa E} \xi_{F,E}^2 \, d \mathcal{H}^1+ \frac{1}{2} \int_{\pa F} \xi^2_{G,F}\, d \mathcal{H}^1+ \frac{1}{2} \int_{\pa E} \hat{\xi}_{G,F}^2 \kappa_E \psi_{F,E}\, d \mathcal{H}^1\\
				& \qquad + Ch \varepsilon \|\pa_{\pa E}^2\psi_{F,E}\|_{L^2(\pa E)}^2+ h C(\varepsilon) \int_{\pa F} \xi_{G,F}^2 \, d \mathcal{H}^1 +Ch \| \psi_{F,E}\|^2_{L^2(\pa E)}.
			\end{split} 
		\end{equation}
		We need to estimate $\frac{1}{2} \int_{\pa E} \hat{\xi}_{G,F}^2 \kappa_E \psi_{F,E}\, d \mathcal{H}^1$. To this aim we recall that
		\begin{equation}
			\left\{
			\begin{aligned}
				& -\Delta_{\pa E} v_{F,E}= \xi_{F,E}= \psi_{F,E}+ \frac{\psi_{F,E}^2}{2}\kappa_E & \text{ on } \pa E , \\
				& \int_{\pa E} v_{F,E} \, d \mathcal{H}^1=0
			\end{aligned}
			\right.
		\end{equation} 
		and $\| \nabla_{\pa E} v_{F,E} \|_{L^2(\pa E)} = d_{H^{-1}}(F,E)\leq Ch$, see formulas \eqref{normH^-1funzv}, \eqref{eqv} and \eqref{2803e'arri}. Therefore we have that
		\begin{equation}\label{05062025sera2}
			\int_{\pa E} \hat{\xi}^2_{G,F}\kappa_E \psi_{F,E} \, d \mathcal{H}^1= -\int_{\pa E} \hat{\xi}^2_{G,F}\kappa_E \frac{\psi_{F,E}^2}{2} \, d \mathcal{H}^1- \int_{\pa E }\hat{\xi}^2_{G,F}\kappa_E\Delta_{\pa E} v_{F,E} \, d \mathcal{H}^1.
		\end{equation}
		Using formula \eqref{iterstimF} we obtain
		\begin{equation}\label{05062025sera3}
			\int_{\pa E} \hat{\xi}^2_{G,F}\kappa_E \frac{\psi_{F,E}^2}{2} \leq h^{\frac{3}{2}}C \int_{\pa E} \hat{\xi}^2_{G,F} \leq h C \int_{\pa F} \xi^2_{G,F} \, d \mathcal{H}^1.
		\end{equation}
		Now by the divergence theorem and using \eqref{2803e'arri}, \eqref{iterstimF} and the Poincarè inequality and the Sobolev embedding we have that
		\begin{equation}\label{05062025sera4}
			\begin{split}
				\int_{\pa E} -\hat{\xi}^2_{G,F}& \kappa_E \Delta_{\pa E} v_{F,E} \, d \mathcal{H}^1\\
				&= \int_{\pa E} \nabla_{\pa E} \hat{\xi}_{G,F} \cdot \nabla_{\pa E} v_{F,E} \kappa_E \hat{\xi}_{G,F}\, d \mathcal{H}^1 + \int_{\pa E}\hat{\xi}^2_{G,F} \nabla_{\pa E} v_{F,E} \cdot \nabla_{\pa E} \kappa_E \, d \mathcal{H}^1\\
				& \leq C \| \nabla_{\pa E} v_{F,E} \|_{L^2(\pa E)} \big( \| \hat{\xi}^2_{G,F} \|_{L^2(\pa E)}+\| \nabla_{\pa E} \hat{\xi}_{G,F} \|^2_{L^2(\pa E)} \big) \\
				& \leq C h\big( \| \hat{\xi}_{G,F}\|_{L^\infty(\pa E)} \| \hat{\xi}_{G,F} \|_{L^2(\pa E)}+   \| \nabla_{\pa F} \xi_{G,F} \|^2_{L^2(\pa F)} \big)\\
				& \leq Ch \big( \| \nabla_{\pa E} \hat{\xi}_{G,F}\|_{L^2(\pa E)} \| \hat{\xi}_{G,F} \|_{L^2(\pa E)}+ \| \xi_{G,F}\|^2_{L^2(\pa F)}+ \varepsilon\| \Delta_{\pa F} \psi_{G,F}\|^2_{L^2(\pa F)}   \big)\\
				& \leq Ch \big( \| \nabla_{\pa E} \hat{\xi}_{G,F}^2 \|_{L^2(\pa E)}^2 + \| \xi_{G,F}\|^2_{L^2(\pa F)}+ \varepsilon\| \Delta_{\pa F} \psi_{G,F}\|^2_{L^2(\pa F)} \big) \\
				& \leq Ch \big( \| \xi_{G,F}\|^2_{L^2(\pa F)}+ \varepsilon\| \Delta_{\pa F} \psi_{G,F}\|^2_{L^2(\pa F)} \big)
			\end{split}
		\end{equation}
		where we have used Lemma \ref{cappelloliberoL} to get \begin{equation}\label{cappelloliberoap}
			\| \nabla_{\pa E} \hat{\xi}_{G,F}  \|^2_{L^2(\pa E)} \leq C(K) \int_{\pa E } \frac{\vert \nabla_{\pa E} \hat{\xi}_{G,F} \vert^2}{J_{F,E}}  \, d \mathcal{H}^1 = C\| \nabla_{\pa F} \xi_{G,F} \|^2_{L^2(\pa F)}  . 
		\end{equation} 
		Using formulas \eqref{05062025sera1}, \eqref{05062025sera2}, \eqref{05062025sera3}, and \eqref{05062025sera4}, we get
		\begin{equation}\label{08052025form5}
			\begin{split}
				h \int_{\pa E}  \pa_{\pa E}^2 \hat{\kappa}_F^\varphi   \hat{\xi}_{G,F}\, d \mathcal{H}^1\leq &\frac{1}{2} \int_{\pa E} \xi_{F,E}^2 \, d \mathcal{H}^1+ \frac{1}{2} \int_{\pa F} \xi^2_{G,F}\, d \mathcal{H}^1+ Ch \varepsilon \|\pa_{\pa E}^2\psi_{F,E}\|_{L^2(\pa E)}^2\\
				& \,\,+ h C(\varepsilon) \int_{\pa F} \xi_{G,F}^2 \, d \mathcal{H}^1 +Ch \| \psi_{F,E}\|^2_{L^2(\pa E)} + \varepsilon h \| \pa^2_{\pa F} \psi_{G,F}\|^2_{L^2(\pa F)}.
			\end{split}
		\end{equation}
		Therefore, combining \eqref{07052025tes1lemmaiter}, \eqref{08052025form4}, and \eqref{08052025form5}, and recalling that $M_\varphi\geq g \geq m_\varphi >0$ (see formula \eqref{mMvarphi}), we conclude:
		\begin{multline}
			\big( \frac{1}{2}-hC  \big)\int_{\pa F} \xi_{G,F}^2 \, d \mathcal{H}^1 +h\big( \frac{3}{4} -\varepsilon C \big)  \int_{\pa F}  g(\nu_F) \vert \pa_{\pa F}^2 \psi_{G,F} \vert^2 \, d \mathcal{H}^1\\
			\leq \big( \frac{1}{2}+hC  \big) \int_{\pa E} \xi_{F,E}^2 \, d \mathcal{H}^1+ h C \varepsilon \int_{\pa E} g(\nu_E) \vert \pa_{\pa E}^2 \psi_{F,E} \vert^2 \,d \mathcal{H}^1.
		\end{multline}
		Choosing $\varepsilon$ and $h$ sufficiently small concludes the proof of \eqref{TDLDITERAZIONE}.
	\end{proof}
	
	\section{Proof of the main theorems}\label{sezionefinale}
	In this section, we use the iteration estimates proved in the previous section to show that the constrained discrete flat flow, defined in \ref{12092023def1}, converge, as $h \rightarrow 0$, to the classical solution of the equation \eqref{MAINEQsol}, provided $K_{el}$ is sufficiently large. We recall that $E_0 \Subset \Omega$ be open and connected set of class $C^5$.

	Here and in the following, we reuse the notation introduced in formula \eqref{minelvinc} and \eqref{enelvinc}. Specifically, we denote by $u_{F}^{K_{el},h}$ 
	a solution to the minimization problem
	\begin{equation}\label{12052025probmin1}
		\min  \left\{ \int_{\Omega \setminus F} Q(E(u))\, dx \colon u \in \mathfrak{C}_{K_{el}}^{3,\frac{1}{4}}(\Omega ,\R^2) ,\, \| \nabla^4 u \|_{C^{0,\frac{1}{4}}(\Omega)} \leq \frac{K_{el}}{h^{\frac{1}{4}}} , u |_{\pa \Omega}= w_0  \right\}
	\end{equation}
	where $ w_0 \in C^{3,\frac{1}{4}}(\partial \Omega)$ is the prescribed boundary displacement.
	We denote by $u_{F}^{K_{el},0}$ a solution to the minimization problem
	\begin{equation}\label{12052025probmin2}
		\min  \left\{ \int_{\Omega \setminus F} Q(E(u))\, dx \colon u \in \mathfrak{C}_{K_{el}}^{3,\frac{1}{4}}(\Omega ,\R^2),\,  u |_{\pa \Omega}= w_0  \right\}
	\end{equation}
	where $w_0$ is as above.
	
	Before proving the first theorem of this section, we establish a lemma that ensures, under suitable assumptions, that the minimizers of problem \eqref{12052025probmin1} converge to those of problem \eqref{12052025probmin2} as $h \rightarrow 0^+$.
	\begin{lemma}\label{lemmafacilespero}
		Let $F_h \Subset \Omega$ be such that $\chi_{F_h} \rightarrow \chi_{F}$ in $L^1(\Omega)$ with $F \Subset \Omega$. Let  $u_{F_h}^{K_{el},h}$ be a minimizer of \eqref{12052025probmin1} for  $h >0$. Then $u_{F_h}^{K_{el},h} \rightarrow u_F^{K_{el},0}$ as $h \rightarrow 0^+$ in $\mathfrak{C}_{K_{el}}^{3,\frac{1}{4}}(\Omega ,\R^2)$ where $u_F^{K_{el},0} $ is a minimizer of \eqref{12052025probmin2}.
	\end{lemma}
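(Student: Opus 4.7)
The plan is a standard three-step argument: compactness, identification of the limit via minimality, and uniqueness.

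First, the family $\{u_{F_h}^{K_{el},h}\}$ is uniformly bounded in $C^{3,\frac{1}{4}}(\Omega,\R^2)$ by $K_{el}$. Arzel\`a--Ascoli then yields a subsequence (not relabeled) converging in $C^{3,\alpha}(\Omega,\R^2)$ for every $\alpha<\frac{1}{4}$ to some $u\in \mathfrak{C}^{3,\frac{1}{4}}_{K_{el}}(\Omega,\R^2)$. Lower semicontinuity of the H\"older seminorm preserves $\|u\|_{C^{3,\frac{1}{4}}}\leq K_{el}$, and uniform convergence on $\pa\Omega$ gives $u|_{\pa\Omega}=w_0$.

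Second, to identify $u$ as a minimizer of \eqref{12052025probmin2}, I would take an arbitrary competitor $v\in \mathfrak{C}^{3,\frac{1}{4}}_{K_{el}}(\Omega,\R^2)$ with $v|_{\pa\Omega}=w_0$ and construct a recovery sequence $v_h$ admissible for \eqref{12052025probmin1}. Fixing a sufficiently smooth extension $W_0$ of $w_0$ to $\Omega$ and writing $\phi:=v-W_0$ (which vanishes on $\pa\Omega$), set $v_h:=W_0+\phi_{\epsilon_h}$ where $\phi_{\epsilon_h}$ is a standard interior mollification of $\phi$ with compact support in $\Omega$ and scale $\epsilon_h = c\,h^{\frac{1}{4}}$ for a small constant $c$. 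The mollifier estimate
\[
\|\nabla^4 \phi_{\epsilon_h}\|_{C^{0,\frac{1}{4}}(\Omega)} \;\leq\; \frac{C}{\epsilon_h}\,[\phi]_{C^{3,\frac{1}{4}}(\Omega)}
\]
then ensures $\|\nabla^4 v_h\|_{C^{0,\frac{1}{4}}}\leq K_{el}/h^{\frac{1}{4}}$ for small $h$, $v_h|_{\pa\Omega}=w_0$, $\|v_h\|_{C^{3,\frac{1}{4}}}\leq K_{el}$, and $v_h\to v$ in $C^{3,\alpha}$. Minimality of $u_{F_h}^{K_{el},h}$ tested against $v_h$, combined with the uniform $C^3$-convergence of both sides and $\chi_{F_h}\to\chi_F$ in $L^1(\Omega)$, yields $\int_{\Omega\setminus F}Q(E(u))\,dx \leq \int_{\Omega\setminus F}Q(E(v))\,dx$ in the limit.

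Finally, strict convexity of $Q$ in $E(u)$, the fixed Dirichlet data $w_0$, and Korn's inequality together give uniqueness of the minimizer $u_F^{K_{el},0}$ of \eqref{12052025probmin2}, so $u=u_F^{K_{el},0}$ and a standard subsequence argument upgrades the convergence to the full sequence. The main obstacle is the construction of the recovery sequence $v_h$: enforcing simultaneously the exact boundary value $w_0$, the $C^{3,\frac{1}{4}}$-ball constraint with radius $K_{el}$, and the $\nabla^4$-bound of order $h^{-\frac{1}{4}}$ requires a careful interior mollification preserving the Dirichlet trace. If $v$ happens to saturate the ball, any small norm increase coming from the boundary correction must be absorbed by a brief density or relaxation argument based on continuous dependence of the minimum value on $K_{el}$.
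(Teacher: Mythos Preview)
Your compactness step and your identification of the limit as a minimizer are both correct, but your second step is more laborious than necessary and your third step contains a genuine error.

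\textbf{On the recovery sequence.} The paper avoids mollification altogether by the following observation: if $v\in C^4(\Omega,\R^2)$ satisfies $\|v\|_{C^{3,1/4}}\leq K_{el}$ and $v|_{\pa\Omega}=w_0$, then $\|\nabla^4 v\|_{C^{0,1/4}}$ is some fixed finite number, so the constraint $\|\nabla^4 v\|_{C^{0,1/4}}\leq K_{el}/h^{1/4}$ is satisfied automatically for all sufficiently small $h$. Hence every such $v$ is itself an admissible competitor for \eqref{12052025probmin1} when $h$ is small, and one passes to the limit directly. A standard density argument ($C^4$ functions are dense in the admissible class for \eqref{12052025probmin2} in any topology under which $u\mapsto\int_{\Omega\setminus F}Q(E(u))\,dx$ is continuous, e.g.\ $C^1$) then covers general competitors. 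This sidesteps the boundary--trace and ball--saturation issues you flag at the end of your proposal.

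\textbf{On uniqueness.} Your claim that strict convexity of $Q$ and Korn's inequality yield uniqueness of $u_F^{K_{el},0}$ is not correct here. Strict convexity forces any two minimizers $u_1,u_2$ to satisfy $E(u_1)=E(u_2)$ on $\Omega\setminus F$, and with the Dirichlet condition on $\pa\Omega$ this gives $u_1=u_2$ on $\Omega\setminus F$. But the functional in \eqref{12052025probmin2} does not see $u$ on $F$ at all, while the admissible class consists of $C^{3,1/4}$ functions on the \emph{whole} of $\Omega$; two minimizers may therefore differ on $F$ (subject only to being $C^{3,1/4}$ extensions of the same function on $\Omega\setminus F$). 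Consequently your subsequence argument does not upgrade to convergence of the full sequence. The paper handles this by stating at the outset of the proof that the phrase ``up to subsequences'' is omitted for brevity; the lemma is applied later in the paper only in this subsequential sense.
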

	\begin{proof}
		In the proof of the lemma, we will omit explicitly mentioning 'up to subsequences' for the sake of brevity.
		By the Ascoli-Arzelà Theorem we have that $ u_{F_h}^{K_{el},h} \rightarrow u$ in $C^{3,\frac{1}{4}}(\Omega)$.
		Let $ v \in C^4(\Omega)$ with $ \| v \|_{C^{3,\frac{1}{4}}(\Omega)}\leq K_{el} $ then for $h$ sufficiently small we get
		\begin{equation}
			\begin{split}
				\int_{\Omega \setminus F} Q(E(u)) \, d x& = \lim_{h \rightarrow 0^+} \int_{\Omega \setminus F_h} Q(E(u_{F_h}^{K_{el},h})) \, d x \\
				&\leq \lim_{h \rightarrow 0^+}\int_{\Omega \setminus F_h} Q(E(v))\, d x  = \int_{\Omega \setminus F} Q(E(v))\, d x. 
			\end{split}
		\end{equation}
		Therefore by the above formula and using a standard density argument we get the thesis.
	\end{proof}
	\begin{theorem}\label{thmausilmain}
		Let $K_{el}>0$ be fixed. There exist $T_0,C_0,\beta_0,\sigma_1$ with the following property: for every $\beta < \beta_0$ there exists $\tilde{h}$ such that $E^{h,\beta}_t \in \mathfrak{H}^4_{C_0,\sigma_1}(E_0)$, i.e.,
		\begin{equation}\label{minchiachedobali}
			\pa E^{h,\beta}_t= \{x+ f^{h,\beta}(t,x)\nu_{E_0}(x)\colon x \in \pa E_0    \}, \, \| f^{h,\beta} \|_{H^4(\pa E_0)} \leq C_0,\, \| f^{h,\beta} \|_{L^\infty(\pa E_0)} \leq \sigma_1,
		\end{equation}
		for all $t \in [0,T_0]$ and $ 0 < h \leq \tilde{h}$, where $ \{ E^{h,\beta}_t \}_{t \geq0}$ is a discrete constrained flat flow starting from $E_0$.
		
		The function $f^{h,\beta}$ converge in 
		$L^\infty([0,T_0], H^4(\pa E_0))$ to a function $f^{\beta}$ such that the family $ \{ E^\beta_t \}_{t \in [0,T_0]}$ with
		\begin{equation}
			\pa E^\beta_t= \{  x + f^\beta(t,x) \nu_{E_0}(x) : x \in \pa E_0 \}
		\end{equation}
		is a distributional solution of the problem
		\begin{equation}\label{EQ1primasoluzione}
			\left\{
			\begin{aligned}
				& V_t= \pa_{\pa E^\beta_t}^2 \big(   \kappa^\varphi_{E^\beta_t}-Q (E(u_{E^\beta_t}^{K_{el},0}))\big), \text{ on } \pa E_t^\beta  \\
				& E^\beta_0=E_0, \\
				&u_{E^\beta_t}^{K_{el},0} \in \arg \!\min \left\{ \int_{\Omega \setminus E^\beta_t } Q(E(u))\, dx \colon u \in \mathfrak{C}_{K_{el}}^{3,\frac{1}{4}}(\Omega ,\R^2) , \, u |_{\pa \Omega}=w_0    \right\}.
			\end{aligned}
			\right.
		\end{equation}  
		Moreover $ f^\beta\in \mathrm{Lip}([0,T_0],L^2(\pa E_0))$ and
		\begin{equation}\label{l'interfinale}
			\| f^\beta(t,\cdot) \|_{C^{3,\frac{1}{4}}(\pa E_0)} \leq C t^\frac{1}{21} 
		\end{equation}
		where $C=C(K_{el})$.
	\end{theorem}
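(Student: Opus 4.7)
The plan is to propagate uniform-in-$h$ regularity along the discrete scheme by combining the one-step estimate of Theorem~\ref{MainThm375} with the iteration estimate of Proposition~\ref{propdiiterazione}, then extract a limit as $h\to 0^+$ and pass to the limit in the Euler--Lagrange equation.

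\emph{Uniform bounds along the scheme.} I would first note that, since $E_0\in C^5$, the hypotheses $E_0\in\mathfrak{H}^4_{K_0,\sigma_0}(E_0)$ and $\|\pa_\tau^3\kappa_{E_0}^\varphi\|_{L^2}\leq K_0 h^{-1/4}$ hold trivially for any $h>0$ with $K_0$ depending only on $E_0$. Theorem~\ref{MainThm375} applied to $(E_0,h)$ then gives $\pa E_{h}^{h,\beta}=\{x+\psi_1(x)\nu_{E_0}(x):x\in\pa E_0\}$ with $\|\psi_1\|_{H^4}\leq C_1$, $\|\psi_1\|_{L^2}\leq C_1 h$, and $\|\pa_\tau^3\kappa^\varphi_{E_h^{h,\beta}}\|_{L^2}\leq C_2 h^{-1/4}$, where constants depend only on $K_{el}$ and $E_0$. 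Setting
\begin{equation*}
D_k:=\int_{\pa E_{(k-1)h}^{h,\beta}}\!\Bigl(\xi_{k,k-1}^2+\tfrac{h}{8}\,g(\nu)\,|\Delta_\tau\psi_k|^2\Bigr)\,d\Ha^1,
\end{equation*}
Proposition~\ref{propdiiterazione} gives $D_{k+1}\leq(1+Mh)\,D_k$, so $D_k\leq e^{MT_0}D_1\leq Ch$ whenever $kh\leq T_0$. In particular $\|\Delta_\tau\psi_k\|_{L^2(\pa E_{(k-1)h}^{h,\beta})}\leq C$ uniformly in $k,h$, and combining with Theorem~\ref{MainThm375} applied inductively at each step (whose input constants are thereby bounded uniformly in $k$) would yield the uniform bounds $\|\psi_k\|_{H^4}\leq C$, $\|\psi_k\|_{L^2}\leq Ch$, $\|\kappa^\varphi_{E_{kh}^{h,\beta}}\|_{H^2}\leq C$, and $\|\pa_\tau^3\kappa^\varphi_{E_{kh}^{h,\beta}}\|_{L^2}\leq C h^{-1/4}$ for all $kh\leq T_0$.

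\emph{Graph over $\pa E_0$ and compactness.} By inductive application of Lemma~\ref{propdadim}, each $E_{kh}^{h,\beta}$ will be a normal graph over $\pa E_0$ with height $f^{h,\beta}(kh,\cdot)$; composing parametrizations and using the uniform regularity of the intermediate surfaces should transfer the above estimates to $\|f^{h,\beta}(t,\cdot)\|_{H^4(\pa E_0)}\leq C_0$ and $\|f^{h,\beta}(t,\cdot)\|_{L^\infty(\pa E_0)}\leq\sigma_1$ for $t\in[0,T_0]$. The one-step bound $\|\psi_k\|_{L^2}\leq Ch$ telescopes (with uniform Jacobian factors) to the Lipschitz estimate $\|f^{h,\beta}(t,\cdot)-f^{h,\beta}(s,\cdot)\|_{L^2(\pa E_0)}\leq C|t-s|$. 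Ascoli--Arzel\`a, together with the compact embedding $H^4\hookrightarrow\hookrightarrow C^{3,\alpha}$ for $\alpha<1/2$, then produces along a subsequence $h_n\to 0^+$ a limit $f^{h_n,\beta}\to f^\beta$ in $L^\infty([0,T_0],C^{3,\alpha}(\pa E_0))$, with $f^\beta\in L^\infty([0,T_0],H^4(\pa E_0))\cap\mathrm{Lip}([0,T_0],L^2(\pa E_0))$.

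\emph{Limit equation and H\"older estimate.} Rewriting the Euler--Lagrange system \eqref{LEEULERO} in the form
\begin{equation*}
\kappa^\varphi_{E_{kh}^{h,\beta}}-Q(E(u_{E_{kh}^{h,\beta}}^{K_{el},h}))-\tfrac{1}{h}\bigl(\Delta_{\pa E_{(k-1)h}^{h,\beta}}\bigr)^{-1}\xi_{k,k-1}\circ\pi_{\pa E_{(k-1)h}^{h,\beta}}=L_k,
\end{equation*}
the curvature term will converge from the $C^{3,\alpha}$-convergence of parametrizations, the elastic term by Lemma~\ref{lemmafacilespero}, and $\xi_{k,k-1}/h=\psi_k/h+O(h)$ (with $\|\psi_k/h\|_{L^2}$ uniformly bounded) will converge weakly to the normal velocity $V_t$ of the limit flow; applying $\Delta_\tau$ to the resulting identity yields \eqref{EQ1primasoluzione} in the distributional sense. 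The H\"older estimate \eqref{l'interfinale} will then follow by interpolating via Proposition~\ref{PROPINTER} and \eqref{interHOLDER} between the uniform bound $\|f^\beta(t,\cdot)\|_{H^4(\pa E_0)}\leq C_0$ and $\|f^\beta(t,\cdot)\|_{L^2(\pa E_0)}\leq Ct$ (a consequence of the Lipschitz-in-$L^2$ estimate and $f^\beta(0,\cdot)\equiv 0$). The hardest part will be identifying the weak limit of the nonlocal term $\xi_{k,k-1}/h$ with the continuous normal velocity $V_t$: this demands careful tracking of the time-varying reference surfaces $\pa E_{(k-1)h}^{h,\beta}$, of the tangential Laplacians $\Delta_\tau$ and of the anisotropy factors $\nabla\varphi(\nu)$, for which the uniform $C^{3,\alpha}$-convergence from the compactness step is essential.
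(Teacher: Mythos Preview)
Your overall strategy matches the paper's: iterate Theorem~\ref{MainThm375} with Proposition~\ref{propdiiterazione}, extract compactness, pass to the limit in the Euler--Lagrange equation, and interpolate for \eqref{l'interfinale}. Two points deserve attention.

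First, a genuine gap: you never establish that $T_0>0$ independently of $h$. You write ``whenever $kh\leq T_0$'' and later ``for $t\in[0,T_0]$'', treating $T_0$ as given, but nothing in your argument prevents the sets $E_{kh}^{h,\beta}$ from drifting out of $\mathcal{I}_\beta(\pa E_0)$ after $O(1)$ steps, in which case they would cease to be graphs over $\pa E_0$ and the induction would terminate at time $O(h)$. The paper handles this (its Claim~2) by defining $k_0$ as the last index with $\pa E_k\subset\mathcal{I}_\beta(\pa E_0)$ and showing $k_0 h\geq c\beta^2$: if some $x_0\in\pa E_{k_0}$ satisfies $\mathrm{dist}(x_0,\pa E_0)\geq\beta/2$, the density estimates for $\Lambda$-minimizers (available via Lemma~\ref{lalalambdamin}) force $|E_{k_0}\Delta E_0|\geq c\beta^2$, while telescoping $|E_{k_0}\Delta E_0|\leq\sum_{j\leq k_0}\|\xi_j\|_{L^1}\leq C L_0\, k_0 h$ from the $L^2$ bound gives the opposite inequality. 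Without this step you have no uniform time interval on which the scheme is controlled.

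Second, a minor slip and a circularity to watch. You should have $D_1\leq Ch^2$ (not $Ch$), since $\|\psi_1\|_{L^2}\leq C_1h$ and, by interpolation, $\|\Delta_\tau\psi_1\|_{L^2}\leq C_1 h^{1/2}$; this is what you actually need to recover $\|\psi_k\|_{L^2}\leq Ch$ from $D_k\leq e^{MT_0}D_1$. More importantly, the constants $\eta_0,h_0,C_1,C_2,M$ in Theorem~\ref{MainThm375} and Proposition~\ref{propdiiterazione} depend on the $\mathfrak{H}^4_{K}$-class of the reference set, so to close the induction you must fix a single large $K_0$ in advance (and $\beta_0<\eta_0(K_0,K_{el})$) and verify that the output curvature bound $\|\kappa^\varphi_{E_k}\|_{H^2}$ returns to $K_0$. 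The paper does this by observing that the iteration yields $\|\psi_k\|_{L^2}\leq L_0 h$ with $L_0$ depending only on the initial datum $E_0$, then reapplying Proposition~\ref{Mthmproof2}; your phrase ``whose input constants are thereby bounded uniformly in $k$'' hides exactly this bootstrap.
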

	\begin{proof}
		In the proof of the theorem, we will omit explicit mention of 'up to subsequences' for the sake of brevity, unless it is strictly necessary for clarity.
		We fix a large constant $K_0= K_0(K_{el},\sigma_0) $, which will be chosen later. Let $\beta_0 < \eta_1$ where $\eta_1$ is the constant from Lemma \ref{spaghettiska}. We fix $\beta \leq \beta_0$. Let $ \{ E_{hk}^{h,\beta}\}_{k \in \N} $ be a constrained discrete flat flow starting from $E_0$; see definition \ref{12092023def1}. To simplify notation, we write $E_k= E_{hk}^{h,\beta}$ for $k \geq 0$. We are now in a position to apply Theorem \ref{MainThm375}, which yields 
		\begin{equation}\label{12052025form-1}
			\begin{split}
				&\pa E_1= \{  x +\psi_1(x)\nu_{E_0}(x)\colon x \in \pa E^0 \},\\
				&\| \psi_1 \|_{L^2(\pa E^0)} \leq L_0 h, \quad \| \psi_1 \|_{H^4(\pa E^0)} \leq L_0, \\
				&  \|  \kappa_{E_1}^{\varphi} \|_{H^2(\pa E_1)} \leq K_0, \quad  \| \pa_{\pa E_1}^3 \kappa_{E_1}^\varphi \|_{L^2(\pa E_1)} \leq \frac{K_0}{h^{\frac{1}{4}}},
			\end{split} 
		\end{equation}
		where $L_0=L_0(K_{el})$. Moreover, using Proposition \ref{PROPINTER}, we have
		\begin{equation}\label{12052025form-1/2}
			\| \pa_{\pa E_0} \psi_1 \|_{L^2(\pa E_0)} \leq L_0 h^{\frac{3}{4}},\quad \| \pa_{\pa E_0}^2 \psi_1 \|_{L^2(\pa E_0)} \leq L_0 h^{\frac{1}{2}},\quad\| \pa_{\pa E_0}^3 \psi_1 \|_{L^2(\pa E_0)} \leq L_0 h^{\frac{1}{4}}.
		\end{equation}
		We denote by $k_0 \in \N$ the largest index such that it holds
		\begin{equation}
			\pa E_k \subset \mathcal{I}_{\beta }(\pa E_0) \quad \forall k \leq k_0.
		\end{equation}
		We set $T_0:= k_0 h$.\\
		\textit{Claim 1:} For every $k \leq k_0$, the following holds:
		\begin{equation}\label{12052025form1}
			\|  \kappa_{E_k}^{\varphi} \|_{H^2(\pa E_k)} \leq K_0, \quad  \| \pa_{\pa E_k}^3 \kappa_{E_k}^\varphi \|_{L^2(\pa E_k)} \leq \frac{K_0}{h^{\frac{1}{4}}}.
		\end{equation}
		
		We prove \eqref{12052025form1} by induction. The base case is verified since the claim holds for $k=1$; see formula \eqref{12052025form-1}. Assume that the claim holds for all integers up to $k-1$. Then, by applying Theorem \eqref{MainThm375}, we obtain 
		\begin{equation}\label{12052025form2}
			\begin{split}
				&\pa E_{k}= \{  x +\psi_{k}(x)\nu_{E_{k-1}} (x)\colon x \in \pa E_{k-1} \},\\
				&\| \psi_{k} \|_{L^2(\pa E_{k-1})} \leq L_1 h, \quad \| \psi_{k} \|_{H^4(\pa E_{k-1})} \leq L_1, 
			\end{split} 
		\end{equation}
		where $L_1=L_1(K,K_{el})$. For every $j\geq 1$, we set $\xi_j= \xi_{E^j,E^{j-1}}$; see \eqref{lafunzgEFgraph}. Using Proposition \ref{propdiiterazione}, we get
		\begin{multline}\label{12052025form3
			}
			\int_{\pa E_{j-1}} \big( \xi_{j}^2+ \frac{h}{2}g(\nu_{E_{j-1}}) \vert \Delta_{\pa E_{j-1}} \psi_{j} \vert^2 \big)\, d \mathcal{H}^1 \\
			\leq (1+M h) \int_{\pa E_{j-2}} \big(  \xi_{j-1}^2 + \frac{h}{8}g(\nu_{E_{j-2}}) \vert \Delta_{\pa E_{j-2}} \psi_{j-1} \vert^2 \big) \, d \mathcal{H}^1 
		\end{multline}
		for every $ 1 \leq j \leq k$.  We recall (see formula \eqref{fomr07112024}) that 
		\begin{equation}\label{zizzagna}
			\frac{1}{\sqrt{2}}\psi_{j}^2 \leq \xi_{j}^2 \leq \sqrt{2} \psi_{j}^2 \text{ for every } j. 
		\end{equation}
		By iterating the estimate above and using \eqref{12052025form-1} and \eqref{12052025form-1/2}, we obtain
		\begin{equation}
			\begin{split}
				\int_{\pa E_{k-1}} \big( \xi_{k}^2+ \frac{h}{4}\sum_{j=1}^kg(\nu_{E_{j-1}}) \vert \Delta_{\pa E_{j-1}} \psi_{j} \vert^2 \big) & \leq (1+Mh )^{k-1} \int_{\pa E^{0}} \big( \xi_{1}^2+ \frac{h}{2}g(\nu_{E_{0}}) \vert \Delta_{\pa E_{0}} \psi_{1} \vert^2 \big)\\
				& \leq e^{2M h k} L_0^2 h^2 \\
				& \leq e^{2M h k_0} L_0^2 h^2 \leq 2 L_0^2 h^2,
			\end{split}
		\end{equation}
		where we have used $h k_0 = T_0$ and that $T_0$ is small. Possibly increasing the value of $L_0$, and using the above inequality along with \eqref{zizzagna}, we obtain
		\begin{equation}\label{zizzagna2}
			\| \psi_k \|^2_{L^2(\pa E_{k-1})}+ h \sum_{j=1}^k \| \Delta_{\pa E_{k-1}} \psi_k \|^2_{L^2(\pa E_{k-1})} \leq L_0^2 h^2.
		\end{equation}
		Therefore, we can apply Proposition \ref{Mthmproof2} and conclude the proof of the claim \eqref{12052025form1}, possibly after increasing the constant $K_0$.\\
		\textit{Claim 2:} $T_0>0$.
		
		We can assume that for the set $E_{k_0}$, there exists a point $x_0 \in \pa E_{k_0}$ such that $\mathrm{dist}(x_0,E_0)\geq  \frac{\beta}{2}$. The set $E_{k_0}$ satisfies the assumptions of Lemma \ref{lalalambdamin} and therefore it is a $\Lambda$-minimizer of the $\varphi$-perimeter for a $\Lambda$ that is independent of $h$. Consequently, for $E_{k_0}$ the density estimates are satisfied, both for the perimeter and for the volume; see \cite{Bombieri1982}. Using these density estimates together with the inequality $\mathrm{dist}(x_0,E_0)\geq  \frac{\beta}{2}$, we obtain
		\begin{equation}
			\vert E_{k_0} \Delta  E_0 \vert \geq c \beta^2,
		\end{equation}
		where $c$ depends on $\Lambda$.
		Now, using the inequality above together with \eqref{zizzagna}, \eqref{zizzagna2} and the triangular inequality, we derive
		\begin{equation}
			\begin{split}
				c \beta^2 &\leq \vert E_{k_0} \Delta  E_0 \vert \leq \sum_{j=1}^{k_0} \vert E_j \Delta E_{j-1} \vert = \sum_{j=1}^{k_0} \| \xi_j \|_{L^1(\pa E_{j-1})}\leq P(E_{j-1})^{\frac{1}{2}} \sum_{j=1}^{k_0} \| \xi_j \|_{L^2(\pa E_{j-1})}\\
				&\leq C_\varphi P_\varphi(E_{j-1})^{\frac{1}{2}} \sum_{j=1}^{k_0} \| \xi_j \|_{L^2(\pa E_{j-1})} \leq  C_\varphi (P_\varphi(E_0)+K_{el}^2\vert \Omega \vert)^{\frac{1}{2}} \sum_{j=1}^{k_0} \| \psi_j \|_{L^2(\pa E_{j-1})} \\
				& \leq C_\varphi (P_\varphi(E_0)+K_{el}^2\vert \Omega \vert)^{\frac{1}{2}} L_0 k_0 h = C_\varphi(P_\varphi(E_0)+K_{el}^2\vert \Omega \vert)^{\frac{1}{2}} L_0 T_0
			\end{split}
		\end{equation}
		where we have used that $P_\varphi(E_j) \leq K_{el}^2\vert \Omega \vert +P_\varphi (E_0)$, which follows from the minimizing movements scheme.\\
		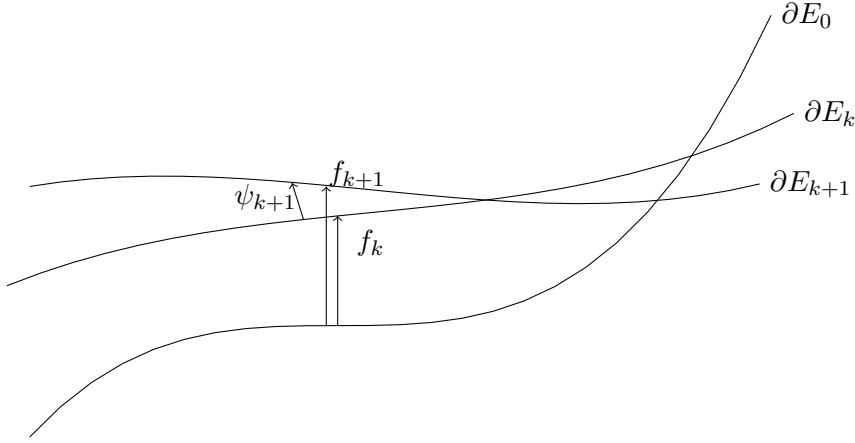
\begin{figure}[t!]
			\centering
			\begin{tikzpicture}[domain=-2:2]
				\draw[scale=1.5,domain=-3.2:3.7,color=black] plot (\x, 0.01*\x*\x*\x+0.1*\x+1) node[right] {$\partial E_{k}$};
				\draw[scale=1.5,domain=-3:3.4,color=black] plot (\x, 0.01*\x*\x*\x-0.1*\x+1.2) node[right] {$\partial E_{k+1}$};
				\draw[scale=1.5,domain=-3:3.5,color=black] plot (\x, 0.05*\x*\x*\x+0.05*0.3*0.3*0.3+3*0.05*0.3*\x*\x+0.05*3*0.3*0.3*\x) node[right] {$\partial E_0$};
				\draw[scale=1.5,->] (-0.3,0) -- (-0.3,0.96973) ;
				\node at ( -0.02,1.1) {$f_{k}$};
				\draw[scale=1.5,->] (-0.4,0) -- (-0.4,1.22973);
				\node at ( -0.2,2) {$f_{k+1}$};
				\draw[scale=1.5,->] (-0.6,0.94) -- (-0.7,1.26) ;
				\node at (-1.4,1.7) {$\psi_{k+1}$};
			\end{tikzpicture}
			\caption{Boundary of $ E_k, E_{k+1}$ and functions $f_k,f_{k+1},\psi_{k+1}$}
		\end{figure}
		\textit{Claim 3:} There exist constants $C_0,\sigma_1>0$ such that 
		\begin{equation}\label{12052025claimlego}
			E_j \in \mathfrak{H}^4_{C_0,\sigma_1}(E_0) \text{ for all } 0 \leq j \leq k_0 . 
		\end{equation}
		
		This claim follows by adapting the arguments from the proof of Theorem \ref{MainThm375}. We provide here a sketch of the proof. As in the previous claim, we may apply Lemma \ref{lalalambdamin},  which implies that each $E_j$ is a $\Lambda$-minimizer of the $\varphi$-perimeter for some $\Lambda$ independent of $h$. Then, using Lemma \ref{propdadim}, we deduce that each $E_j$ is a normal graph over $\pa E_0$, with
		\begin{equation}
			\mathrm{dist}(\pa E_j, \pa E_0) \leq \beta \leq \sigma_1 \text{ for all }j.
		\end{equation}
		If $\sigma_1$ is small enough, we can again apply Lemma \ref{propdadim} to conclude that each $E_j$ is a normal graph over $\pa E_0$. Therefore, there exist functions $f_j: \pa E_0 \rightarrow \R$ such that 
		\begin{equation}
			\pa E_j = \{ x+ f_j(x)\nu_{E_0}(x)\colon x \in \pa E_0  \}.
		\end{equation}
		Moreover, by Lemma \ref{propdadim}, we have that $ \| f_j \|_{C^{1,\gamma}}(\pa E_0) \leq C$ for some $C>0$.
		Using formula \eqref{12052025form1} (the bound $ \| \kappa_{E_j}^{\varphi} \|_{H^2(\pa E^j)}  \leq K_0$) we deduce that $\| f_j \|_{ H^4(\pa E_0)} \leq C_0$. \\
		\textit{Claim 4:} There exists a constant $L_{lip}>0$ such that for all $0 \leq i,k \leq k_0$,
		\begin{equation}\label{12052025claimlego2}
			\| f_i- f_k \|_{L^1(\pa E_0)} \leq L_{lip} h\vert -1+i-k \vert.
		\end{equation}
		
		Without loss of generality, suppose $i < k$.
		The claim follows from the following estimate:
		\begin{equation}
			\begin{split}
				\| f_i- f_k \|_{L^1(\pa E_0)} &\leq \vert E_{k} \Delta  E_i \vert \leq \sum_{j=i+1}^{k} \vert E_j \Delta E_{j-1} \vert = \sum_{j=i+1}^{k} \| \xi_j \|_{L^1(\pa E_{j-1})}\\
				&\leq P(E_{j-1})^{\frac{1}{2}} \sum_{j=i+1}^{k} \| \xi_j \|_{L^2(\pa E_{j-1})}\\
				&\leq \sqrt{2}(K_{el}^2\vert \Omega \vert +P(E_0))^{\frac{1}{2}} \sum_{j=i+1}^{k} \| \psi_j \|_{L^2(\pa E_{j-1})} \\
				& \leq \sqrt{2}(K_{el}^2\vert \Omega \vert +P(E_0))^{\frac{1}{2}}L_0 (k-i-1) h ,
			\end{split}
		\end{equation}
		where we have used \eqref{zizzagna2}.
		
		Hence, combining 
		\eqref{12052025claimlego} and \eqref{12052025claimlego2} and by a standard application of the Ascoli Arzelà Theorem, commonly used in the analysis of minimizing movements,
		we conclude that there exists a subsequence $\{h_m\}_{m \in \N} $ such that
		$f_{h_m}(t) \rightarrow f^\beta(t)$ in $L^1(\pa E_0)$ for a.e. $t \in [0,T_0]$ as $m \rightarrow + \infty$, where
		\begin{equation}\label{diocane-1}
			f^\beta \in \mathrm{Lip} ([0,T_0], L^1(\pa E_0)), \quad f^\beta \in L^\infty ([0,T_0], H^4(\pa E_0)).
		\end{equation}
		in what follows we omit the dependence on $m$ for this subsequence. Therefore by the Sobolev embedding, we get $f^\beta \in L^\infty([0,T_0], C^{3,\frac{1}{2}}(\pa E_0))$. We define the family $ \{ E^\beta_t \}_{t \in [0,T_0]} $ by
		\begin{equation}\label{27052025form2}
			E^\beta_t \Delta E_0 \subset \mathcal{I}_{\sigma_1}(\pa E_0) \text{ and } \pa E^\beta_t := \{  x+  f^\beta(t,x) \nu_{E_0}(x) \colon \, x \in \pa E_0\}.
		\end{equation}
		
		Recall that $u_{E_j}^{K_{el},h}$ is a minimizer of the problem \eqref{12052025probmin1}, with $ u_{E_j}^{K_{el},h} \in \mathfrak{C}_{K_{el}}^{3,\frac{1}{4}}(\Omega ,\R^2)$. 
		Then, up to subsequence, by Lemma \ref{lemmafacilespero}, we obtain
		\begin{equation}
			u_{E_{j_h}}^{K_{el},h} \rightarrow u_{E^\beta_t}^{K_{el},0} \text{ in } \mathfrak{C}_{K_{el}}^{3,\frac{1}{4}}(\Omega ,\R^2) \text{ as } h \rightarrow 0^+,
		\end{equation}
		where $ u^{K_{el},0}_{E^\beta_t}$ is a minimizer of \eqref{12052025probmin2}. Thus, we obtain that 
		\begin{equation}\label{12052025serapgelato}
			u_{E^\beta_t}^{K_{el},0} \in \arg \!\min \left\{ \int_{\Omega \setminus E^\beta(t) } Q(E(u))\, dx \colon u \in \mathfrak{C}_{K_{el}}^{3,\frac{1}{4}}(\Omega ,\R^2) , \, u |_{\pa \Omega}=w_0    \right\}.
		\end{equation}
		\textit{Claim 5:} $E^\beta_t$ is a solution of equation \eqref{EQ1primasoluzione}.
		
		We define the discrete normal velocity on $\pa E_j$ as 
		\begin{equation}
			V_j : \pa E_j \rightarrow \R, \quad V_j := \frac{\psi_{j+1}}{h}.
		\end{equation}
		Let $\Psi_j : \pa E_0 \rightarrow \pa E_j$ be defined by $$\Psi_j(x):= x+ f_j (x)\nu_{E_0}(x) .$$ We recall that 
		$$J_{\tau} \Psi_j(x)= \sqrt{(1+ f_j(x) \kappa_{E_0}(x))^2+\vert \pa_{\pa E_0} f_j(x) \vert^2 } \quad x \in \pa E_0 .$$
		We also define $N_j : \pa E_0 \rightarrow \R^2$ by
		$$  N_j(x):= \frac{-\pa_{\pa E_0}  f_j(x)}{1+ \kappa_{E_0}(x) f_j(x)} \tau_{E_0}(x)+ \nu_{E_0}(x).$$
		We observe that 
		\begin{equation}
			\vert N_j \vert= \frac{ J_{\tau} \Psi_j}{1+ \kappa_{E_0} f_j}.
		\end{equation}
		\textit{Subclaim:} The following holds: 
		\begin{equation}\label{diocaneoverl2}
			\lim_{h \rightarrow 0^+} \|  V_j \circ \Psi_j - \frac{f_{j+1}-f_j}{\vert N_j \vert h} \|_{L^2(\pa E_0)} =0.
		\end{equation}
		
		Using the estimate in \eqref{zizzagna2} we obtain 
		$$\| \psi_{j+1} \circ \Psi_j \|_{C^1(\pa E_0)} \leq C h^{\frac{1}{2}} \text{ and } \| \psi_{j+1} \circ \Psi_j \|_{L^2(\pa E_0)} \leq C h. $$
		From the bounds $ \| f_j \|_{C^{1,\gamma}}(\pa E_0) \leq \varepsilon$ and the previous estimate, we deduce 
		$$ \vert f_{j+1}(x)-f_j(x) \vert \leq C \vert \psi_{j+1} \circ \Psi_j (x)\vert \, \, \forall x \in \pa E_0 \text{ and }  \|f_{j+1}-f_j \|_{C^1(\pa E_0)} \leq C h^{\frac{1}{2}}.$$
		Let $G : \pa  E_0 \rightarrow \R$ be a function such that $ \| G \|_{C^1(\pa E_0)} \leq C h^{\gamma}$ for some $\gamma$. We define 
		$$\Psi_t : \pa E_0 \rightarrow \R^2 , \,\, \Psi_t(x):=x+t \nu_{E_0}(x),  $$ and we recall that $J_{\tau} \Psi_t= 1+ t \kappa_{E_0}$.Applying the coarea formula, we get:
		\begin{equation}\label{13052025primacrossfit}
			\begin{split}
				&\int_{\R^2} G \circ \pi_{\pa E_0}(x) \big(   \chi_{E_{j+1}}(x)- \chi_{E_{j}}(x) \big)\, dx \\
				&= \int_{\pa E_0} G(x) \int_{-\sigma_1}^{\sigma_1} \big(  \chi_{E_{j+1}}(\Psi_t(x))- \chi_{E_{j}}(\Psi_t(x)) \big) (1+ t \kappa_{E_0}(x)\, dt \, d \mathcal{H}^1_x\\
				&= \int_{\pa E_0} G(x) \int_{f_j(x)}^{f_{j+1}(x)} (1+ t \kappa_{E_0}(x)) \, dt \, d \mathcal{H}^1_x\\
				&= \int_{\pa E_0} G(x) (f_{j+1}(x)-f_{j}(x)) (1+ f_j(x)\kappa_{E_0}(x))\, d \mathcal{H}^1_x+ o(h^2)\\
				&= \int_{\pa E_0} G(x) J_{\tau} \Psi_j(x) \frac{f_{j+1}(x)- f_j(x)}{\vert N_j (x) \vert} \, d \mathcal{H}^1_x+ o(h^2).
			\end{split}
		\end{equation}
		We define
		$$ \Phi_{j,t} : \pa E^j \rightarrow \R^2, \,\, \Phi_{j,t}(x):= x+ t \nu_{E_j}(x) , $$
		and we recall $ J_\tau \Phi_{j,t}= 1+ t \kappa_{E_j}$.
		We compute the integral $\int_{\R^2} G \circ \pi_{\pa E_0}(x) \big(   \chi_{E_{j+1}}(x)- \chi_{E_{j}}(x) \big)\, dx$ in a different way from \eqref{13052025primacrossfit}:
		\begin{equation}\label{diocaneover1}
			\begin{split}
				&\int_{\R^2} G \circ \pi_{\pa E_0}(x) \big(  \chi_{E_{j+1}}(x)- \chi_{E_j}(x) \big) \, d x\\
				&= \int_{\pa E_j} \int_{-\beta}^{\beta} G \circ \pi_{\pa E_0}(\Phi_{j,t}(x)) \big(  \chi_{E_{j+1}}(\Phi_{j,t}(x))- \chi_{E^j}(\Phi_{j,t}(x)) \big) (1+ t \kappa_{E^j}(x))\, dt \, d \mathcal{H}^1_x \\
				&= \int_{\pa E_j} \int_{0}^{\psi_{j+1}(x)} G \circ \pi_{\pa E_0} ( \Psi_{j,t}(x)) (1+ t \kappa_{E_j}(x)) \, d t \,d \mathcal{H}^1_x\\
				& = \int_{\pa E_j} \int_{0}^{\psi_{j+1}(x)} \big( G \circ \pi_{\pa E_0} ( \Psi_{j,t}(x)) -G \circ \pi_{\pa E_0}(x)+ G \circ \pi_{\pa E_0}(x)\big)(1+ t \kappa_{E_j}(x)) \, d t \,d \mathcal{H}^1_x \\
				& = \int_{\pa E_j} \psi_{j+1}(x) G \circ \pi_{\pa E_0}(x)  \,d \mathcal{H}^1_x + o(h^2)\\
				& = \int_{\pa E_0} \psi_{j+1} \circ \Psi_j (x) G (x) J_{\tau} \Psi_j(x)\, d\mathcal{H}_x^1+ o(h^2).
			\end{split}
		\end{equation}
		Comparing  \eqref{13052025primacrossfit} and \eqref{diocaneover1} we find that for all $G: \pa E_0 \rightarrow \R$ with $ \|G \|_{C^1(\pa E_0)} \leq Ch^{\gamma}$, it holds true
		\begin{equation}\label{diocaneoverl3}
			\int_{\pa E_0}  G(x) J_{\tau} \Psi_{j}(x) \bigg[ \psi_{j+1}\circ \Psi_j(x)-\frac{f_{j+1}(x)- f_j(x)}{\vert N_j (x) \vert}  \bigg]\, d \mathcal{H}^1_x=              o(h^2).
		\end{equation}
		We define
		\begin{equation}\label{diocaneoverl4}
			G(x):= \frac{1}{J_{\tau} \Psi_{j}(x)}\bigg[ \psi_{j+1}\circ \Psi_j(x)-\frac{f_{j+1}(x)- f_j(x)}{\vert N_j (x) \vert}  \bigg].
		\end{equation}  
		A straightforward computation yields  $ \| G \|_{C^1(\pa E_0) } \leq C h^\gamma$ for some $\gamma \in (0,1)$. Plugging \eqref{diocaneoverl4} into \eqref{diocaneoverl3} gives \eqref{diocaneoverl2}. 
		
		We now return to the main claim: "$E^\beta(t)$ is a solution of \eqref{EQ1primasoluzione}". Up to now, we have established:
		\begin{equation}\label{27052025form1}
			\|f_j \|_{H^4(\pa E_0)} \leq C_0, \, \| f_j \|_{L^\infty(\pa E_0)} \leq \sigma_1 ,\,  \bigg\| \frac{f_{j+1}-f_j}{\vert N_j \vert h} \bigg\|_{L^2(\pa E_0)} \leq C \, \text{ for all } j h \leq T_0 .
		\end{equation}
		Therefore, using \eqref{27052025form1}, along with \eqref{diocaneoverl2} and \eqref{diocane-1} we conclude:
		\begin{equation}\label{19052025fonzi}
			\exists \, L^2(\pa E_0)-\lim_{h \rightarrow 0^+} V_j \circ \Psi_j(\cdot)= \frac{\pa_t f^\beta(t,\cdot)}{\vert N(t,\cdot)\vert },\,\,  \text{ for } t \in [0,T_0],
		\end{equation}
		where $\vert N(t,x) \vert= \frac{J_\tau \Psi_t(x)}{1+ f^\beta(t,x) \kappa_{E_0}(x)}$ and $ \Psi_t(x):= x+ f^\beta(t,x)\nu_{E_0}(x)$ for $x \in \pa E_0$. Let $l \in C^2_c(\R^2)$. Multiplying the Euler–Lagrange equation \eqref{LEEULEROCOMB} by $l$ and integrating by parts yields:
		\begin{equation}\label{diomerda}
			\begin{split}
				\int_{\pa E_j} \frac{\psi_{j+1}(x)}{h} & l(x) \, d\mathcal{H}^1_x+ \int_{\pa E_j} 
				\frac{\psi_{j+1}^2(x)}{2 h}\kappa_{E_j}(x) l(x)  \, d \mathcal{H}^1_x\\
				&= \int_{\pa E_j}  -g(\nu_{E_j}(x))\pa_{\pa E_j}^2 \psi(x) \pa_{\pa E_j}^2 l(x) \, d \mathcal{H}^1_x \\
				&\qquad- \int_{\pa E_j}  Q(E(u_{E_{j+1}}^{K_{el}}))(x+ \psi_{j+1}(x)\nu_{E_j}(x)) \pa_{\pa E_j}^2 l(x) \, d \mathcal{H}^1_x
				\\ 
				&\qquad+ \int_{\pa E_j} R(x) \pa_{\pa E_j}^2 l(x)\, d \mathcal{H}^1_x.
			\end{split}
		\end{equation}
		We observe that
		\begin{equation}\label{diomerda1}
			\begin{split}
				&\lim_{h \rightarrow 0^+} \int_{\pa E_j} 
				\frac{\psi_{j+1}^2(x)}{2 h}\kappa_{E_j}(x) l(x)  \, d \mathcal{H}^1_x \\
				& \qquad\leq \lim_{h \rightarrow 0^+} \| l \|_{L^\infty(\R^2)} \frac{ \| \psi_{j+1} \|_{L^2(\pa E_j)} }{2h} \| \psi_{j+1} \|_{L^\infty(\pa E_j)} \| \kappa_{E_j}\|_{L^2(\pa E_j)} =0, 
			\end{split}
		\end{equation}
		where we have used \eqref{zizzagna2}. Thank the result of the previous claim we also have that $$ \| \psi_{j+1} \|_{H^2(\pa E_j)} \leq C h^\gamma.$$ Recalling the definition of
		$R$ (see formula \eqref{ILRESTO}), we have:
		\begin{equation}\label{diomerda2}
			\| R \|_{L^2(\pa E^j)} \leq C h^\gamma.
		\end{equation}
		Therefore, we can pass to the limit as $h \rightarrow 0^+$, in the equation \eqref{diomerda}, and using \eqref{diomerda1} and \eqref{diomerda2}, we conclude that $E^\beta(t) $ is a distributional solution of \eqref{EQ1primasoluzione} in $[0,T_0]$. Moreover from \eqref{27052025form1} and \eqref{27052025form2}. we get $f^\beta \in \mathrm{Lip}([0,T_0], L^2(\pa E_0))$. \\
		\textit{Claim 6:} It is holds true \eqref{l'interfinale}.
		
		Using formula \eqref{diocane-1} and Proposition \ref{PROPINTER}, we get
		\begin{equation}
			\begin{split}
				\|   \pa_{\pa E_0} f^\beta(t,\cdot) \|_{L^2(\pa E_0)} &\leq  C \| f^\beta(t,\cdot)\|^{\frac{2}{3}}_{L^1(\pa E_0)} \sup_{t \in [0,T_0]}\| f^\beta(t,\cdot)\|^\frac{1}{3}_{H^4(\pa E_0)} \\
				&\leq C t^\frac{2}{3}.
			\end{split}
		\end{equation}
		Using the estimate above, together with \eqref{diocane-1}, \eqref{interHOLDER} and the Sobolev embedding, we deduce
		\begin{equation}
			\begin{split}
				\|  f^\beta(t,\cdot) \|_{C^{3,\frac{1}{4}}(\pa E_0)} & \leq C \big(\sup_{t \in [0,T_0]}\| f^\beta(t,\cdot)\|^\frac{13}{14}_{H^4(\pa E_0)}\big) \| f^\beta(t,\cdot)\|_{C^{0,\frac{1}{2}}(\pa E_0)}^\frac{1}{14}\\
				& \leq C t^\frac{1}{21}.
			\end{split}
		\end{equation}
	\end{proof}
	
	We now recall the statement of Lemma 3.2 from \cite{FJM2018}, which we will use in the next theorem.
	\begin{lemma}
		Let $0 < \alpha < \beta \leq 1$, $M>0$ and $k \in \N$. Then there exists $C>0$ such that for any $ F, \tilde{F} \in \mathfrak{C}_M^{k,\beta}(E_0)$,  the following estimate holds:
		\begin{equation}\label{lemmafjm2018}
			\| u_F (\cdot+ \varphi_F(\cdot) \nu_{E_0}(\cdot))- u_{\tilde{F}} (\cdot+ \varphi_{\tilde{F}}(\cdot) \nu_{E_0}(\cdot)) \|_{C^{k,\alpha}(\pa E_0)} \leq C \| \varphi_F - \varphi_{\tilde{F}} \|_{C^{k,\alpha}(\pa E_0)}
		\end{equation}
		where
		\begin{equation}
			\pa F= \{  x+ \varphi_F(x)\nu_{E_0}(x): \, x \in \pa E_0\}, \, \pa \tilde F= \{  x+ \varphi_{\tilde F}(x)\nu_{E_0}(x): \, x \in \pa E_0\}.
		\end{equation}
	\end{lemma}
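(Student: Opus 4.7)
The plan is to reduce both elasticity problems to a common fixed domain $\Omega\setminus E_0$ via a diffeomorphism constructed from the normal parametrization, and then treat the difference of the pulled-back displacements by standard Schauder theory for linear elliptic systems with variable coefficients.

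First I would fix a cutoff $\eta\in C_c^\infty(\mathcal{I}_{\sigma_0}(\pa E_0))$ with $\eta\equiv1$ near $\pa E_0$ and, for every $G\in\mathfrak{C}_M^{k,\beta}(E_0)$, define the diffeomorphism
\begin{equation}
T_G:\Omega\to\Omega,\qquad T_G(x):=x+\eta(x)\,\varphi_G(\pi_{\pa E_0}(x))\,\nu_{E_0}(\pi_{\pa E_0}(x)).
\end{equation}
For $M$ small one checks (using \eqref{formunoquattro} and the regularity of $\pa E_0$) that $T_G$ is a $C^{k,\beta}$-diffeomorphism of $\Omega$ into itself, fixing $\pa\Omega$, mapping $\Omega\setminus E_0$ onto $\Omega\setminus G$, and satisfying $\|T_G-\mathrm{Id}\|_{C^{k,\beta}}\leq C\|\varphi_G\|_{C^{k,\beta}(\pa E_0)}$ together with the Lipschitz-type bound $\|T_F-T_{\tilde F}\|_{C^{k,\alpha}}\leq C\|\varphi_F-\varphi_{\tilde F}\|_{C^{k,\alpha}(\pa E_0)}$. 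Set $\tilde u_G:=u_G\circ T_G$ on the fixed domain $\Omega\setminus E_0$.

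Next, pulling the Euler--Lagrange system \eqref{eqelliel} back by $T_G$ gives a uniformly elliptic system on $\Omega\setminus E_0$,
\begin{equation}
\div\bigl(\mathbb{A}_G(x)\,\nabla\tilde u_G\bigr)=0\ \text{in }\Omega\setminus E_0,\qquad \mathbb{A}_G[\nu_{E_0}]\tilde u_G=0\ \text{on }\pa E_0,\qquad \tilde u_G=w_0\ \text{on }\pa\Omega,
\end{equation}
whose coefficients $\mathbb{A}_G$ are polynomial expressions in $\nabla T_G$ and $(\nabla T_G)^{-1}\det\nabla T_G$. In particular $\mathbb{A}_G\in C^{k-1,\beta}$ with uniform bounds, and
\begin{equation}
\|\mathbb{A}_F-\mathbb{A}_{\tilde F}\|_{C^{k-1,\alpha}(\Omega\setminus E_0)}\leq C\|\varphi_F-\varphi_{\tilde F}\|_{C^{k,\alpha}(\pa E_0)}.
\end{equation}
By Agmon--Douglis--Nirenberg/Schauder estimates applied to each $\tilde u_G$, one obtains a uniform bound $\|\tilde u_G\|_{C^{k,\alpha}(\Omega\setminus E_0)}\leq C(M,\|w_0\|_{C^{k,\alpha}})$.

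The decisive step is to look at $w:=\tilde u_F-\tilde u_{\tilde F}$, which solves
\begin{equation}
\div(\mathbb{A}_F\nabla w)=\div\bigl((\mathbb{A}_{\tilde F}-\mathbb{A}_F)\nabla \tilde u_{\tilde F}\bigr)\ \text{in }\Omega\setminus E_0,
\end{equation}
with $w=0$ on $\pa\Omega$ and a Neumann condition on $\pa E_0$ whose inhomogeneity is controlled by $\|\mathbb{A}_F-\mathbb{A}_{\tilde F}\|_{C^{k-1,\alpha}}\|\tilde u_{\tilde F}\|_{C^{k,\alpha}}$. Using the uniform bound on $\tilde u_{\tilde F}$ and the estimate on $\mathbb{A}_F-\mathbb{A}_{\tilde F}$, the Schauder estimate for this system yields
\begin{equation}
\|w\|_{C^{k,\alpha}(\Omega\setminus E_0)}\leq C\|\varphi_F-\varphi_{\tilde F}\|_{C^{k,\alpha}(\pa E_0)}.
\end{equation}
Since $u_G(\cdot+\varphi_G(\cdot)\nu_{E_0}(\cdot))=\tilde u_G|_{\pa E_0}$ by the very definition of $T_G$, restricting $w$ to $\pa E_0$ gives \eqref{lemmafjm2018}.

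The main obstacle is the bookkeeping around the Neumann boundary operator on $\pa E_0$: after pull-back, the conormal direction depends on $\varphi_G$, so the difference of the boundary conditions for $\tilde u_F$ and $\tilde u_{\tilde F}$ produces an extra inhomogeneity that must be absorbed at the correct H\"older scale. This is handled by writing the transformed conormal as a smooth function of $\nabla T_G$ and $\nu_{E_0}$, estimating its difference in $C^{k-1,\alpha}$ by $\|\varphi_F-\varphi_{\tilde F}\|_{C^{k,\alpha}(\pa E_0)}$ via the chain rule, and invoking the Schauder theory for oblique boundary-value problems; the loss of one derivative from the chain rule is exactly what forces the assumption $\alpha<\beta$.
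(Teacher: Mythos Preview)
The paper does not prove this lemma at all: it is stated as a recollection of \cite[Lemma~3.2]{FJM2018} and no argument is given. Your proposal is therefore not competing against a proof in the present paper, but it does reproduce the standard strategy used in \cite{FJM2018} (and earlier in \cite{FM2012}): flatten both free boundaries onto $\partial E_0$ by a diffeomorphism built from the height function, pull back the Lam\'e system to the fixed domain $\Omega\setminus E_0$, and estimate the difference of the pulled-back solutions via Schauder theory for the resulting variable-coefficient system. Your outline is correct and matches that approach; the one cosmetic slip is the phrase ``for $M$ small'': invertibility of $T_G$ depends on $\|\varphi_G\|_{L^\infty}\leq\sigma_0<\sigma_{E_0}$, which is built into the definition of $\mathfrak{C}^{k,\beta}_{M,\sigma_0}(E_0)$, not on the size of $M$.
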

	We are now in a position to prove that the minimizing movement scheme converges to the classical solution of \eqref{MAINEQsol}, provided that the initial data $E$ is sufficiently smooth and $K_{el}$ is sufficiently large.
	\begin{theorem}\label{locexisitsol}
		There exist constants $K_{el}$ and $ T_s$ such that any family $\{E^\beta_t\}_{t \in [0,T_0]}$ obtained in Theorem \ref{thmausilmain} is a solution of the problem \eqref{MAINEQsol} in $[0,T_s]$.
	\end{theorem}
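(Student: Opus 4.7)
The plan is to show that, after choosing the parameter $K_{el}$ large enough and restricting to a sufficiently small time interval, the constrained elastic minimizer $u^{K_{el},0}_{E^\beta_t}$ that appears in \eqref{EQ1primasoluzione} coincides with the unconstrained minimizer $u_{E^\beta_t}$ solving \eqref{eqelliel}. Once this equality is established, \eqref{EQ1primasoluzione} becomes precisely \eqref{MAINEQsol} on $[0,T_s]$.

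First I would exploit the regularity of the initial datum. Since $E_0$ is of class $C^5$ and $w_0 \in C^{3,1/4}(\pa\Omega)$, standard elliptic regularity for the Lam\'e system (as recalled after \eqref{eqelliel}) gives $u_{E_0} \in C^{3,1/4}(\Omega \setminus E_0)$, with some finite bound $K_0:=\|u_{E_0}\|_{C^{3,1/4}(\Omega\setminus E_0)}$. Fix $K_{el} := K_0 + 2$. Next, Theorem \ref{thmausilmain} provides the uniform bound $\|f^\beta(t,\cdot)\|_{H^4(\pa E_0)} \leq C_0$; by Sobolev embedding on the one-dimensional manifold $\pa E_0$ one has in particular $\|f^\beta(t,\cdot)\|_{C^{3,1/3}(\pa E_0)}\leq M$ uniformly in $t\in[0,T_0]$, so $E^\beta_t \in \mathfrak{C}^{3,1/3}_M(E_0)$. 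Applying the Lipschitz-type estimate \eqref{lemmafjm2018} with $k=3$, $\beta=1/3$, $\alpha=1/4$, and combining with the decay rate \eqref{l'interfinale}, one gets
\begin{equation*}
\|u_{E^\beta_t}(\cdot+f^\beta(t,\cdot)\nu_{E_0}(\cdot))-u_{E_0}\|_{C^{3,1/4}(\pa E_0)}\leq C\|f^\beta(t,\cdot)\|_{C^{3,1/4}(\pa E_0)}\leq C\,t^{1/21}.
\end{equation*}
Transferring this trace estimate to the bulk via elliptic regularity on the $C^{3,1/4}$-domain $\Omega\setminus E^\beta_t$ yields $\|u_{E^\beta_t}\|_{C^{3,1/4}(\Omega\setminus E^\beta_t)}\leq K_0+C\,t^{1/21}$.

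I would conclude by choosing $T_s\in(0,T_0]$ so small that $C T_s^{1/21}<1$. For every $t\in[0,T_s]$ one then has $\|u_{E^\beta_t}\|_{C^{3,1/4}(\Omega\setminus E^\beta_t)}<K_{el}$, so that $u_{E^\beta_t}$ belongs to the admissible class of the constrained problem \eqref{12052025probmin2} (the extra $C^{0,1/4}$ condition on the fourth derivatives is irrelevant for \eqref{12052025probmin2}); strict convexity of $Q$ together with uniqueness of the $H^1$-minimizer forces $u^{K_{el},0}_{E^\beta_t}=u_{E^\beta_t}$. Inserting this identity into \eqref{EQ1primasoluzione} shows that $\{E^\beta_t\}_{t\in[0,T_s]}$ solves \eqref{MAINEQsol}. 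The principal obstacle is to match the $C^{3,1/4}$ regularity exactly, since Lemma \eqref{lemmafjm2018} requires the strict inequality $\alpha<\beta$; this is resolved by first upgrading the uniform $H^4$ bound via Sobolev embedding to $C^{3,\alpha'}$ for some $\alpha'>1/4$ (e.g.\ $\alpha'=1/3$), thereby creating the strict gap $1/4<\alpha'$ needed to apply the lemma at $\alpha=1/4$.
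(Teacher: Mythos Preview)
Your approach is essentially the same as the paper's, but there is a genuine gap in the argument. The admissible class for the constrained problem \eqref{12052025probmin2} is $\mathfrak{C}^{3,1/4}_{K_{el}}(\Omega,\R^2)$, i.e.\ functions defined on \emph{all} of $\Omega$ with $\|u\|_{C^{3,1/4}(\Omega)}\leq K_{el}$. The unconstrained minimizer $u_{E^\beta_t}$, however, lives only on $\Omega\setminus E^\beta_t$. Bounding $\|u_{E^\beta_t}\|_{C^{3,1/4}(\Omega\setminus E^\beta_t)}$ is therefore not enough to conclude that ``$u_{E^\beta_t}$ belongs to the admissible class''; you must first extend $u_{E^\beta_t}$ across $E^\beta_t$ and control the $C^{3,1/4}(\Omega)$-norm of the extension. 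The paper handles this explicitly by defining $\tilde u_{E^\beta_t}$ via a cut-off of the boundary trace composed with the projection onto $\pa E_0$ (formula \eqref{estensioneu_E^beta}), and checks that the extension operator is bounded with a constant $C_1$ depending only on the geometry of $E_0$. This constant then has to be absorbed into the definition of $K_{el}$, which is why the paper takes $K_{el}=2\max\{1, C_1 L(\|w_0\|_{C^{3,1/4}(\pa\Omega)}+\|u_{E_0}\|_{C^{3,1/4}(\pa E_0)})\}$ rather than simply $K_0+2$; your choice does not leave room for the extension constant or for the constant $L$ in the Schauder estimate.

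Once the extension $\tilde u_{E^\beta_t}\in\mathfrak{C}^{3,1/4}_{K_{el}}(\Omega,\R^2)$ is produced, your concluding argument is correct: since $\tilde u_{E^\beta_t}|_{\Omega\setminus E^\beta_t}=u_{E^\beta_t}$ minimizes over the larger class $H^1(\Omega\setminus E^\beta_t)$, it also minimizes over the constrained class, and any constrained minimizer must agree with $u_{E^\beta_t}$ on $\Omega\setminus E^\beta_t$ (hence on $\pa E^\beta_t$) by strict convexity. Your observation about creating the strict H\"older gap $1/4<1/3$ via the Sobolev embedding $H^4(\pa E_0)\hookrightarrow C^{3,1/2}(\pa E_0)$ in order to apply \eqref{lemmafjm2018} is exactly right and matches the paper's use of that lemma.
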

	\begin{proof}
		Thanks the regularity of $\pa E_0$ and using classical elliptic regularity theory, see \cite{AgmonDouglisNiremberg1964}, \cite[Proposition 8.9]{FM2012}, we know that the function $u_{E_0}$, which minimizes problem \eqref{minelast} for $F = E_0$, is of class $C^{3,\frac{1}{4}}(\Omega \setminus E_0 )$ and satisfies equation \eqref{eqelliel}. We define the function 
		\begin{equation}\label{estensioneu_E}
			\tilde{u}_{E_0}(x)=\left\{
			\begin{aligned}
				& u_{E_0}(x) \quad \text{ if } x \in \Omega \setminus E_0 , \\
				&  \eta\bigg( \frac{ d_{E_0}(x)}{\sigma_{E_0}}  \bigg)u_{E_0} \big( \pi_{\pa E_0}(x)   \big) \quad \text{ if } x \in E_{0} \cap \mathcal{I}_{\sigma_{E_0}}(\pa E_0),\\
				& 0 \quad \text{ if } x \in E_0 \setminus \mathcal{I}_{\sigma_{E_0}}(\pa E_0),
			\end{aligned}
			\right.
		\end{equation}  
		where $ \eta \in C^{\infty}_c (-2,2)$, and $ \eta \geq 0$, $ \eta =1$ in $ (-1,1)$. 
		We observe that
		\begin{equation}\label{sisisinonono1}
			\| \tilde{u}_{E_0} \|_{C^{3,\frac{1}{4}}(\Omega)} \leq C_1 \| u_{E_0} \|_{C^{3,\frac{1}{4}}(\Omega \setminus E_0)}, 
		\end{equation}
		where $C_1=C(\| \kappa_{E_0} \|_{C^2(\pa E_0)})$.
		Moreover, since $u_{E_0}$ solves the equation \eqref{eqelliel}, we get
		\begin{equation}\label{sisisinonono2}
			\| u_{E_0} \|_{C^{3,\frac{1}{4}}(\Omega \setminus E_0)} \leq L (\| w_0 \|_{C^{3,\frac{1}{4}}(\pa \Omega)}+ \| u_{E_0}  \|_{C^{3,\frac{1}{4}}(\pa E_0)}),
		\end{equation}
		where $L $ is a universal constant.
		We define the constant $K_{el}$ as 
		\begin{equation}\label{sisisinonono2.5}
			K_{el}:= 2 \max\{1,C_1 L (\| w_0 \|_{C^{3,\frac{1}{4}}(\pa \Omega)}+ \| u_{E_0} \|_{C^{3,\frac{1}{4}}(\pa E_0)})\}.
		\end{equation} 
		Therefore, $\tilde{u}_{E_0}$ is a minimizer of the problem \eqref{12052025probmin2} with this choice of $K_{el}$, and satisfies
		$$ \| \tilde{u}_{E_0} \|_{C^{3,\frac{1}{4}}(\Omega)} < K_{el}.$$ Let $f^\beta$ be the function obtained in Theorem \ref{thmausilmain}, and let $T_0$ be the corresponding time from the same theorem. Recall that $f^\beta(0,\cdot)= 0$. By combining formulas \eqref{l'interfinale} and \eqref{lemmafjm2018} (with $F= E_0$ and $\tilde{F}= E^\beta_t$), we get, for all $t \in [0,T_0]$,
		\begin{equation}\label{sisisinonono3}
			\| u_{E_0} - u_{E^\beta_t} (\cdot+ f^\beta(t,\cdot) \nu_{E_0}(\cdot)) \|_{C^{3,\frac{1}{4}}(\pa E_0)} \leq C t^\frac{1}{21},
		\end{equation}
		where $u_{E^\beta_t}$ is a minimizer of \eqref{minelast} for $F= E^\beta_t$.\\
		\textit{Claim:} There exits $T_s$ such that for all $ 0 \leq t \leq T_s$, the function \begin{equation}\label{estensioneu_E^beta}
			\tilde{u}_{E^\beta_t}(x)=\left\{
			\begin{aligned}
				& u_{E^\beta_t}(x) \text{ if } x \in \Omega \setminus {E^\beta_t} , \\
				&  \eta\bigg( \frac{ d_{E_0}(x)}{\sigma_{E_0}}  \bigg)u_{E^\beta_t} \bigg( \pi_{\pa E_0}(x)+ f^\beta(t,x)\nu_{E_0}(\pi_{E_0}(x))   \bigg) \text{ if } x \in E^{E^\beta(t)}
			\end{aligned}
			\right.
		\end{equation}
		is a minimizer of \eqref{12052025probmin2} for $F= E^\beta_t$ and satisfies $ \| \tilde{u}_{E^\beta_t} \|_{C^{3,\frac{1}{4}}(\Omega)} < K_{el}$.
		
		As in formulas \eqref{sisisinonono1} and \eqref{sisisinonono2}, we obtain
		\begin{equation}
			\| \tilde{u}_{E^\beta_t} \|_{C^{3,\frac{1}{4}}(\Omega)} \leq C_1  L (\| w_0 \|_{C^{3,\frac{1}{4}}(\pa \Omega)}+ \| u_{E^\beta_t} (\cdot+ f^\beta(t,\cdot)\nu_{E_0}(\cdot)) \|_{C^{3,\frac{1}{4}}(\pa E_0)}).
		\end{equation}
		Using this and \eqref{sisisinonono3}, we conclude that
		\begin{equation}
			\begin{split}
				\| \tilde{u}_{E^\beta_t} \|_{C^{3,\frac{1}{4}}(\Omega)} &\leq C_1  L (\| w_0 \|_{C^{3,\frac{1}{4}}(\pa \Omega)}+ \| u_{E_0}  \|_{C^{3,\frac{1}{4}}(\pa E_0)})+C t^{\frac{1}{21}}\\
				&< K_{el},
			\end{split}
		\end{equation}
		for $t$ sufficiently small. By the minimality of $u_{E^\beta_t}$ in \eqref{minelast}, we get that $\tilde{u}_{E^\beta_t}$ is a minimizer of \eqref{12052025probmin2}.
		
		Therefore, there exist constants $K_{el},T_s$ such that the family $\{E^\beta_t\}_{t \in [0,T_s]}$ satisfies \eqref{EQ1primasoluzione}. Moreover, the minimizer $\tilde{u}_{E^\beta_t}$ of \eqref{12052025probmin2} satisfies $ \|\tilde{u}_{E^\beta_t} \|_{C^{3,\frac{1}{4}}(\Omega)}< K_{el} $ and 
		$$\tilde{u}_{E^\beta_t} (x) = {u}_{E^\beta_t}(x) \text{ for all } x \in \Omega \setminus E^\beta_t,$$
		where $u_{E^\beta_t}$ is a minimizer of \eqref{minelast}. Hence, we conclude that the family $\{E^\beta_t\}_{t \in [0,T_s]}$, parametrized by the diffeomorphisms $\Phi_t(x)= x+f^\beta(t,x)\nu_{E_0}$, constitutes a strong solution to the anisotropic surface diffusion equation with elasticity. More precisely, using the expansion of the curvature from \eqref{L'ESPANSOIONE} and the expansion of the Laplace–Beltrami operator as in \cite{FJM2018}, we find that the function $f^\beta : [0,T_s] \times \pa E_0 \rightarrow \R$ is a strong solution to the equation (see formulas $(3.6),(3.32)$, and $(3.38)$ in \cite{FJM2018})
		\begin{equation}\label{EQFJM2018}
			\left\{
			\begin{aligned}
				& \pa_t f^\beta= \frac{1}{1+f^\beta \kappa_{E_0}} \pa_\tau \bigg( \frac{\pa_\tau \big( \big( g(\nu_{E^\beta_t}) \kappa_{E^\beta_t} - Q(E(u_{E^\beta_t}     ) ) \big)\circ \pi^{-1}_{\pa E^\beta_t}  \big)}{\sqrt{(1+f^\beta \kappa_{E_0})^2+\vert \pa_\tau f^\beta \vert^2}} \bigg), \text{ on } \pa E_0
				\\
				& f^\beta(0,\cdot)=0 \text{ on } \pa E_0.
			\end{aligned}
			\right.
		\end{equation} 
		By a strong solution, we mean that $f^\beta \in \mathrm{Lip}([0,T_s], L^2(\pa E_0)) \cap L^\infty ([0,T_s], H^4(\pa E_0))$ and that it satisfies equation \eqref{EQFJM2018} almost everywhere.
		Using Gr\"onwall's lemma, one can deduce that the strong solution to \eqref{EQFJM2018} with zero initial data is unique. This implies that the limiting flat flow coincides with the classical solution of \eqref{MAINEQsol} on the interval $[0,T_s]$.
	\end{proof}
	We recall the definition of uniform ball condition.
	\begin{definition}
		We say that a set $E\subset \R^2$ satisfies the uniform ball condition (UBC) with a given
		radius $r>0$ if for every $x \in \pa E$ there are balls $B_r(x_{+})$ and $B_r(x_{-})$ such that
		\begin{equation}
			B_r(x_{+}) \subset \R^2 \setminus E, \quad B_r(x_{-}) \subset E \text{ and } x \in \pa B_r(x_{+}) \cap \pa B_r(x_{-}).
		\end{equation}
	\end{definition}

	\begin{remark}
		\label{rem:thm2}
		We may quantify the statement of Theorem  \ref{locexisitsol} as follows 
		: Let $E_0\Subset \Omega$ be a open connected set of class $C^5$ and that satisfies the UBC with radius $2r_0$,
		and the heightfunction $ \psi_1$, see Theorem \ref{thmausilmain}, satisfies 
		\[
		\|\psi_1 \|_{L^2(\pa E_0)} \leq L_0 h \quad \text{and} \quad \|\Delta_{\pa E_0}\psi_1 \|_{L^2(\Sigma_0)} \leq  L_0\sqrt{h}.
		\]
		Then there is  $K_0 = K_0(r_0,L_0)$ and $\beta_0=\beta_0(r_0,K_0)$ such that if
		\[
		\|\kappa_{ E_0}^\varphi  \|_{H^2(\pa E_0)} \leq K_0  \quad \text{and} \quad \| \nabla_{\pa E_0} \Delta_{\pa E_0}  \kappa_{\pa E_0}^\varphi \|_{L^2(\pa E_0)} \leq  K_0 h^{-\frac14}
		\]
		then  the discrete constrained flat flow $\{E^{h,\beta}_t\}_{t\geq 0}$, where $\beta \leq \beta_0$, also satisfies the UBC with radius $r_0$, and 
		\[
		\|\kappa_{E^{h,\beta}_t }^\varphi \|_{H^2(\pa E^{h,\beta}_t)} \leq  K_0  \quad \text{and} \quad \| \nabla_{\pa E^{h,\beta}_t}\Delta_{\pa E^{h,\beta}_t} \kappa^\varphi_{E^{h,\beta}_t} \|_{L^2(\pa E^{h,\beta}_t)} \leq K_0 h^{-\frac14}
		\]
		for all $t \in [0,T_s]$, where $T_s = T_s(r_0,K_0)$.  
	\end{remark}

	\begin{remark}
		\label{rem:uniqueness-strong}
		The arguments in the proofs of Theorems \ref{locexisitsol} and \ref{thmausilmain} imply that if a constrained discrete flat flow $\{ E^{h, \beta}_t   \}_{t \geq 0}$, starting from $E_0$,  satisfies 
		\[
		\|\kappa_{E^{h,\beta}_t }^\varphi \|_{H^2(\pa E^{h,\beta}_t)} \leq  K_0  \quad \text{and} \quad \| \nabla_{\pa E^{h,\beta}_t}\Delta_{\pa E^{h,\beta}_t} \kappa^\varphi_{E^{h,\beta}_t} \|_{L^2(\pa E^{h,\beta}_t)} \leq K_0 h^{-\frac14}
		\]
		for all $t \in [0,T_s]$, then the limiting flat flow coincides with the classical solution on the time interval $[0,T_s]$.  
	\end{remark}
	
	\section{Convergence to the global solution}\label{CONVGLOBSOL}
	We recall that the classical solution of \eqref{MAINEQsol} with initial datum $E_0$ exists on the interval $[0,T_e)$, where $T_e$denotes the maximal existence time. In this subsection, we prove that for every $T< T_e$, there exist $\beta(T)$ and $K_{el}(T)$ such that the constrained discrete flat flow with initial datum $E_0$ converges to the classical solution of \eqref{MAINEQsol} on $[0,T]$ as $h \rightarrow 0^+$. The proof of the next theorem is similar to the one presented in \cite{CFJKsd}[Theorem 1.2], but we include it here for the reader’s convenience.
	\begin{theorem}\label{03thmfinaleGLOB}
		Let $\{E_t\}_{t \in [0,T_e)}$ be a classical solution of \eqref{MAINEQsol} with initial datum $E_0$. Then for every $T < T_e$ there exist $\beta(T)$ and $K_{el}(T)$ such that for all $\beta \in (0,\beta(T)]$ the constrained flat flow  $E^\beta_t$, starting from $E_0$, coincide with $E_t$ in $[0,T]$.    
	\end{theorem}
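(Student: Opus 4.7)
The plan is to leverage the short-time coincidence result of Theorem \ref{locexisitsol} in an iterative fashion, using the fact that the classical solution $\{E_t\}_{t \in [0,T]}$ is smooth and hence enjoys uniform geometric bounds on the compact interval $[0,T]$. Since $T<T_e$, the family $\{E_t\}_{t \in [0,T]}$ is compact in every $C^k$ norm, so one extracts uniform constants $r_0=r_0(T)$, $M^{\ast}=M^{\ast}(T)$, $K_0=K_0(T)$ such that every $E_t$ satisfies the UBC with radius $2r_0$ and
\[
\sup_{t\in[0,T]}\Big(\|\kappa^\varphi_{E_t}\|_{H^2(\pa E_t)} + \|\nabla_{\pa E_t}\Delta_{\pa E_t}\kappa^\varphi_{E_t}\|_{L^2(\pa E_t)} + \|u_{E_t}\|_{C^{3,1/4}(\Omega\setminus E_t)}\Big)\le M^{\ast},
\]
with $K_0$ chosen strictly larger than $M^{\ast}$. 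Classical elliptic regularity for the elasticity system, combined with the extension procedure used in the proof of Theorem \ref{locexisitsol} (cf.\ \eqref{estensioneu_E}--\eqref{sisisinonono2.5}), then fixes $K_{el}(T)$ so that the extended displacement $\tilde u_{E_t}$ satisfies $\|\tilde u_{E_t}\|_{C^{3,1/4}(\Omega)} < K_{el}(T)$ for every $t\in[0,T]$, while one further shrinks $\beta(T)\le\beta_0(r_0,K_0)$ with $\beta_0$ as in Remark \ref{rem:thm2}.

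The core of the argument is a concatenation along disjoint subintervals of length $T_s=T_s(r_0,K_0)>0$ provided by Remark \ref{rem:thm2}. For $\beta\le\beta(T)$ and $h$ small, Theorem \ref{locexisitsol} applied at $t=0$ yields a constrained discrete flat flow $\{E^{h,\beta}_t\}$ that coincides with $\{E_t\}$ on $[0,T_s]$. By Remark \ref{rem:thm2} the curvature bounds and UBC with constants $(K_0,r_0)$ are preserved at $t=T_s$, so $E^{h,\beta}_{T_s}$ is an admissible initial datum for a new application of Theorem \ref{locexisitsol} with the same parameters. This extends coincidence with the classical solution to $[T_s,2T_s]$, and after $\lceil T/T_s\rceil$ iterations one covers the whole of $[0,T]$. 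Uniqueness of strong solutions to \eqref{EQFJM2018}, as recorded in Remark \ref{rem:uniqueness-strong}, forces every limiting piece to agree with the unique classical solution, and by chaining these pieces one obtains $E^\beta_t = E_t$ on $[0,T]$.

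The main obstacle is to guarantee that the thresholds $(r_0,K_0,M^\ast)$ and the non-activation of the two constraints in \eqref{eq:def-scheme-intro-2} and \eqref{minelvinc} persist at every restart with parameters chosen once and for all. For the tubular constraint the key input is Theorem \ref{MainThm375} together with the iteration estimate Proposition \ref{propdiiterazione}: its Gr\"onwall-type structure ensures that the energy $\int(\xi^2+hg(\nu)|\Delta\psi|^2)\,d\mathcal{H}^1$ grows by at most a factor $(1+Mh)$ per step, so the accumulated factor $e^{MT}$ on $[0,T]$ is $h$-independent and controls the $H^2$-curvature bound uniformly. For the elastic constraint, the smoothness of $\{E_t\}_{t\in[0,T]}$ together with the closeness estimate \eqref{sisisinonono3}, applied on each subinterval $[jT_s,(j+1)T_s]$, shows that the extended displacement $\tilde u_{E^{h,\beta}_t}$ remains within a fixed ball of radius $K_{el}(T)$ in $C^{3,1/4}(\Omega)$ for all $t\in[0,T]$, so the constrained elastic minimization coincides with the unconstrained one throughout. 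Once these two uniform non-activation facts are in place, the iteration closes and the conclusion follows.
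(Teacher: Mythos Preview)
Your overall strategy---iterate the short-time result using uniform bounds on the classical solution---matches the paper's, but there is a genuine gap in the restart step. The claim that ``$E^{h,\beta}_{T_s}$ is an admissible initial datum for a new application of Theorem \ref{locexisitsol} with the same parameters'' is not justified: Remark \ref{rem:thm2} requires the initial set to satisfy the UBC with radius $2r_0$ but only outputs UBC with radius $r_0$, so a naive restart halves the ball radius and shrinks $T_s$ at each step. Your fallback---the Gr\"onwall factor $e^{MT}$ from Proposition \ref{propdiiterazione}---is circular: the constant $M$ in that proposition depends on the $H^4$ bound $K$ on the reference set, and applying the iteration over all of $[0,T]$ presupposes that every $E_k$ stays in $\mathfrak{H}^4_{K,\sigma_0}(E_0)$ with a \emph{fixed} $K$, which is exactly what you are trying to prove. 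In the paper this circularity is broken only for short times $T_0$ (see Claim~1 in the proof of Theorem \ref{thmausilmain}), so one cannot simply let the iteration run.

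The paper closes the gap with an additional idea you do not invoke. Instead of restarting at $T_s$, it runs a continuous-induction argument on the maximal time $\tilde t$ for which the curvature bounds \eqref{19052025ancora1} hold, and then, on a window $[\tilde t-\tfrac{T_s}{2},\tilde t]$, uses the summability $\sum_j h\|\Delta\psi_j\|_{L^2}^2\le Ch^2$ (pigeonhole) to select a single step $\hat k$ where $\|\Delta\psi_{\hat k}\|_{L^2}\le Ch$, not merely $\le C\sqrt{h}$. This extra smallness makes the discrete normal velocity $v^h=\psi_{\hat k}/h$ bounded in $C^{1,\frac12}$ and hence convergent to the classical velocity $V_{\bar t}$ of $\{E_t\}$. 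Since the classical solution is uniformly smooth on $[0,T]$, $\|V_{\bar t}\|_{L^2}\le K_{el}/2$ with $K_{el}$ chosen once from the classical bounds; this transfers to $\|\psi_{\hat k}\|_{L^2}\le\tfrac{K_{el}}{2}h$. Likewise the UBC radius and $C^{3,\frac12}$ regularity are inherited from $E_{\bar t}=E^\beta_{\bar t}$, not from the discrete output. These classical-solution inputs are what reset the constants to their original values and allow Remark \ref{rem:thm2} to be reapplied without degradation. Without this mechanism, your iteration does not close.
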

	\begin{proof}
		Let $\{E_t\}_{t \in [0,T_e)}$ be the classical solution of \eqref{MAINEQsol}, and let $ T < T_e$ be fixed. Since the classical solution is regular on $[0,T]$, there exist constants $ K_2,\sigma_2$, and $\tilde K_{el}$ such that 
		\begin{equation}\label{lamerdainculo}
			E_t \in \mathfrak{H}^4_{K_2,\sigma_2}(E_0),\, \| \tilde{u}_{E_t}\|_{C^3(\Omega)} < \tilde K_{el}\,\, \text{ for all } t \in [0,T],
		\end{equation}
		where $\tilde{u}_{E_t}$ is the function defined by \eqref{12052025probmin2}, where we replaced $E^\beta_t$ with $E_t$. It is easy to check that the condition $E_t \in \mathfrak{H}^4_{K_2,\sigma_2}(E_0) $ implies that there exists $r_0>0$ such that $E_t $ satisfies the UBC with $r_0$.
		Let $\beta_0,T_s$ be the constants obtained in Theorems \ref{thmausilmain}, \ref{locexisitsol} and Remark \ref{rem:thm2} for 
		$$K_{el}=4 \tilde{K}_{el}. $$
		We fix $\beta\leq  \beta_0$. 
		Let $k_0 \in \N$ be such that $T_0 \in [hk_0, h(k_0+1))$, and let $(E^{h,\beta}_{hk})_{k \in \N}$ be a discrete constrained flat flow starting from $E_0$. As in Theorem \ref{thmausilmain}, we have
		\begin{equation}
			\begin{split}
				&\pa E^{h,\beta}_{h}= \{  x +\psi_1(x)\nu_{E^0}(x)\colon x \in \pa E^0 \},\\
				&\| \psi_1 \|_{L^2(\pa E^0)} \leq L_0 h, \quad \| \psi_1 \|_{H^4(\pa E^0)} \leq L_0, 
				\\
				&  \|  \kappa_{E_h^{h,\beta}}^{\varphi} \|_{H^2(\pa E_h^{h,\beta})} \leq K_0, \quad  \| \pa_{\pa E^{h,\beta}}^3 \kappa_{E_h^{h,\beta}}^\varphi \|_{L^2(\pa E_h^{h,\beta})} \leq \frac{K_0}{h^{\frac{1}{4}}},
			\end{split} 
		\end{equation}
		where $L_0$ and $K_0$ are as in \eqref{12052025form-1}, with $K_0 > K_{el}$. 
		We adopt the following notation: $E^h_t:= E^{h,\beta}(t)$, $E_k:= E^{h,\beta}_{hk}$, and recall \eqref{agligderigligb} for the definition of $ E^{h,\beta}_t$.
		The conclusion of the theorem follows from the next claim, together with Remarks \ref{rem:thm2} and \ref{rem:uniqueness-strong}. 
		\\
		\textit{Claim:} For evert $t \in [0,T]$
		\begin{equation}\label{19052025ancora1}
			\|\kappa_{E^h_t}^{\varphi} \|_{H^2(\pa E^h_t )} \leq K_0, \quad  \| \pa_{\pa E^h_t}^3 \kappa_{E^h_t}^\varphi \|_{L^2(\pa E^h_t )} \leq \frac{K_0}{h^{\frac{1}{4}}}.
		\end{equation}
		
		By Theorem \ref{thmausilmain}, estimate \eqref{19052025ancora1} holds for all $t \in [0,T_s]$. We define
		\begin{equation}\label{19052025ttilde}
			\tilde{t}: = \inf \{   t \in [T_s, T]: \text{ formula \eqref{19052025ancora1} is true for all }t \in[0,\tilde{t}]  \} .
		\end{equation}
		We will show that \eqref{19052025ancora1} continues to hold for all $t \leq \tilde{t}+ \frac{T_s}{2}$, which implies the claim. To this end, let $\tilde{k} \in \N$ be such that $ \tilde{t}-\frac{T_s}{2} \in [h \tilde{k}, (\tilde{k}+1)h  )$ satisfies \eqref{19052025ancora1}, we apply Theorems \ref{thmausilmain} and \ref{locexisitsol} with $E_0= E_{\tilde{k}}$ to obtain there exist $k_1 \in \N$ and $c>0$ (we recall that $c=c(K_{el}, K_0)$) such that $0<c \leq   h k_1=T_1 \leq  T_s$, and for all $ k \in \{ \tilde{k}, \dots, \tilde{k}+k_1 \}$
		\begin{equation}
			\pa E_k= \{  x+ \psi_k(x)\nu_{E_{k-1}}(x)\colon x \in \pa E_{k-1}\}.
		\end{equation}
		Using formula \eqref{zizzagna2}, we obtain
		\begin{equation}
			\| \psi_k \|^2_{L^2(\pa E_{k-1})}+ h \sum_{j=\tilde{k}}^{\tilde{k}+k_1} \| \Delta_{\pa E_{k-1}} \psi_k \|^2_{L^2(\pa E_{k-1})} \leq C h^2,  
		\end{equation}
		for some constant $C$. Since  $0 <c \leq hk_1=T_1$, there exists $\hat{k} \in \{  \tilde{k}, \dots, \tilde{k}+ \lfloor \frac{k_1}{4} \rfloor \}$ such that
		\begin{equation}\label{19052025pom1}
			\| \psi_{\hat{k}} \|_{L^2(\pa E_{\hat{k}-1})}+  \|   \Delta_{\pa E_{\hat{k}-1} } \psi_{\hat{k}} \|_{L^2(\pa E_{\hat{k}-1})} \leq C h.
		\end{equation}
		From the above and using the very definition of $\tilde{t}$, see\eqref{19052025ttilde}, we get
		$$h\hat{k} \leq h (\tilde{k}+ \lfloor \frac{k_1}{4} \rfloor ) \leq \tilde{t}.$$
		Recall that in the minimizing movement scheme, each set $E_j$  is of class $C^5$, since it solves the Euler–Lagrange equation \eqref{LEEULERO}. 
		Moreover, $E_{\hat{k}}$ is uniformly $C^{3,\frac{1}{2}}$-regular.  Let $t_h= \hat{k}h$ and we set
		\begin{equation}
			v^h(t_h,x)= \frac{\psi_{\hat{k}}(x) }{h}.
		\end{equation}
		By \eqref{19052025pom1} and the Sobolev embedding theorem, we obtain
		\begin{equation}\label{19052025pom2}
			\| v^h(t_h,\cdot)\|_{C^{1,\frac{1}{2}}(\pa E_{\hat{k}-1})} \leq C.
		\end{equation}
		Since $t_h = \hat{k}h \in [\tilde{t}- \frac{T_s}{2}, \tilde{t}]$, by passing to a subsequence if necessary, we can assume \begin{equation}\label{19052025pom3}
			\exists \lim_{h \rightarrow 0^+} t_h= \bar{t} .
		\end{equation}
		From \eqref{19052025pom2} and \eqref{19052025pom3}, we conclude
		\begin{equation}
			v^h(t_h,\cdot) \rightarrow v(\bar t, \cdot) \, \text{in } C^{1,\frac{1}{2}} \text{ as } h \rightarrow 0^+, \, \| v(\bar t, \cdot )\|_{C^{1,\frac{1}{2}}(\pa E^\beta(\bar t))} \leq C.
		\end{equation}
		Hence,
		\begin{equation}
			\lim_{h \rightarrow 0^+} \| v^h(t_h,\cdot)\|_{L^2(\pa E_{\hat{k}-1})}= \| v(\bar t, \cdot )\|_{L^2(\pa E^\beta_{\bar t}   )}.
		\end{equation}
		Since we assumed that \eqref{19052025ancora1} holds for all $t \leq \tilde{t}$,
		and $\bar t \leq \tilde{t}$, Remark \ref{rem:uniqueness-strong} implies that the flat flow agrees with the classical solution up to time $\tilde{t}$.
		Using \eqref{19052025fonzi} and \eqref{diocaneoverl2} with $E(\tilde{t}- \frac{T_0}{2})$ in place of $E_0$, we find that $v(\bar t, \cdot)$ coincides with the normal velocity $V_{\bar t}$ of the classical solution $\{ E_t\}_{t\geq 0}$, and
		\begin{equation}
			\| V_{\bar t} \|_{L^2(\pa E_{\bar t})} = \| v(\bar t,\cdot) \|_{L^2(\pa E^\beta_{\bar t} )} . 
		\end{equation}
		By the definition of $K_{el}$ ($K_{el}= 4 \max \{K_2, \tilde K_{el}  \}$) and using equation \eqref{MAINEQsol}, we get
		\begin{equation}
			\| V_{\hat t} \|_{L^2(\pa E_{ \hat t})} = \| \Delta_{\pa E_t } \big(   \kappa^\varphi_{E_{\hat t}}-Q (E(u_{E_{\hat t}}))\big) \|_{L^2(\pa E_{\hat t}  )} \leq K_2+ \tilde{K}_{el} \leq \frac{K_{el}}{2} .
		\end{equation}
		Hence, we conclude \begin{equation}
			\| \psi_{k}\|_{L^2(\pa E_{k-1})} \leq \frac{K_{el}}{2} h.
		\end{equation}
		Using \eqref{19052025pom1}, we also have
		\begin{equation}
			\| \Delta_{\pa E_{k-1}}\psi_{k}\|_{L^2(\pa E_{k-1})} \leq  C h \leq K_{el} \sqrt{h}
		\end{equation}
		for $h$ small enough. Finally, since $E_{\bar t}= E^\beta_{\bar t}$ is uniformly $C^{3,\frac{1}{2}}$ regular with bound $C$, the same holds for $E_{\hat{k}}$ is uniformly $C^{3,\frac{1}{2}}$,  with a bound $2C$. Then, applying Remarks \ref{rem:thm2} and \ref{rem:uniqueness-strong} with $E_{\hat{k}}$ instead of $E_0$, we deduce that \eqref{19052025ancora1} holds on $[\hat{k}h,\hat{k}h+T_s],$. Since $ \hat{k}h \in [\tilde{t}-\frac{T_s}{2}, \tilde{t}]$, this implies that  $E^h_t$ satisfies \eqref{19052025ancora1} on $[0,\tilde{t}+ \frac{T_s}{2}]$. Repeating this argument a finite number of times yields the claim.
	\end{proof}

	\section*{Aknowledgments} 
	I was supported by the Academy of Finland grant 314227.

\end{document}